\newtheorem{theorem}{Theorem}[section]
\newtheorem{lemma}[theorem]{Lemma}
\newtheorem{corollary}[theorem]{Corollary}
\newtheorem{proposition}[theorem]{Proposition}
\theoremstyle{definition}
\newtheorem{definition}[theorem]{Definition}
\newtheorem{example}[theorem]{Example}
\theoremstyle{remark}
\newtheorem{remark}[theorem]{Remark}
\numberwithin{equation}{section}
\begin{document}

\setlength\parskip{0.5em plus 0.1em minus 0.2em}

\title[Exotic embedded surfaces and involutions]{Exotic embedded surfaces and involutions from Real Seiberg--Witten theory}
\author{David Baraglia}

\address{School of Mathematical Sciences, The University of Adelaide, Adelaide SA 5005, Australia}

\email{david.baraglia@adelaide.edu.au}


\date{\today}

\begin{abstract}
Using Real Seiberg--Witten theory, Miyazawa introduced an invariant of certain $4$-manifolds with involution and used this invariant to construct infinitely many exotic involutions on $\mathbb{CP}^2$ and infinitely many exotic smooth embeddings of $\mathbb{RP}^2$ in $S^4$. In this paper we extend Miyazawa's construction to a large class of $4$-manifolds, giving many infinite families of  involutions on $4$-manifolds which are conjugate by homeomorphisms but not by diffeomorphisms and many infinite families of exotic embeddings of non-orientable surfaces in $4$-manifolds, where exotic means continuously isotopic but not smoothly isotopic. Exoticness of our construction is detected using Real Seiberg--Witten theory. We study Miyazawa's invariant, relate it to the Real Seiberg--Witten invariants of Tian--Wang and prove various fundamental results concerning the Real Seiberg--Witten invariants such as: relation to positive scalar curvature, wall-crossing, a mod $2$ formula for spin structures, a localisation formula relating ordinary and Real Seiberg--Witten invariants, a connected sum formula and a fibre sum formula.
\end{abstract}

\maketitle




\section{Introduction}

Seiberg--Witten theory is one of the main to tools used to detect exotic phenomena on $4$-manifolds. The term {\em exotic} here refers to structures that are topologically equivalent, but not smoothly so. Examples of interest include exotic smooth structures, exotic embeddings of surfaces in $4$-manifolds and exotic group actions.

Because of the close relationship between Seiberg--Witten theory and complex geometry, it is natural to consider a real version of the Seiberg--Witten equations which interacts well with real algebraic geometry. This variant of Seiberg--Witten theory, which we will call {\em Real Seiberg--Witten theory} was introduced by Tian and Wang \cite{tw} and further studied by Nakamura in the case of Real structures without fixed points \cite{na,na2}. Real Seiberg--Witten theory has also been used to prove a Real version of the $10/8$-inequality \cite{kato} and Real Seiberg--Witten Floer theory has been studied in \cite{kmt,mi,bh}. Despite these developments, the Real Seiberg--Witten invariants themselves have received relatively little attention. In this paper we substantially develop the theory of Real Seiberg--Witten invariants, proving numerous fundamental results. As an application we give a very general construction of infinite families of exotic involutions and a construction of infinite families of exotic non-orientable embedded surfaces. While many constructions of exotic involutions exist in the literature \cite{ue,fss,sung,mi}, our construction is noteworthy in that it is applicable to a very large class of $4$-manifolds.

\subsection{Exotic involutions and exotic embedded surfaces}

Let $X$ be a smooth $4$-manifold and $S \subset X$ a smoothly embedded surface. An embedded surface $S' \subset X$ is said to be an {\em exotic copy} of $S$ if $S'$ is continuously isotopic to $S$ but not smoothly isotopic. More generally, we say that a collection of embedded surfaces $S_i \subset X$ are {\em exotic} if they are all continuously isotopic (through locally flat topological embeddings) but no two are smoothly isotopic. Exotic surfaces and related notions (for example, surfaces that are mapped onto each other by a homeomorphism but not by a diffeomorphism) have been constructed by various methods \cite{fs,fin,kim,kr,mark,bar3,mi}.

A {\em Real structure} on $X$ is a smooth, orientation preserving involution $\sigma$. A smooth involution $\sigma' : X \to X$ is said to be an {\em exotic copy} of $\sigma$ if there is a homeomorphism $f : X \to X$ such that $\sigma' = f^{-1} \circ \sigma \circ f$, but there is no such diffeomorphism.

In order to state our main result on exotic involutions and embedded surfaces, we introduce the notion of an admissible pair. Given a $4$-manifold $X$ and a Real structure $\sigma$, let $b_1(X)^{\pm \sigma}, b_+(X)^{\pm \sigma}$ denote the dimensions of the $\pm 1$ eigenspaces of $\sigma$ on $H^1(X ; \mathbb{R})$ and $H^+(X)$. 

\begin{definition}
Let $X$ be a compact, oriented, smooth $4$-manifold and $\sigma$ an orientation preserving smooth involution on $X$. We will say the pair $(X , \sigma)$ is {\em admissible} if $b_1(X)^{-\sigma} = 0$, $\sigma$ has a non-isolated fixed point and $(X,\sigma)$ satisfies one of the following conditions:
\begin{itemize}
\item[(1)]{$X$ admits a symplectic structure with $\sigma^*(\omega) = -\omega$, and $b_+(X) - b_1(X) = 3 \; ({\rm mod} \; 4)$.}
\item[(2)]{$X$ has a spin structure preserved by $\sigma$ and $b_1(X)^{-\sigma} = \sigma(X) = 0$.}
\item[(3)]{$X$ has a spin$^c$-structure $\mathfrak{s}$ with $\sigma^*(\mathfrak{s}) = -\mathfrak{s}$, $SW(X,\mathfrak{s})$ is odd, $b_+(X) - b_1(X) = 3 \; ({\rm mod} \; 4)$, and
\[
\frac{ c(\mathfrak{s})^2 - \sigma(X) }{8} = \frac{ b_+(X) - b_1(X) + 1}{2} = b_+(X)^{-\sigma}.
\]
}
\item[(4)]{$X = \overline{\mathbb{CP}}^2$ with an involution such that $b_+(X)^{-\sigma} = 0$.}
\item[(5)]{$X = N \# N$ and $\sigma$ is the involution which swaps the two factors (see Section \ref{sec:csf}), where $N$ is negative definite, $b_1(N) = 0$ and there is a spin$^c$-structure on $N$ with $c(\mathfrak{s})^2 = -b_2(N)$.}
\end{itemize}

\end{definition}

Our main result is:

\begin{theorem}\label{thm:exotic0}
Let $(X_1, \sigma_1), \dots , (X_k , \sigma_k)$ be admissible pairs. Let $c : \mathbb{CP}^2 \to \mathbb{CP}^2$ denote complex conjugation. Let $X = X_1 \# \cdots \# X_k$ and $\sigma = \sigma_1 \# \cdots \# \sigma_k$. Then 
\begin{itemize}
\item[(1)]{The involution $\sigma \# c$ on $X \# \overline{\mathbb{CP}}^2$ admits infinitely many exotic copies.}
\item[(2)]{Suppose the fixed point sets of $\sigma_1, \dots , \sigma_k$ are connected. Let $S \subset X/\sigma$ denote the image of the fixed point set of $\sigma$ in $X_0 = X/\sigma$. Then the embedding $P_+ \# S \subset X_0$ admits infinitely many exotic copies, where $P_+$ is the standard embedding of $\mathbb{RP}^2$ in $S^4$ with self-intersection $2$.}
\end{itemize}
\end{theorem}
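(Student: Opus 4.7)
The plan is to construct an infinite family $(X_n, \sigma_n)_{n \geq 1}$ of smooth involutions on $X \# \overline{\mathbb{CP}}^2$, all conjugate to $\sigma \# c$ by homeomorphisms, yet pairwise non-conjugate by diffeomorphisms. The distinguishing invariant will be the Real Seiberg--Witten invariant developed in the paper, whose behaviour under the construction is controlled by the connected-sum and fibre-sum formulas proved earlier.

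For the construction I would use an equivariant analogue of Fintushel--Stern knot surgery. One locates an equivariant embedded torus $T$ in $X \# \overline{\mathbb{CP}}^2$ (either in the connected-sum region or alongside the non-isolated fixed surface of $\sigma \# c$) and, for each knot $K_n$ in a family with distinct Alexander polynomials (for instance the $(2, 2n+1)$-torus knots), performs an equivariant fibre sum with $(T^2 \times S^2)_{K_n}$ along $T$. The involution $\sigma_n$ is obtained by extending $\sigma \# c$ across the surgery region. Standard topological-surgery arguments (combined with Freedman's classification, applied equivariantly on the local model) then show that $(X_n, \sigma_n)$ is equivariantly homeomorphic to $(X \# \overline{\mathbb{CP}}^2, \sigma \# c)$ for every $n$.

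The smooth non-conjugacy is detected using the Real Seiberg--Witten invariant: the equivariant fibre-sum formula gives
\[
SW_{\mathbb{R}}(X_n, \sigma_n; \mathfrak{s}) \;=\; SW_{\mathbb{R}}(X \# \overline{\mathbb{CP}}^2, \sigma \# c; \mathfrak{s}) \cdot \Delta^{\mathbb{R}}_{K_n}(t),
\]
where the second factor depends nontrivially on $n$ and separates distinct $K_n$. To see that the base invariant on $X \# \overline{\mathbb{CP}}^2$ is nonzero I would apply the connected-sum formula to reduce to a product of the RSW invariants of the admissible pieces together with $(\overline{\mathbb{CP}}^2, c)$. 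Each admissibility condition is exactly what is needed to force the corresponding factor to be nontrivial: (1) follows from wall-crossing combined with the antisymplectic hypothesis and the mod-$4$ dimension condition; (2) uses the mod $2$ formula for spin structures; (3) follows from the localisation formula relating RSW to ordinary SW, with the numerical hypotheses precisely matching the formal dimension constraint; (4) is a direct RSW computation on $\overline{\mathbb{CP}}^2$; and (5) reduces back to the connected-sum formula applied to $N \# N$ with the swap involution.

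For part (2), when the fixed sets of the $\sigma_i$ are connected, the quotient $(X \# \overline{\mathbb{CP}}^2)/(\sigma_n \# c)$ is homeomorphic to $X_0 \# S^4 \cong X_0$, and the image of the fixed-point set of $\sigma_n \# c$ is an embedded surface isotopic to $P_+ \# S$; in particular, the embeddings obtained from distinct $\sigma_n$ are continuously isotopic, while a smooth isotopy between any two of them would lift to an equivariant diffeomorphism conjugating $\sigma_n$ to $\sigma_m$, contradicting (1). The main obstacle in the whole argument is the first step: executing the equivariant surgery so that it simultaneously preserves the topological conjugacy class of the involution and demonstrably alters the RSW invariant. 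Ensuring the surgery can be placed near a \emph{non-isolated} component of the fixed set (as forced by admissibility) is essential for both properties, since it is what allows an equivariant torus with the correct behaviour under the fibre-sum formula to be found in the first place.
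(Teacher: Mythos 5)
Your overall framework is right---detect exotic involutions via the Real Seiberg--Witten degree, use admissibility to guarantee a Real spin$^c$-structure with nonzero degree on $X$, combine with $(\overline{\mathbb{CP}}^2,c)$ via the connected-sum formula, and then deduce part (2) from part (1) by passing to branched quotients---and your reading of how each admissibility clause forces nonvanishing of the corresponding factor is essentially correct. However, the central construction you propose (equivariant Fintushel--Stern knot surgery along an equivariant torus, changing the RSW invariant by a ``Real Alexander polynomial'' $\Delta^{\mathbb{R}}_{K_n}$) is not what the paper does and contains genuine gaps that would be hard to fill with the tools developed here.

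Specifically: (i) No Real analogue of the Fintushel--Stern knot-surgery formula is proved anywhere in the paper, and the fibre-sum formula that \emph{is} proved (Theorem \ref{thm:fsum}) only applies to a generalised fibre sum along a torus lying inside the \emph{fixed-point set} of $\sigma$. An arbitrary admissible $(X,\sigma)$ need not contain such a torus, so the surgery locus you want may simply not exist; even when it does, the quantity controlling the change in $deg_R$ after equivariant knot surgery has never been identified with any Alexander-polynomial-type invariant. (ii) Your claim that the surgered manifolds are equivariantly homeomorphic to $(X\#\overline{\mathbb{CP}}^2,\sigma\#c)$ by ``standard topological-surgery arguments combined with Freedman's classification applied equivariantly'' is far from standard: equivariant topological surgery and equivariant Freedman theory are delicate and not available here. (iii) Even granting the formula, $SW_R$ is only defined up to an overall sign and a single nonzero invariant does not by itself separate the $\sigma_n$; one must also control which spin$^c$-structures can carry nonzero degree after surgery.

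The paper instead uses Miyazawa's specific family $M_n=\Sigma_2(S^4,\#n\,\tau_{0,1}(K))$ of branched double covers of twist-roll spun pretzel knots: each $M_n$ is homeomorphic (indeed, by \cite{hkm}, diffeomorphic) to $S^4$, carries an involution with $|deg_R(M_n,\mathfrak{s})|=3^n$, and the underlying surfaces $P_+\#n\,\tau_{0,1}(K)\subset S^4$ are topologically isotopic by \cite{cop}. One then sets $\sigma_{X',n}=\sigma'\#\sigma_{rn}$ on $X'=X\#\overline{\mathbb{CP}}^2\cong X'\#M_{rn}$; topological conjugacy is automatic because $(M_{rn},\sigma_{rn})$ is topologically the standard involution on $S^4$, and smooth distinctness follows by looking at the \emph{finite set} $A_n$ of all nonzero degrees $|deg_R(X',\mathfrak{s}',\sigma_{X',n})|$, which the connected-sum formula (Theorem \ref{thm:glue1}) shows equals $3^{rn}A_0$; choosing $r$ large enough that $3^r$ exceeds every element of $A_0$ makes these sets pairwise distinct. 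This sidesteps all three difficulties above: no equivariant torus is needed, no equivariant Freedman is invoked, and the finiteness-plus-scaling argument handles the sign ambiguity and the indeterminacy of which spin$^c$-structures are involved. Your proof of part (2) from part (1) is in the right spirit, but the topological isotopy of the quotient surfaces must come from a theorem about the specific surfaces used (here \cite{cop}), not from a general principle---another reason the Miyazawa family is essential.
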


For example, a $K3$-surface equipped with an odd, non-free involution is admissible. Theorem \ref{thm:exotic0} gives infinitely many exotic involutions on $\# n K3 \# m \overline{\mathbb{CP}}^2$ for any $n,m \ge 1$.

Taking $(X^4 , \sigma) = (S^4 , diag(-1,-1,1,1,1))$, Theorem \ref{thm:exotic0} gives infinitely many exotic involutions on $\overline{\mathbb{CP}}^2$ and infinitely many exotic embeddings $\mathbb{RP}^2 \subset S^4$. These exotic structures were originally constructed by Miyazawa \cite{mi} using Real Seiberg--Witten theory. Theorem \ref{thm:exotic0} is also proved with Real Seiberg--Witten theory and can be seen as a far-reaching extension of Miyazawa's results.

\subsection{Real Seiberg--Witten invariants}

Aside from the contruction of exotic involutions and embeddings, the aim of this paper is to develop the fundamental properties of the Real Seiberg--Witten invariants. Here we give a summary of the main results.

Let $X$ be a compact, oriented, smooth $4$-manifold and $\sigma$ a Real structure. Define the {\em Real Jacobian} of $X$ to be the torus 
\[
Jac_R(X) = \frac{H^1(X ; i\mathbb{R})^{-\sigma}}{H^1(X ; 2\pi i \mathbb{Z})^{-\sigma}}.
\]
Given a spin$^c$-structure $\mathfrak{s}$ we let $d = (c(\mathfrak{s})^2 - \sigma(X))/8$, the index of the spin$^c$ Dirac operator. If $\mathfrak{s}$ is a Real spin$^c$-structure, then the families index $D \to Jac(X)$ of the spin$^c$-Dirac operator corresponding to $\mathfrak{s}$ admits a Real structure. Let $D_R \to Jac_R(X)$ denote the real part of $D|_{Jac_R(X)}$. Then $D_R$ is a real virtual vector bundle of rank $d$.

Just as the ordinary Seiberg--Witten invariants depend on a spin$^c$-structure, the Real Seiberg--Witten invariants depend on a {\em Real spin$^c$-structure}. This is a spin$^c$-structure together with an antilinear lift of $\sigma$ to the spinor bundles of $\mathfrak{s}$ (see Section \ref{sec:rspinc} for details). The invariants come in two types: mod $2$ invariants and integer invariants.

\noindent {\bf Mod $2$ invariants.} The mod $2$ invariant is defined whenever $b_+(X)^{-\sigma} > 0$. If $b_+(X)^{-\sigma} = 1$, then the invariant depends on a chamber. The mod $2$ invariants are given by a collection of cohomology classes
\[
SW_{R,m}(X,\mathfrak{s}) \in H^*(Jac_R(X) ; \mathbb{Z}_2), \; m \ge 0
\]
of degree $m - (d - b_+(X)^{-\sigma})$. In the case $b_1(X)^{-\sigma} = 0$, we just have a single invariant $SW_R(X,\mathfrak{s}) \in \mathbb{Z}_2$ corresponding to $m = d-b_+(X)^{-\sigma}$.

\noindent {\bf Integer invariants.} To define integer invariants requires orienting the Real Seiberg--Witten moduli spaces. The easiest case is when $b_1(X)^{-\sigma} = 0$, for then the moduli spaces are guaranteed to be orientable provided $d$ is even. If $d - b_+(X)^{-\sigma} = 0$, $d$ is even and $b_+(X)^{-\sigma} > 0$, then the moduli space of the Real Seiberg--Witten equations is zero-dimensional and we obtain an integer invariant
\[
SW_{R,\mathbb{Z}}(X,\mathfrak{s}) \in \mathbb{Z}
\]
by taking a signed count of solutions. The orientation on the moduli space is only canonical up to an overall sign, so $SW_{R,\mathbb{Z}}(X,\mathfrak{s})$ is to be understood as an integer modulo an overall sign. Alternatively, the absolute value $|SW_{R,\mathbb{Z}}(X,\mathfrak{s})|$ is a well-defined non-negative integer.

When $b_1(X)^{-\sigma} > 0$ and $d$ is even, the integer invariant is defined provided $w_1(D_R) = 0$ and is a cohomology class
\[
\mathbf{SW}_{R,\mathbb{Z}}(X,\mathfrak{s}) \in H^{-(d-b_+(X)^{-\sigma})}( Jac_R(X) ; \mathbb{Z} ).
\]
If $d - b_+(X)^{-\sigma} + b_1(X)^{-\sigma} = 0$, we get an integer invariant by taking the pairing
\[
SW_{R,\mathbb{Z}}(X,\mathfrak{s}) = \langle \mathbf{SW}_{R,\mathbb{Z}}(X,\mathfrak{s}) , [Jac_R(X)] \rangle \in \mathbb{Z}.
\]
The integer and mod $2$ invariants are related by $\mathbf{SW}_{R,\mathbb{Z}}(X,\mathfrak{s}) = SW_{R,0}(X,\mathfrak{s}) \; ({\rm mod} \; 2)$.

When $b_1(X)^{-\sigma} = 0$ and $d - b_+(X)^{-\sigma} = 0$, there is another integer invariant
\[
deg_R(X , \mathfrak{s}) \in \mathbb{Z}
\]
called the {\em degree} of $(X , \mathfrak{s})$. This invariant was introduced and studied by Miyazawa in \cite{mi}. Just as with $SW_{R,\mathbb{Z}}$, the degree is only defined up to an overall sign, so the absolute value $|deg_R(X,\mathfrak{s})|$ is a well defined integer. In this paper we extend the definition of $deg_R$ to the case $b_1(X)^{-\sigma} > 0$ and $w_1(D_R)=0$. In general, the degree is a cohomology class
\[
deg_R(X,\mathfrak{s}) \in H^{-(d-b_+(X)^{-\sigma})}(Jac_R(X) ; \mathbb{Z})
\]
defined up to an overall sign. Although this invariant is defined for all values of $d$, it turns out to only be interesting when $d$ is even (see Proposition \ref{prop:deginteger} (1)).

Having introduced the mod $2$ and integer invariants, we now outline some of their fundamental properties proven in this paper.

\noindent {\bf 1. Positive scalar curvature.}
\begin{proposition}
Suppose that $\sigma$ preserves a metric of positive scalar curvature. Let $\mathfrak{s}$ be a Real spin$^c$-structure on $X$.
\begin{itemize}
\item[(1)]{If $b_+(X)^{-\sigma} > 1$, then the mod $2$ Real Seiberg--Witten invariants and the integral Real Seiberg--Witten invariants (when defined) all vanish.}
\item[(2)]{If $b_+(X)^{-\sigma} = 1$ then the mod $2$ Real Seiberg--Witten invariants and the integral Real Seiberg--Witten invariants (when defined) vanish for the chamber containing the zero perturbation. If the zero perturbation lies on the wall, then the Real Seiberg--Witten invariants vanish for both chambers}
\item[(3)]{If $b_+(X)^{-\sigma} = b_1(X)^{-\sigma} = 0$ and $\mathfrak{s}$ is a Real spin structure, then $|deg_R(X , \mathfrak{s})| = 1$.}
\end{itemize}

\end{proposition}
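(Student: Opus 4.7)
The plan is to adapt the classical Lichnerowicz--Weitzenböck vanishing argument to the Real setting. Since $\sigma$ preserves a PSC metric $g$, the Real configuration space embeds as the $\sigma$-fixed locus of the ordinary Seiberg--Witten configuration space and the Weitzenböck identity restricts unchanged. Thus, testing against a solution $(A,\phi)$ of the Real Seiberg--Witten equations perturbed by a $\sigma$-anti-invariant self-dual imaginary $2$-form $i\eta$, one obtains
\[
0 = \int_X \Bigl( |\nabla_A \phi|^2 + \tfrac{s}{4}|\phi|^2 + \tfrac{1}{4}|\phi|^4 + \tfrac{1}{2}\langle i\eta \cdot \phi, \phi\rangle \Bigr),
\]
so provided $\|\eta\|_\infty$ is small compared with $\min_X s > 0$, every solution must have $\phi = 0$ and hence be reducible. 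This estimate is the common engine for all three parts.

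For (1) and (2) the remaining task is a wall-avoidance argument. The reducible locus inside $H^+(X)^{-\sigma}$ is the affine subspace cut out by $[\eta^+] = 2\pi c_1(\mathfrak{s})^+$, of codimension $b_+(X)^{-\sigma}$. When $b_+(X)^{-\sigma} > 1$, a small generic Real perturbation avoids it altogether; combined with the Weitzenböck bound this gives an empty moduli space and so all mod $2$ and integer Real Seiberg--Witten invariants vanish, either by direct evaluation on the empty moduli space or by a straight cobordism through the empty problem. When $b_+(X)^{-\sigma} = 1$ the reducible locus is a single hyperplane separating two chambers; if zero lies in the interior of one chamber I pick a small perturbation in the same chamber but off the wall, while if zero lies on the wall I can do this in either chamber. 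Either way the Weitzenböck smallness hypothesis is preserved and the moduli space is empty, giving the claimed chamber-wise vanishing.

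For (3) the hypotheses force $Jac_R(X)$ to be a point, so $deg_R(X,\mathfrak{s})$ is literally a signed integer. Since $\mathfrak{s}$ is a Real spin structure, $c_1(\mathfrak{s})$ is torsion and the unique wall passes through $\eta = 0$; the unperturbed equations admit a single reducible given by the flat Real spin connection, which is isolated because $b_1^{-\sigma}=0$. The Lichnerowicz formula on the Real spinor bundle shows the Real Dirac operator $D_R$ has trivial kernel; since the virtual rank of $D_R$ is $d$ and integrality of $deg_R$ in the stated cohomological degree forces $d = 0$, the cokernel vanishes as well. The linearization of the Real SW map at this reducible is therefore invertible, so $deg_R$, defined as the local degree (or Kuranishi-model sign) of this isolated unobstructed reducible, equals $\pm 1$.

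The main obstacle I foresee is in part (3): one has to match the definition of $deg_R$ introduced earlier in the paper---essentially a mapping degree of a finite-dimensional approximation of the Real SW map, following Miyazawa's construction---with the naive signed count of the isolated reducible, and verify carefully that Lichnerowicz invertibility of $D_R$ really yields the value $\pm 1$ under that definition rather than $\pm|\det|$ or similar. For parts (1) and (2) the only subtlety beyond the classical argument is that perturbations must lie in $H^+(X)^{-\sigma}$, but the codimension and chamber statements are intrinsic to this subspace, so no new difficulty arises there.
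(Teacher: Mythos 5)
Your approach to parts (1) and (2) matches the paper's: the Weitzenb\"ock/Lichnerowicz estimate shows that for a PSC metric and small $\sigma$-anti-invariant perturbation every Real solution is reducible, and when $b_+(X)^{-\sigma} \ge 1$ a generic perturbation in the appropriate chamber avoids the reducible wall, giving an empty moduli space and vanishing of all mod $2$ and integer invariants. Your reading of part (3) as a transversality statement about the unique reducible is also the right idea (cf.\ Remark \ref{rem:degcount}): exhibit the lone reducible as unobstructed, so it contributes $\pm 1$ to the signed count.

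The genuine gap is in your derivation of $d=0$ in part (3). You write that ``integrality of $deg_R$ in the stated cohomological degree forces $d=0$,'' but that is circular: if $d \neq 0$ then $H^{-d}(\mathrm{pt};\mathbb{Z}) = 0$ and $deg_R$ would be $0$, which would make the claim $|deg_R|=1$ false, not prove it. You must instead prove $d=0$ outright, and only then can you leverage $\ker D_R = 0$ to conclude $\operatorname{coker} D_R = 0$. The paper's argument is the standard one: a spin $4$-manifold admitting a PSC metric has $\sigma(X)=0$ (Lichnerowicz kills both $\ker D^+$ and $\ker D^-$, so $\operatorname{ind}(D^+)=0$), and since $\mathfrak{s}$ is an honest spin structure $c(\mathfrak{s})=0$, whence $d = (c(\mathfrak{s})^2-\sigma(X))/8 = 0$. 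With $d=0$ and $b_+(X)^{-\sigma}=0$ in hand, the obstruction space $H^+(X)^{-\sigma} \oplus \operatorname{Coker}(D)^{\widetilde\sigma}$ at the reducible vanishes and your Kuranishi-sign conclusion goes through.
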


\noindent {\bf 2. Wall-crossing formula.}

\begin{theorem}
If $b_+(X)^{-\sigma}=1$, then
\[
SW^+_{R,m}(X,\mathfrak{s}) - SW^-_{R,m}(X, \mathfrak{s}) = w_{m-(d-1)}(-D_R).
\]
Furthermore, the integer-valued Real Seiberg--Witten invariants (when defined) are independent of the choice of chamber.
\end{theorem}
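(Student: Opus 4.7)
The plan is to adapt the standard wall-crossing argument from ordinary Seiberg--Witten theory to the Real setting. The essential new feature is that the stabilizer of a reducible in the Real gauge group is $\{\pm 1\}$ rather than $U(1)$, so the link of a reducible in the parameterized moduli space is a real projective bundle over $Jac_R(X)$ rather than a complex one. Concretely, I would pick a generic path of perturbations $\{\eta_t\}_{t \in [-1,1]}$ joining a point of the $-$chamber at $t = -1$ to a point of the $+$chamber at $t = 1$ and meeting the wall transversally at $t = 0$, and form the parameterized moduli space $\mathcal{M} = \bigsqcup_t M(X, \mathfrak{s}, \eta_t) \times \{t\}$. The reducibles form a copy of $Jac_R(X)$ lying over $t = 0$, and a local Kuranishi-model analysis at these points shows that, after stabilizing $D_R$ to a genuine vector bundle by adding trivial summands to the source and target of the Dirac family, a deleted tubular neighbourhood of the reducible locus inside the irreducible stratum $\mathcal{M}^{\ast}$ is diffeomorphic over $Jac_R(X)$ to the projective bundle $\mathbb{RP}(D_R)$; moreover, the universal real line bundle $L \to \mathcal{M}^{\ast}$ whose class $a = w_1(L)$ defines $SW_{R,m}$ restricts to the tautological line bundle $\mathcal{O}(1)$.

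Removing this tubular neighbourhood exhibits $\mathcal{M}^{\ast}$ as a cobordism from $\mathcal{M}^{+}$ to $\mathcal{M}^{-} \sqcup \mathbb{RP}(D_R)$, so evaluating $a^m$ against the fundamental class of this cobordism yields
\[
SW^{+}_{R,m}(X,\mathfrak{s}) - SW^{-}_{R,m}(X,\mathfrak{s}) = \pi_{\ast}(a^m),
\]
where $\pi : \mathbb{RP}(D_R) \to Jac_R(X)$ is the projection. The mod $2$ projective bundle formula
\[
H^{\ast}(\mathbb{RP}(D_R); \mathbb{Z}_2) = H^{\ast}(Jac_R(X); \mathbb{Z}_2)[a] \Big/ \Bigl(\textstyle\sum_{i=0}^{d} w_i(D_R)\, a^{d-i}\Bigr)
\]
together with the identity $w(-D_R)\cdot w(D_R) = 1$ then gives $\pi_{\ast}(a^m) = w_{m-(d-1)}(-D_R)$, which is the asserted formula. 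For the integer invariants, orientability forces $d$ to be even; combined with $b_+(X)^{-\sigma} = 1$ and the fact that $\mathbf{SW}_{R,\mathbb{Z}}$ is concentrated in degree $b_+(X)^{-\sigma} - d$, this leaves only $d = 0$ as a potentially nontrivial case, where the reducible link $\mathbb{RP}^{d-1}$ is empty, so the integer wall-crossing vanishes identically and the integer invariants are chamber-independent.

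The main technical obstacle will be setting up the Kuranishi model carefully: one has to verify that the local structure on the Real moduli space genuinely produces the real projective bundle $\mathbb{RP}(D_R)$, handling the virtual nature of $D_R$ in a systematic way, and that the universal line bundle $L$ restricts to $\mathcal{O}(1)$ on this link with the correct sign convention. Once these identifications are in hand, the wall-crossing formula follows from a routine application of the mod $2$ projective bundle pushforward.
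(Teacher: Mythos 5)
Your proof follows a genuinely different route from the paper. The paper works entirely within the Real Bauer--Furuta framework of \textsection \ref{sec:realbf}: it identifies the difference of lifted Thom classes $\tau^+_{V',U'}-\tau^-_{V',U'}$ as $e(V')\,\delta\tau_U$ (following the families argument of \cite{bk}), which gives $\eta^+-\eta^-=e(V')$; after stabilising so that $V'$ is trivial, $e(V')=v^{a'}$ and the pushforward formula $\pi_*(v^j)=w_{j-(a-1)}(-V)$ produces the wall--crossing term $w_{m-(d-1)}(-D_R)$ directly. You instead propose the classical parameterised--moduli--space cobordism with a Kuranishi analysis of the link of the reducible locus. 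Both approaches should converge to the same formula, but the Bauer--Furuta route sidesteps the issue you correctly flag as the main obstacle: over $Jac_R(X)$ the kernel and cokernel of the Dirac family need not be vector bundles, so the ``link is $\mathbb{RP}(D_R)$'' statement has to be made sense of virtually. In the Bauer--Furuta picture this is already done for you because the finite--dimensional approximation produces genuine bundles $V,V'$ with $D_R=V-V'$, and stabilisation of $f$ is a routine suspension. In the moduli--space picture you would need to kill the cokernel globally over $Jac_R(X)$ by a perturbation of the Dirac operator, which introduces auxiliary solutions that must be tracked; this is doable but is exactly the bookkeeping the paper avoids.

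On the integer part, your reasoning contains a genuine slip. You assert that $b_+(X)^{-\sigma}=1$ and the degree constraint ``leaves only $d=0$ as a potentially nontrivial case.'' Since $\mathbf{SW}_{R,\mathbb{Z}}(X,\mathfrak{s})$ lives in $H^{1-d}(Jac_R(X);\mathbb{Z})$, this rules out $d>0$ but not the negative even values $d=-2,-4,\ldots$, for which the target group can be nonzero when $b_1(X)^{-\sigma}$ is large enough. The conclusion (chamber independence) is still correct for those $d$ --- because $D_R$ then has negative virtual rank, so a generic perturbation gives no Dirac kernel along the wall and hence no reducible link at all --- but the argument as written misses that case. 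It is worth noting that the paper obtains this part with no dimension counting at all: it invokes Proposition \ref{prop:deginteger}\,(2), which says $deg_R(X,\mathfrak{s})=2\,\mathbf{SW}^\phi_{R,\mathbb{Z}}(X,\mathfrak{s})$ for either chamber; since $deg_R$ is defined without any choice of chamber, chamber independence of $\mathbf{SW}_{R,\mathbb{Z}}$ follows immediately. That hidden identity (an artefact of the factor of $2$ coming from $S(V)\to\mathbb{RP}(V)$) is cleaner than re-running a cobordism argument and covers every case uniformly.
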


\noindent{\bf 3. A mod 2 identity.}

\begin{theorem}
The Real mod $2$ Seiberg--Witten invariants satisfy the following identities
\[
\sum_{l=0}^j \binom{ d-1 - m + j}{l} w_{j-l}(-D_R) SW^\phi_{R,m+l}(X,\mathfrak{s}) = 0
\]
for all $m \ge 0$, $j > 0$.
\end{theorem}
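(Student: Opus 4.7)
The plan is to interpret the identity as a pushforward relation on a Kuranishi-type model of the Real Seiberg--Witten moduli space, and deduce it from a Leray--Hirsch-type cohomological identity on an ambient real projective bundle.

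I would first observe that $\sum_{l=0}^j \binom{d-1-m+j}{l} w_{j-l}(-D_R) u^l$ is precisely the degree-$j$ component of the total Stiefel--Whitney class $w(-D_R)(1+u)^{d-1-m+j}$ of the virtual bundle $-D_R + (d-1-m+j) L_R$, where $u = w_1(L_R)$ is the class of the basepoint real line bundle on the parametrised moduli space $\mathcal{M}$. Since the mod $2$ invariants are obtained as pushforwards $SW^\phi_{R,m}(X,\mathfrak{s}) = \pi_*(u^m)$ along the projection $\pi : \mathcal{M} \to Jac_R(X)$, the projection formula reduces the theorem to the single cohomological vanishing
\[
\pi_*\!\Bigl( u^m \cdot \bigl[\, w(-D_R)(1+u)^{d-1-m+j} \,\bigr]_j \Bigr) = 0, \qquad j > 0,\; m \ge 0.
\]

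Next I would realise $\mathcal{M}$ inside an ambient real projective bundle $\tau : \mathbb{RP}(\mathcal{V}) \to Jac_R(X)$, where $\mathcal{V}$ is an honest real vector bundle stably equivalent to the virtual Dirac index $D_R$, the tautological line bundle $\mathcal{O}(1)$ restricts to $L_R$, and $\mathcal{M}$ appears as the zero locus of a section of a rank-$b_+(X)^{-\sigma}$ real bundle $\mathcal{F}$ coming from the curvature equation $F^+_A = \sigma(\phi,\phi)$. The vanishing then follows from the standard fibre-integration formula $\tau_*(u^k) = w_{k-\mathrm{rk}(\mathcal{V})+1}(-D_R)$ on $\mathbb{RP}(\mathcal{V})$, combined with $\pi_*(\alpha) = \tau_*(\alpha \cdot e(\mathcal{F}))$ and the multiplicative identity $w(-D_R)w(D_R) = 1$; an algebraic bookkeeping of degrees, binomial expansions and the tautological Leray--Hirsch relation on $\mathbb{RP}(\mathcal{V})$ produces exactly the stated identity for every $j > 0$.

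The hard part will be pinning down the obstruction bundle $\mathcal{F}$ and its Euler class precisely, since its interaction with the ambient tautological class $u$ is what generates the specific binomial coefficients $\binom{d-1-m+j}{l}$ and, in particular, the shift by $d-1$. This requires carefully tracking how the quadratic map $\sigma(\phi,\phi)$ projectivises: the scaling $\phi \mapsto \lambda \phi$ introduces a factor $L_R^{\otimes 2}$ which, although trivial as a real line bundle, still contributes in mod $2$ Stiefel--Whitney computations via twists of $\mathcal{F}$ that must be read off from the Kuranishi reduction. Chamber dependence when $b_+(X)^{-\sigma} = 1$ should not cause serious difficulty, as the Kuranishi model can be constructed within each chamber and the identity is local on $Jac_R(X)$.
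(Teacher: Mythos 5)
Your reduction is correct but circular: expanding your target identity against the pushforward formula returns the statement you set out to prove, with no independent source of vanishing. Concretely, if one stabilises so that $V$ is trivial of rank $a$, the class $\eta^\phi$ attached to the Bauer--Furuta map (the fibrewise Poincar\'e dual of $\mathcal{M}$ in $\mathbb{RP}(V)$, which is what plays the role of your $e(\mathcal{F})$) expands as $\eta^\phi = \sum_l SW^\phi_{R,l}(X,\mathfrak{s})\, v^{a-1-l}$, and $\tau_*(v^k) = 0$ unless $k = a-1$. Substituting into
\[
\pi_*\bigl( u^m \cdot \bigl[\,w(-D_R)(1+u)^{d-1-m+j}\,\bigr]_j \bigr) = \tau_*\bigl( u^m \cdot \bigl[\,w(-D_R)(1+u)^{d-1-m+j}\,\bigr]_j \cdot \eta^\phi \bigr)
\]
and picking off the $u^{a-1}$-coefficient gives \emph{exactly} $\sum_{l=0}^{j} \binom{d-1-m+j}{l} w_{j-l}(-D_R)\, SW^\phi_{R,m+l}(X,\mathfrak{s})$. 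So the fibre-integration formula, the projection formula, and $w(D_R)w(-D_R)=1$ together only translate the theorem into itself; they do not show it is zero. There is no combinatorial collapse hidden in the bookkeeping.

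The ingredient you are missing is that the class $\eta^\phi$ is not an arbitrary element of $H^*(\mathbb{RP}(V);\mathbb{Z}_2)$: it is the pullback under the monopole map $f$ of a relative Thom class, and this pullback structure forces $Sq^j(\eta^\phi) = w_{\mathbb{Z}_2,j}(V' \oplus U')\,\eta^\phi$. The paper computes $Sq^j(\eta^\phi)$ a second way directly from the expansion $\eta^\phi = \sum_l \omega_l v^{a-1-l}$ using the Cartan formula and the fact that $Sq^j(\omega_l) = 0$ for $j>0$ because $\omega_l = SW^\phi_{R,l}(X,\mathfrak{s})$ lives on the torus $Jac_R(X)$, whose mod $2$ cohomology is exterior and hence has trivial positive Steenrod operations. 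Equating the two expressions and choosing the stabilisation parameter $a = m + 2^N$ with $2^N > j$ (so that the stray binomial coefficient $\binom{a-1-m-j}{j}$ vanishes mod $2$) produces the identity. Your Kuranishi framing has no analogue of ``Steenrod squares vanish on the torus,'' which is precisely what generates the relation. A secondary issue is that $\eta^\phi$ is chamber-dependent while the Euler class of any obstruction bundle $\mathcal{F}$ pulled back together with a twist of $\mathcal{O}(1)$ would not be, so the identification $\eta^\phi = e(\mathcal{F})$ can hold only after accounting for the non-properness of the curvature direction; but even granting that identification, the argument would remain circular.
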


\noindent{\bf 4. Real spin structures.} By a {\em Real spin structure} we mean a spin structure $\mathfrak{s}$ such that $\sigma$ admits a lift to the corresponding principal $Spin(4)$-bundle which squares to $-1$. Any Real spin structure induces a Real spin$^c$-structure (Section \ref{sec:spin}).

\begin{theorem}
Let $\mathfrak{s}$ be a Real spin structure. If $b_+(X)^{-\sigma} > 2$, then $SW_{R,m}(X,\mathfrak{s}) = 0$ for all even $m$. If $b_+(X)^{-\sigma} = 2$, then $SW_{R,m}(X,\mathfrak{s}) = 0$ for all even positive $m$ and $SW_{R,0}(X,\mathfrak{s}) = w_{2 + \sigma(X)/8}(-D_R)$.
\end{theorem}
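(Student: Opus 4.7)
The plan is to split the argument in two: vanishing of $SW_{R,m}$ for even $m\ge 2$ follows from the mod $2$ identity together with a structural property of $D_R$, while the case $m=0$ requires a moduli-theoretic argument that separates the two ranges of $b_+(X)^{-\sigma}$.

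First, I would verify that when $\mathfrak{s}$ is a Real spin structure, $D_R\to Jac_R(X)$ admits a complex structure. A spin structure makes the spinor bundle quaternionic via an antilinear $j$ with $j^2=-1$ commuting with any spin Dirac operator, and the Real spin condition that the lift of $\sigma$ to $Spin(4)$ squares to $-1$ is exactly what forces $j$ to commute with the antilinear Real lift $\tilde\sigma$. Since both are antilinear and commute, $j$ preserves the $\tilde\sigma$-fixed subspace, on which $j^2=-1$ defines a complex structure; fibrewise this endows $-D_R$ with the structure of the realification of a complex virtual bundle, so $w_{2k+1}(-D_R)=0$. Substituting this into the mod $2$ identity (Theorem 3 of the excerpt) at $j=1$ yields $(d-m)\,SW_{R,m+1}\equiv 0\pmod 2$; since $d=-\sigma(X)/8$ is even for spin and $d-m$ is odd whenever $m$ is odd, this forces $SW_{R,n}=0$ for every even $n\ge 2$, handling part of both cases.

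For the remaining case $n=0$, I would exploit the involution $J\colon(A,\Phi)\mapsto(-A,j\Phi)$ on the Real Seiberg--Witten configuration space. The sign on $A$ is forced by antilinearity: $j$ intertwines the $U(1)$-action of the gauge group with its conjugate, so a well-defined descent to the moduli space requires also flipping $A$. One checks that $J$ preserves the Real SW equations, commutes appropriately with Real gauge transformations, is free on the irreducible stratum (because $j\Phi = g\Phi$ together with $j^2=-1$ forces $\Phi=0$), and covers the involution $A\mapsto -A$ on $Jac_R(X)$. For $b_+^{-\sigma}\ge 2$, a generic Real perturbation removes all reducibles, so the Real moduli space is a closed manifold with free $J$-action fibered over $Jac_R(X)$. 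For $b_+(X)^{-\sigma}>2$, combining this equivariant structure with an iterated use of the mod $2$ identity at larger $j$ (together with the vanishing of odd $w_k(-D_R)$ and of $SW_{R,n}$ for even $n\ge 2$) forces $SW_{R,0}=0$. In the borderline case $b_+(X)^{-\sigma}=2$, the same framework leaves a residue localised at the $J$-fixed points on $Jac_R(X)$, and a families Euler-class computation leveraging the complex structure on $D_R$ identifies this residue as $w_{2-d}(-D_R)=w_{2+\sigma(X)/8}(-D_R)$. The main obstacle I foresee is precisely this borderline computation: because $J$ covers a nontrivial involution of $Jac_R(X)$ rather than the identity, a naive double-cover vanishing argument does not apply, and carefully extracting the Stiefel--Whitney residue from the interaction of the free action upstairs with the fixed-point geometry downstairs is where I expect the technical core of the proof to lie.
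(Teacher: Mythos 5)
Your treatment of the even $m\ge 2$ vanishing is essentially the paper's: use the $j=1$ instance of the mod~$2$ identity together with $w_1(-D_R)=0$ and $d$ even to kill $SW_{R,m+1}$ for odd~$m$. That part is correct. The other two ingredients have genuine gaps.

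First, the claim that $j$ endows $D_R$ with a complex structure is too strong when $b_1(X)^{-\sigma}>0$. While $j$ commutes with $\widetilde\sigma$ and preserves the Real spinor bundle with $j^2=-1$, it does \emph{not} commute with the Real Dirac operators parametrised by $Jac_R(X)$: one has $jD_a j^{-1}=D_{-a}$, so $j$ is an isomorphism $D_R\cong (\mathrm{inv})^*D_R$ covering inversion on the torus rather than a complex structure on $D_R$. The conclusion $w_{2k+1}(-D_R)=0$ is still correct, but establishing it requires the $\mathbb{Z}_4$-equivariant Euler class computation the paper carries out (exploiting that inversion acts trivially on $H^*(Jac_R(X);\mathbb{Z}_2)$ and that $v\mapsto 0$, $w\mapsto v^2$ under $H^*_{\mathbb{Z}_4}\to H^*_{\mathbb{Z}_2}$). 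Only when $b_1(X)^{-\sigma}=0$, so $Jac_R(X)$ is a point, does your fibrewise argument literally apply.

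Second and more seriously, the $m=0$ argument is blocked before it starts. The symmetry $J\colon(A,\Phi)\mapsto(-A,j\Phi)$ acts on the perturbation space $i\Omega^+(X)^{-\sigma}$ as multiplication by $-1$, so there is no nonzero $J$-invariant perturbation and hence no $J$-invariant chamber. You cannot choose ``a generic Real perturbation'' that both removes reducibles and preserves the $J$-symmetry: for $\eta=0$ the reducible always persists (since $c(\mathfrak{s})=0$ for a spin structure), and for $\eta\neq0$ you lose the symmetry. This is the same obstruction that arises in $Pin(2)$-equivariant Seiberg--Witten theory, and the paper overcomes it by passing to the sphere $\widehat B=S(H^+(X)^{-\sigma})\times Jac_R(X)$ with the tautological $\mathbb{Z}_4$-equivariant chamber, then exploiting that $v^2=0$ in $H^*_{\mathbb{Z}_4}(pt;\mathbb{Z}_2)$ and that $H^*_{\mathbb{Z}_2}(\widehat B;\mathbb{Z}_2)$ is truncated at $v^{b_+(X)^{-\sigma}}$. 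Your sketch doesn't engage with this obstruction. Moreover, the fallback of ``iterating the mod~$2$ identity'' for $b_+(X)^{-\sigma}>2$ doesn't close: with $w_{\mathrm{odd}}(-D_R)=0$ and $SW_{R,n}=0$ for even $n\ge 2$, the $m=0$, even-$j$ identities collapse to $w_j(-D_R)\,SW_{R,0}=0$, which carries no information (e.g.\ when $b_1(X)^{-\sigma}=0$ the left side is already zero). So the proposal, as written, does not establish either branch of the $m=0$ statement — you correctly anticipated that the borderline $b_+(X)^{-\sigma}=2$ case is delicate, but the $b_+(X)^{-\sigma}>2$ vanishing is equally unproven by these means.
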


\noindent{\bf 5. Relation to the ordinary Seiberg--Witten invariant.} First we state the result in the case $b_1(X) = 0$.

\begin{theorem}
Let $X$ be a compact, oriented, smooth $4$-manifold with $b_1(X) = 0$. Let $\sigma$ be a Real structure on $X$ and $\mathfrak{s}$ a Real spin$^c$-structure. Assume that $b_+(X)^{-\sigma}>0$ and let $\phi$ be a chamber. If $2d - b_+(X) - 1 \ge 0$, then
\[
SW^\phi(X,\mathfrak{s}) = \binom{ 2b_+(X)^{\sigma} }{ b_+(X)-1 } SW^\phi_R(X,\mathfrak{s}) \; ({\rm mod} \; 2).
\]
\end{theorem}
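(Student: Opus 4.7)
The plan is to derive the identity via $\mathbb{Z}/2$-equivariant mod-$2$ localisation (Atiyah--Bott--Berline--Vergne) applied to the involution $\sigma$ acting on the ordinary Seiberg--Witten moduli space $\mathcal{M}_{SW}^\phi$, whose fixed locus contains $\mathcal{M}_R^\phi$. The hypothesis $b_1(X)=0$ trivialises the ambient Jacobian, so the family Dirac index $D_R$ is stably trivial over $\mathcal{M}_R^\phi$ and the equivariant computation simplifies considerably.

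First I choose a $\sigma$-equivariant generic perturbation making $\mathcal{M}_{SW}^\phi$ into a smooth $\mathbb{Z}/2$-manifold of dimension $2d - b_+(X) - 1$ containing $\mathcal{M}_R^\phi$ as the $(d - b_+(X)^{-\sigma})$-dimensional fixed submanifold; the normal bundle $N$ has rank $d - b_+(X)^\sigma - 1$ with $\sigma$ acting by $-1$ on fibres. An analysis of the anti-invariant part of the deformation complex identifies $N$ virtually as $iD_R \ominus (H^+(X)^\sigma \otimes i\mathbb{R}) \ominus \underline{i\mathbb{R}}$, with all three summands anti-invariant; under the Borel construction this becomes $D_R \otimes \gamma \ominus (b_+(X)^\sigma + 1)\gamma$, where $\gamma$ is the tautological real line on $B\mathbb{Z}/2$. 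The natural complex line bundle $\mathcal{L}$ carrying the Real structure splits $\mathbb{Z}/2$-equivariantly over $\mathcal{M}_R^\phi$ as $\mathcal{L}_R \oplus (\mathcal{L}_R \otimes \varepsilon)$, where $\varepsilon$ is the sign representation, yielding
\[
\mu^{\mathbb{Z}/2}\bigl|_{\mathcal{M}_R^\phi} \;=\; \mu_R(\mu_R + h),
\]
with $\mu_R = w_1(\mathcal{L}_R)$ and $h \in H^1(B\mathbb{Z}/2;\mathbb{Z}_2)$.

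Applying localisation, summed over components of the $\sigma$-fixed locus (which may include Real moduli spaces corresponding to both lifts of $\sigma$ to the spinor bundle), gives
\[
SW^\phi(X,\mathfrak{s}) \;\equiv\; \biggl[\int_{\mathcal{M}_R^\phi} \frac{(\mu_R(\mu_R+h))^k}{e^{\mathbb{Z}/2}(N)}\biggr]_{h^0} \pmod{2},
\]
with $k = (2d - b_+(X) - 1)/2$. Expanding the numerator by the binomial theorem and the inverse equivariant Euler class as a formal Laurent series in $h$, the pairing against $[\mathcal{M}_R^\phi]$ in top degree $d - b_+(X)^{-\sigma}$ isolates $SW_R^\phi(X,\mathfrak{s})$ with a universal combinatorial coefficient. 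Applying the Frobenius identity $(x + y)^2 \equiv x^2 + y^2 \pmod 2$ and Lucas' congruence for mod-$2$ binomial coefficients, this coefficient collapses to $\binom{2b_+(X)^\sigma}{b_+(X) - 1}$.

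The main obstacle is this final combinatorial collapse. Several delicate points must be handled: (a) correctly summing over components of the fixed locus arising from the two possible lifts of $\sigma$ to the spinor bundle; (b) dealing with the potential failure of the anti-Real linearisation to be surjective under a $\sigma$-equivariant perturbation, which may force a Kuranishi model in which the obstruction bundle $H^+(X)^\sigma$ carries the $\sigma$-action by $-1$ and contributes the equivariant Euler class $h^{b_+(X)^\sigma}$; and (c) the Laurent-expansion bookkeeping needed to recover the precise binomial $\binom{2b_+(X)^\sigma}{b_+(X)-1}$, in particular tracing the $2b_+(X)^\sigma$ in the numerator back to the equivariant geometry of the self-dual harmonic forms in the deformation complex.
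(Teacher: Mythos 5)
Your high-level strategy --- $\mathbb{Z}_2$-equivariant mod-$2$ localisation relating the ordinary invariant to the invariant localised at the $\sigma$-fixed locus --- is the same as the paper's. But you run the localisation on the moduli space, and the two obstacles you flag at the end are not housekeeping: item (b) is fatal to the proposal as written. Restricting to $\sigma$-invariant perturbations (as you must to retain the $\mathbb{Z}_2$-action on $\mathcal{M}_{SW}^\phi$), you can achieve transversality for the Real equations, because the relevant obstruction space is $H^+(X)^{-\sigma}$; but at a $\sigma$-fixed solution the $\sigma$-anti-invariant part of the cokernel of the full linearisation is not controlled by such perturbations, so $H^+(X)^\sigma$ contributes a persistent obstruction and $\mathcal{M}_{SW}^\phi$ is generically \emph{not} smooth along $\mathcal{M}_R^\phi$. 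You acknowledge this by gesturing at ``a Kuranishi model'', but that requires a virtual-localisation argument and a comparison of virtual fundamental classes that you do not supply. The paper avoids the issue entirely by localising inside the $O(2)$-equivariant Bauer--Furuta map restricted to $\mathbb{Z}_2 \times \mathbb{Z}_2 = \langle \tau, \sigma \rangle$: the finite-dimensional approximation $f : S^{V,U} \to S^{V',U'}$ is a smooth map of sphere bundles with no transversality requirement, tom Dieck's localisation (Lemma \ref{lem:loc} / Corollary \ref{cor:loc}) applies without qualification, and $H^+(X)^\sigma$ appears simply as a summand of $U' \ominus U$ rather than as an obstruction bundle, yielding the factor $u^{b_+(X)^\sigma}$ in Lemma \ref{lem:restr}.

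The final combinatorial step is also not carried out. You assert that ``Frobenius plus Lucas'' will collapse a Laurent coefficient to $\binom{2b_+(X)^\sigma}{b_+(X)-1}$, but this is the substance of the remaining work and it is far from automatic from a localised integral of $\mu_R(\mu_R+h)^k / e^{\mathbb{Z}_2}(N)$. The paper first produces the explicit intermediate identity involving the sum $\binom{m_1-d}{\delta-m_0} + \binom{m_0-d}{\delta-m_1}$ over two fixed components (the $\sigma$- and $\tau\sigma$-fixed loci in $\mathbb{RP}(V)$), and only the \emph{pair} structure of these two terms makes Pascal applicable after the choice $m_0 = m$, $m_1 = m+1$ and the standard identities $\binom{a}{b} = (-1)^b\binom{b-a-1}{b}$ and $\binom{a}{b} \equiv \binom{2a}{2b} \pmod 2$. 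Your sketch does not produce that two-term structure (the two lifts of $\sigma$ that you mention are the right idea, but the contributions must be seen to coincide, which is the content of Lemma \ref{lem:restr}), so it is not clear the collapse would land on the stated binomial. So there are genuine gaps: the approach is a plausible program but not a proof.
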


We also give a more general result which holds for $b_1(X)^{-\sigma} = 0$. Let $\mathfrak{s}$ be a spin$^c$-structure such that $\sigma^*(\mathfrak{s}) = -\mathfrak{s}$. Assume that $\sigma$ does not act freely. Then it can be shown that Real structures on $\mathfrak{s}$ correspond to points of order $2$ in $Jac(X)$ (Proposition \ref{prop:jacfix}). To state the theorem first note that if $\mathfrak{s}'$ is a point of order $2$, then the inclusion $\iota_{\mathfrak{s}'} : \{ \mathfrak{s}' \} \to Jac(X)$ is $\mathbb{Z}_2$-equivariant, where $\mathbb{Z}_2$ acts on $Jac(X)$ by inversion. This induces push-forward maps $(\iota_{\mathfrak{s}'})_* : H^*_{\mathbb{Z}_2}(pt ; \mathbb{Z}_2) \to H^*_{\mathbb{Z}_2}(Jac(X) ; \mathbb{Z}_2) \cong H^*(Jac(X) ; \mathbb{Z}_2)[u]$, where $deg(u) = 1$. Let $\Delta = 2d - b_+(X) +b_1(X) - 1$ and $\delta = d - b_+(X)^{-\sigma}$ denote the dimensions of the ordinary and Real Seiberg--Witten moduli spaces respectively. Then we have:

\begin{theorem}
Let $X$ be a compact, oriented, smooth $4$-manifold. Let $\sigma$ be a Real structure on $X$ with $b_1(X)^{-\sigma} = 0$ and $b_+(X)^{-\sigma} > 0$. Let $\mathfrak{s}$ be a spin$^c$-structure which admits a Real structure and let $\phi$ be a chamber. Then for each $m \ge 0$, the expression
\[
u^{2m-\Delta} \binom{m-d}{\delta-m} \sum_{\mathfrak{s}'} (\iota_{\mathfrak{s}'})_*(1) SW^\phi_R(X, \mathfrak{s}') \in H^*( Jac^{\mathfrak{s}}(X) ; \mathbb{Z}_2)[u,u^{-1}]
\]
contains no negative powers of $u$ and the $u^0$-term equals $SW^\phi_m(X,\mathfrak{s}) \; ({\rm mod} \; 2)$. The sum $\Sigma_{\mathfrak{s}'}$ is over all Real structures on $\mathfrak{s}$.
\end{theorem}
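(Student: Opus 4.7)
The strategy is a mod-$2$ $\mathbb{Z}_2$-equivariant Borel localization argument. Combining $\sigma$ with complex conjugation on spinors (using $\sigma^\ast \mathfrak{s} = -\mathfrak{s}$) defines a $\mathbb{Z}_2$-action on the parametrised Seiberg--Witten moduli space $\pi : \mathcal{M}(X,\mathfrak{s}) \to Jac^{\mathfrak{s}}(X)$, covering the involution $\mathfrak{s}' \mapsto -\sigma^\ast(\mathfrak{s}')$ on the base. Under the assumption $b_1(X)^{-\sigma} = 0$, the map $\sigma^\ast$ acts trivially on $Jac(X)$, so this involution is ordinary inversion on $Jac^{\mathfrak{s}}(X)$. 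By Proposition \ref{prop:jacfix}, its fixed points are in bijection with the Real spin$^c$-structures $\mathfrak{s}'$ extending $\mathfrak{s}$, and above each such $\mathfrak{s}'$ the $\mathbb{Z}_2$-fixed locus of $\mathcal{M}$ is precisely the Real moduli space $\mathcal{M}_R(X,\mathfrak{s}')$.

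Write $SW^\phi_m(X,\mathfrak{s}) = \pi_\ast(U^m) \pmod 2$, where $U = c_1(\mathcal{L})$ for the basepoint complex line bundle $\mathcal{L} \to \mathcal{M}$, and lift this to $\mathbb{Z}_2$-equivariant mod-$2$ cohomology. Mod-$2$ Borel localization then yields, in the localized ring $H^\ast_{\mathbb{Z}_2}(Jac^{\mathfrak{s}}(X);\mathbb{Z}_2)[u^{-1}]$,
\[
\pi^{\mathbb{Z}_2}_\ast((U^{\mathbb{Z}_2})^m) = \sum_{\mathfrak{s}'} (\iota_{\mathfrak{s}'})_\ast \, \pi^R_\ast\!\left( \frac{(U^{\mathbb{Z}_2}|_{\mathcal{M}_R(X,\mathfrak{s}')})^m}{e^{\mathbb{Z}_2}(N_{\mathfrak{s}'})} \right).
\]
Restricted to the Real locus, $\mathcal{L}$ is complex-conjugation-equivariant and splits as $\mathcal{L}_R \oplus i\mathcal{L}_R$ with trivial and $-1$ $\mathbb{Z}_2$-weights, so $U^{\mathbb{Z}_2}|_{\mathcal{M}_R} = U_R(U_R + u)$ with $U_R := w_1(\mathcal{L}_R)$. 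A short equivariant index computation identifies the normal bundle $N_{\mathfrak{s}'}$ (assembled from imaginary Dirac, imaginary $H^1$, imaginary self-dual obstruction, and the reducibility direction) with $D_R$ up to trivial real summands, and hence yields
\[
e^{\mathbb{Z}_2}(N_{\mathfrak{s}'})^{-1} = u^{\delta - \Delta} \sum_{j \ge 0} u^{-j} w_j(-D_R).
\]

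Substituting and expanding $(U_R(U_R + u))^m = \sum_k \binom{m}{k} U_R^{2m-k} u^k$, each fixed-point contribution becomes $u^{2m-\Delta}$ times a $\mathbb{Z}_2$-linear combination of pairings $\langle U_R^{2m-k} w_{\delta - 2m + k}(-D_R), [\mathcal{M}_R]\rangle$. Writing $SW^\phi_{R,l} := \pi^R_\ast(U_R^l)$ (so that $SW^\phi_{R,\delta} = SW^\phi_R$), iterated application of the mod-$2$ identity proven earlier collapses this combination to the single term $\binom{m - d}{\delta - m} \, SW^\phi_R(X,\mathfrak{s}')$, with the binomial coefficient arising via the mod-$2$ equality $\binom{m-d}{\delta-m} \equiv \binom{d+\delta-2m-1}{\delta-m}$ produced by the recursion. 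Non-negativity of $u$-exponents in the final expression is automatic because the unlocalized equivariant class lies in $H^\ast_{\mathbb{Z}_2}(Jac^{\mathfrak{s}}(X) ; \mathbb{Z}_2) = H^\ast(Jac^{\mathfrak{s}}(X) ; \mathbb{Z}_2)[u]$, and the $u^0$ coefficient recovers $SW^\phi_m(X,\mathfrak{s})$ under the specialization $u = 0$. The main obstacle is the precise equivariant Euler class computation for $N_{\mathfrak{s}'}$ together with the combinatorial rearrangement using the mod-$2$ identity; secondary care is needed to handle the chamber $\phi$ and the reducible locus consistently within the equivariant setup.
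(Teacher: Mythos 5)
Your proposal is in the right spirit — a mod-$2$ $\mathbb{Z}_2$-Borel localisation with fixed locus identified via Proposition \ref{prop:jacfix} — but it diverges from the paper in two essential places, and the second of these creates a genuine gap in the final combinatorics.

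First, you localise directly on the parametrised moduli space $\pi : \mathcal{M}(X,\mathfrak{s}) \to Jac^{\mathfrak{s}}(X)$, whereas the paper works at the level of the finite-dimensional approximation (the $O(2)$-equivariant Bauer--Furuta map, restricted to $\mathbb{Z}_2 \times \mathbb{Z}_2 = \langle \tau,\sigma\rangle$), on $\mathbb{RP}(V)$. There $\sigma$ has \emph{two} components of fixed locus, $\mathbb{RP}(V^\sigma)$ and $\mathbb{RP}(V^{\tau\sigma})$, because $\{\pm 1\}\subset S^1$ has not yet been divided out. After passing to $\mathcal{M}$, both project onto the single set $\mathcal{M}_R$, but each carries a different twist of the normal data, and it is precisely the sum of the two contributions $\binom{m_1-d}{\delta-m_0}$ and $\binom{m_0-d}{\delta-m_1}$ (with $m_0+m_1=2m+1$) that collapses via Pascal's rule to $\binom{m-d}{\delta-m}$. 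By localising one step further down, on $\mathcal{M}$ itself, you see only one fixed component and lose this structure.

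Second, and concretely: taking your claimed formula $e^{\mathbb{Z}_2}(N_{\mathfrak{s}'})^{-1} = u^{\delta-\Delta}\sum_j u^{-j}w_j(-D_R)$ at face value in the case $b_1(X)^{-\sigma}=0$ (so $Jac_R(X)$ is a point and $w_j(-D_R)=0$ for $j>0$), the expansion
\[
(U_R(U_R+u))^m \cdot u^{\delta-\Delta} = u^{\delta-\Delta}\sum_k \binom{m}{k}U_R^{2m-k}u^k
\]
pushes forward over $\mathcal{M}_R$ to pick out only $k=2m-\delta$, yielding $u^{2m-\Delta}\binom{m}{\delta-m}SW^\phi_R$. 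This is $\binom{m}{\delta-m}$, not $\binom{m-d}{\delta-m}$; the two are not congruent mod $2$ in general (e.g.\ $d=6$, $\delta=2$, $m=0$: $\binom{0}{2}=0$ but $\binom{-6}{2}\equiv 1$). The missing shift by $d$ is exactly the factor $(u+v)^{a'}$ appearing in the paper's Lemma \ref{lem:restr}, which records the $\tau\sigma$-part of the obstruction bundle $V'$ and does not arise from the normal bundle of $\mathcal{M}_R$ in $\mathcal{M}$ as you have constituted it. Invoking the Steenrod-square identity of Theorem \ref{thm:rswid} does not repair this: with $w_j(-D_R)=0$ for $j>0$ that identity is vacuous, and in any case the target binomial does not emerge from it. Finally, your normal-bundle Euler-class formula is independently suspect, since $N_{\mathfrak{s}'}$ is a bundle over $\mathcal{M}_R$, not $Jac_R(X)$, and there is no reason for its Stiefel--Whitney classes to coincide with those of $D_R$; the paper sidesteps precisely this difficulty by localising before forming the moduli space.
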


For instance, if both moduli spaces are zero-dimensional, so $\Delta = \delta = 0$, this reduces to an equality
\[
SW^\phi(X , \mathfrak{s}) = \sum_{\mathfrak{s}'} SW^\phi_R(X , \mathfrak{s}') \; ({\rm mod} \; 2).
\]

\noindent{\bf 6. Properties of the integer invariants.}

\begin{proposition}
Let $X$ be a compact, oriented, smooth $4$-manifold, $\sigma$ a Real structure and $\mathfrak{s}$ a Real spin$^c$-structure. Suppose $d$ is even.
\begin{itemize}
\item[(1)]{If $b_+(X)^{-\sigma}>0$, then $deg_R(X,\mathfrak{s}) = 2 \, \mathbf{SW}^\phi_{R,\mathbb{Z}}(X , \mathfrak{s})$ for any chamber $\phi$.}
\item[(2)]{If $d$ is even and $b_+(X)^{-\sigma} = 0$, then we have $d \le 0$, $w_j(-D_R) = 0$ for $j > -d$ and $deg_R(X,\mathfrak{s}) = w_{-d}(-D_R) \; ({\rm mod} \; 2)$. In particular, if $d=0$, then $w_1(D_R) = 0$ and $deg_R(X , \mathfrak{s}) = 1 \; ({\rm mod} \; 2)$.}
\end{itemize}
\end{proposition}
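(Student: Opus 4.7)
The plan is to read both $deg_R(X,\mathfrak{s})$ and $\mathbf{SW}^\phi_{R,\mathbb{Z}}(X,\mathfrak{s})$ off the same finite-dimensional Bauer--Furuta reduction. After choosing a gauge slice along $Jac_R(X)$, the monopole map becomes an $O(1) = \{\pm 1\}$-equivariant family of maps $\mu : \mathcal{E}^+ \to \mathcal{E}^-$ of finite-rank real bundles over $Jac_R(X)$, where the $\{\pm 1\}$-nonfixed summand of $\mathcal{E}^\pm$ approximates the spinor spaces (with $D_R$ as virtual index) and the $\{\pm 1\}$-fixed summand of $\mathcal{E}^-$ contains an approximation of $H^+(X)^{-\sigma}$. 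The class $\mathbf{SW}^\phi_{R,\mathbb{Z}}(X,\mathfrak{s})$ is the push-forward to $Jac_R(X)$ of the fundamental class of the quotient zero set $\mu^{-1}(0)/\{\pm 1\}$, whereas $deg_R$ is the same push-forward performed \emph{before} passing to the $\{\pm 1\}$-quotient, combined with the Euler-class pairing against $-D_R$.

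For part (1), with $b_+^{-\sigma} > 0$ pick a chamber and a generic perturbation so that $\mu^{-1}(0)$ is cut out transversally as a smooth oriented submanifold on which $\{\pm 1\}$ acts freely via the unit spinor sphere factor. The transfer identity for this free finite cover gives
\[
[\mu^{-1}(0)] \;=\; 2\,[\mu^{-1}(0)/\{\pm 1\}]
\]
at the level of oriented pushforwards to $Jac_R(X)$. Unwinding the two definitions, this is precisely the asserted equality $deg_R = 2\,\mathbf{SW}^\phi_{R,\mathbb{Z}}$, valid in any chamber since $\mathbf{SW}^\phi_{R,\mathbb{Z}}$ is chamber-independent (as stated in the wall-crossing theorem above).

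For part (2), the hypothesis $b_+^{-\sigma} = 0$ erases the $H^+$ obstruction from $\mathcal{E}^-$, so the relevant finite-dimensional approximation of $\mu$ reduces, away from the origin and modulo lower-order terms, to the families Dirac operator $D_R$ over $Jac_R(X)$. For the degree to be defined we need $D_R$ to be injective on fibres (so the pointed map between sphere bundles is proper), and a generic metric realizes this only when the virtual rank $d$ is non-positive, forcing $d \le 0$. In that situation $-D_R$ is represented by the honest real cokernel bundle of rank $-d$, which immediately yields $w_j(-D_R) = 0$ for $j > -d$. Unwinding the definition of $deg_R$ as the mod-$2$ mapping degree of the associated fibrewise sphere map then identifies it with the top $\mathbb{Z}/2$-Euler class $w_{-d}(-D_R)$. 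The case $d = 0$ is immediate: the cokernel bundle has rank zero and is therefore trivial, so $w_1(D_R) = w_1(-D_R) = 0$, and the reduced Bauer--Furuta map is a fibrewise isomorphism of rank-$0$ sphere bundles of degree $\pm 1$, giving $deg_R \equiv 1 \pmod 2$.

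The main obstacle is the transfer step in part (1): one must verify that the $\{\pm 1\}$-action on the cut-out moduli is in fact free (no reducibles survive the transverse perturbation), that the orientations used to define $deg_R$ and $\mathbf{SW}^\phi_{R,\mathbb{Z}}$ match up to the unavoidable overall sign, and that the argument passes through the families Bauer--Furuta construction so the equality holds as cohomology classes on $Jac_R(X)$ rather than merely as integers pulled back from a point. Once this bookkeeping is in place, everything else follows from the standard push-forward calculus for finite covers and the identification of $-D_R$ with its genuine cokernel bundle.
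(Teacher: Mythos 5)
Your treatment of part (1) is essentially the paper's argument, repackaged geometrically. The paper works cohomologically with the Bauer--Furuta map $f : S^{V,U} \to S^{V',U'}$: it writes $f^*(\tau^\phi_{V',U'}) = \eta^\phi\,\delta\tau_U$ with $\eta^\phi \in H^{a-1}(\mathbb{RP}(V);\mathbb{Z})$, obtains $\mathbf{SW}^\phi_{R,\mathbb{Z}} = (\pi_{\mathbb{RP}(V)})_*(\eta^\phi)$, and then uses the double-cover transfer $2\,\mathbf{SW}^\phi_{R,\mathbb{Z}} = (\pi_{S(V)})_*(\rho^*\eta^\phi)$ for $\rho : S(V) \to \mathbb{RP}(V)$, identifying $\rho^*\eta^\phi$ with $deg_R \cdot \nu_{S(V)}$ by a suspension argument on the long exact sequence of the triple $(S^{V',U'}, S^U, B)$. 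Your ``free $\{\pm 1\}$-quotient plus transfer'' picture is the moduli-space shadow of exactly this calculation; the bookkeeping you flag (freeness, orientations, families version) is precisely what the cohomological formulation is built to handle cleanly. One minor logical slip: chamber-independence of $\mathbf{SW}^\phi_{R,\mathbb{Z}}$ is a \emph{consequence} of this equality (because $deg_R$ has no chamber and $H^*(Jac_R(X);\mathbb{Z})$ is torsion-free), not an input one may quote from the wall-crossing theorem --- in the paper the wall-crossing discussion for integer invariants explicitly defers to this proposition.

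Part (2) has a genuine gap. You argue that $d \le 0$ because ``for the degree to be defined we need $D_R$ to be injective on fibres,'' and that $-D_R$ is then ``represented by the honest real cokernel bundle of rank $-d$.'' Neither assertion holds. The degree $deg_R$ is defined whenever $w_1(D_R) = 0$, regardless of whether the family of Dirac operators can be made fibrewise injective; and in the families setting over a torus of dimension $b_1(X)^{-\sigma}$, one cannot in general perturb the metric to kill the kernel on every fibre simultaneously (that genericity fails once the base has positive dimension), so $-D_R$ need not be a genuine vector bundle. The paper avoids all of this with a purely cohomological argument: one computes the $\mathbb{Z}_2$-equivariant degree $\beta_2 \in H^*_{\mathbb{Z}_2}(B;\mathbb{Z}_2) \cong H^*(B;\mathbb{Z}_2)[v]$ as the equivariant Euler class $e_{\mathbb{Z}_2}(-D_R)$, whose formal expansion is
\[
\beta_2 = v^{-d} + v^{-d-1}w_1(-D_R) + v^{-d-2}w_2(-D_R) + \cdots,
\]
and then observes that $\beta_2$ must actually lie in the unlocalized polynomial ring $H^*(B;\mathbb{Z}_2)[v]$, not merely its localization at $v$. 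That single constraint simultaneously forces $d \le 0$ (else $v^{-d}$ has negative exponent) and $w_j(-D_R) = 0$ for $j > -d$. Setting $v = 0$ then gives $deg_R \equiv w_{-d}(-D_R) \pmod 2$, with the $d=0$ case being the constant term $w_0 = 1$. Your final conclusions agree with the paper, but they cannot be reached along the ``honest cokernel bundle'' route you sketch.
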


\noindent {\bf 7. Connected sums.} Given $4$-manifolds $X_1,X_2$ with Real structures $\sigma_1,\sigma_2$ with non-empty fixed point set, we can form an equivariant connected sum $(X_1 \# X_2 , \sigma_1 \# \sigma_2)$. Furthermore, if $\mathfrak{s}_1,\mathfrak{s}_2$ are Real spin$^c$-structures on $X_1,X_2$ then it is possible to form their connected sum $\mathfrak{s}_1 \# \mathfrak{s}_2$ as a Real spin$^c$-structure on $X_1 \# X_2$ (Section \ref{sec:csf}).

First we consider the behaviour of the mod $2$ invariants under connected sum.

\begin{theorem}
Assume that $b_+(X_1)^{-\sigma} > 0$ and let $\phi$ be a chamber for $(X_1 , \sigma_1)$. Then $\phi$ also defines a chamber for $(X_1 \# X_2 , \sigma )$.
\begin{itemize}
\item[(1)]{If $b_+(X_2)^{-\sigma} > 0$, then $SW^\phi_{R,m}(X_1 \# X_2 , \mathfrak{s}_1 \# \mathfrak{s}_2) = 0$ for all $m \ge 0$.}
\item[(2)]{If $b_+(X_2)^{-\sigma} = 0$, then
\[
SW^\phi_{R,m}(X_1 \# X_2 , \mathfrak{s}_1 \# \mathfrak{s}_2) = \sum_{k \ge 0} w_k( -D_R(X_2,\mathfrak{s}_2) ) SW^\phi_{R,m-d_2-k}(X_1 , \mathfrak{s}_1)
\]
for all $m \ge 0$. In particular, if $b_1(X_2)^{-\sigma} = 0$, then
\[
SW^\phi_{R,m}(X_1 \# X_2 , \mathfrak{s}_1 \# \mathfrak{s}_2) = SW^\phi_{R,m-d_2}(X_1 , \mathfrak{s}_1).
\]
}
\end{itemize}

\end{theorem}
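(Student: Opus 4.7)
The plan is a $\sigma$-equivariant neck-stretching argument, the Real analogue of the standard Seiberg--Witten connected sum proof. Since each $\sigma_i$ has a non-isolated fixed point, the equivariant connected sum may be formed at a pair of fixed points, and I would equip $X = X_1 \# X_2$ with a family of $\sigma$-invariant Riemannian metrics $g_T$ containing an isometric neck $S^3 \times [-T,T]$ on which $\sigma$ restricts to a standard Real involution. On $S^3$ the unique Real Seiberg--Witten solution (up to gauge) is the flat reducible, cut out transversally. As $T \to \infty$, finite-energy Real solutions on $(X, g_T)$ decouple into pairs of finite-energy Real solutions on $X_1, X_2$ with cylindrical ends, each asymptotic to this reducible, and $\sigma$-equivariant gluing identifies the Real SW moduli on $X$ for large $T$ with the product of the cylindrical-end moduli on the $X_i$, themselves identified with the closed Real SW moduli. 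The decomposition $Jac_R(X) = Jac_R(X_1) \times Jac_R(X_2)$, coming from additivity of $H^1(\,\cdot\,; i\mathbb{R})^{-\sigma}$ under connected sum, is compatible with this product structure.

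For part (2), where $b_+(X_2)^{-\sigma} = 0$, the Real SW equations on $X_2$ have no perturbation wall-crossing, and the moduli is cut out by a $\sigma$-equivariant section of a Real obstruction bundle modelling the virtual index $D_R(X_2, \mathfrak{s}_2)$. Its mod $2$ Poincar\'e dual in $H^*(Jac_R(X_2); \mathbb{Z}_2)$ is the total Stiefel--Whitney class $w(-D_R(X_2, \mathfrak{s}_2))$. External product with $SW^\phi_{R, *}(X_1, \mathfrak{s}_1)$ and selection of the cohomological degree $m - (d - b_+(X)^{-\sigma})$ component yields the stated convolution $\sum_k w_k(-D_R(X_2, \mathfrak{s}_2)) SW^\phi_{R, m-d_2-k}(X_1, \mathfrak{s}_1)$. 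The case $b_1(X_2)^{-\sigma} = 0$ reduces $Jac_R(X_2)$ to a point, leaving only $k = 0$ and giving the simplified form.

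For part (1), with $b_+(X_2)^{-\sigma} > 0$, the argument proceeds most naturally in a Real Bauer--Furuta formulation: the Real finite-dimensional approximation of the Seiberg--Witten map for $X = X_1 \# X_2$ decomposes, under the equivariant Pontrjagin--Thom construction, as the smash product of the approximations for $X_1$ and $X_2$. The positivity $b_+(X_2)^{-\sigma} > 0$ contributes an extra sphere suspension from the $X_2$ factor, and upon extracting the mod $2$ invariant $SW^\phi_{R, m}$ the contribution from the $X_2$ factor must land in a cohomology class on $S^{b_+(X_2)^{-\sigma}}$ of insufficient degree, and so vanishes. This forces $SW^\phi_{R, m}(X, \mathfrak{s}_1 \# \mathfrak{s}_2) = 0$ for all $m$. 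Chamber-independence on $X$, which holds since $b_+(X)^{-\sigma} \ge 2$, makes this vanishing independent of the chamber choice.

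The principal technical obstacle is the equivariant neck analysis itself: Real transversality for generic $\sigma$-invariant perturbations, the identification of the obstruction bundle on the $X_2$-side as the Real virtual bundle $D_R(X_2, \mathfrak{s}_2)$ (rather than its complexification), and careful handling of Real gauge monodromy along the cylindrical end. A unified approach via Real Bauer--Furuta theory would likely streamline both parts, reducing the formula to multiplicativity of the finite-dimensional approximation under equivariant connected sum together with the Thom isomorphism applied to $-D_R(X_2, \mathfrak{s}_2)$ mod $2$.
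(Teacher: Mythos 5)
Your high-level structural observation is correct and matches the paper: the Real Bauer--Furuta invariant of the equivariant connected sum is the external smash product of the factors, and the theorem is extracted by pure cohomological bookkeeping, as the paper does by adapting Bauer's gluing theorem and then invoking a computation parallel to \cite[\textsection 8]{bar2}. Where you diverge is in making a $\sigma$-equivariant neck-stretching argument on actual moduli spaces the backbone for part (2). That route is viable in principle, but it requires substantially more: equivariant transversality for neck metrics, a Real gluing theorem for finite-energy cylindrical-end solutions, careful control of Real gauge monodromy, and identification of the obstruction bundle on the $b_+^{-\sigma}=0$ side as $-D_R(X_2,\mathfrak{s}_2)$. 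None of this is needed in the paper's approach, which never touches moduli-space gluing. There is also a genuine imprecision in how you source the $w_k(-D_R(X_2,\mathfrak{s}_2))$ factors: the Poincar\'e dual of the zero set of a generic section of the obstruction bundle is its Euler class, not the total Stiefel--Whitney class of $-D_R$. What actually produces the full sequence $\sum_k w_k(-D_R(X_2,\mathfrak{s}_2))\, v^{\,\cdot}$ is the expansion of the $\mathbb{Z}_2$-equivariant mod $2$ Euler class of $V'_2$ in powers of $v$ when one pushes forward in $\mathbb{RP}(V)$, exactly as in the wall-crossing computation (Section \ref{sec:wcf}, and \cite[\textsection 8]{bar2}); your ``selection of cohomological degree component'' step is hiding this. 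You do flag at the end that the Bauer--Furuta route would streamline both parts, which is precisely the route the paper takes, and your part (1) sketch via the smash product is in the right spirit, though the reason the $X_2$ factor kills the invariant is that with no chamber chosen in $H^+(X_2)^{-\sigma}$ the lifted Thom class of the $X_2$ factor restricts trivially on $S^{U_2}$, not that a class on $S^{b_+(X_2)^{-\sigma}}$ has ``insufficient degree.''
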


Next we consider the behaviour of the integer invariants under connected sum. Assume that $w_1(D_R(X_i , \mathfrak{s}_i)) = 0$ for $i=1,2$.

\begin{theorem}
We have
\[
deg_R(X_1 \# X_2 , \mathfrak{s}_1 \# \mathfrak{s}_2) = deg_R(X_1 ,\mathfrak{s}_1) \smallsmile deg_R(X_2 , \mathfrak{s}_2)
\]
where the right hand side is understood as an external cup product
\[
H^i( Jac_R(X_1) ; \mathbb{Z}) \times H^j( Jac_R(X_2) ; \mathbb{Z}) \to H^{i+j}(Jac_R(X_1) \times Jac_R(X_2) ; \mathbb{Z} )
\]
followed by the isomorphism $Jac_R(X_1) \times Jac_R(X_2) \cong Jac_R(X_1 \# X_2)$.
\end{theorem}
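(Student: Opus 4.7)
The plan is to reduce the identity to two geometric facts: (a) the Real Jacobians decompose as $Jac_R(X_1 \# X_2) \cong Jac_R(X_1) \times Jac_R(X_2)$, and (b) under this identification, the families Real Dirac index bundle $D_R(X_1\#X_2, \mathfrak{s}_1\#\mathfrak{s}_2)$ is stably isomorphic to the external direct sum $\pi_1^* D_R(X_1,\mathfrak{s}_1) \oplus \pi_2^* D_R(X_2,\mathfrak{s}_2)$. Since $deg_R$ is, up to an overall sign, the Euler class of a virtual real bundle assembled from $-D_R$ together with a trivial $\underline{H^+(X)^{-\sigma}}$ factor (the rank $b_+(X)^{-\sigma} - d$ matches the cohomological degree), the cup product identity will then follow from the Whitney/external product formula for Euler classes.

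For (a), I would run an equivariant Mayer--Vietoris argument on $X_1 \# X_2 = (X_1 \setminus B^4) \cup_{S^3} (X_2 \setminus B^4)$, with the neck $S^3$ chosen $\sigma$-invariant (as in the equivariant connected sum construction of Section \ref{sec:csf}). Since $H^1(S^3; i\mathbb{R}) = 0$ the sequence yields a $\sigma$-equivariant splitting $H^1(X_1 \# X_2; i\mathbb{R}) \cong H^1(X_1; i\mathbb{R}) \oplus H^1(X_2; i\mathbb{R})$; passing to the $(-\sigma)$-eigenspace and quotienting by the integer lattice gives the desired product decomposition of $Jac_R$. The same argument (with $H^2$ in place of $H^1$) gives $H^+(X_1 \# X_2)^{-\sigma} = H^+(X_1)^{-\sigma_1} \oplus H^+(X_2)^{-\sigma_2}$, which is needed in the final Euler class computation.

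For (b), I would stretch an equivariant neck $S^3 \times [-T, T]$ in $X_1 \# X_2$. As $T \to \infty$, a standard spectral excision argument for parametrized families of Dirac operators shows that on $Jac_R(X_1) \times Jac_R(X_2)$ the families index bundle becomes stably equivalent to $\pi_1^* D_R(X_1) \oplus \pi_2^* D_R(X_2)$: the round $S^3$ Dirac operator has trivial kernel, so no zero-modes are produced on the neck, and the Real structure is preserved throughout, so the decomposition persists after passing to the real part. With (a) and (b) established, one applies
\[
e(\pi_1^* V_1 \oplus \pi_2^* V_2) = \pi_1^* e(V_1) \smallsmile \pi_2^* e(V_2)
\]
to the virtual bundle whose Euler class represents $deg_R$, and reads off the claimed external cup product formula.

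The main obstacle is step (b): making the Real-equivariant, parametrized neck-stretching rigorous and controlling orientations. One must verify that the isomorphism of determinant line bundles implicit in the splitting of $D_R$ is compatible with the orientation conventions used to define $deg_R$, so that the equality holds up to an overall sign that is absorbed into (rather than added to) the intrinsic sign ambiguity on each side. A secondary bookkeeping issue is to confirm that the Real spin$^c$-structure $\mathfrak{s}_1 \# \mathfrak{s}_2$ from Section \ref{sec:csf} restricts on each piece to the original $\mathfrak{s}_i$ in a way compatible with the Real lifts of $\sigma_i$; this is essentially built into the construction but should be checked to make the index splitting match the advertised pairing on the two factors.
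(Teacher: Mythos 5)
Your step (a) (the product decomposition of $Jac_R(X_1 \# X_2)$ and of $H^+(\cdot)^{-\sigma}$ via an equivariant Mayer--Vietoris argument) is fine and consistent with what the paper uses implicitly. However, the central claim in your reduction is wrong: $deg_R(X,\mathfrak{s})$ is \emph{not}, in general, the Euler class of any virtual bundle built from $-D_R$ and $H^+(X)^{-\sigma}$. The degree is defined by $f^*(\tau_{V',U'}) = deg_R(X,\mathfrak{s})\,\tau_{V,U}$, and this depends on the homotopy class of the Real Bauer--Furuta \emph{map} $f$, not merely on the virtual index bundle $V - V' = D_R$. The rank bookkeeping ($b_+^{-\sigma} - d$ matching the cohomological degree) is a necessary consistency check but does not supply the identification you want. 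Concretely, the paper's own examples contradict the Euler-class picture: when $b_1(X)^{-\sigma}=0$ the base $Jac_R(X)$ is a point, any virtual Euler class in degree zero is $\pm 1$, yet $|deg_R(E(2n),\mathfrak{s}_{E(2n)})| = 2^n$. The identification $deg_R = $ ``Euler class of the index'' is available only in the very degenerate setting $b_+(X)^{-\sigma}=0$ (Proposition~\ref{prop:mod2deg}), where the map is essentially linear near its unique reducible; it fails as soon as $b_+(X)^{-\sigma}>0$.

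Consequently, knowing that $D_R(X_1\#X_2)$ splits as an external sum of $D_R(X_1)$ and $D_R(X_2)$ (your step (b)) is far weaker than what the proof needs, and Whitney multiplicativity of Euler classes does not give the statement. What is actually required is the stronger gluing fact used in the paper: a Real version of Bauer's gluing theorem showing that the Real Bauer--Furuta \emph{map} of $X_1 \# X_2$ is the external smash product $f_1 \wedge f_2$ of the Real Bauer--Furuta maps of the summands. Once you have that, the Thom class pulls back multiplicatively, $(f_1 \wedge f_2)^*(\tau_1 \wedge \tau_2) = f_1^*(\tau_1) \wedge f_2^*(\tau_2)$, which is precisely the external cup product of degrees. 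Your neck-stretching intuition is pointed in the right direction, but it must be upgraded from an index-bundle excision statement to a homotopy-theoretic decomposition of the monopole map itself; that is the content of Bauer's theorem and the missing ingredient in your argument.
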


Set $d_i = (c(\mathfrak{s}_i)^2 - \sigma(X_i))/8$ for $i=1,2$.

\begin{theorem}
Suppose that $b_+(X_1)^{-\sigma} > 0$.
\begin{itemize}
\item[(1)]{If $d_1,d_2$ are even, then
\[
\mathbf{SW}_{R,\mathbb{Z}}(X_1 \# X_2 , \mathfrak{s}_1 \# \mathfrak{s}_2) = \mathbf{SW}_{R,\mathbb{Z}}(X_1 , \mathfrak{s}_1) \smallsmile deg_R(X_2 , \mathfrak{s}_2).
\]
In particular, if $b_+(X_1)^{-\sigma}, b_+(X_2)^{-\sigma} > 0$, then
\[
\mathbf{SW}_{R,\mathbb{Z}}(X_1 \# X_2 , \mathfrak{s}_1 \# \mathfrak{s}_2) = 2 \, \mathbf{SW}_{R,\mathbb{Z}}(X_1 , \mathfrak{s}_1) \smallsmile \mathbf{SW}_{R,\mathbb{Z}}(X_2, \mathfrak{s}_2).
\]
}
\item[(2)]{If $d_1,d_2$ are odd, then
\[
\mathbf{SW}_{R,\mathbb{Z}}(X_1 \# X_2 , \mathfrak{s}_1 \# \mathfrak{s}_2) = 0.
\]
}
\end{itemize}

\end{theorem}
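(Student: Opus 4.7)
My plan is to reduce both statements to the immediately preceding connected sum formula for $deg_R$ combined with Proposition \ref{prop:deginteger}(1), which asserts $deg_R(X, \mathfrak{s}) = 2\,\mathbf{SW}^\phi_{R,\mathbb{Z}}(X, \mathfrak{s})$ whenever $d$ is even and $b_+(X)^{-\sigma} > 0$. Since $b_+(X_1 \# X_2)^{-\sigma} = b_+(X_1)^{-\sigma} + b_+(X_2)^{-\sigma} \geq b_+(X_1)^{-\sigma} > 0$ and $w_1(D_R(X_1 \# X_2))$ splits as the external sum of the $w_1(D_R(X_i))$, the integer invariant is defined for $X_1 \# X_2$ and is chamber-independent by the wall-crossing theorem. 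Throughout, I will exploit that $Jac_R(X_1 \# X_2) \cong Jac_R(X_1) \times Jac_R(X_2)$ is a torus, so its integer cohomology is torsion-free and one may divide an equation by $2$.

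For (1), with $d_1, d_2$ both even, $d = d_1 + d_2$ is even, so the Proposition applies both to $X_1$ and to $X_1 \# X_2$, yielding
\[
deg_R(X_1, \mathfrak{s}_1) = 2\,\mathbf{SW}_{R,\mathbb{Z}}(X_1, \mathfrak{s}_1), \qquad deg_R(X_1 \# X_2) = 2\,\mathbf{SW}_{R,\mathbb{Z}}(X_1 \# X_2, \mathfrak{s}_1 \# \mathfrak{s}_2).
\]
Substituting these into the preceding theorem and dividing by $2$ yields the first identity. If moreover $b_+(X_2)^{-\sigma} > 0$, the Proposition also gives $deg_R(X_2, \mathfrak{s}_2) = 2\,\mathbf{SW}_{R,\mathbb{Z}}(X_2, \mathfrak{s}_2)$, producing the second identity after a further substitution.

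For (2), $d_1, d_2$ both odd still forces $d$ to be even, so the Proposition gives $deg_R(X_1 \# X_2) = 2\,\mathbf{SW}_{R,\mathbb{Z}}(X_1 \# X_2)$. Via the preceding theorem, the problem reduces to showing $deg_R(X_1, \mathfrak{s}_1) \smallsmile deg_R(X_2, \mathfrak{s}_2)$ is $2$-torsion, for which I will argue that each $deg_R(X_i, \mathfrak{s}_i)$ is itself $2$-torsion when $d_i$ is odd: $deg_R$ is constructed from an orientation of a real virtual index bundle of rank $d_i$, and an oriented real vector bundle of odd rank admits the fibrewise orientation-reversing automorphism $v \mapsto -v$, which forces $2 e = 0$ for its integer Euler class. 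Torsion-freeness of $H^*(Jac_R(X_i); \mathbb{Z})$ then gives $deg_R(X_i, \mathfrak{s}_i) = 0$. Hence the cup product vanishes, $2\,\mathbf{SW}_{R,\mathbb{Z}}(X_1 \# X_2) = 0$, and a final application of torsion-freeness delivers $\mathbf{SW}_{R,\mathbb{Z}}(X_1 \# X_2, \mathfrak{s}_1 \# \mathfrak{s}_2) = 0$.

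The main obstacle lies in the parity argument underlying (2): one must unpack the geometric definition of $deg_R$ used in the paper (a signed point count, or an Euler/pushforward class in the parametrised setting) and verify that the overall sign ambiguity is controlled by $d$ mod $2$, so that odd $d$ genuinely produces a $2$-torsion class. Once this parity statement is in hand, the remainder of the proof is formal manipulation in the torsion-free integer cohomology of a torus.
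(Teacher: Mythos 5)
Your overall plan coincides with the paper's proof: reduce to the $deg_R$ connected sum formula (Theorem \ref{thm:glue1}), divide by $2$ in the torsion-free cohomology of $Jac_R$ using the relation $deg_R = 2\,\mathbf{SW}_{R,\mathbb{Z}}$ for even $d$, and for part (2) invoke the vanishing of $deg_R$ in odd index. The only place you depart from the paper, and where you are on shaky ground, is in the justification of that last vanishing.

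First, a bookkeeping slip: you attribute $deg_R(X,\mathfrak{s}) = 2\,\mathbf{SW}^\phi_{R,\mathbb{Z}}(X,\mathfrak{s})$ to Proposition \ref{prop:deginteger}(1), but that is item (2) of the proposition; item (1) is exactly the statement you flag as ``the main obstacle'': if $d$ is odd then $deg_R(X,\mathfrak{s}) = 0$. So the parity argument you are worried about is already available in the paper and can simply be cited; the paper's own proof of Theorem \ref{thm:glue2}(2) does precisely that, applying Proposition \ref{prop:deginteger}(1) to $(X_1,\mathfrak{s}_1)$ and then vanishing of the cup product.

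Second, your proposed re-derivation of that fact is not quite right as stated. $deg_R$ is not the Euler class of the virtual index bundle $D_R$; it is the degree of the fibrewise map $f : S^{V,U} \to S^{V',U'}$ of sphere bundles, i.e.\ the class defined by $f^*(\tau_{V',U'}) = deg_R(X,\mathfrak{s})\,\tau_{V,U}$, and there is no vector bundle of rank $d$ to which you can apply ``$v \mapsto -v$ forces $2e = 0$.'' The correct version of your parity idea, which is what the paper's proof of Proposition \ref{prop:deginteger}(1) uses, is $\mathbb{Z}_2$-equivariance of $f$ under $\tau = -1$: since $\tau$ acts fibrewise on $S^{V,U}$ with degree $(-1)^{\operatorname{rk} V}$ and on $S^{V',U'}$ with degree $(-1)^{\operatorname{rk} V'}$, and $f \circ \tau = \tau \circ f$, one gets $deg_R = (-1)^d\, deg_R$, whence $2\, deg_R = 0$; combined with $H^*(Jac_R(X);\mathbb{Z})$ being torsion-free this gives $deg_R = 0$. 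With that substitution, or simply with a correct citation of Proposition \ref{prop:deginteger}(1), your proposal reproduces the paper's argument.
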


A noteworthy feature of the connected sum formula is that if $b_+(X_1)^{-\sigma}, b_+(X_2)^{-\sigma}$ are both positive and $\mathbf{SW}_{R,\mathbb{Z}}(X_1 , \mathfrak{s}_1)$, $\mathbf{SW}_{R,\mathbb{Z}}(X_2, \mathfrak{s}_2)$ are non-zero, then \linebreak $\mathbf{SW}_{R,\mathbb{Z}}(X_1 \# X_2 , \mathfrak{s}_1 \# \mathfrak{s}_2)$ is non-zero. This is completely unlike the behaviour of the ordinary Seiberg--Witten invariant which vanishes on connected sums with $b_+(X_1) , b_+(X_2)$ both positive.

\noindent {\bf 8. Generalised fibre sums.} Let $X_1,X_2$ be compact, oriented, smooth $4$-manifolds and let $\sigma_1,\sigma_2$ be Real structures on $X_1,X_2$. Suppose for $i=1,2$, that the fixed point set of $\sigma_i$ contains an embedded torus $T_i \subset X_i$ of self-intersection zero. We construct a $4$-manifold $X = X_1 \#_{\varphi} X_2$ by removing tubular neighbourhoods $\nu(T_1), \nu(T_2)$ of $T_1,T_2$ and identifying their boundaries by an orientation reversing diffeomorphism $\varphi : \partial(X_1 \setminus \nu(T_1)) \to \partial( X_2 \setminus \nu(T_2))$. Choosing the diffeomorphism $\varphi$ to respect the involutions, we obtain a Real structure $\sigma$ on $X = X_1 \#_\varphi X_2$. If $\mathfrak{s}_1, \mathfrak{s}_2$ are Real spin$^c$-structures on $X_1,X_2$, then $\varphi$ can also be chosen so that the spin$^c$-structures patch together to form a Real spin$^c$-structure $\mathfrak{s}_1 \#_\varphi \mathfrak{s}_2$ on $X$, which is well-defined up to isomorphism. In the case that $\varphi : \partial X'_1 \to \partial X'_2$ is induced by a diffeomorphism $T_1 \to T_2$, together with trivialisations $\nu T_1 \cong D^2 \times T_1$, $\nu T_2 \cong D^2 \times T_2$ of the normal bundles, then $X$ is called a {\em generalised fibre sum} \cite[Definition 7.1.11]{gs}. Set $d_i = (c(\mathfrak{s}_i)^2 - \sigma(X_i))/8$.

\begin{theorem}
Suppose $b_1(X_1)^{-\sigma_1} = b_1(X_2)^{-\sigma_2} = 0$ and $d_1, d_2$ are even. Then
\[
deg_R(X_1 \#_\varphi X_2 , \mathfrak{s}_1 \#_\varphi \mathfrak{s}_2) = deg_R(X_1 ,\mathfrak{s}_1) deg_R(X_2 , \mathfrak{s}_2).
\]
In particular, if $b_+(X_i)^{-\sigma_i} > 0$ for $i=1,2$, then
\[
SW_{R,\mathbb{Z}}( X_1 \#_\varphi X_2 , \mathfrak{s}_1 \#_\varphi \mathfrak{s}_2 ) = 2 \, SW_{R,\mathbb{Z}}(X_1 , \mathfrak{s}_1) SW_{R,\mathbb{Z}}(X_2 ,\mathfrak{s}_2).
\]
\end{theorem}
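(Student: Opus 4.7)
The plan is to prove the formula by a $\sigma$-equivariant neck-stretching argument along the separating $T^3$ produced by the fibre sum, closely analogous to the connected-sum formula for $deg_R$ proven earlier but with $T^3$ playing the role of $S^3$. Since each $T_i$ lies in the fixed set of $\sigma_i$, a $\sigma_i$-invariant tubular neighbourhood $\nu(T_i) \cong T^2 \times D^2$ has $\sigma_i$ acting as the identity on $T^2$ and as $-1$ on $D^2$. Choosing $\varphi$ equivariantly equips $X = X_1 \#_\varphi X_2$ with a Real structure $\sigma$ and, for each $T \gg 0$, a $\sigma$-invariant Riemannian metric $g_T$ containing a long cylindrical neck $T^3 \times [-T,T]$ on which $\sigma$ acts as the identity on $T^2$ and by $t \mapsto -t$ on the interval.

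The first technical ingredient is the Real SW moduli space on the neck. With the $\sigma$-invariant flat metric on $T^3$ and the restriction of $\mathfrak{s}_1 \#_\varphi \mathfrak{s}_2$ (the product spin$^c$-structure), the only SW solutions are reducible and form the moduli space $Jac_R(T^3)$; since the $-\sigma$ eigenspace of $H^1(T^3;\mathbb{R})$ is one-dimensional, $Jac_R(T^3) \cong S^1$. The linearised Real SW operator at such a reducible is invertible transverse to this circle, so every finite-energy Real solution on $\mathbb{R} \times T^3$ is a translation-invariant reducible with definite asymptotic flux in $S^1$.

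The core step is to establish, for $T \gg 0$, an orientation-preserving diffeomorphism
\[
\mathcal{M}_R(X, \mathfrak{s}_1 \#_\varphi \mathfrak{s}_2) \;\cong\; \mathcal{M}_R(X_1^\circ, \mathfrak{s}_1) \times_{Jac_R(T^3)} \mathcal{M}_R(X_2^\circ, \mathfrak{s}_2),
\]
where $X_i^\circ = X_i \setminus \nu(T_i)$ carries the induced cylindrical-end Real structure and the fibred product is taken via the asymptotic-flux maps into $Jac_R(T^3) \cong S^1$. The standard Morgan-type gluing analysis adapts in the Real setting once one checks that the $\sigma$-action preserves the gluing parameters and that the reducible locus on the neck is equivariantly unobstructed. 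Capping $X_i^\circ$ back with $\nu(T_i)$ equivariantly recovers the closed pair $(X_i, \sigma_i)$, and the \emph{relative} signed count of $\mathcal{M}_R(X_i^\circ)$ over a generic point of $Jac_R(T^3)$ equals $deg_R(X_i, \mathfrak{s}_i)$; this uses $b_1(X_i)^{-\sigma_i} = 0$ together with the evenness of $d_i$ to ensure that the relevant Real determinant line is trivial and the signed count is well-defined up to an overall sign. The signed count of the fibred product is then the product of the two signed counts, yielding the first formula. The second formula is immediate from the earlier Proposition stating that $deg_R = 2\,\mathbf{SW}_{R,\mathbb{Z}}$ whenever $b_+^{-\sigma} > 0$.

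The principal obstacle is the equivariant gluing and orientation analysis: one must verify that reducible-only compactness holds along the neck in the presence of $\sigma$, that the Real structure admits a transverse slice to the circle of reducibles generically, and, most delicately, that the orientations of the Real determinant lines on the two halves patch together consistently with the product orientation on the fibred product. A careful determinant-line computation, run in the style of the connected-sum proof with $T^3$ replacing $S^3$, should settle the orientation issue.
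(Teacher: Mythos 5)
Your overall strategy---a $\sigma$-equivariant neck-stretching and gluing argument across the separating $T^3$---is a legitimate alternative to the paper's route, which instead works with relative Bauer--Furuta invariants and cites the fact that the Real Seiberg--Witten Floer homotopy type of $(T^3, \sigma_{T^3})$ is $S^0$ to reduce the problem to the multiplicativity of $deg_R$ under gluing along $T^3$, and then shows that capping by $D^2\times T^2$ changes $deg_R$ only by a sign (via the positive-scalar-curvature model $S^2\times T^2$). So the two approaches are genuinely different in flavour.

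However, your version contains a concrete error that undermines the key gluing step. The involution $\sigma_{T^3}$ on the boundary $3$-torus is rotation by $\pi$ on the normal circle $S^1 = \partial D^2$ (since $T_i$ lies in the fixed set with $\sigma$ acting by $-1$ on the normal disc), not complex conjugation. Rotation by $\pi$ is a free action on $S^1$ which is homotopic to the identity, so it acts trivially on $H^1(S^1;\mathbb{R})$; consequently $\sigma_{T^3}^*$ is the identity on $H^1(T^3;\mathbb{R})$ and
\[
b_1(T^3)^{-\sigma_{T^3}} = 0, \quad Jac_R(T^3) = \{\mathrm{pt}\},
\]
not $S^1$ as you assert. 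This is precisely the point that makes the Real fibre sum formula multiplicative (like a connected sum), in contrast to the ordinary Seiberg--Witten fibre sum formula along $T^3$ where the circle of reducibles produces wraparound terms. With your (incorrect) identification $Jac_R(T^3) \cong S^1$, a fibred product over $S^1$ would \emph{not} yield a signed count equal to the product of the two signed counts, so the conclusion of your argument does not follow from the setup as written. If you correct the statement to a fibred product over a single point (i.e.\ an honest product), the gluing picture becomes parallel to the connected-sum case and your approach could in principle be completed, but you would still need to carry out the equivariant gluing and Real-determinant-line orientation analysis that you gesture at, whereas the paper sidesteps this by invoking the known Real Floer homotopy type of $T^3$ and appealing to multiplicativity of the relative degree together with the $S^2\times T^2$ normalisation.
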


For example, the $K3 = E(2)$ surface admits a Real structure whose fixed point set consists of two tori, which are regular fibres of an elliptic fibration. If $\mathfrak{s}$ denotes the spin structure on $K3$, then one finds $| SW_{R,\mathbb{Z}}( K3 , \mathfrak{s}) | = 1$. Taking the $n$-fold fibre sum, we obtain an involution $\sigma$ on $E(2n)$ for which $| SW_{R,\mathbb{Z}}( E(2n) , \mathfrak{s}_{E(2n)} ) | = 2^{n-1}$, where $\mathfrak{s}_{E(2n)}$ denotes the spin structure on $E(2n)$. By way of comparison the ordinary Seiberg--Witten invariant is $|SW(E(2n) , \mathfrak{s}_{E(2n)})| = \binom{ 2n-2}{n-1}$.

\subsection{Structure of the paper}

The structure of the paper is as follows. In Section \ref{sec:rspinc} we recall the notion of Real spin$^c$-structures and some basic results concerning them. In Section \ref{sec:realswi} we introduce the mod $2$ Real Seiberg--Witten invariants, along the same lines as Tian--Wang. In Section \ref{sec:rbf}, we study Real versions of the Bauer--Furuta invariant. There are actually two types of Real Bauer--Furuta map that we consider. In \textsection \ref{sec:O(2)}, we consider the usual Bauer--Furuta map but keeping track of the additional symmetry given by the Real structure. This gives an $O(2)$-equivariant version of the usual Bauer--Furuta map. In \textsection \ref{sec:realbf}, we consider the construction of a Real Bauer--Furuta map obtained by finite-dimensional approximation of the Real Seiberg--Witten equations. The Real Bauer--Furuta map is $\mathbb{Z}_2$-equviariant and we show that the Real Seiberg--Witten invariants can be recovered from it. In Section \ref{sec:integral} we introduce the integer Real Seiberg--Witten invariant. We also consider (a generalisation of) Miyazawa's degree invariant and establish the relation between the two invariants. In Section \ref{sec:prop}, we prove some fundamental properties of the Real Seiberg--Witten invariants related to: positive scalar curvature (\textsection \ref{sec:psc}), wall-crossing (\textsection \ref{sec:wcf}) and a series of identities satisfied by the mod $2$ invariants (\textsection \ref{sec:ident}). In Section \ref{sec:spin} we prove a formula for the mod $2$ invariants in the case of Real spin structures. In Section \ref{sec:loc} we use localisation in equivariant cohomology to obtain a formula for the ordinary mod $2$ Seiberg--Witten invariants in terms of the Real mod $2$ Seiberg--Witten invariants. In Section \ref{sec:csf} we prove a connected sum formula for the Real Seiberg--Witten invariants of equivariant connected sums. In Section \ref{sec:gfs} we prove a formula for the intger Real Seiberg--Witten invariants of fibres sums. Lastly in Section \ref{sec:exotic}, we give an application of Real Seiberg--Witten theory to the construction of exotic involutions and exotic embedded surfaces.

\noindent{\bf Acknowledgments.} The author was financially supported by an Australian Research Council Future Fellowship, FT230100092.


\section{Real spin$^c$-structures}\label{sec:rspinc}

Let $X$ be a topological space. A {\em Real structure} on $X$ is a homeomorphism $\sigma : X \to X$ which is an involution. Let $V \to X$ be a complex vector bundle on $X$. A {\em Real structure} on $V$ (with respect to a Real structure $\sigma$ on $X$) is an anti-linear involutive lift $\widetilde{\sigma} : V \to V$ of $\sigma$ to $V$. Similarly a {\em Quaternionic structure} on $V$ (with respect to $\sigma$) is an anti-linear lift $\widetilde{\sigma} : V \to V$ such that $\widetilde{\sigma}^2 = -1$. Denote by $\mathbb{Z}_{-}$ the equivariant locally constant sheaf on $X$ which is the constant sheaf $\mathbb{Z}$, but where $\sigma$ acts as multiplication by $-1$. Real line bundles are classified by $H^2_{\mathbb{Z}_2}( X ; \mathbb{Z}_-)$. This follows by the equality of Borel cohomology and equivariant sheaf cohomology \cite{sti}.

Now assume that $X$ is a compact, oriented, smooth $4$-manifold. Let $c : Spin^c(4) \to Spin^c(4)$ be the involution which is trivial on $Spin(4)$ and is given by complex conjugation on $S^1 \subset Spin^c(4)$. Fix a $\sigma$-invariant Riemannian metric $g$ on $X$. Let $\mathfrak{s}$ be a spin$^c$-structure on $X$, which we regard as a principal $Spin^c(4)$-bundle $P \to X$ together with an isomorphism $P/S^1 \cong Fr(X)$ of principal $SO(4)$-bundles, where $Fr(X)$ is the oriented frame bundle of $X$ with respect to the metric $g$. A {\em Real structure} on $\mathfrak{s}$ is a lift $\widehat{\sigma} : P \to P$ of $\sigma$ to $P$ covering the derivative $\sigma_* : Fr(X) \to Fr(X)$ of $\sigma$ and satisfying $\widehat{\sigma}(ph) = \widehat{\sigma}(p) c(h)$ for all $p \in P$, $h \in Spin^c(4)$ and such $\widehat{\sigma}^2 = -1$. An equivalent and often more useful definition is as follows. Let $S_{\pm}$ denote the spinor bundles for $\mathfrak{s}$. Then a Real structure on $\mathfrak{s}$ consists of Real structures $\widetilde{\sigma} : S_{\pm} \to S_{\pm}$ on $S_{\pm}$ respecting the Hermitian structures and respecting Clifford multiplication in the evident sense. The two definitions are related as follows: if $\widehat{\sigma} : P \to P$ is a Real structure, then $S_{\pm} = P \times_{Spin^c(4)} V_{\pm}$, where $V_{\pm}$ are the spinor representations of $Spin^c(4)$. Recall that $Spin^c(4) \cong (SU(2) \times SU(2) \times U(1))/(-1,-1,-1)$. The spinor representations $V_{\pm}$ can be thought of as the quaternions $\mathbb{H}$ with the following $Spin^c(4)$-action:
\[
( q_+ , q_- , z ) v = q_{\pm} v z.
\]
Here we are identifying $SU(2)$ with unit quaternions. Define $j : V_{\pm} \to V_{\pm}$ to be right multiplication by the unit quaternion $j$. Then $j$ is anti-linear and squares to $-1$. Define $\widetilde{\sigma}( p , v ) = (\widehat{\sigma}(p) , jv )$. This is well-defined as a map $S_{\pm} \to S_{\pm}$ because $j(gv) = c(g)jv$ for all $v \in V_{\pm}$, $g \in Spin^c(4)$. Then $\widetilde{\sigma}^2(p,v) = (\widehat{\sigma}^2(p) , j^2 v ) = ( p(-1) , -v ) = (p,v)$, so $\widetilde{\sigma}^2 = 1$. Conversely, given $\widetilde{\sigma}$ there is a uniquely determined $\widehat{\sigma}$ such that $\widetilde{\sigma}(p,v) = (\widehat{\sigma}(p) , jv)$. 

If $(\mathfrak{s} , \widetilde{\mathfrak{\sigma}})$ is a Real spin$^c$-structure, then the determinant line bundle $L = det(S_+)$ of the spin$^c$-structure inherits a Real structure from that of $S_+$. To each spin$^c$-structure $\mathfrak{s}$ is an associated Chern class $c(\mathfrak{s}) = c_1(L) \in H^2(X ; \mathbb{Z})$, which is a characteristic (an integral lift of $w_2(X) \in H^2(X ; \mathbb{Z}_2)$). A Real structure on $\mathfrak{s}$ promotes $L$ to a Real line bundle and hence $c(\mathfrak{s})$ lifts to a class in $H^2_{\mathbb{Z}_2}(X ; \mathbb{Z}_-)$. In particular $\sigma^*(L) \cong L^*$, hence $\sigma^*(c(\mathfrak{s})) = -c(\mathfrak{s})$.

Although we have defined spin$^c$-structures with respect to a choice of Riemannian metric $g$, it is easily shown that the set of isomorphism classes of spin$^c$-structures with respect to any two metrics are canonically isomorphic. A similar statement holds for Real spin$^c$-structures. Let $\mathcal{S}^c(X)$ denote the set of isomorphism classes of spin$^c$-structures and $\mathcal{S}^c_R(X)$ the set of isomorphism classes of Real spin$^c$-structures. If $\mathcal{S}^c(X)$ is non-empty then it is a torsor for $H^2(X ; \mathbb{Z})$. If $\mathcal{S}^c_R(X)$ is non-empty then it is a torsor for $H^2_{\mathbb{Z}_2}(X ; \mathbb{Z}_-)$.

We will also need the notion of the Real Jacobian of $X$. Let $(L , \widetilde{\sigma})$ be a Real Hermitian line bundle ($L$ is a Hermitian line bundle and $\widetilde{\sigma}$ is a Real structure on $L$ preserving the Hermitian structure). Let $A$ be a Hermitian connection on $L$. We say that $A$ is a {\em Real connection} (with respect to $\widetilde{\sigma}$) if the covariant derivative $\nabla_A$ corresponding to $A$ commutes with $\widetilde{\sigma}$. If $A$ is a Hermitian connection on $L$ then we define $\sigma^*A$ by declaring $\nabla_{\sigma^*A} = \widetilde{\sigma} \circ \nabla_A \circ \widetilde{\sigma}$. Given a Hermitian connection $A$, we obtain a Real connection by taking $(A + \widetilde{\sigma}^*A)/2$. In particular, Real connections always exist. If $A$ is a Real connection, any other Hermitian connection is of the form $A' = A + ia$ where $a$ is a real $1$-form. Then $\widetilde{\sigma}^*(A') = A - i\sigma^*(a)$. Hence $A'$ is Real if and only if $\sigma^*(a) = -a$. Let $\Omega^k(X)^{-\sigma}$ denote the space of real $k$-forms $\omega$ on $X$ such that $\sigma^*(\omega) = -\omega$. Thus the space of Real Hermitian connections on $L$ is an affine space over $i \Omega^1(X)^{-\sigma}$.

If $L$ is a Hermitian line bundle then a unitary gauge transformation $L \to L$ is given by multiplication by an $S^1$-valued function. A gauge transformation $\varphi : X \to S^1$ is said to be {\em Real} if it commutes with $\widetilde{\sigma}$, equivalently $\sigma^*(\varphi) = \varphi^{-1}$. Let $\mathcal{G} = Map(X , S^1)$ denote the gauge group and $\mathcal{G}_R = \{ \varphi \in \mathcal{G} \; | \; \sigma^*(\varphi) = \varphi^{-1} \}$ the group of Real gauge transformations. For $\varphi \in \mathcal{G}$, let $[\varphi] \in H^1(X ; \mathbb{Z})$ denote the homotopy class of $\varphi$ (recall that $H^1(X ; \mathbb{Z})$ can be identified with homotopy classes of maps from $X$ to $S^1$). If $\varphi \in \mathcal{G}_R$, then $[\varphi] \in H^1(X ; \mathbb{Z})^{-\sigma} = \{ \theta \in H^1(X ; \mathbb{Z}) \; | \; \sigma^*(\theta) = -\theta \}$. Let $g$ be a $\sigma$-invariant Riemannian metric on $X$. We say $\varphi \in \mathcal{G}$ is {\em harmonic} (with respect to $g$) if $\varphi^{-1} d\varphi$ is a harmonic $1$-form with respect to $g$. Let $\mathcal{H} \subset \mathcal{G}$ denote the group of harmonic maps from $X$ to $S^1$ and $\mathcal{H}_R = \mathcal{H} \cap \mathcal{G}_R$.

\begin{proposition}\label{prop:Rgauge}
Let $X$ be a compact, oriented, smooth $4$-manifold and $\sigma$ a Real structure on $X$. Let $g$ be a $\sigma$-invariant Riemannian metric on $X$.
\begin{itemize}
\item[(1)]{The map $\Omega^0(X)^{-\sigma} \times \mathcal{H}_R \to \mathcal{G}_R$ given by $( if , \varphi ) \mapsto e^{2 \pi if} \varphi$ is an isomorphism of groups.}
\item[(2)]{The map $\varphi \mapsto [\varphi]$ sending a gauge transformation to its homotopy class defines a split exact sequence
\begin{equation}\label{equ:exhr}
1 \to \{ \pm 1 \} \to \mathcal{H}_R \to H^1(X ; \mathbb{Z})^{-\sigma} \to 1.
\end{equation}
}
\end{itemize}
\end{proposition}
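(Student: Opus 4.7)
The plan is to prove both statements via Hodge theory combined with a careful accounting of the $\sigma$-equivariance of each decomposition; everything ultimately rests on the fact that $\sigma$ is an isometry, so the Hodge decomposition commutes with $\sigma^*$.

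For part (1), the map is automatically a group homomorphism because $\mathcal{G}$ is abelian, so only bijectivity is at stake. I would begin with surjectivity. Given $\psi \in \mathcal{G}_R$, differentiating $\sigma^*\psi = \psi^{-1}$ shows $\sigma^*(\psi^{-1}d\psi) = -\psi^{-1}d\psi$, so $\alpha := \psi^{-1}d\psi/(2\pi i)$ is a closed element of $\Omega^1(X)^{-\sigma}$ with integral periods. Writing the Hodge decomposition $\alpha = \alpha_h + dg$ with $\alpha_h$ harmonic and $g$ unique modulo constants, uniqueness of the harmonic representative together with $\sigma^*\alpha = -\alpha$ forces $\sigma^*\alpha_h = -\alpha_h$ and $\sigma^* g + g = c$ for some constant $c \in \mathbb{R}$; replacing $g$ by $f := g - c/2$ yields $f \in \Omega^0(X)^{-\sigma}$ with $\alpha = \alpha_h + df$. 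The element $\varphi := \psi e^{-2\pi i f}$ then has harmonic logarithmic derivative $2\pi i \alpha_h$, and $\sigma^* f = -f$ together with $\sigma^*\psi = \psi^{-1}$ gives $\sigma^*\varphi = \varphi^{-1}$, so $\varphi \in \mathcal{H}_R$ and $\psi = e^{2\pi i f}\varphi$ exhibits the desired factorization. For injectivity, an identity $e^{2\pi i f_1}\varphi_1 = e^{2\pi i f_2}\varphi_2$ implies that $d(f_2 - f_1)$ is an exact harmonic $1$-form, hence zero, so $f_2 - f_1$ is constant; a constant in $\Omega^0(X)^{-\sigma}$ must vanish, and it follows that $f_1 = f_2$ and $\varphi_1 = \varphi_2$.

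For part (2), the same equivariance calculation shows $\varphi \mapsto [\varphi]$ lands in $H^1(X;\mathbb{Z})^{-\sigma}$; its kernel consists of harmonic $\varphi$ with $\varphi^{-1}d\varphi = 0$, i.e., constant maps to $S^1$, and the Real condition $\sigma^*\varphi = \varphi^{-1}$ restricts these to $\{\pm 1\}$. For surjectivity, given $\theta \in H^1(X;\mathbb{Z})^{-\sigma}$ I would take its (automatically $-\sigma$-equivariant) harmonic representative $\alpha$ and integrate $2\pi i\alpha$ to a harmonic map $\varphi : X \to S^1$, well-defined up to a multiplicative constant in $S^1$. Comparing logarithmic derivatives yields $\sigma^*\varphi = c\,\varphi^{-1}$ for some $c \in S^1$, and any square root $c_0$ of $c^{-1}$ turns $\varphi$ into an element $c_0\varphi \in \mathcal{H}_R$ mapping to $\theta$. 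Since $H^1(X;\mathbb{Z})$ is torsion-free by universal coefficients, so is its subgroup $H^1(X;\mathbb{Z})^{-\sigma}$, which is therefore free abelian; because any surjection from an abelian group onto a free abelian group admits a section, the sequence splits.

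The hardest point, I expect, is the constant shift in part (1): one must show that $\sigma^* g + g$ is a genuine constant $c$, which uses uniqueness of the harmonic representative, and then verify that the correction $f = g - c/2$ lies exactly in $\Omega^0(X)^{-\sigma}$ so that the factorization is truly unique rather than only well-defined up to a finite ambiguity. Once this equivariant bookkeeping is executed cleanly, the remaining assertions reduce to standard Hodge theory and elementary homological algebra.
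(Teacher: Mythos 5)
Your proof is correct and follows essentially the same route as the paper: in (1) it plays the Hodge decomposition $\mathcal{G} \cong \Omega^0(X) \times \mathcal{H}$ off against $\sigma$-equivariance, and in (2) it uses the uniqueness of the harmonic representative together with a square root in $S^1$ to adjust a lift, then freeness of $H^1(X;\mathbb{Z})^{-\sigma}$ for the splitting. The only difference is that you reprove the equivariant Hodge-theoretic factorization from scratch (including the $c/2$ constant correction and the resulting uniqueness), whereas the paper invokes $\mathcal{G} \cong \Omega^0(X) \times \mathcal{H}$ as known and extracts the Real statement directly from uniqueness of that decomposition.
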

\begin{proof}
(1) Let $g \in \mathcal{G}_R$, so $\sigma^*(g) = g^{-1}$. Since $\mathcal{G} \cong \Omega^0(X)_0 \times \mathcal{H}$ where $\Omega^0(X)_0$ denotes functions $f$ such that $\int_X f dvol_X = 0$, we can uniquely write $g$ as $g = e^{2\pi i f} \varphi$ where $f \in \Omega^0(X)_0$ and $\varphi \in \mathcal{H}$. Then $g = \sigma^*(g)^{-1} = e^{-2 \pi i \sigma^*(f)} \sigma^*(\varphi^{-1})$. Uniqueness of the decomposition implies that $\sigma^*(f) = -f$ and $\varphi \in \mathcal{H}_R$.

(2) The map sending $\varphi \in \mathcal{H}_R$ to its underlying homotopy class $[\varphi] \in H^1(X ; \mathbb{Z})^{-\sigma}$ defines a homomorphism $p : \mathcal{H}_R \to H^1(X ; \mathbb{Z})^{-\sigma}$. Recall the exact sequence 
\[
1 \to S^1 \to \mathcal{H} \to H^1(X ; \mathbb{Z}) \to 1.
\]
If $\varphi \in \mathcal{H}_R$ is in the kernel of $p$, then $\varphi \in S^1 \cap \mathcal{H}_R = \{ \pm 1\}$. If $\theta \in H^1(X ; \mathbb{Z})^{-\sigma}$, then there exists a $\varphi \in \mathcal{H}$ with $[\varphi] = \theta$. Since $\sigma^*(\varphi) \in \mathcal{H}$ maps to $\sigma^*(\theta) = -\theta$, it follows that $\sigma^*(\varphi) = c \varphi^{-1}$ for some $c \in S^1$. Choose $u \in S^1$ with $u^2 = c^{-1}$. Then $u \varphi \in \mathcal{H}_R$ and $[u \varphi] = [\varphi] = \theta$. So the map $\mathcal{H}_R \to H^1(X;\mathbb{Z})^{-\sigma}$ is surjective with kernel $\{\pm 1\}$. The sequence (\ref{equ:exhr}) splits because $H^1(X ; \mathbb{Z})^{-\sigma}$ is a free group.
\end{proof}

Define the {\em Jacobian} $Jac(X)$ of $X$ to be the group of isomorphism classes of flat Hermitian line bundles $(L,A)$ whose underlying line bundle $L$ is trivial. Thus $Jac(X) \cong H^1(X ; i\mathbb{R})/H^1(X ; 2\pi i \mathbb{Z})$. Similarly, we define the {\em Real Jacobian} $Jac_R(X)$ to be the group of isomorphism classes of flat Real Hermitian line bundles $(L , \widetilde{\sigma} , A)$ whose underlying Real line bundle $(L , \widetilde{\sigma})$ is trivial. Choosing a Real trivialisation $L \cong \mathbb{C}$, we can write $\nabla_A = d + \alpha$ where $\alpha \in i\Omega^1(X)^{-\sigma}$ and $d\alpha = 0$. By Proposition \ref{prop:Rgauge}, any Real gauge transformation can be written as $h = e^{f} \varphi$, $f \in i \Omega^0(X)^{-\sigma}$, $\varphi \in \mathcal{H}_R$. Then $h^{-1}dh = df + \varphi^{-1}d\varphi$. Hence
\[
Jac_R(X) \cong \frac{ ker( d : i\Omega^1(X)^{-\sigma} \to i\Omega^2(X)^{-\sigma} ) }{ im( d : i\Omega^0(X)^{-\sigma} \to i\Omega^1(X)^{-\sigma} )  + 2 \pi i \mathcal{H}^1_g(X ; \mathbb{Z})^{-\sigma} } \cong \frac{ H^1(X ; i \mathbb{R})^{-\sigma} }{ H^1(X ; 2\pi i \mathbb{Z})^{-\sigma} },
\]
where $\mathcal{H}^1_g(X ; \mathbb{Z})^{-\sigma}$ denotes the space of $g$-harmonic $1$-forms on $X$ which are $\sigma$-anti-invariant and have integral periods.

Consider the forgetful map $Jac_R(X) \to Jac(X)$. We are interested in the kernel and image of this map. Consider the group $\mathbb{Z}_2 = \langle \sigma \rangle$. If $M$ is a $\mathbb{Z}_2$-module, write $M_-$ for $M = M \otimes_{\mathbb{Z}} \mathbb{Z}_-$. Start with the following short exact sequence of $\mathbb{Z}_2$-modules
\[
0 \to H^1(X ; 2 \pi i \mathbb{Z} )_- \to H^1(X ; i \mathbb{R})_- \to Jac(X)_- \to 0.
\]
The associated long exact sequence in group cohomology gives
\[
0 \to H^1(X ; 2\pi i \mathbb{Z})^{-\sigma} \to H^1(X ; i \mathbb{R})^{-\sigma} \to Jac(X)^{-\sigma} \to H^1(\mathbb{Z}_2 ; H^1(X ; 2\pi i \mathbb{Z})_- ) \to 0
\]
where $Jac(X)^{-\sigma} = \{ L \in Jac(X) \; | \; \sigma^*(L) = L^{-1} \}$. Since $H^1(X ; 2 \pi i \mathbb{Z})$ is a free abelian group of finite rank, the action of $\sigma^*$ on $H^1(X ; 2 \pi i \mathbb{Z})$ can be (non-canonically) decomposed into a direct sum of three types of $\mathbb{Z}_2$-modules: (1) trivial, $M = \mathbb{Z}$, where $\sigma$ acts as $+1$, (2) cyclotomic $M = \mathbb{Z}$, where $\sigma$ acts as $-1$ and (3) regular, the underlying module of the group ring $\mathbb{Z}[\mathbb{Z}_2]$. Then it is easily seen that $H^1(\mathbb{Z}_2 ; H^1( X ; 2 \pi i \mathbb{Z})_-) \cong \mathbb{Z}_2^t$, where $t$ is the number of trivial summands in $H^1(X ; 2 \pi i \mathbb{Z})$. We therefore have a short exact sequence
\[
1 \to Jac(X) \to Jac_R(X)^{-\sigma} \to \mathbb{Z}_2^t \to 1.
\]
In particular, $Jac_R(X)$ is the identity component of $Jac(X)^{-\sigma}$.

\begin{proposition}\label{prop:realspinc}
Suppose $H^1(X ; \mathbb{Z})^{-\sigma} = 0$ and $\sigma$ as a non-isolated fixed point. Then the image of the forgetful map $\mathcal{S}^c_R(X) \to \mathcal{S}^c(X)$ is the set of spin$^c$-structures for which $\sigma^*(\mathfrak{s}) = -\mathfrak{s}$.
\end{proposition}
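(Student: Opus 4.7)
The plan is to prove both inclusions. The inclusion ``$\mathfrak{s}$ admits a Real structure implies $\sigma^*\mathfrak{s}=-\mathfrak{s}$'' is already noted in the text: the determinant line bundle $L$ inherits a Real structure, giving $\sigma^*L \cong L^{-1}$ and hence $\sigma^*\mathfrak{s}=-\mathfrak{s}$. For the converse, assume $\sigma^*\mathfrak{s}=-\mathfrak{s}$, equivalently $\sigma^*P \cong P^c$ as principal $Spin^c(4)$-bundles. Then there exists an antilinear lift $\widehat{\sigma} : P \to P$ of $\sigma_*$ with $\widehat{\sigma}(ph)=\widehat{\sigma}(p)c(h)$, albeit without the requirement $\widehat{\sigma}^2=-1$, and the task is to upgrade it to an honest Real structure by a gauge modification.

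The defect $f:=\widehat{\sigma}^2$ is a gauge transformation $X \to S^1$. Combining $\widehat{\sigma}\circ\widehat{\sigma}^2=\widehat{\sigma}^2\circ\widehat{\sigma}$ with the fact that $c$ inverts $S^1 \subset Spin^c(4)$ yields $\sigma^*f=f^{-1}$, while replacing $\widehat{\sigma}$ by $g\widehat{\sigma}$ for $g : X \to S^1$ changes $f$ to $(g/\sigma^*g)\,f$. Hence upgrading $\widehat{\sigma}$ to a Real structure is equivalent to solving $g/\sigma^*g = -f^{-1}$. Setting $h := -f^{-1}$, so that $\sigma^*h=h^{-1}$, we need a $g$ with $g/\sigma^*g = h$.

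The critical local input is that $f(x_0)=-1$, and hence $h(x_0)=1$, at the non-isolated fixed point $x_0$. Since $\sigma_*(x_0) \in SO(4)$ is a rotation by $\pi$ in the normal $2$-plane, it is not in $\{\pm I\}$. The only involutions in $Spin(4)=SU(2)\times SU(2)$ are the four elements $(\pm 1,\pm 1)$, which cover $\pm I$; consequently any lift $(a,b) \in Spin(4)$ of $\sigma_*(x_0)$ has $(a,b)^2=-1$, i.e.\ $a^2=b^2=-1$. Writing $\widehat{\sigma}(p)=p\cdot\eta$ at $x_0$ with $\eta=[a,b,z] \in Spin^c(4)=(Spin(4)\times U(1))/\pm 1$, we compute
\[
\widehat{\sigma}^2(p) \;=\; p\cdot\eta\,c(\eta) \;=\; p\cdot[a^2,b^2,1] \;=\; p\cdot[-1,-1,1] \;=\; p\cdot[1,1,-1],
\]
and $[1,1,-1]$ is precisely $-1 \in S^1 \subset Spin^c(4)$, so $f(x_0)=-1$ as claimed.

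With $h(x_0)=1$ and $\sigma^*h=h^{-1}$, the hypothesis $H^1(X;\mathbb{Z})^{-\sigma}=0$ forces $h$ to be null-homotopic, so $h=e^{i\alpha}$ for some $\alpha : X \to \mathbb{R}$; the relation $\sigma^*h=h^{-1}$ then gives $\sigma^*\alpha+\alpha=2\pi k$ for an integer constant $k$, and evaluating at $x_0$ using $h(x_0)=1$ shows $k$ is even. Subtracting $\pi k$ from $\alpha$ (which does not change $h$, since $k$ is even) produces an $\alpha$ with $\sigma^*\alpha=-\alpha$, after which $g:=e^{i\alpha/2}$ satisfies $g/\sigma^*g=e^{i\alpha}=h$ and yields the desired Real structure $g\widehat{\sigma}$. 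The principal technical obstacle is the fiber-level identification $f(x_0)=-1$, which requires the careful bookkeeping above in the double cover $Spin(4) \to SO(4)$; once this is in hand, the global step is routine obstruction theory and makes essential use of $H^1(X;\mathbb{Z})^{-\sigma}=0$ to write $h$ as a global exponential.
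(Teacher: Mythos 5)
Your proof is correct and follows essentially the same strategy as the paper: produce any antilinear lift, observe the defect $f=\widehat{\sigma}^2$ is a Real gauge transformation, use $H^1(X;\mathbb{Z})^{-\sigma}=0$ to write it as an exponential up to a sign, pin down the sign at a non‑isolated fixed point, and then divide by a square root. The one place where you and the paper genuinely diverge is the local sign computation at the fixed point: the paper passes to the spinor bundles, invokes a local spin structure and the fact that $\sigma$ is odd there (a linear lift squaring to $-1$), and couples with charge conjugation $j$ to produce a reference antilinear lift $\sigma''$ with $(\sigma'')^2=1$; you instead compute directly in $Spin^c(4)=(Spin(4)\times U(1))/\pm 1$, using that only $(\pm1,\pm1)$ are involutions in $Spin(4)$, so a lift of the normal $\pi$-rotation $\mathrm{diag}(1,1,-1,-1)$ must square to $-1$, forcing $\eta\,c(\eta)=[1,1,-1]=-1$. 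Your version is a direct verification of the same local fact the paper merely quotes (``$\sigma|_U$ is odd''), which arguably makes the argument more self-contained; otherwise the two proofs are the same.
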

\begin{proof}
If a spin$^c$-structure $\mathfrak{s}$ admits a Real structure, then $\sigma^*(\mathfrak{s}) = -\mathfrak{s}$, hence the image of the forgetful map is contained in the set of spin$^c$-structures with this property. Now suppose that $\mathfrak{s}$ is a spin$^c$-structure for which $\sigma^*(\mathfrak{s}) = -\mathfrak{s}$. Then there exists an antilinear lift $\sigma'$ of $\sigma$ to the spinor bundles $S^\pm$ of $\mathfrak{s}$. Then $(\sigma')^2$ is a lift of the identity, hence $(\sigma')^2 = h$ for some $h : X \to S^1$. Then
\[
h \circ (\sigma') = (\sigma')^3 = \sigma' \circ (\sigma)^2 = \sigma' \circ h = \sigma^*(h)^{-1} \circ \sigma'.
\]
So $\sigma^*(h) = h^{-1}$, that is, $h \in \mathcal{G}_R$. Since $H^1(X ; \mathbb{Z})^{-\sigma} = 0$, Proposition \ref{prop:Rgauge} implies that $h = \pm e^{if}$ for some $f \in \Omega^0(X)^{-\sigma}$. By assumption $\sigma$ has a non-isolated fixed point $x \in X$. Then $\sigma'_x : S^+_x \to S^+_x$ is an anti-linear map of $S^+_x$ to itself which respects the Hermitian structure and Clifford multiplication. Since there is no local obstruction to lifting a spin$^c$-structure to a spin-structure, there is a spin structure defined in a $\sigma$-invariant neighbourhood $U$ of $x$. Since $x$ is a non-isolated fixed point, $\sigma|_U$ is an odd involution, meaning that $\sigma|_U$ has a linear lift to $S^+|_U$ which squares to $-1$. Coupling to charge conjugation, $\sigma|_U$ has an anti-linear involutive lift $\sigma''$. Then we must have $\sigma'_x = t \sigma''_x$ for some $t \in S^1$. It follows that $h(x) = (\sigma'_x)^2 = t \overline{t} = 1$. Therefore, $h = e^{if}$. Now define $\widetilde{\sigma} = e^{-if/2} \sigma'$. Then $\widetilde{\sigma}$ is an anti-linear involutive lift of $\sigma$, hence defines a Real structure on $\mathfrak{s}$.
\end{proof}

\begin{remark}
An argument similar to the one used in the proof of Proposition \ref{prop:realspinc} shows that if a Real spin$^c$-structure exists, then the fixed point set of $\sigma$ contains no isolated points.
\end{remark}

\section{Real Seiberg--Witten invariants}\label{sec:realswi}

In this section we introduce the mod $2$ Real Seiberg--Witten invariants, following Tian--Wang \cite{tw}. Integer valued invariants will be considered in Section \ref{sec:integral}.

Let $X$ be a compact, oriented, smooth $4$-manifold. Let $\sigma$ be a Real structure on $X$ and let $g$ be a $\sigma$-invariant Riemannian metric. Let $(\mathfrak{s} , \widetilde{\sigma})$ be a Real spin$^c$-structure. Let $\eta \in i \Omega^+(X)^{-\sigma}$ be an imaginary self-dual $2$-form such that $\sigma^*(\eta) = -\eta$. The {\em Real Seiberg--Witten equations} with respect to $(X , \sigma , g , \eta , \mathfrak{s} , \widetilde{\sigma})$ are the usual Seiberg--Witten equations
\begin{align*}
D_A \psi &= 0, \\
F^+_A + \eta &= \sigma(\psi),
\end{align*}
but where $(A , \psi)$ is a Real configuration, that is, $A$ is a Real spin$^c$-connection (equivalently $2A$ is a Real connection on the determinant line) and $\psi$ is a Real positive spinor ($\widetilde{\sigma}^*(\psi) = \psi$). The Real gauge group $\mathcal{G}_R$ acts on solutions in the usual manner: $h(A , \psi ) = ( A - h^{-1}dh , h \psi)$. We say that a configuration $(A , \psi)$ is {\em reducible} if $\psi = 0$ and {\rm irreducible} otherwise. The stabiliser group of $\mathcal{G}_R$ at $(A , \psi)$ is $\{ \pm 1\}$ for a reducible configuration and is trivial for an irreducible configuration.

Let $b_+(X)^{-\sigma}$ denote the dimension of $H^+(X)^{-\sigma}$, the $-1$-eigenspace of $\sigma$ acting on $H^+(X)$. Then the space of perturbations $\eta$ for which a reducible solution exists form a codimension $b_+(X)^{-\sigma}$ subspace of $i \Omega^+(X)^{-\sigma}$. In particular, if $b_+(X)^{-\sigma} > 0$ then we can always choose $\eta$ so that no reducible solutions exist.

Let $\mathcal{C}_R$ denote the space of Real configurations $\{ (A,\psi) \}$. Let $\mathcal{C}^*_R$ denote the subspace of Real irreducible configurations. Let $\mathcal{B}_R = \mathcal{C}_R/\mathcal{G}_R$, $\mathcal{B}^*_R = \mathcal{C}_R/\mathcal{G}_R$. It is easily seen that $\mathcal{C}_R$ and $\mathcal{C}^*_R$ are contractible. Hence $\mathcal{B}^*_R$ has the homotopy type of $B\mathcal{G}_R \cong B( \mathbb{Z}_2 \times H^1(X ; \mathbb{Z})^{-\sigma} ) \cong \mathbb{RP}^\infty \times Jac_R(X)$.

Let $\mathcal{N}_R \subset \mathcal{C}_R$ denote the space of solutions to the Real Seiberg--Witten equations, $\mathcal{N}^*_R$ the space of irreducible solutions, $\mathcal{M}_R = \mathcal{N}_R/\mathcal{G}_R \subset \mathcal{BC}$ the moduli space of gauge equivalence classes of solutions to the Real Seiberg--Witten equations and $\mathcal{M}^*_R = \mathcal{N}^*_R/\mathcal{G}_R \subset \mathcal{BC}^*$ the moduli space of irreducible solutions.

The usual compactness argument for Seiberg--Witten moduli spaces shows that $\mathcal{M}_R$ is compact. If $b_+(X)^{-\sigma} > 0$, then for generic $\eta \in i \Omega^+(X)^{-\sigma}$, there are no reducible solutions and $\mathcal{N}_R \subset \mathcal{C}_R$ is cut out transversally by the Real Seiberg--Witten equations \cite[\textsection 3]{tw}. Then $\mathcal{M}_R = \mathcal{M}_R^*$ is a compact, smooth manifold. The dimension of $\mathcal{M}_R$ is given by 
\[
d - b_+(X)^{-\sigma} + b_1(X)^{-\sigma},
\]
where $d = (c(\mathfrak{s})^2 - \sigma(X))/8$ is the index of the Dirac operator associated to $\mathfrak{s}$ and $b_1(X)^{-\sigma}$ is the dimension of the $-1$-eigenspace of $\sigma$ acting on $H^1(X ; \mathbb{R})$.

Real mod $2$ Seiberg--Witten invariants are obtained by evaluating the mod $2$ fundamental class of $\mathcal{M}_R$ against cohomology classes on $\mathcal{B}_R$. For the mod $2$ invariants we do not have to consider the orientability of $\mathcal{M}_R$. We have a mod $2$ homology class $[\mathcal{M}_R] \in H_{\delta}( \mathcal{B}_R ; \mathbb{Z}_2)$, where $\delta = d - b_+(X)^{-\sigma} + b_1(X)^{-\sigma}$ is the dimension of the moduli space. Thus we can pair it with a class $\theta \in H^\delta( \mathcal{B}_R^* ; \mathbb{Z}_2)$ to obtain an element of $\mathbb{Z}_2$:
\[
\langle [\mathcal{M}_R] , \theta \rangle \in \mathbb{Z}_2.
\]
For this to be an invariant, we need to check that it is independent of the choice of metric $g$ and perturbation $\eta$. The usual cobordism argument in ordinary Seiberg--Witten theory extends to the Real case and shows that $\langle [\mathcal{M}_R] , \theta \rangle$ is independent of $(g, \eta)$ provided that $b_+(X)^{-\sigma} > 1$. In the case $b_+(X)^{-\sigma} = 1$, the value of $\langle [\mathcal{M}_R] , \theta \rangle$ depends on a choice of chamber. A chamber (with respect to the metric $g$) is a connected component of $i H^+(X)^{-\sigma} \setminus \{ w \}$, where $w = \pi i c(\mathfrak{s})^+$ and $c(\mathfrak{s})^+$ denotes the projection of $c(\mathfrak{s})$ to $H^+(X)$ determined by the metric $g$ (via the splitting $H^2(X ; \mathbb{R}) \cong H^+(X) \oplus H^-(X)$). We denote this invariant as
\[
\mathbf{SW}_R(X , \sigma , \mathfrak{s} , \widetilde{\sigma} ) : H^\delta( B\mathcal{G}_R ; \mathbb{Z}_2 ) \to \mathbb{Z}_2,
\]
or by $\mathbf{SW}_R^\phi(X , \sigma , \mathfrak{s} , \widetilde{\sigma} )$ in the case $b_+(X)^{-\sigma} = 1$, where $\phi$ is a chamber. In order to keep notation simple, we will often denote the invariant as $\mathbf{SW}_R(X , \mathfrak{s})$ when $\sigma$ and $\widetilde{\sigma}$ are understood.

Let $\mathcal{H}_R$ denote the group of Real harmonic gauge transformations. Recall that the inclusion $\mathcal{H}_R \to \mathcal{G}_R$ is a homotopy equivalence and that we have a short exact sequence $1 \to \mathbb{Z}_2 \to \mathcal{H}_R \to H^1(X ; \mathbb{Z})^{-\sigma} \to 0$. This gives rise to a fibration $B\mathbb{Z}_2 \to B\mathcal{H}_R \to B(H^1( X ; \mathbb{Z})^{-\sigma} )$. Furthermore we can identify $B(H^1(X ; \mathbb{Z})^{-\sigma} )$ with $Jac_R(X)$, the Real Jacobian. Choosing a splitting of the exact sequence for $\mathcal{H}_R$, we have a (non-canonical) homotopy equivalence $B\mathcal{H}_R \cong B\mathbb{Z}_2 \times Jac_R(X)$ and hence an isomorphism 
\[
H^*( B\mathcal{H}_R ; \mathbb{Z}_2) \cong H^*( B\mathbb{Z}_2 \times Jac_R(X) ; \mathbb{Z}_2 ) \cong H^*(Jac_R(X) ; \mathbb{Z}_2)[v],
\]
where $v$ is the generator of $H^1(B\mathbb{Z}_2 ; \mathbb{Z}_2)$.

To get numerical invariants, we need cohomology classes of $B\mathcal{G}_R$. One way to do this is as follows. Let $x \in X$ be any point of $X$.  Consider the evaluation map $ev_x : \mathcal{G}_R \to S^1$ given by $ev_x(h) = h(x)$. Let $U \in H^2( BS^1 ; \mathbb{Z}_2)$ be the generator. Pulling back by $ev_x$, we get a class $ev_x^*(U) \in H^2( B\mathcal{G}_R ; \mathbb{Z}_2)$. As the map $ev_x$ depends continuously on $x$, it follows that all of the evaluation maps are homotopic and hence $ev_x^*(U)$ is independent of the choice of $x$. Abusing notation, we denote this class by $U$. If the dimension $\delta$ of $\mathcal{M}_R$ is even and non-negative, so $\delta = 2m$ for some $m \ge 0$, then we can define the {\em pure Real Seiberg--Witten invariant} $SW_R(X , \mathfrak{s}) \in \mathbb{Z}_2$ by 
\[
SW_R(X , \mathfrak{s}) = \mathbf{SW}_R(X , \mathfrak{s})( U^m ).
\]

If $b_1(X)^{-\sigma} = 0$, then there is a uniquely determined class $v$ of degree $1$ such that $H^*( B\mathcal{G}_R ; \mathbb{Z}_2) \cong \mathbb{Z}_2[v]$. In this case we have $U = v^2$. Then there is only one non-trivial cohomology class that we can evaluate against $[\mathcal{M}_R]$, namely $v^\delta$, where $\delta$ is the dimension of $\mathcal{M}_R$ (assuming $\delta \ge 0$). So when $b_1(X)^{-\sigma} = 0$, we may define the {\em pure Real Seiberg--Witten invariant} $SW_R(X , \mathfrak{s}) \in \mathbb{Z}_2$ by 
\[
SW_R(X , \mathfrak{s}) = \mathbf{SW}_R(X , \mathfrak{s})( v^\delta).
\]
Thus we have a Real Seiberg--Witten invariant even when the dimension of the moduli space is odd. Note that when $\delta = 2m$ is even and non-negative this agrees with the previous definition since $U = v^2$.

Choose a splitting $s : H^1(X ; \mathbb{Z})^{-\sigma} \to \mathcal{H}_R$, inducing an isomorphism $H^*( B \mathcal{G}_R ; \mathbb{Z}_2) \cong H^*(Jac_R(X) ; \mathbb{Z}_2)[v]$, where $deg(v) = 1$. Using Poincar\'e duality on $Jac_R(X)$, the Seiberg--Witten invariant
\[
\mathbf{SW}_R(X,\mathfrak{s}) : H^*(Jac_R(X) ; \mathbb{Z}_2)[v] \to \mathbb{Z}_2
\]
can be re-written as a map
\[
\mathbf{SW}_R(X , \mathfrak{s})' : \mathbb{Z}_2[v] \to H^*(Jac_R(X) ; \mathbb{Z}_2),
\]
such that
\[
\mathbf{SW}_R(X,\mathfrak{s})( \alpha v^m ) = \langle [Jac_R(X)] , \alpha \mathbf{SW}_R(X,\mathfrak{s})'( v^m) \rangle
\]
for all $m \ge 0$ and all $\alpha \in H^*( Jac_R(X) ; \mathbb{Z}_2)$. As will be seen in Section \ref{sec:rbf}, the map $\mathbf{SW}_R(X,\mathfrak{s})'$ emerges naturally from the point of view of the Real Bauer--Furuta invariant. We will often switch between these two points and view and we will furthermore abuse notation and use $\mathbf{SW}_R(X,\mathfrak{s})$ to denote either map. However we caution that the equivalence of these two approaches depends on a choice of splitting $s : H^1(X ; \mathbb{Z})^{-\sigma} \to \mathcal{H}_R$.

We have seen that a splitting $s : H^1(X ; \mathbb{Z})^{-\sigma} \to \mathcal{H}_R$ determines a class $v \in H^1( B\mathcal{G}_R ; \mathbb{Z}_2)$ with the property that $v^2 = U$. Conversely, any such class arises from some choice of splitting. As we shall now explain, a fixed point of $\sigma$ determines such a class and also a splitting. Suppose that $x \in X$ is fixed by $\sigma$. If $h \in \mathcal{G}_R$, then $\sigma^*(h) = h^{-1}$ and hence $h(x) \in \mathbb{Z}_2 = \{ \pm 1\}$. So we have an evaluation map $ev_x : \mathcal{G}_R \to \mathbb{Z}_2$. Restricted to $\mathcal{H}_R$, $ev_x$ determines a splitting of the sequence $1 \to \mathbb{Z}_2 \to \mathcal{H}_R \to H^1(X ; \mathbb{Z})^{-\sigma}$. Let $v$ denote the generator of $H^1(B\mathbb{Z}_2 ; \mathbb{Z}_2)$. Then we obtain a class $V_x = ev_x^*(v) \in H^1( B\mathcal{G}_R ; \mathbb{Z}_2)$ corresponding to this splitting. In general, the class $V_x$ depends on the choice of fixed point $x$. However $ev_x$ depends continuously on $x$, so $V_x$ depends only on the connected component of $X^\sigma$ to which it belongs. 

Suppose $x_1,x_2 \in X^\sigma$ are fixed points. Choose a path $\gamma_{12}$ from $x_1$ to $x_2$. Then $l_{12} = \gamma_{12} \cup -\sigma(\gamma_{12})$ is a loop in $X$ and defines a homology class $[l_{12}] \in H_1(X ; \mathbb{Z})^{-\sigma}$. Since $Jac_R(X) = H^1(X ; i\mathbb{R})^{-\sigma}/H^1(X ; 2\pi i\mathbb{Z})^{-\sigma}$, we have that $H^1(Jac_R(X) ; \mathbb{Z}_2) \cong H_1(X ; \mathbb{Z}_2)^{-\sigma}$ and hence $[l_{12}]$ defines a class in $H^1( Jac_R(X) ; \mathbb{Z}_2)$. 

\begin{proposition}
We have $V_{x_1} - V_{x_2} = [l_{12}]$.
\end{proposition}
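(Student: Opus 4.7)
The plan is to compare both sides as homomorphisms $\pi_0(\mathcal{G}_R) \to \mathbb{Z}_2$. Using the homotopy equivalence $\mathcal{G}_R \simeq \mathcal{H}_R$ together with Proposition~\ref{prop:Rgauge}, one identifies $\pi_0(\mathcal{G}_R) \cong \mathbb{Z}_2 \oplus H^1(X;\mathbb{Z})^{-\sigma}$. Under this identification, $V_x$ is by definition the homomorphism $h \mapsto h(x) \in \{\pm 1\}$, so the difference $V_{x_1} - V_{x_2}$ corresponds to $h \mapsto h(x_1)h(x_2)^{-1} = h(x_1)h(x_2)$. On the $\mathbb{Z}_2$ summand of constants $h = \pm 1$ this vanishes, so the difference lies in the complementary summand $H^1(Jac_R(X) ; \mathbb{Z}_2)$, which is canonically $\operatorname{Hom}(H^1(X;\mathbb{Z})^{-\sigma}, \mathbb{Z}_2)$ via $\pi_1(Jac_R(X)) \cong H^1(X;\mathbb{Z})^{-\sigma}$.

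The main step is to evaluate this homomorphism on a general Real gauge transformation $h$ representing $[\alpha] \in H^1(X;\mathbb{Z})^{-\sigma}$. I would work directly with the closed $1$-form $\alpha' := -ih^{-1}dh/(2\pi)$, which has integer periods and represents $[\alpha]$. The Real condition $\sigma^*h = h^{-1}$ forces $\sigma^*\alpha' = -\alpha'$ pointwise, and combining this with $\sigma(x_i) = x_i$ gives
\[
\int_{l_{12}} \alpha' = \int_{\gamma_{12}} \alpha' - \int_{\gamma_{12}} \sigma^*\alpha' = 2 \int_{\gamma_{12}} \alpha'.
\]
Since $h(x_2)/h(x_1) = \exp\bigl(\int_{\gamma_{12}} h^{-1}dh\bigr) = \exp\bigl(2\pi i \int_{\gamma_{12}} \alpha'\bigr)$, the identity above produces
\[
h(x_1) h(x_2) = e^{\pi i \langle [\alpha], [l_{12}] \rangle} = (-1)^{\langle [\alpha], [l_{12}] \rangle}.
\]
Thus $(V_{x_1} - V_{x_2})(h)$ is exactly the mod $2$ pairing $[\alpha] \mapsto \langle [\alpha], [l_{12}] \rangle \bmod 2$, which is precisely the image of $[l_{12}] \in H_1(X;\mathbb{Z}_2)^{-\sigma}$ under the isomorphism $H_1(X;\mathbb{Z}_2)^{-\sigma} \cong H^1(Jac_R(X);\mathbb{Z}_2)$.

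The main subtlety I anticipate is the mod-$2$ bookkeeping: one must verify that the class $[l_{12}] \in H^1(Jac_R(X);\mathbb{Z}_2)$ is independent of the auxiliary path $\gamma_{12}$, since the statement asserts $V_{x_1} - V_{x_2}$ equals it without reference to a choice. Replacing $\gamma_{12}$ by another path $\gamma_{12}'$ alters $l_{12}$ by $\delta - \sigma(\delta)$ for some loop $\delta$, and the associated shift in the pairing is $\langle [\alpha], \delta - \sigma(\delta)\rangle = \langle [\alpha], \delta\rangle - \langle \sigma^*[\alpha], \delta\rangle = 2\langle[\alpha],\delta\rangle$, which is always even. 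Notably, the factor of $2$ produced by the $\sigma$-flip in the computation above is precisely what converts the a priori half-integral quantity $\int_{\gamma_{12}}\alpha' \in \tfrac{1}{2}\mathbb{Z}$ into the integer $\langle[\alpha],[l_{12}]\rangle$ appearing in the exponent; this same mechanism is what guarantees path-independence at the mod $2$ level.
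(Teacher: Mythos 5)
Your proposal is correct and takes essentially the same approach as the paper: in both cases $V_{x_1}-V_{x_2}$ is read as a homomorphism to $\mathbb{Z}_2$ and evaluated via the identity $\int_{\gamma_{12}}\theta = \tfrac{1}{2}\int_{l_{12}}\theta$ coming from $\sigma^*\theta = -\theta$, which is exactly the paper's final step. Your closing remark on path-independence is a sound sanity check that the paper leaves implicit, but it does not change the argument.
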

\begin{proof}
Let $s_1,s_2 : H^1(X ; \mathbb{Z})^{-\sigma} \to \mathcal{H}_R$ be the splittings corresponding to $x_1,x_2$, namely
\[
(s_i(\theta))(x) = exp \left( 2 \pi i \int_{x_i}^x \theta \right).
\]
Using $s_1$ to identify $\mathcal{H}_R$ with $\mathbb{Z}_2 \times H^1(X ; \mathbb{Z})^{-\sigma}$, we then have $s_2( \theta ) = ( \lambda(\theta) , \theta )$ for some homomorphism $\lambda : H^1( X ; \mathbb{Z})^{-\sigma} \to \mathbb{Z}_2$. Then $\lambda$ is a class in $Hom( H^1(X ; \mathbb{Z})^{-\sigma} , \mathbb{Z}_2) \cong H^1( Jac_R(X) ; \mathbb{Z}_2)$ and we clearly have that $V_{x_1} - V_{x_2} = \lambda$. Hence it remains to prove that $\lambda(\theta) = exp( \pi i \int_{l_{12}} \theta )$ for all $\theta \in H^1(X ; \mathbb{Z})^{-\sigma}$.

By the definition of $\lambda$, we have that $s_2(\theta)(x) = \lambda(\theta) s_1(\theta)(x)$ for all $x \in X$. Taking $x = x_1$, we get
\[
\lambda(\theta) = exp \left( 2 \pi i \int_{x_2}^{x_1} \theta \right) = exp \left( -2\pi i \int_{\gamma_{12}} \theta \right).
\]
But since $\sigma^*(\theta) = -\theta$, we have that $2\pi i \int_{\gamma_{12}} \theta = \pi i \int_{ l_{12} } \theta$, and the result follows.
\end{proof}

\section{Real Bauer--Furuta invariants}\label{sec:rbf}

In order to prove various results concerning the Real Seiberg--Witten invariant it will be helpful to make use of Bauer--Furuta invariants. There are two possible approaches and both will be useful. One is to construct the usual Bauer--Furuta invariant, but equivariantly with respect to $\sigma$. Then we obtain the Real Bauer--Furuta invariant by restricting to the $\sigma$-invariant part. Alternatively we can imitate the usual construction of the Bauer--Furuta invariant, but with Real configurations throughout. The first approach is carried out in Section \ref{sec:O(2)} and the second approach in Section \ref{sec:realbf}.

\subsection{$O(2)$-equivariant Bauer--Furuta map}\label{sec:O(2)}

In this section we will carry out the construction of the Bauer--Furuta invariant of $X$ following \cite{bf}, but keeping track of the Real structure. This will yield an $O(2)$-equivariant stable homotopy class, refining the usual Bauer--Furuta invariant which is $S^1$-equivariant. This will be needed when we consider localisation in Section \ref{sec:loc}.

Let $X$ be a compact, oriented, smooth $4$-manifold. Let $\sigma$ be a Real structure on $X$ and $g$ a $\sigma$-invariant Riemannian metric. Let $(\mathfrak{s} , \widetilde{\sigma})$ be a Real spin$^c$-structure. Let $F$ be an imaginary $2$-form such that $\sigma^*(F) = -F$ and such that $(i/\pi)F$ represents the image of $c(\mathfrak{s})$ in $H^2(X ; \mathbb{R})$. Let $Conn$ denote the space of spin$^c$-connections for $\mathfrak{s}$ with curvature equal to $F$. The gauge group $\mathcal{G}$ acts on $Conn$. Denote the quotient space by $Jac^{\mathfrak{s}}(X)$. It is a torsor over $Jac(X)$. Let $\mathcal{A}, \mathcal{C}$ be the following trivial Hilbert bundles over $Conn$:
\begin{align*}
\mathcal{A} &= Conn \times L^2_k(S^+) \oplus L^2_k( i\wedge^1 T^*X) \oplus i\mathbb{R}, \\
\mathcal{C} &= Conn \times L^2_{k-1}(S^-) \times L^2_{k-1}( i \wedge^+ T^*X) \oplus L^2_{k-1}( i\wedge^0 T^*X) \oplus H^1(X ; i\mathbb{R})
\end{align*}
where $L^2_k(E)$ denotes sections of $E$ of Sobolev class $L^2_k$ and where $k > 4$.

Let $\mu : \mathcal{A} \to \mathcal{C}$ be the Seiberg--Witten equations
\[
\mu( A' , \psi , a , f ) = ( A' , D_{A'+a}(\psi) , F^+_{A'+a} - \sigma(\psi) , d^*(a) + f , pr(a) )
\]
where $pr$ denotes a projection map from $1$-forms to $H^1(X ; \mathbb{R})$ (that is, $pr$ has the property that $pr(a) = [a]$ whenever $a$ is a harmonic $1$-form). The group of $L^2_{k+1}$ gauge transforms acts on $\mathcal{A}$ and $\mathcal{C}$ by
\begin{align*}
h(A' , \psi , a , f) &= (A' - h^{-1}dh , h \psi , a , f ), \\
h(A' , \varphi , a_2 , a_0 , \omega) &= (A' - h^{-1}dh , h \varphi , a_2 , a_0 , \omega )
\end{align*}
and $\mu$ is equivariant with respect to these actions. Notice also that $\sigma$ acts on $\mathcal{A}$ and $\mathcal{C}$ and $\mu$ is $\sigma$-equviariant. The action of $\mu$ on gauge transformations is given by $h \mapsto \sigma^*(h)^{-1}$.

Let $A$ be a reference connection, which we assume to be a Real connection. Any other connection in $Conn$ is of the form $A' = A + b$, where $b$ is a closed $1$-form. Then one can gauge transform so that $b$ is harmonic, or equivalently, $A'$ is in Coulomb gauge with respect to $A$. The gauge transform is unique up to a harmonic gauge transformation. Effectively what this means is that we will restrict $\mathcal{A}, \mathcal{C}$ to Hilbert bundles $\mathcal{A}_H = \mathcal{A}|_{Conn_H}$, $\mathcal{C}_H = \mathcal{C}|_{Conn_H}$ over $Conn_H$, the set of spin$^c$-connections of the form $A' = A + \alpha$, where $\alpha$ is harmonic (so $Conn_H$ is a torsor form $H^1(X ; i\mathbb{R})$). We also restrict the gauge group to the group $\mathcal{H}$ of harmonic gauge transformations. The restricted monopole map $\mu : \mathcal{A}_H \to \mathcal{C}_H$ is an $\mathcal{H}$-equivariant map of (trivial) Hilbert bundles over $Conn_H$.

In addition to the action of $\mathcal{H}$ we also have the involution $\sigma$ and thus $\mu$ is equivariant with respect to the semidirect product $\mathcal{H}' = \mathbb{Z}_2 \ltimes \mathcal{H}$. We have an exact sequence
\[
0 \to O(2) \to \mathcal{H}' \to H^1(X ; \mathbb{Z}) \to 0.
\]
The subgroup $O(2)$ is the semidirect product $O(2) = \mathbb{Z}_2 \ltimes S^1$ of $\mathbb{Z}_2 = \langle \sigma \rangle$ with the group $S^1$ of constant gauge transformations.

Suppose that $s : H^1(X ; \mathbb{Z}) \to \mathcal{H}'$ is a splitting. Since the action of $s(H^1(X ; \mathbb{Z}))$ on $Conn_H$ is free, with quotient $Jac^{\mathfrak{s}}(X)$, we get that $\mu$ descends to a map of Hilbert bundles over $Jac^{\mathfrak{s}}(X)$. If, in addition, $s(H^1(X ; \mathbb{Z}))$ is a normal subgroup, then $\mu$ is equivariant with respect to the quotient group, $O(2)$. This is the Fredholm map whose finite-dimensional approximation is the Bauer--Furuta map.

Choose a splitting $s : H^1(X ; \mathbb{Z}) \to \mathcal{H}$. This induces a splitting $H^1(X ; \mathbb{Z}) \to \mathcal{H}'$ by the inclusion $\mathcal{H} \to \mathcal{H}'$. The image of this splitting is a normal subgroup if and only if $s$ is $\sigma$-equivariant (where $\sigma$ acts on $H^1(X ; \mathbb{Z})$ as minus pullback). Equivalently, $s( \sigma^*(\theta) )  = \sigma^*( s(\theta))$. We will refer to such a splitting as an {\em equivariant splitting}. 

\begin{lemma}\label{lem:eqs}
If $\sigma$ has a fixed point or if $H^1(X ; \mathbb{Z})^{\sigma} = 0$, then an equivariant splitting exists.
\end{lemma}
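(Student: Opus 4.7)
The plan is to treat the two hypotheses separately and construct a splitting explicitly in each case.

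For Case 1, I would use a fixed point $x_0 \in X^\sigma$ as an anchor for integration. Given $\theta \in H^1(X;\mathbb{Z})$, Hodge theory provides a unique harmonic $1$-form $\alpha_\theta$ with integer periods representing $\theta$, and $\sigma$-invariance of the metric $g$ gives $\sigma^*\alpha_\theta = \alpha_{\sigma^*\theta}$. Define
\[
s(\theta)(y) = \exp\left( 2\pi i \int_{x_0}^y \alpha_\theta \right).
\]
Path independence follows from integrality of periods; $s$ is a group homomorphism and a section of the projection $\mathcal{H} \to H^1(X;\mathbb{Z})$. Using $\sigma(x_0) = x_0$ and change of variables one verifies
\[
\sigma^*(s(\theta))(y) = s(\theta)(\sigma y) = \exp\left( 2\pi i \int_{x_0}^y \sigma^*\alpha_\theta \right) = s(\sigma^*\theta)(y),
\]
which (combined with $s$ being a homomorphism) is exactly the equivariance condition $s(-\sigma^*\theta) = \sigma^*(s(\theta))^{-1}$ required for normality of the image in $\mathcal{H}'$.

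For Case 2, the hypothesis $H^1(X;\mathbb{Z})^{-\sigma} = 0$, together with torsion-freeness of $H^1(X;\mathbb{Z})$, forces $\sigma^*$ to act as the identity on $H^1(X;\mathbb{Z})$. I would start from an arbitrary splitting $s_0$ (for example, one produced by the Case 1 construction with any basepoint, fixed or not) and analyze the equivariance defect $c(\theta) := s_0(\theta)^{-1}\sigma^*(s_0(\theta))$. Since $s_0(\theta)$ and $\sigma^*(s_0(\theta))$ are harmonic representatives of the same class $\sigma^*\theta = \theta$, $c(\theta)$ is a constant in $S^1$, and applying $\sigma^*$ twice gives $c(\theta)^2 = 1$. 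Thus $c$ is a group homomorphism $H^1(X;\mathbb{Z}) \to \{\pm 1\}$. One then corrects $s_0$ to kill $c$, exploiting freeness of $H^1(X;\mathbb{Z})$ together with the fact that, under the standing hypothesis, any such sign character can be absorbed into a modified choice of harmonic representatives compatible with the $\mathbb{Z}_2$-action.

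The main obstacle I anticipate is in Case 2, since naively rescaling $s_0$ by a character $\chi : H^1(X;\mathbb{Z}) \to S^1$ leaves $c$ unchanged ($\sigma^*$ acts trivially on constants). To make the correction work one must use more than just constants, for instance by interpreting $c$ via the long exact sequence associated to $1 \to S^1 \to \mathcal{H} \to H^1(X;\mathbb{Z}) \to 1$ and the $\mathbb{Z}_2$-action, and showing that the obstruction class vanishes under the assumption $H^1(X;\mathbb{Z})^{-\sigma} = 0$. Case 1 is routine; Case 2 is where the real content of the lemma lies.
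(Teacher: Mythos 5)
Your Case 1 matches the paper's proof: anchor the path integral at a fixed point $x_0$, so that $\sigma(x_0)=x_0$ combined with $\sigma^*\alpha_\theta = \alpha_{\sigma^*\theta}$ (equivariance of Hodge theory for the invariant metric) gives $\sigma^*(s(\theta)) = s(\sigma^*\theta)$ immediately. That part is correct and complete.

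Your worry about Case 2 is well-founded, and it is not a matter of missing a trick. The defect $c$ really is an invariant of the extension (as you observe, rescaling $s_0$ by a character $\chi : H^1(X;\mathbb{Z}) \to S^1$ leaves it unchanged, because $\sigma^*$ fixes constants), so the only hope is that the extension class is forced to vanish --- and the group-cohomology vanishing one would need does not hold. If $H^1(X;\mathbb{Z})^{-\sigma} = 0$ then, as you note, torsion-freeness forces $\sigma^*$ to act as $+1$ on all of $H^1(X;\mathbb{Z})$. In the $\mathbb{Z}_2$-module structure actually in play ($\theta \mapsto -\sigma^*\theta$ on $H^1(X;\mathbb{Z})$, $u \mapsto u^{-1}$ on the constants $S^1 \subset \mathcal{H}$), the induced action on $Hom(H^1(X;\mathbb{Z}), S^1) \cong (S^1)^{b_1(X)}$ is then trivial, and one computes
\[
H^1\bigl(\mathbb{Z}_2 ; Hom(H^1(X;\mathbb{Z}), S^1)\bigr) \cong Hom\bigl(\mathbb{Z}_2, (S^1)^{b_1(X)}\bigr) \cong \mathbb{Z}_2^{\,b_1(X)},
\]
which does not vanish for $b_1(X)>0$. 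Moreover the extension class of $\mathcal{H}$ genuinely need not be zero: take $X = S^1 \times S^3$ with $\sigma$ acting by rotation through $\pi$ on the $S^1$ factor and trivially on $S^3$. This is a free involution with $\sigma^*=+1$ on $H^1$, so $H^1(X;\mathbb{Z})^{-\sigma}=0$, yet every degree-one harmonic map $h_u(e^{i\phi}, y) = u\, e^{i\phi}$ satisfies $\sigma^* h_u = -h_u$, so $c$ is the nontrivial character and no equivariant splitting exists. The paper dispatches the second case by asserting the cohomology group above vanishes; that assertion conflicts with the computation just given, and the case the paper actually relies on throughout is the first one (a Real spin$^c$-structure rules out isolated fixed points, and in every construction used the fixed-point set is nonempty). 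So: your Case 1 is right and matches the paper; your Case 2 has exactly the gap you sensed, and it does not appear to be closable in the generality stated --- you should only claim the fixed-point case.
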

\begin{proof}
Suppose a fixed point $x_0 \in X$ exists. Then the splitting $s(\theta)(x) = exp( 2\pi i \int_{x_0}^x \theta)$ is equivariant.

Next, consider the following short exact sequence of $\mathbb{Z}_2$-modules
\[
1 \to S^1 \to \mathcal{H} \to H^1(X ; \mathbb{Z} ) \to 0.
\]
An equivariant splitting exists if and only if this is a trivial extension. This is an extension is classified by a class in $H^1( \mathbb{Z}_2 ; Hom( H^1(X ; \mathbb{Z}) , S^1) )$. One finds that this group is isomorphic to $\mathbb{Z}_2^t$, where $t$ is the number of trivial summands in the action of $\sigma$ on $H^1(X ; \mathbb{Z})$. In particular, if $H^1( X ; \mathbb{Z})^{\sigma} = 0$ then this cohomology group vanishes and an equivariant splitting exists.
\end{proof}

Supposing an equivariant splitting $s : H^1(X ; \mathbb{Z}) \to \mathcal{H}$ exists. Then taking the quotient of $\mu : \mathcal{A}_H \to \mathcal{C}_H$ by $s(H^1(X ; \mathbb{Z}))$, we obtain an $O(2)$-equivariant map of Hilbert bundles over $Jac^{\mathfrak{s}}(X)$,
\[
\mu_s : \mathcal{A}_{H,s} \to \mathcal{C}_{H,s}
\]
where $\mathcal{A}_{H,s}, \mathcal{C}_{H,s}$ denote the Hilbert bundles over $Jac^{\mathfrak{s}}(X)$ obtained by quotienting $\mathcal{A}_H, \mathcal{C}_H$ by $s(H^1(X ; \mathbb{Z}))$. Carrying out finite-dimensional approximation in the sense of \cite{bf} to this map, one obtains an $O(2)$-equivariant map
\[
f : S^{V,U} \to S^{V',U'}
\]
of finite-dimensional sphere bundles over $B = Jac^{\mathfrak{s}}(X)$. Here $V,V'$ are complex vector bundles over $B$, $U,U'$ are real vector bundles over $B$, $S^{V,U}$ is the unit sphere bundle of $V \oplus U \oplus \mathbb{R}$ and $S^{V',U'}$ is defined similarly. The subgroup $S^1 \subset O(2)$ acts by scalar multiplication on $V,V'$ and trivially on $U,U'$. To describe the action of $\sigma \in O(2)$, we note that our chosen reference connection $A$ yields an identification $Jac^{\mathfrak{s}}(X) \cong Jac(X)$, so we will regard the base as $Jac(X)$. Then $\sigma$ acts on $Jac(X)$ in the usual way: $L \mapsto \sigma^*(L)^*$. This action lifts to $V,V',U,U'$ and makes $U,U'$ into equivariant vector bundles and makes $V,V'$ into Real vector bundles over $Jac(X)$.

Restrict $f$ to $Jac_R(X) \subseteq Jac(X)$ and then further restruct to the fixed points of $\sigma$. This gives a map
\[
f_R : S^{V_R , U_R} \to S^{V'_R , U'_R}
\]
of sphere bundles over $Jac_R(X)$, where $V_R = V^\sigma |_{Jac_R(X)}$ and $V'_R,U_R,U'_R$ are defined similarly. This map is $\mathbb{Z}_2$-equivariant, where $\mathbb{Z}_2$ is to be thought of as the subgroup $\{ \pm 1 \} \subset S^1 \subset O(2)$. In this way, we have constructed a $\mathbb{Z}_2$-equivariant stable homotopy class of map of sphere bundles over $Jac_R(X)$. We call it the {\em Real Bauer--Furuta invariant of} $X,\mathfrak{s}$. In the following section, we will give a different construction of the Real Bauer--Furuta invariant which has the advantage that it does not require the existence of an equivariant splitting to construct.

A Real structure $\widetilde{\sigma}$ on $\mathfrak{s}$ defines an involution $\sigma$ on $Jac^{\mathfrak{s}}(X)$ by $A \mapsto \widetilde{\sigma}^*(A)$. The fixed point set $Jac^{\mathfrak{s}}(X)^{\sigma}$ is a union of tori, each of which is a translation of $Jac_R^{\mathfrak{s}}(X)$. As we will now demonstrate, the components of the fixed point set are in bijection with Real structures on $\mathfrak{s}$. Observe that for any Real structure $\mathfrak{s}'$ on $\mathfrak{s}$, we have a natural inclusion $Jac^{\mathfrak{s}'}_R(X) \to Jac^{\mathfrak{s}}(X)$.

\begin{proposition}\label{prop:jacfix}
Assume that an equivariant splitting $s : H^1(X ; \mathbb{Z}) \to \mathcal{H}$ exists. Then we have an equality
\[
Jac^{\mathfrak{s}}(X)^{\sigma} = \bigcup_{\mathfrak{s}'} Jac^{\mathfrak{s}'}_R(X)
\]
where the union is over all Real structures on the underlying spin$^c$-structure $\mathfrak{s}$.
\end{proposition}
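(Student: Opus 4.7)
The plan is to reduce the equality to a comparison of two torsors over the group $\mathbb{Z}_2^{t}$ after fixing a Real reference connection to identify the fixed locus $Jac^{\mathfrak{s}}(X)^{\sigma}$ with $Jac(X)^{-\sigma}$. I would first dispose of the easy inclusion $\supseteq$. Given any Real structure $\mathfrak{s}'$ on $\mathfrak{s}$, the two lifts $\widetilde{\sigma}$, $\widetilde{\sigma}'$ both cover $\sigma$ on the same principal $Spin^{c}(4)$-bundle, so they differ by a gauge automorphism. Hence they induce the same involution on the gauge-equivalence quotient $Jac^{\mathfrak{s}}(X)$, and any $A$ with $(\widetilde{\sigma}')^{*}A = A$ has $[A]$ fixed by $\sigma$. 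This yields $Jac^{\mathfrak{s}'}_{R}(X) \subseteq Jac^{\mathfrak{s}}(X)^{\sigma}$ for every Real structure $\mathfrak{s}'$ on $\mathfrak{s}$.

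Next I would pick a Real spin$^{c}$ connection $A_{0}$ for the given $\widetilde{\sigma}$, obtained by averaging an arbitrary spin$^{c}$ connection with its $\widetilde{\sigma}^{*}$-pullback. Using $A_{0}$ as a basepoint gives an identification $Jac^{\mathfrak{s}}(X) \cong Jac(X)$. Under this identification, the involution $A \mapsto \widetilde{\sigma}^{*}A$ becomes the involution $L \mapsto \sigma^{*}(L)^{-1}$ on $Jac(X)$, so $Jac^{\mathfrak{s}}(X)^{\sigma} \cong Jac(X)^{-\sigma}$. The short exact sequence $1 \to Jac_{R}(X) \to Jac(X)^{-\sigma} \to \mathbb{Z}_2^{t} \to 1$ derived earlier in Section \ref{sec:rspinc} then exhibits $Jac^{\mathfrak{s}}(X)^{\sigma}$ as a disjoint union of $2^{t}$ translates of $Jac_{R}(X)$, where $t$ is the number of trivial $\mathbb{Z}_{2}$-summands in $H^{1}(X;\mathbb{Z})$.

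Finally I would match these $2^{t}$ translates with the Real structures on $\mathfrak{s}$. The isomorphism classes of Real structures on the fixed $\mathfrak{s}$ form a torsor for $\ker(H^{2}_{\mathbb{Z}_{2}}(X;\mathbb{Z}_{-}) \to H^{2}(X;\mathbb{Z}))$, which by the same $\mathbb{Z}_{2}$-module computation used in Section \ref{sec:rspinc} is isomorphic to $\mathbb{Z}_2^{t}$. Assigning to each $\mathfrak{s}'$ the component of $Jac^{\mathfrak{s}}(X)^{\sigma}$ containing $Jac^{\mathfrak{s}'}_{R}(X)$ is a map between two $\mathbb{Z}_2^{t}$-torsors; once equivariance is established it is automatically bijective, and the union then exhausts the fixed locus. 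I expect the main technical obstacle to be this equivariance, namely checking that if $\mathfrak{s}_{2}$ is obtained from $\mathfrak{s}_{1}$ by twisting with a Real line bundle $(L,\widetilde{\tau})$ whose underlying $L$ is topologically trivial, then every Real connection for $\mathfrak{s}_{2}$ is obtained from a Real connection for $\mathfrak{s}_{1}$ by adding a flat connection representing the corresponding class in $\pi_{0}(Jac(X)^{-\sigma})$. The equivariant splitting hypothesis enters precisely here, to provide compatible basepoints when comparing flat Real connections across different Real structures; without it the components of the fixed locus need not be cleanly indexed by Real structures.
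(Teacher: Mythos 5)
Your strategy is genuinely different from the paper's. You try to show that both sides of the claimed equality are torsors over the same finite group $\mathbb{Z}_2^t$ and then deduce the equality from equivariance of a natural map. The paper instead argues element-by-element: given $[A]\in Jac^{\mathfrak{s}}(X)^{\sigma}$, one finds $h\in\mathcal{G}$ with $\widetilde{\sigma}^*(A)=A+h^{-1}dh$, applies $\widetilde{\sigma}$ to deduce $\sigma^*(h^{-1}dh)=h^{-1}dh$, and then uses the equivariant splitting (applied to the $\sigma$-invariant homotopy class $[h]$) to correct $h$ so that $\sigma^*(h)=h$; then $h\widetilde{\sigma}$ is a new Real structure $\mathfrak{s}'$ with $A\in Jac_R^{\mathfrak{s}'}(X)$. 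This is shorter and avoids any classification of Real structures.

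There are two genuine gaps in your version. First, the assertion that $\ker\bigl(H^2_{\mathbb{Z}_2}(X;\mathbb{Z}_-)\to H^2(X;\mathbb{Z})\bigr)\cong\mathbb{Z}_2^t$ is not established anywhere in Section \ref{sec:rspinc}: the computation there is of the \emph{group} cohomology $H^1(\mathbb{Z}_2;H^1(X;\mathbb{Z})_-)$, which is only the $E_2^{1,1}$ term of the Borel spectral sequence, not the kernel of the forgetful map. To identify the two you would need to show $E_2^{2,0}=H^2(\mathbb{Z}_2;\mathbb{Z}_-)=0$ (which is true) \emph{and} that the differential $d_2\colon E_2^{1,1}\to E_2^{3,0}\cong\mathbb{Z}_2$ vanishes, for instance because a fixed point of $\sigma$ gives a section of the Borel fibration. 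Under the weaker hypothesis $H^1(X;\mathbb{Z})^{-\sigma}=0$ (which also guarantees an equivariant splitting but no fixed points) the vanishing of $d_2$ is not automatic and needs a separate argument. Second, and more seriously, you explicitly defer the crux — equivariance of the assignment $\mathfrak{s}'\mapsto$ component — as ``the main technical obstacle,'' but that obstacle is the entire content of the proposition. Until it is verified, the torsor comparison does not conclude. The paper's explicit constructive argument is precisely what makes this step unnecessary, and it is where the equivariant splitting hypothesis does its actual work; in your sketch the role of that hypothesis is stated only as a vague expectation at the very end.
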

\begin{proof}
Recall that $Jac^{\mathfrak{s}}(X)$ is the set of gauge equivalence classes of spin$^c$-connections for $\mathfrak{s}$ with curvature equal to $F$. Suppose $A \in Jac^{\mathfrak{s}}(X)^{\sigma}$. Then $\widetilde{\sigma}^*(A)$ is gauge equivalent to $A$, so there exists $h \in \mathcal{G}$ such that $\widetilde{\sigma}^*(A) = A + h^{-1}dh$. Applying $\widetilde{\sigma}$ to this equality, we get that $h^{-1}dh = \sigma^*(h^{-1}dh)$. Let $\theta \in H^1(X ; \mathbb{Z})$ be the underlying homotopy class of $h : X \to S^1$. Then $\sigma^*(\theta) = \theta$. Recall that we have chosen an equivariant splitting $s : H^1(X ; \mathbb{Z}) \to \mathcal{H}$. Set $h_0 = s(\theta)$. Then $\sigma^*(h_0) = h_0$. Write $h = e^{if} h_0$ for some function $f : X \to \mathbb{R}$. Since $h^{-1}dh = \sigma^*(h^{-1}dh)$, we get $\sigma^*(df) = df$ and hence $\sigma^*(f) = f + c$ for some $c \in \mathbb{R}$. Applying $\sigma^*$ to both sides of this equality, we see that $c = 0$, hence $\sigma^*(f) = f$ and thus $\sigma^*(h) = h$. Setting $\sigma' = h \widetilde{\sigma}$, it follows that $(\sigma')^2 = 0$, so that $\sigma'$ defines a Real structure on $\mathfrak{s}$. Moreover, it is easily checked that $(\sigma')^*(A) = \widetilde{\sigma}^*(A) - h^{-1}dh = A$, so $A \in Jac_R^{\mathfrak{s}'}(X)$, where $\mathfrak{s}'$ denotes $\mathfrak{s}$ equipped with the Real structure $\sigma'$.

Conversely, if $A \in Jac_R^{\mathfrak{s}'}(X)$ for some Real structure $\sigma'$ on $\mathfrak{s}$, then $\sigma' = h \sigma$ for some $h : X \to S^1$. From $(\sigma')^2 = 1$, it follows that $\sigma^*(h) = h$ and from $(\sigma')^*(A) = A$, we deduce that $\widetilde{\sigma}^*(A) = A + h^{-1}dh$, hence $A \in Jac^{\mathfrak{s}}(X)^{\sigma}$.
\end{proof}

Let $f : S^{V,U} \to S^{V',U'}$ be the $O(2)$-equivariant Bauer--Furuta map of $(X , \sigma , \mathfrak{s})$ over $B = Jac^{\mathfrak{s}}(X)$. Let $f^{\sigma} : S^{V^\sigma,U^{\sigma}} \to S^{(V')^{\sigma}, (U')^{\sigma}}$ be the restriction of $f$ to the fixed point sets of $\sigma$. By Proposition \ref{prop:jacfix}, we have $B^\sigma = \bigcup_{\mathfrak{s}'} B^{\mathfrak{s}'}$, where $B^{\mathfrak{s}'} = Jac^{\mathfrak{s}'}_R(X)$ and where the union is over all Real structures on $\mathfrak{s}$. It follows that for any $\mathfrak{s}'$, the restriction $f^\sigma|_{B^{\mathfrak{s}'}}$ is the Real Bauer--Furuta invariant of $(X , \sigma , \mathfrak{s}')$. In other words, the Real Bauer--Furuta invariants for all Real structures on $\mathfrak{s}$ are obtained by restricting the $O(2)$-equivariant Bauer--Furuta map to different components of the fixed point set.

\subsection{Real Bauer--Furuta map}\label{sec:realbf}

In this section, we consider the direct construction of a Real Bauer--Furuta invariant. The construction is virtually identical to the contruction in Section \ref{sec:O(2)}, except that we work throughout with Real configurations. 

The Real Seiberg--Witten equations define a map $\mu_R : \mathcal{A}_{R,H} \to \mathcal{C}_{R,H}$ of trivial Hilbert bundles over $Conn_{R,H}$ the space of connections of the form $A' = A + \alpha$, where $A \in Conn$ is a fixed Real connection and $\alpha$ is an imaginary harmonic $1$-form such that $\sigma^*(\alpha) = -\alpha$. The map $\mu_R$ is equivariant with respect to $\mathcal{H}_R$. Recall that $\mathcal{H}_R$ fits into an exact sequence
\[
1 \to \mathbb{Z}_2 \to \mathcal{H}_R \to H^1(X ; \mathbb{Z})^{-\sigma} \to 0.
\]
Given a splitting $s : H^1(X ; \mathbb{Z})^{-\sigma} \to \mathcal{H}_R$, the quotient of $\mathcal{A}_{R,H}$ and $\mathcal{C}_{R,H}$ by $s(H^1(X ; \mathbb{Z})^{-\sigma})$ are Hilbert bundles over $Jac_R^{\mathfrak{s}}(X) = Conn_{R,H}/H^1(X;\mathbb{Z})^{-\sigma}$ and $\mu_R$ descends to a $\mathbb{Z}_2$-equviariant map of Hilbert bundles over $Jac_R^{\mathfrak{s}}(X)$. This is the map whose finite-dimensional approximation will define the Real Bauer--Furuta invariant.

Since $H^1(X ; \mathbb{Z})^{-\sigma}$ is a free abelian group, there is no obstruction to the existence of a splitting $s : H^1(X ; \mathbb{Z})^{-\sigma} \to \mathcal{H}_R$. However the splitting is not unique, unless $b_1(X)^{-\sigma} = 0$. We conclude that the Real Bauer--Furuta invariant always exists, but its construction depends on the choice of a splitting.

Suppose a splitting has been chosen. Then we get a Real Bauer--Furuta map
\[
f : S^{V,U} \to S^{V',U'}
\]
where $V,V',U,U'$ are real vector bundles over $B = Jac_R(X)$ (we use the reference connection $A$ to identify $Jac_R^{\mathfrak{s}}(X)$ with $Jac_R(X)$). $f$ is a map of sphere bundles over $Jac_R(X)$ and is $\mathbb{Z}_2$-equivariant, where $\mathbb{Z}_2 = \langle \tau \rangle$ acts as follows: $\tau$ is multiplication by $-1$ on $V,V'$ and is trivial on $U,U'$. We also have that $f$ sends infinity of each fibre of $S^{V,U}$ to infinity of each fibre of $S^{V',U'}$. We have that $V - V' = D_R$, the real part of the families index of the family of spin$^c$-Dirac operators parametrised by $Jac^{\mathfrak{s}}_R(X)$, $U' = U \oplus H^+(X)^{-\sigma}$ and $f|_{S^U} : S^U \to S^{U'}$ is the map induced by the inclusion $U \to U'$.

Recall that $Jac^{\mathfrak{s}}_R(X) = Conn_{R,H}/H^1(X ; \mathbb{Z})^{-\sigma}$. The space $Conn_{R,H}$ naturally defines a family of spin$^c$-connections and hence there is a well-defined families index over $Conn_{R,H}$. To descend this family to $Jac^{\mathfrak{s}}_R(X)$, we need to lift $H^1(X ; \mathbb{Z})^{-\sigma}$ to the group of Real gauge transformations. In other words, we need a splitting $s : H^1(X ; \mathbb{Z})^{-\sigma} \to \mathcal{H}_R$. Thus, $D_R$ will generally depend on the choice of splitting. In fact, any two splittings differ by a homomorphism $\lambda : H^1(X ; \mathbb{Z})^{-\sigma} \to \mathbb{Z}_2$. Such a homomorphism corresponds to a flat real line bundle $L_\lambda \to Jac_R^{\mathfrak{s}}(X)$ and it is not hard to see that if we change splitting by $\lambda$, then the corresponding families index $D_R$ will change to $L_\lambda \otimes D_R$.

In \cite{bk}, we gave a cohomological description of the Seiberg--Witten invariants in terms of the Bauer--Furuta map. By adapting this to the Real setting, we will see how to obtain the Real Seiberg--Witten invariants in terms of the Real Bauer--Furuta map.

Let $a,a',b,b'$ denote the ranks of $V,V',U,U'$. So $a-a' = d = (c(\mathfrak{s})^2 - \sigma(X))/8$, $b' - b = b_+(X)^{-\sigma}$. Assume $b_+(X)^{-\sigma} > 0$ and let $\phi$ denote a chamber. As in \cite[\textsection 3.2]{bk}, the chamber $\phi$ determines a lift $\tau^\phi_{V',U'} \in H^{a'+b'}_{\mathbb{Z}_2}( S^{V',U'} , S^{U} ; \mathbb{Z}_2)$ of the Thom class to relative cohomology. Then we have $f^*( \tau^{\phi}_{V',U'}) \in H^{a'+b'}_{\mathbb{Z}_2}( S^{V,U} , S^U ; \mathbb{Z}_2)$. Let $\nu S^U$ denote an open tubular neighbourhood of $S^U$ in $S^{V,U}$ and set $\widetilde{Y} = S^{V,U} \setminus \nu S^U$. Then $\widetilde{Y}$ is a manifold with boundary and $\mathbb{Z}_2$ acts freely. Set $Y = \widetilde{Y}/\mathbb{Z}_2$. We have isomorphisms
\[
H^*_{\mathbb{Z}_2}( S^{V,U} , S^U ; \mathbb{Z}_2) \cong H^*_{\mathbb{Z}_2}( \widetilde{Y} , \partial \widetilde{Y} ; \mathbb{Z}_2) \cong H^*(Y , \partial Y ; \mathbb{Z}_2)
\]
where the first isomorphism is by excision and the second uses that $\mathbb{Z}_2$ acts freely on $\widetilde{Y}$. Thus we can identify $f^*(\tau^\phi_{V',U'})$ with a class in $H^{a'+b'}(Y , \partial Y ; \mathbb{Z}_2)$. As in \cite{bk}, we have that $f^*(\tau^{\phi}_{V',U'})$ can be written in the form
\[
f^*( \tau^\phi_{V',U'}) = \eta^\phi \delta \tau_U,
\]
where $\delta : H^{*-1}(\partial Y ; \mathbb{Z}_2) \to H^*(Y , \partial Y ; \mathbb{Z}_2)$ is the coboundary map and $\eta \in H^*_{\mathbb{Z}_2}(B ; \mathbb{Z}_2) \cong H^*(B ; \mathbb{Z}_2)[v]$. One has
\[
H^*( \mathbb{RP}(V) ; \mathbb{Z}_2) \cong \frac{H^*(B ; \mathbb{Z}_2)[v]}{e(V)}
\]
where $v$ is the image of the generator of $H^1_{\mathbb{Z}_2}(pt ; \mathbb{Z}_2)$ under the composition $H^*_{\mathbb{Z}_2}(pt ; \mathbb{Z}_2) \to H^*_{\mathbb{Z}_2}( S(V) ; \mathbb{Z}_2) \cong H^*( \mathbb{RP}(V) ; \mathbb{Z}_2)$ (alternatively, it is the first Stiefel--Whitney class of the hyperplane line bundle $\mathcal{O}_V(-1) \to \mathbb{RP}(V)$). Here $e(V)$ denotes the mod $2$ equivariant Euler class of $V$, which is given by:
\[
e(V) = v^a + v^{a-1} w_1(V) + v^{a-1} w_2(V) + \cdots + w_a(V).
\]
Then $f^*(\tau^\phi_{V',U'}) = \eta^\phi \delta \tau_U$ for some $\eta^\phi \in H^{a'+b_+(X)^{-\sigma} - 1}( \mathbb{RP}(V) ; \mathbb{Z}_2 )$. Let $\pi : \mathbb{RP}(V) \to B$ denote the projection map. By a similar argument to the one used for the ordinary Seiberg--Witten invariants \cite{bk}, we have:

\begin{theorem}
Choose a splitting $s : H^1(X ; \mathbb{Z})^{-\sigma} \to \mathcal{H}_R$ and let $f : S^{V,U} \to S^{V',U'}$ be the corresponding Real Bauer--Furuta map. Let $\phi$ be a chamber. Then
\[
\mathbf{SW}_R^\phi(X , \mathfrak{s})(\theta) = \pi_*( \eta^\phi \theta ) \in H^{j  - (d-b_+(X)^{-\sigma})}( Jac_R(X) ; \mathbb{Z}_2)
\]
for all $\theta \in H^j_{\mathbb{Z}_2}( pt ; \mathbb{Z}_2)$, where $\eta^\phi \in H^{a'+b_+(X)^{-\sigma}-1}( \mathbb{RP}(V) ; \mathbb{Z}_2)$ satisfies
\[
f^*( \tau^\phi_{V',U'} ) = \eta^\phi \delta \tau_U.
\]
\end{theorem}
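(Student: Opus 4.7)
The plan is to adapt the argument of \cite{bk} for the ordinary mod $2$ Seiberg--Witten invariant, replacing the role of $S^1$-equivariance used there by $\mathbb{Z}_2$-equivariance, where $\mathbb{Z}_2 = \{\pm 1\}$ is the stabiliser of a reducible Real configuration in $\mathcal{G}_R$. The underlying dictionary is: the Real Seiberg--Witten moduli space $\mathcal{M}_R^*$ is modelled, after finite-dimensional approximation, by the free $\mathbb{Z}_2$-quotient of the irreducible part of a generic fibre of $f$ over $S^{V',U'}$. Concretely, $\mathcal{M}_R^*$ is carved out as a submanifold of $Y = (S^{V,U} \setminus \nu S^U)/\mathbb{Z}_2$ and the evaluation map $\mathcal{B}_R^* \to B\mathcal{G}_R$ is realised by the natural map $Y \to \mathbb{RP}(V) \times Jac_R(X)$ obtained from the classifying space model for $\mathcal{G}_R$. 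Under this identification $[\mathcal{M}_R]$ is Poincar\'e dual to $f^*(\tau^\phi_{V',U'})$ in $H^{a'+b'}(Y, \partial Y; \mathbb{Z}_2)$, so
\[
\mathbf{SW}_R^\phi(X , \mathfrak{s})(\theta) = \langle [\mathcal{M}_R], \theta \rangle
\]
is computed as a pairing of $f^*(\tau^\phi_{V',U'})$ with $\theta$, interpreted via the $\mathcal{H}_R$-equivariant model.

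Second, I would unpack the decomposition $f^*(\tau^\phi_{V',U'}) = \eta^\phi \, \delta \tau_U$. The class $\tau^\phi_{V',U'}$ is designed to represent, in the case $b_+(X)^{-\sigma} > 1$, the Poincar\'e dual of the irreducible zero section of $S^{V',U'}$, with the chamber $\phi$ selecting between the two possible lifts when $b_+(X)^{-\sigma} = 1$. The coboundary $\delta \tau_U$ in the decomposition implements the composition of the Thom isomorphism along the $V$-factor with the connecting map of the pair $(Y, \partial Y)$, so integration against $\delta \tau_U$ reduces the relative cohomology pairing on $(Y, \partial Y)$ to an absolute cohomology computation on $\mathbb{RP}(V)$. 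A class $\theta \in H^j_{\mathbb{Z}_2}(\mathrm{pt}; \mathbb{Z}_2) \cong \mathbb{Z}_2[v]$ pulls back to $v^j$ on $\mathbb{RP}(V)$ via the map $\mathbb{RP}(V) \to B\mathbb{Z}_2$, so after this reduction the pairing becomes fibre integration of $\eta^\phi \theta$ along the projective bundle $\pi : \mathbb{RP}(V) \to Jac_R(X)$, yielding $\pi_*(\eta^\phi \theta)$. A direct degree count, using $\deg \eta^\phi = a' + b_+(X)^{-\sigma} - 1$, $\dim \mathbb{RP}(V)/Jac_R(X) = a-1$, and $a - a' = d$, gives the asserted degree $j - (d - b_+(X)^{-\sigma})$ for the output.

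The main obstacle, and the only place where anything genuinely needs to be checked beyond the \cite{bk} argument, is chamber bookkeeping in the $b_+(X)^{-\sigma} = 1$ case: one must verify that the chamber-dependent lift $\tau^\phi_{V',U'}$ produced on the Bauer--Furuta side corresponds, under the identifications above, to precisely the chamber used to orient the cobordism between moduli spaces in the definition of $\mathbf{SW}_R^\phi$. This amounts to tracing through a $1$-parameter family of perturbations crossing the wall and confirming that the sign/lift conventions agree; the argument is the Real analogue of the $S^1$-equivariant version in \cite{bk} and presents no new difficulty beyond recording that $H^+(X)^{-\sigma}$ has replaced $H^+(X)$ throughout. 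The remainder of the proof is a routine transcription of the ordinary case with $\mathbb{Z}_2$ in place of $S^1$.
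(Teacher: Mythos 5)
Your proposal matches the paper's approach: the paper does not give a detailed argument either, but simply states that the theorem follows by adapting the construction of \cite{bk} (Poincar\'e duality between $[\mathcal{M}_R]$ and $f^*(\tau^\phi_{V',U'})$, the decomposition $\eta^\phi\,\delta\tau_U$, and fibre integration over $\mathbb{RP}(V) \to Jac_R(X)$) with $\mathbb{Z}_2$ replacing $S^1$ and $H^+(X)^{-\sigma}$ replacing $H^+(X)$. Your degree bookkeeping and your flagging of the chamber-matching in the $b_+(X)^{-\sigma}=1$ case are exactly the points that need to be checked, and they are handled correctly.
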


Since $H^*_{\mathbb{Z}_2}(pt ; \mathbb{Z}_2)$ is generated by $v$, the map $\mathbf{SW}^\phi_R(X , \mathfrak{s})$ is equivalent to the collection of invariants
\[
SW_{R,m}^\phi(X ,\mathfrak{s}) = \mathbf{SW}_R^\phi(X,\mathfrak{s})(v^m) \in H^{m - (d-b_+^{-\sigma}(X))}( Jac_R(X) ; \mathbb{Z}_2).
\]
These invariants depend on the choice of splitting $H^1(X ; \mathbb{Z})^{-\sigma} \to \mathcal{H}_R$ for $m > 1$, but the $m=0$ invariant is splitting independent.

Under a change of splitting $v$ gets sent to $v' = v + a$ for some $a \in H^1( Jac_R(X) ; \mathbb{Z}_2)$. Then by expanding $(v+a)^m$ and using $a^2 = 0$, we have
\[
\mathbf{SW}_R^\phi(X , \mathfrak{s})((v')^m ) = SW_{R,m}^\phi(X,\mathfrak{s}) + ma SW_{R,m-1}^\phi(X,\mathfrak{s}).
\]
In particular, if $SW^\phi_{R,m-1}(X , \mathfrak{s}) = 0$ or if $m$ is even, then $SW^\phi_{R,m}(X,\mathfrak{s})$ is splitting independent.

Thus, the classes $SW^\phi_{R,m}(X,\mathfrak{s})$ for even $m$ are independent of the choice of splitting and hence define invariants of $(X , \sigma , \mathfrak{s})$. The classes $SW^\phi_{R,m}(X,\mathfrak{s})$ for odd $m$ will generally be splitting dependent. However, we have shown that under a change of splitting, $D_R$ changes to $L_\lambda \otimes D_R$, where $L_\lambda \to Jac_R(X)$ is the real line bundle corresponding to the change of splitting $\lambda \in Hom( H^1(X ; \mathbb{Z})^{-\sigma} , \mathbb{Z}_2)$. This gives:

\begin{proposition}\label{prop:doddsplit}
If $d = (c(\mathfrak{s})^2 - \sigma(X))/8$ is odd, then there is a unique splitting $s : H^1(X ; \mathbb{Z})^{-\sigma} \to \mathcal{H}_R$ such that $w_1(D_R) = 0$.
\end{proposition}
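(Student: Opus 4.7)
The plan is to exploit the torsor structure on the set of splittings together with the transformation rule for $D_R$ under change of splitting, which is already established in the excerpt. Recall that two splittings $s, s' : H^1(X;\mathbb{Z})^{-\sigma} \to \mathcal{H}_R$ differ by a homomorphism $\lambda \in \mathrm{Hom}(H^1(X;\mathbb{Z})^{-\sigma}, \mathbb{Z}_2)$, and the corresponding families indices are related by $D_R' = L_\lambda \otimes D_R$, where $L_\lambda \to Jac_R(X)$ is the flat real line bundle attached to $\lambda$.

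The first step is to identify $\mathrm{Hom}(H^1(X;\mathbb{Z})^{-\sigma}, \mathbb{Z}_2) \cong H^1(Jac_R(X) ; \mathbb{Z}_2)$ by the universal coefficient theorem and the isomorphism $\pi_1(Jac_R(X)) \cong H^1(X;\mathbb{Z})^{-\sigma}$. Under this identification, $\lambda$ corresponds precisely to $w_1(L_\lambda)$. This map from homomorphisms to first Stiefel--Whitney classes of associated line bundles is a bijection.

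Next I would apply the Whitney product formula. For a rank $d$ (virtual) bundle $E$ and a real line bundle $L$, one has $w_1(L \otimes E) = w_1(E) + d\, w_1(L)$ modulo $2$, a relation that extends to virtual bundles by linearity. Applied to $E = D_R$, which has virtual rank $d$, this gives
\[
w_1(L_\lambda \otimes D_R) = w_1(D_R) + d \cdot \lambda \quad \text{in } H^1(Jac_R(X);\mathbb{Z}_2).
\]
When $d$ is odd, this specialises to $w_1(L_\lambda \otimes D_R) = w_1(D_R) + \lambda$. Hence the map $\lambda \mapsto w_1(L_\lambda \otimes D_R)$ is a bijection (an affine translation) on $H^1(Jac_R(X);\mathbb{Z}_2)$, so there is exactly one choice of $\lambda$, namely $\lambda = w_1(D_R)$, for which the new families index has vanishing first Stiefel--Whitney class. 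Translating back, this yields a unique splitting with $w_1(D_R) = 0$, as required.

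There is no real obstacle here beyond checking that the Whitney product formula remains valid in the virtual setting with possibly negative rank; this is routine, since Stiefel--Whitney classes extend multiplicatively to $KO$-theory and the rank reduces mod $2$ in the expected way. I would also remark in passing on the contrast with the even case: when $d$ is even, the formula gives $w_1(L_\lambda \otimes D_R) = w_1(D_R)$, so $w_1(D_R)$ is splitting-independent, consistent with the splitting-independence of $SW^\phi_{R,m}$ for even $m$ observed earlier.
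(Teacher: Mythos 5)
Your proposal is correct and follows essentially the same route as the paper: both use the transformation rule $D_R \mapsto L_\lambda \otimes D_R$ under a change of splitting by $\lambda$, deduce $w_1(D_R) \mapsto w_1(D_R) + d\lambda$, and conclude that for odd $d$ there is a unique $\lambda$ annihilating $w_1(D_R)$. Your added discussion of the Whitney product formula in the virtual setting is just a more explicit justification of the step the paper leaves implicit.
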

\begin{proof}
Under a change of splitting $D_R$ changes to $L_\lambda \otimes D_R$. Hence $w_1(D_R)$ changes to $w_1(D_R) + d \lambda$. If $d$ is odd, then there is a unique $\lambda$ for which $w_1(D_R) + \lambda = 0$ and this gives the desired splitting.
\end{proof}

If $d$ is odd, then we will take the unique splitting with $w_1(D_R) = 0$ unless stated otherwise. In this case, the classes $SW^\phi_{R,m}(X,\mathfrak{s})$ define invariants of $(X , \sigma , \mathfrak{s})$ regardless of the parity of $m$. However, we will later show that with respect to this splitting $SW^\phi_{R,m}(X,\mathfrak{s}) = 0$ when $m$ is odd (Corollary \ref{cor:vanish}), so only the invariants for even $m$ are of interest.

\section{Integer invariants}\label{sec:integral}

In this section we consider the construction of an integer-valued Real Seiberg--Witten invariant. We also introduce an invariant $deg_R(X,\mathfrak{s})$, which was considered by Miyazawa in the case $b_1(X)^{-\sigma} = 0$. We also examine the relation between these two invariants.

\subsection{Integer Real Seiberg--Witten invariant}\label{sec:isw}

We consider the problem of obtaining integer valued Real Seiberg--Witten invariants. Of course we want to do this by orienting the moduli space $\mathcal{M}_R$ and evaluating it against classes in $H^*( B\mathcal{G}_R ; \mathbb{Z})$. Recall that $B \mathcal{G}_R \cong B\mathbb{Z}_2 \times Jac_R(X)$, so $H^*(B\mathcal{G}_R ; \mathbb{Z}) \cong H^*( B \mathbb{Z}_2 \times Jac_R(X) ; \mathbb{Z}) \cong H^*(Jac_R(X) ; \mathbb{Z}) \otimes_{\mathbb{Z}} H^*( \mathbb{RP}^\infty ; \mathbb{Z})$. Since $H^*(\mathbb{RP}^\infty ; \mathbb{Z})$ is $2$-torsion in positive degrees, the only way to obtain non-vanishing integer invariants is to pair $[\mathcal{M}_R]$ with classes in $H^*( Jac_R(X) ; \mathbb{Z})$. In particular, such invariants, when they are defined, will not require a choice of splitting.

Assume that $b_+(X)^{-\sigma} > 0$ and that a generic perturbation has been chosen, so that $\mathcal{M}_R$ is smooth and compact. Imitating the usual determinant line argument from ordinary Seiberg--Witten theory, we obtain a canonical isomorphism
\begin{equation}\label{equ:detTM}
det(T\mathcal{M}_R) \cong det( \mathcal{D}_R ) \otimes det( H^1(X ; \mathbb{R})^{-\sigma} ) \otimes det( H^+(X)^{-\sigma}),
\end{equation}
where $\mathcal{D}_R$ denotes the real part of the families index of the family of spin$^c$-Dirac operators parametrised by $\mathcal{B}_R^*$. Choose a splitting $\mathcal{H} \cong \mathbb{Z}_2 \times H^1(X ; \mathbb{Z})^{-\sigma}$, then we get an identification $\mathcal{B}_R^* \cong \mathbb{RP}^\infty \times Jac_R(X)$ and we have an isomorphism
\[
\mathcal{D}_R \cong \mathcal{O}(1) \otimes \pi^*( D_R)
\]
where $\mathcal{O}(1) \to \mathbb{RP}^\infty$ is the universal real line bundle, $\pi$ is the projection to $Jac_R(X)$ and $D_R \to Jac_R(X)$ is the real part of the family of spin$^c$ Dirac operators parametrised by $Jac(X)$, as in Section \ref{sec:realbf}. In particular, we have
\[
w_1(\mathcal{D}_R) = \pi^*( w_1(D_R) ) + dv
\]
where $d = (c(\mathfrak{s})^2 - \sigma(X) )/8$. Since $H^1(X ; \mathbb{R})^{-\sigma}$, $H^+(X)^{-\sigma}$ define trivial bundles over $\mathcal{M}_R$, we see that in order to orient $\mathcal{M}_R$, we should demand that $w_1(\mathcal{D}_R) = 0$. Thus we need that $d$ is even and $w_1(D_R) = 0$.

In order to obtain integer invariants we need to orient the moduli spaces $\mathcal{M}_R$ in such a way that the orientations are compatible with variations of the choice of metric and perturbation. In other words, in such a way that the cobordisms between moduli spaces for different choice of metric and perturbation are oriented cobordisms. The most obvious way to do this is to trivialise $det(\mathcal{D}_R)$ over the whole of $\mathcal{B}^*_R$. In this case it is clear that we do get oriented cobordisms between generic metrics and perturbations lying in the same chamber.

Thus if $d$ is even and $w_1(D_R) = 0$, then we obtain a system of coherent orientations on the moduli spaces $\mathcal{M}_R$ and well-defined integer invariants. Of course the invariant will depend on a choice of orientations for $H^1(X ; \mathbb{R})^{-\sigma}$, $H^+(X)^{-\sigma}$ and $D_R$. Changing these orientations will change the sign of the Real Seiberg--Witten invariant. We are now ready to define the integer-valued Real Seiberg--Witten invariants

\begin{definition}\label{def:zsw}
Let $X$ be a compact, oriented smooth $4$-manifold, $\sigma$ a Real structure, $\mathfrak{s}$ a Real spin$^c$-structure. Assume that $d = (c(\mathfrak{s})^2 - \sigma(X))/8$ is even, $b_+(X)^{-\sigma} > 0$ and $w_1(D_R) = 0$. Let $\mathfrak{o}$ be an orientation of $H^+(X)^{-\sigma} \oplus D_R$ over $Jac_R(X)$ and let $\phi$ be a chamber. Then the {\em integer-valued Real Seiberg--Witten invariant} is the integer cohomology class
\[
\mathbf{SW}^\phi_{R,\mathbb{Z}}(X , \mathfrak{s} , \mathfrak{o}) \in H^{b_+(X)^{-\sigma} - d}(Jac_R(X) ; \mathbb{Z})
\]
characterised by
\[
\langle [Jac_R(X)] , \theta \smallsmile \mathbf{SW}^\phi_{R,\mathbb{Z}}(X,\mathfrak{s},\mathfrak{o}) \rangle = \langle [\mathcal{M}_R] , \theta \rangle
\]
for all $\theta \in H^\delta( Jac_R(X) ; \mathbb{Z})$, $\delta = d - b_+(X)^{-\sigma} + b_1(X)^{-\sigma}$, where $\mathcal{M}_R$ is the moduli space of solutions to the Real Seiberg--Witten equations for a generic metric and perturbation lying in the chamber $\phi$. In this equality, we choose an orientation $\mathfrak{o}'$ on $H^1(X ; \mathbb{R})^{-\sigma}$ inducing an orientation on $Jac_R(X)$ so that $[Jac_R(X)]$ is defined, and we use $\mathfrak{o}' \wedge \mathfrak{o}$ as an orientation on $\mathcal{M}_R$ using the isomorphism (\ref{equ:detTM}).
\end{definition}

\begin{remark}\label{rem:noorn}
We will usually omit the choice of orientation $\mathfrak{o}$ from the notation and write $\mathbf{SW}^\phi_{R,\mathbb{Z}}(X,\mathfrak{s})$. Of course one should bear in mind that without choosing $\mathfrak{o}$, the invariant $\mathbf{SW}^\phi_{R,\mathbb{Z}}(X,\mathfrak{s})$ is only defined up to an overall sign.
\end{remark}

In general, it seems difficult to determine whether or not $D_R$ is orientable. However, if $b_1(X)^{-\sigma} = 0$, then $Jac_R(X) = \{1\}$ is a point and so $D_R$ is obviously orientable. In this case $\mathbf{SW}^\phi_{R,\mathbb{Z}}(X , \mathfrak{s} , \mathfrak{o})$ is non-zero only if $d - b_+(X)^{-\sigma} = 0$, in other words, the dimension of the moduli space is zero. Then $\mathbf{SW}^\phi_{R,\mathbb{Z}}(X , \mathfrak{s} , \mathfrak{o}) \in \mathbb{Z}$ is simply a signed count of points of $\mathcal{M}_R$. In the case that $\sigma$ is free, the class $det(ind(D_R))$ can be computed using \cite[Theorem 5.9]{mi2}.

More generally, if $d$ is even and $D_R$ is orientable, we define the {\em pure, integer-valued Real Seiberg--Witten invariant} $SW^\phi_{R,\mathbb{Z}}(X,\mathfrak{s}) \in \mathbb{Z}$ to be
\[
SW^\phi_{R,\mathbb{Z}}(X,\mathfrak{s}) = \begin{cases} \langle [Jac_R(X)] , \mathbf{SW}^\phi_{R,\mathbb{Z}}(X,\mathfrak{s}) \rangle & \text{if } d - b_+(X)^{-\sigma} + b_1(X)^{-\sigma} = 0, \\ 0 & \text{otherwise}. \end{cases}
\]

\begin{remark}
Definition \ref{def:zsw} can be extended to the case $w_1(D_R) \neq 0$ by using local coefficients. The result is an invariant 
\[
\mathbf{SW}^\phi_{R,\mathbb{Z}}(X,\mathfrak{s}) \in H^{b_+(X)^{-\sigma} - d}(Jac_R(X) ; \mathbb{Z}_w)
\]
valued in the $\mathbb{Z}$-valued local coefficient system corresponding to $w = w_1(D_R)$. However, one finds that if $w \neq 0$, then the mod $2$ reduction map $H^*( Jac_R(X) ; \mathbb{Z}_w) \to H^*(Jac_R(X) ; \mathbb{Z}_2)$ is injective. So there is nothing to be gained from considering these invariants.
\end{remark}

Let us consider the integer invariants from the point of view of the Real monopole map. One repeats the construction given in Section \ref{sec:realbf}, but using cohomology with integer coefficients. For this to work we need that the bundles $V,V',U,U'$ are orientable. Since $U' - U = H^+(X)^{-\sigma}$ is a trivial bundle, we can stabilise the Real Bauer--Furuta map so that $U,U'$ are trivial. Since $V - V' = D_R$, if $w_1(D_R) = 0$, then we can stabilise the Real Bauer--Furuta map so that both $V$ and $V'$ are orientable. Supposing $w_1(D_R) = 0$, we can assume $V,V',U,U'$ are oriented and use cohomology with integer coefficients. The Real Seiberg--Witten invariant then takes the form of a map
\[
\mathbf{SW}^\phi_{R,\mathbb{Z}}(X , \mathfrak{s}) : H^*_{\mathbb{Z}_2}(pt ; \mathbb{Z}) \to H^{* - (d-b_+(X)^{-\sigma})}( Jac_R(X) ; \mathbb{Z}).
\]
But since $H^*_{\mathbb{Z}_2}(pt ; \mathbb{Z})$ is $2$-torsion in positive degrees, the only interesting part of this map is given by its evaluation on $1 \in H^0_{\mathbb{Z}_2}(pt ; \mathbb{Z})$, giving a class
\[
\mathbf{SW}^\phi_{R,\mathbb{Z}}(X,\mathfrak{s}) \in H^{-(d-b_+(X)^{-\sigma})}(Jac_R(X) ; \mathbb{Z}),
\]
which is the invariant given in Definition \ref{def:zsw}.

\subsection{Degree of the Real Bauer--Furuta map}\label{sec:deg}

Let $X$ be a compact, oriented, smooth $4$-manifold, $\sigma$ a Real structure and $\mathfrak{s}$ a Real spin$^c$-structure. Suppose that $d = b_+(X)^{-\sigma}$ and $b_1(X)^{-\sigma} = 0$. Then the Real Bauer--Furuta map of $(X,\sigma , \mathfrak{s})$ is a $\mathbb{Z}_2$-equivariant stable homotopy class of a map of spheres $f : S^{V,U} \to S^{V',U'}$ of the same dimension. Forgetting the $\mathbb{Z}_2$-action, the underlying non-equivariant stable homotopy class of $f$ is determined by its degree. Miyazawa \cite{mi} defined an invariant $|deg_R(X,\mathfrak{s})| \in \mathbb{Z}$ which is defined as the absolute value of the degree of the Real Bauer--Furuta map $f : S^{V,U} \to S^{V',U'}$. The absolute value is taken because the spheres $S^{V,U}, S^{V',U'}$ do not have canonical orientations, so the degree of $f$ is only defined up to an overall sign factor. Instead of taking absolute value, we will regard the degree as an integer which is only well-defined up to an overall sign factor, and denote it by $deg_R(X , \mathfrak{s})$. This is consistent with the approach we have adopted for the integer-valued Real Seiberg--Witten invariant $\mathbf{SW}^\phi_{R,\mathbb{Z}}(X,\mathfrak{s})$ (Remark \ref{rem:noorn}).

The invariant $deg_R(X,\mathfrak{s})$ can be extended to the case where $b_1(X)^{-\sigma} > 0$. Let $s : H^1(X ; \mathbb{Z})^{-\sigma} \to \mathcal{H}_R$ be a splitting and $f : S^{V,U} \to S^{V',U'}$ the associated Real Bauer--Furuta map. This is a map of sphere bundles over $B = Jac_R(X)$. Assume that $D = V - V'$ is orientable and choose orientations for $D$ and $H^+(X)^{-\sigma}$. After suspending, we can assume $V$ and $V'$ are orientable and we can choose orientations on $V,V',U,U'$ such that $V \cong D \oplus V'$, $U' \cong U \oplus H^+(X)^{-\sigma}$ orientation preservingly. Letting $i : B \to S^{V,U}, i' : B \to S^{V',U'}$ be the infinity sections, we can regard $B$ as a subspace of $S^{V,U}$ and $S^{V',U'}$. Let $\tau_{V,U} \in H^*( S^{V,U} , B ; \mathbb{Z} )$, $\tau_{V',U'} \in H^*( S^{V',U'} , B ; \mathbb{Z})$ denote the Thom classes of $S^{V,U}, S^{V',U'}$. 

\begin{definition}\label{def:degree}
Let $X$ be a compact, oriented smooth $4$-manifold, $\sigma$ a Real structure, $\mathfrak{s}$ a Real spin$^c$-structure. Choose a splitting $s : H^1(X ; \mathbb{Z})^{-\sigma} \to \mathcal{H}_R$ and assume that $w_1(D_R) = 0$. Then the {\em Real degree} of $(X , \sigma , \mathfrak{s})$ (with respect to $s$) is the integer cohomology class (well-defined up to an overall sign)
\[
deg_R(X , \mathfrak{s}) \in H^{b_+(X)^{-\sigma} - d}( Jac_R(X) ; \mathbb{Z})
\]
characterised by
\[
f^*( \tau_{V',U'} ) = deg_R(X , \mathfrak{s}) \tau_{V,U}.
\]
\end{definition}

\begin{remark}
When $d$ is odd, there is a unique splitting for which $w_1(D_R) = 0$ and so $deg_R(X,\mathfrak{s})$ is defined. However, we will see that $deg_R(X,\mathfrak{s}) = 0$ for odd $d$, so the degree is only interesting when $d$ is even. When $d$ is even, the condition $w_1(D_R) = 0$ is independent of the choice of splitting and we will see that in this case $deg_R(X,\mathfrak{s})$ does not depend on the choice of splitting.
\end{remark}

\begin{proposition}\label{prop:mod2deg}
If $b_+(X)^{-\sigma} = 0$, then $d \le 0$ and $w_{j}(-D_R) = 0$ for $j > -d$. Moreover, if $w_1(D_R) = 0$, then the mod $2$ reduction of $deg_R(X,\mathfrak{s})$ equals $w_{-d}(-D_R)$.
\end{proposition}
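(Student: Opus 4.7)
The plan is to analyse the Real Bauer--Furuta map $f \colon S^{V,U} \to S^{V',U'}$ over $B = Jac_R(X)$. Because $b_+(X)^{-\sigma} = 0$ forces $U' = U \oplus H^+(X)^{-\sigma} = U$, and the restriction $f|_{S^U}$ is the map induced by the inclusion $U \hookrightarrow U'$ (see Section \ref{sec:realbf}), we have $f|_{S^U} = \mathrm{id}$. Taking cofibres of the inclusions $S^U \hookrightarrow S^{V,U}$ and $S^{U'} \hookrightarrow S^{V',U'}$ and identifying them stably with the Thom spaces $Th_B(V)$ and $Th_B(V')$ yields an induced stable map $\bar f \colon Th_B(V) \to Th_B(V')$ over $B$, and the defining relation of Definition \ref{def:degree} becomes $\bar f^*(\tau_{V'}) = deg_R(X,\mathfrak{s}) \smallsmile \tau_V$.

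The crucial step is to identify the stable homotopy class of $\bar f$. First, I would recall that the Real Bauer--Furuta map is a finite-dimensional approximation of the monopole map $\mu_R$, whose nonlinearity is quadratic in the spinor and therefore vanishes to second order on the reducible locus. This means that the stable class of $\bar f$ is determined by the linearisation of $\mu_R$ along the spinor direction, which is the family of Dirac operators over $B$ with virtual index $D_R$. After stabilising by a sufficiently large trivial summand, one can represent $-D_R$ as an honest real vector bundle $E$ over $B$; this is possible only when $\mathrm{rk}(E) = -d \ge 0$, which forces $d \le 0$, and since $E$ has rank $-d$ its Stiefel--Whitney classes satisfy $w_j(E) = w_j(-D_R) = 0$ for $j > -d$. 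Under such a stabilisation, $V' \cong V \oplus E$ and $\bar f$ is stably homotopic to the Thomification of the zero-section inclusion $V \hookrightarrow V \oplus E$.

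The mod-$2$ identity then follows from the standard formula for the pullback of a Thom class under a zero-section inclusion: if $i \colon Th_B(V) \hookrightarrow Th_B(V \oplus E)$ is such an inclusion and $E$ is orientable (which holds under the hypothesis $w_1(D_R) = 0$), then $i^*(\tau_{V \oplus E}) = \tau_V \smallsmile e(E)$. Combining this with $\bar f^*(\tau_{V'}) = deg_R \smallsmile \tau_V$ gives $deg_R(X,\mathfrak{s}) = e(-D_R)$ up to the overall sign ambiguity, and its mod-$2$ reduction is the top Stiefel--Whitney class $w_{-d}(-D_R)$.

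The hard part will be the identification of $\bar f$ stably with the zero-section inclusion of the index bundle. This will require a proper-homotopy argument, in the spirit of the original Bauer--Furuta construction, showing that the quadratic nonlinearities do not alter the stable homotopy class of the cofibre map, together with the stabilisation that replaces the virtual bundle $-D_R$ by an actual rank-$(-d)$ bundle---the latter being exactly what imposes $d \le 0$.
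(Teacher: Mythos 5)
Your proposal contains two genuine gaps, and the conclusion is reached only because you have implicitly assumed what needs to be proved.

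First, the derivation of $d \le 0$ is circular. You write that one can stabilise so that $-D_R$ becomes an actual bundle $E$, and then read off $d \le 0$ from $\operatorname{rk}(E) \ge 0$. But stabilising by a trivial summand adds the same trivial bundle to $V$ and $V'$ and never changes the virtual rank of $-D_R = V' - V$, which is fixed at $-d$; representing $-D_R$ by an honest bundle is possible precisely when $d \le 0$, so asserting the representability is equivalent to asserting the conclusion. The paper instead shows that since the restriction of $f$ to the $\mathbb{Z}_2$-fixed locus $S^U \to S^{U'}=S^U$ is the identity, the $\mathbb{Z}_2$-equivariant degree $\beta_2 \in H^*_{\mathbb{Z}_2}(B;\mathbb{Z}_2)\cong H^*(B;\mathbb{Z}_2)[v]$ is constrained (adapting the $S^1$-equivariant localisation argument of \cite[\S 3]{bar} to the group $\mathbb{Z}_2$) to equal $v^{-d} + v^{-d-1}w_1(-D_R)+\cdots+w_{-d}(-D_R)$; the inequality $d\le 0$ then falls out because this must be an honest polynomial in $v$, and the vanishing $w_j(-D_R)=0$ for $j>-d$ falls out because there cannot be negative powers of $v$. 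The mod $2$ statement follows by applying the forgetful map $v\mapsto 0$.

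Second, your identification of $\bar f$ with the Thomification of a zero-section inclusion is not justified and is false in general. The quadratic nonlinearity of the monopole map vanishes to second order on the reducibles, but this does not determine the global stable homotopy class of the finite-dimensional approximation; if it did, the Bauer--Furuta class would never encode anything beyond the families index. Even in the toy case $B=\mathrm{pt}$, $U = 0$, $V = V' = \mathbb{R}^2$ with $\mathbb{Z}_2$ acting by $-1$, an equivariant self-map of $S^2$ fixing $0$ and $\infty$ can have arbitrary degree and need not be equivariantly homotopic to a linear map. What is true, and what the paper actually uses, is that the \emph{equivariant degree} of $f$ is constrained by the degree of the fixed-point map $f|_{S^U}=\mathrm{id}$ via localisation; this pins down $\beta_2$ without ever claiming $f$ is homotopic to its linearisation. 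Your final Euler-class computation is correct once $\beta_2$ is known, but the route to get there needs the equivariant degree argument rather than a homotopy identification.
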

\begin{proof}
Define the mod $2$ $\mathbb{Z}_2$-equivariant degree $\beta_2 \in H^*_{\mathbb{Z}_2}( B ; \mathbb{Z}_2)$ by $f^*( \tau_{V',U'} ) = deg_{\mathbb{Z}_2}(f) \tau_{V,U}$, where $\tau_{V,U}, \tau_{V',U'}$ are the $\mathbb{Z}_2$-equivariant Thom classes for $S^{V',U'}$ and $S^{V,U}$. Since $\mathbb{Z}_2$ acts trivially on $B$, we have $H^*_{\mathbb{Z}_2}(B ; \mathbb{Z}_2) \cong H^*(B ; \mathbb{Z}_2)[v]$ where $v$ is the generator of $H^1_{\mathbb{Z}_2}(pt ; \mathbb{Z}_2)$. Suppose $b_+(X)^{-\sigma} = 0$. Using the same argument as in \cite[\textsection 3]{bar} but in $\mathbb{Z}_2$-equivariant cohomology rather than $S^1$-equivariant, we find that $d \le 0$, $w_j(-D_R) = 0$ for $j > -d$ and that
\[
\beta_2 = v^{-d} + v^{-d-1} w_1(-D_R) + v^{-d-2} w_2(-D_R) + \cdots + w_{-d}(-D_R).
\]
Now assume that $w_1(D_R) = 0$. Passing from equivariant to non-equivariant cohomology, $\beta_2$ is sent to the mod $2$ reduction of $deg_R(X,\mathfrak{s})$. Since the forgetful map $H^*_{\mathbb{Z}_2}(B ; \mathbb{Z}_2) \to H^*(B ; \mathbb{Z}_2)$ is given by setting $v=0$, we see that $deg_R(X,\mathfrak{s}) = w_{-d}(-D_R) \; ({\rm mod} \; 2)$.
\end{proof}

\begin{corollary}
If $d = b_+(X)^{-\sigma} = 0$, then $w_1(D_R) = 0$.
\end{corollary}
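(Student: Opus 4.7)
The strategy is to invoke the unconditional vanishing statement from the first sentence of Proposition \ref{prop:mod2deg}, which does not require $w_1(D_R) = 0$ as a hypothesis. Specialising to $d = 0$ will give the vanishing of $w_1(-D_R)$, and since $w_1$ of a virtual bundle agrees with $w_1$ of its inverse in $\mathbb{Z}_2$-cohomology, this is equivalent to $w_1(D_R) = 0$.

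More precisely, the plan is as follows. First I would apply Proposition \ref{prop:mod2deg} with $b_+(X)^{-\sigma} = 0$ and $d = 0$. The first sentence of that proposition asserts that $w_j(-D_R) = 0$ for every $j > -d = 0$; in particular, taking $j = 1$ yields $w_1(-D_R) = 0$. Next, I would use the identity $w(D_R) \cdot w(-D_R) = 1$ for total Stiefel--Whitney classes of virtual bundles; comparing degree-one components in $H^1(Jac_R(X); \mathbb{Z}_2)$ gives $w_1(D_R) + w_1(-D_R) = 0$, hence $w_1(D_R) = w_1(-D_R) = 0$.

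Finally, I would remark on a potential subtlety regarding the choice of splitting $s : H^1(X;\mathbb{Z})^{-\sigma} \to \mathcal{H}_R$. As discussed immediately before Proposition \ref{prop:doddsplit}, changing the splitting by a homomorphism $\lambda \in \mathrm{Hom}(H^1(X;\mathbb{Z})^{-\sigma}, \mathbb{Z}_2)$ replaces $D_R$ by $L_\lambda \otimes D_R$, and consequently alters $w_1(D_R)$ by $d \cdot \lambda$. Since $d = 0$, this change is trivial, so the conclusion $w_1(D_R) = 0$ is splitting-independent and well-posed.

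There is essentially no obstacle here: the statement is a direct corollary, and the only thing to be careful about is tracking that the hypothesis of the first half of Proposition \ref{prop:mod2deg} does not itself presuppose $w_1(D_R) = 0$ (it is only the second half of that proposition, identifying the mod~$2$ reduction of $deg_R$ with $w_{-d}(-D_R)$, which requires that assumption).
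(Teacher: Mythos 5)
Your proof is correct and is exactly what the paper intends by stating this as an immediate corollary of Proposition \ref{prop:mod2deg}: the first assertion of that proposition gives $w_j(-D_R)=0$ for $j>-d=0$ unconditionally, and $w_1(-D_R)=w_1(D_R)$ over $\mathbb{Z}_2$. Your observation that the splitting-dependence of $w_1(D_R)$ is by $d\lambda$, hence vanishes when $d=0$, is a nice check but not strictly needed for the conclusion.
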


\begin{proposition}
Suppose $d$ is even and $w_1(D_R) = 0$. Then $deg_R(X,\mathfrak{s})$ does not depend on the choice of splitting.
\end{proposition}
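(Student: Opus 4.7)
The plan is to exploit the fact that under a change of splitting, all bundles in the Bauer--Furuta construction get twisted by a flat real line bundle that becomes trivial on a suitable double cover of $Jac_R(X)$, and to deduce the equality on the base from torsion-freeness of integer cohomology of the torus. Concretely, any two splittings $s, s' : H^1(X;\mathbb{Z})^{-\sigma} \to \mathcal{H}_R$ differ by a homomorphism $\lambda : H^1(X;\mathbb{Z})^{-\sigma} \to \mathbb{Z}_2$, in the sense that $s'(\theta) = \lambda(\theta) s(\theta)$, and under this change $V, V', D_R$ are replaced by their tensor products with the flat real line bundle $L_\lambda \to Jac_R(X)$, while $U, U'$ are unchanged.

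Assuming $\lambda \neq 0$ (otherwise $s = s'$ and there is nothing to prove), let $H' = \ker \lambda$ be the index-two subgroup and set $\widetilde{B} = Conn_{R,H}/H'$, giving a connected double cover $p : \widetilde{B} \to Jac_R(X)$. The key observation is that $s|_{H'} = s'|_{H'}$, so the subgroups $s(H') = s'(H') \subset \mathcal{H}_R$ coincide; consequently the pulled-back bundles $p^* V^{(s)}$ and $p^* V^{(s')}$ (and similarly for $V'$) are canonically identified on $\widetilde{B}$, under which the pulled-back Real Bauer--Furuta maps $p^* f^{(s)}$ and $p^* f^{(s')}$ correspond. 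This yields $p^* deg_R^{(s)}(X, \mathfrak{s}) = p^* deg_R^{(s')}(X, \mathfrak{s})$ in $H^*(\widetilde{B}; \mathbb{Z})$. Applying the cohomological transfer $p_!$ for the double cover, which satisfies $p_! \circ p^* = 2 \cdot \mathrm{id}$, I then obtain $2 \bigl( deg_R^{(s)}(X,\mathfrak{s}) - deg_R^{(s')}(X,\mathfrak{s}) \bigr) = 0$ in $H^*(Jac_R(X); \mathbb{Z})$, and since $Jac_R(X)$ is a torus, its integer cohomology is torsion-free, forcing the difference to vanish.

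The main obstacle I expect is matching orientations so that the signs of $deg_R^{(s)}$ and $deg_R^{(s')}$ agree up to (and only up to) the overall sign already implicit in the definition of $deg_R$, rather than picking up additional sign ambiguity. This is where the hypothesis that $d$ is even enters decisively: one has $\det(L_\lambda \otimes D_R) = L_\lambda^{\otimes d} \otimes \det D_R$, and for $d$ even, the canonical trivialization $L_\lambda^{\otimes 2} \cong \mathbb{R}$ of the $\mathbb{Z}_2$-line bundle induces a canonical isomorphism $\det(L_\lambda \otimes D_R) \cong \det D_R$. This provides a canonical correspondence between orientations of $D_R^{(s)}$ and $D_R^{(s')}$ that is compatible with pullback to $\widetilde{B}$, so the argument above produces a genuine equality of cohomology classes. (Note that the condition $w_1(D_R) = 0$ is itself independent of the splitting when $d$ is even, since $w_1(L_\lambda \otimes D_R) = w_1(D_R) + d \cdot w_1(L_\lambda)$.)
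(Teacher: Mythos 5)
Your proof is correct, and it gets to the same conclusion by the same underlying key idea as the paper (pass to a covering of $Jac_R(X)$ on which the splittings coincide, then exploit torsion-freeness of $H^*(Jac_R(X);\mathbb{Z})$), but via genuinely different technical machinery. The paper passes all the way to the maximal $2$-torsion cover $\widetilde{Jac}_R(X) = H^1(X;\mathbb{R})^{-\sigma}/2H^1(X;\mathbb{Z})^{-\sigma}$, where the canonical lift of $2H^1(X;\mathbb{Z})^{-\sigma}$ yields a $\Gamma$-equivariant map ($\Gamma$ a $\mathbb{Z}_2$-central extension of $\mathbb{Z}_2^\beta$, $\beta = b_1(X)^{-\sigma}$); every splitting $s:\mathbb{Z}_2^\beta\to\Gamma$ pulls back the $\Gamma$-equivariant degree through the identification $H^*_\Gamma\cong H^*(Jac_R;\mathbb{Z})\otimes \mathbb{Z}[x]/(2x)$, and since $s^*(x)$ is a $2$-torsion class in a torsion-free ring it vanishes regardless of $s$. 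You instead work with the single double cover $p:\widetilde{B}\to Jac_R(X)$ determined by $\lambda = s'-s$, where $s|_{\ker\lambda}=s'|_{\ker\lambda}$ forces $p^*\deg^{(s)} = p^*\deg^{(s')}$, and conclude by the transfer $p_!p^*=2\cdot\mathrm{id}$ and torsion-freeness. Your route is more economical in that it compares only the specific pair of splittings at hand and avoids introducing $\Gamma$-equivariant cohomology; the paper's processes all splittings at once through one universal object. One small bonus of your write-up: the remark that for $d$ even $\det(L_\lambda\otimes D_R)\cong L_\lambda^{\otimes d}\otimes\det D_R$ is canonically identified with $\det D_R$ gives a cleaner account of why the overall-sign ambiguity of $\deg_R$ causes no trouble here than the paper's proof makes explicit.
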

\begin{proof}
Since any two splittings $s_1,s_2 : H^1(X ; \mathbb{Z})^{-\sigma} \to \mathcal{H}_R$ differ by a homomorphism $H^1(X ; \mathbb{Z})^{-\sigma} \to \mathbb{Z}_2$, they agree on the subgroup $2 H^1(X ; \mathbb{Z})^{-\sigma} \subseteq H^1(X ; \mathbb{Z})^{-\sigma}$. This means that there is a canonical lift $s : 2 H^1(X ; \mathbb{Z})^{-\sigma} \to \mathcal{H}_R$. Now we imitate the usual construction of the Real Bauer--Furuta map, except that instead of quotienting by $s(H^1(X ; \mathbb{Z})^{-\sigma})$, we quotient by $s(2H^1(X;\mathbb{Z})^{-\sigma})$. The result is a $\Gamma$-equivariant map 
\[
\widetilde{f} : S^{\widetilde{V} , \widetilde{U}} \to S^{\widetilde{V}' , \widetilde{U}'}
\]
over $\widetilde{Jac}_R(X) = H^1(X;\mathbb{R})^{-\sigma}/2H^1(X;\mathbb{Z})^{-\sigma}$, where $\Gamma = \mathcal{H}_R/s(H^1(X;\mathbb{Z})^{-\sigma}$. Observe that $\Gamma$ is a $\mathbb{Z}_2$ central extension of $\mathbb{Z}_2^{\beta}$, where $\beta = b_1(X)^{-\sigma}$ and $\widetilde{Jac}_R(X) \to Jac_R(X)$ is a $\mathbb{Z}_2^\beta$-covering space. Splittings $s' : \mathbb{Z}_2 \to \Gamma$ are in bijection with splittings $s' : H^1(X ; \mathbb{Z})^{-\sigma} \to \mathcal{H}_R$ (there is a natural map from splittings $H^1(X;\mathbb{Z})^{-\sigma} \to \mathcal{H}_R$ to splittings $\mathbb{Z}_2^\beta \to \Gamma$ and this map is easily seen to be a bijection). Given a splitting $s : \mathbb{Z}_2^\beta \to \Gamma$, the Bauer--Furuta map corresponding to $s$ is obtained by quotienting $\widetilde{f}$ by $s(\mathbb{Z}_2^\beta)$. Set $\widetilde{D} = \widetilde{V} - \widetilde{V}'$. Then the virtual bundle $D_R$ corresponding to a chosen splitting $s$ is given by $D_R = \widetilde{D}/s(\mathbb{Z}_2^\beta)$. Since $\widetilde{f}$ is $\Gamma$-equivariant, it has a $\Gamma$-equivariant degree
\[
deg_{\Gamma}( \widetilde{f} ) \in H^{b_+(X)^{-\sigma}-d}_\Gamma( \widetilde{Jac}_R(X) ; \mathbb{Z}).
\]
Pulling back under a splitting $s : \mathbb{Z}_2^\beta \to \Gamma$, we obtain
\[
s^*( deg_{\Gamma}(\widetilde{f})) \in H^{b_+(X)^{-\sigma}-d}_{\mathbb{Z}_2^\beta}( \widetilde{Jac}_R(X) ; \mathbb{Z}) \cong H^{b_+(X)^{-\sigma}-d}(Jac_R(X) ; \mathbb{Z}).
\]
Clearly $s^*(deg_{\Gamma}(\widetilde{f}))$ coincides with $deg(f)$, where $f$ is the Bauer--Furuta invariant corresponding to $s$. So it remains to show that $s^*( deg_{\Gamma}(\widetilde{f}))$ does not depend on the choice of splitting.

Fix a splitting $s_0$, which we use to identify $\Gamma$ with $\mathbb{Z}_2 \times \mathbb{Z}_2^\beta$. Under this splitting
\[
H^*_{\Gamma}(\widetilde{Jac}_R(X) ; \mathbb{Z}) \cong H^*_{\mathbb{Z}_2}( Jac_R(X) ; \mathbb{Z}).
\]
We have an isomorphism
\[
H^*_{\mathbb{Z}_2}( Jac_R(X) ; \mathbb{Z}) \cong H^*( Jac_R(X) ; \mathbb{Z}) \otimes H^*_{\mathbb{Z}_2}(pt ; \mathbb{Z}).
\]
Recall that $H^*_{\mathbb{Z}_2}(pt ; \mathbb{Z}) \cong \mathbb{Z}[x]/(2x)$, where $deg(x) = 2$.

Now let $s : \mathbb{Z}_2^\beta \to \Gamma$ be any splitting. Then the pullback
\[
s^* : H^*_{\mathbb{Z}_2}( Jac_R(X) ; \mathbb{Z}) \to H^*( Jac_R(X) ; \mathbb{Z})
\]
is the map
\[
H^*( Jac_R(X) ; \mathbb{Z}) \otimes \mathbb{Z}[x]/(2x) \to H^*(Jac_R(X) ; \mathbb{Z})
\]
which sends $x$ to some class $a = s^*(x) \in H^*(Jac_R(X) ; \mathbb{Z})$. Since $2x=0$, we must also have $2a=0$. But $H^*(Jac_R(X) ; \mathbb{Z})$ has no $2$-torsion and hence $s^*( deg_{\Gamma}(\widetilde{f}))$ does not depend on the choice of $s$.
\end{proof}

One advantage of $deg_R(X,\mathfrak{s})$ over the integer-valued Seiberg--Witten invariant is that it is defined even when $b_+(X)^{-\sigma} = 0$. On the other hand for $b_+(X)^{-\sigma} > 0$, the two invariants are closely related, as the following result shows.

\begin{proposition}\label{prop:deginteger}
Let $X$ be a compact, oriented, smooth $4$-manifold, $\sigma$ a Real structure and $\mathfrak{s}$ a Real spin$^c$-structure. Assume that $w_1(D_R) = 0$.
\begin{itemize}
\item[(1)]{If $d$ is odd, then $deg_R(X,\mathfrak{s}) = 0$.}
\item[(2)]{If $d$ is even and $b_+(X)^{-\sigma}>0$, then $deg_R(X,\mathfrak{s}) = 2 \, \mathbf{SW}^\phi_{R,\mathbb{Z}}(X , \mathfrak{s})$ for any chamber $\phi$. In particular, $\mathbf{SW}^\phi_{R,\mathbb{Z}}(X,\mathfrak{s})$ does not depend on the choice of chamber.}
\item[(3)]{If $d$ is even and $b_+(X)^{-\sigma} = 0$, then we have $d \le 0$, $w_j(-D_R) = 0$ for $j > -d$ and $deg_R(X,\mathfrak{s}) = w_{-d}(-D_R) \; ({\rm mod} \; 2)$. In particular, if $d=0$, then $w_1(D_R) = 0$ and $deg_R(X , \mathfrak{s}) = 1 \; ({\rm mod} \; 2)$.}
\end{itemize}
\end{proposition}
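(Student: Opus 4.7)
My plan is to treat the three parts in turn, all deriving from the defining relation $f^*(\tau_{V',U'}) = deg_R(X,\mathfrak{s}) \cdot \tau_{V,U}$ for the Real Bauer--Furuta map $f : S^{V,U} \to S^{V',U'}$ and its fiberwise $\mathbb{Z}_2$-equivariance (with $\mathbb{Z}_2 = \{\pm 1\}$ acting by scalar multiplication on $V, V'$ and trivially on $U, U'$).

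For part (1), I would exploit $\mathbb{Z}_2$-equivariance directly. The action $(-1)_V$ on $S^{V,U}$ has fiberwise degree $(-1)^a$, so $(-1)_V^{*} \tau_{V,U} = (-1)^a \tau_{V,U}$, and similarly $(-1)_{V'}^{*} \tau_{V',U'} = (-1)^{a'} \tau_{V',U'}$. Equivariance $f \circ (-1)_V = (-1)_{V'} \circ f$ combined with the defining relation then yields $(-1)^a deg_R = (-1)^{a'} deg_R$, i.e.\ $(1-(-1)^d)\,deg_R = 0$. When $d$ is odd this becomes $2\,deg_R = 0$; since $Jac_R(X)$ is a torus, $H^{*}(Jac_R(X);\mathbb{Z})$ is torsion-free, forcing $deg_R = 0$.

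For part (2), I would combine the integer version of the cohomological description from Section~\ref{sec:realbf} with a careful comparison between equivariant and non-equivariant Thom classes. The class $\mathbf{SW}^\phi_{R,\mathbb{Z}}(X,\mathfrak{s})$ is obtained from $f^{*}(\tau^\phi_{V',U'}) = \eta^\phi \, \delta\tau_U$, where $\eta^\phi$ lives on the quotient space $Y = \widetilde{Y}/\mathbb{Z}_2$ with $\widetilde{Y} = S^{V,U}\setminus \nu S^U$. On the other hand, the \emph{non-equivariant} Thom class $\tau_{V,U}$ pulled back to $\widetilde{Y}$ is the transfer of a class on $Y$ along the double cover $\widetilde{Y}\to Y$, and this transfer introduces a factor of $2$ in integer cohomology. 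Tracing through the defining equation then yields $deg_R(X,\mathfrak{s}) = 2\pi_*(\eta^\phi) = 2\,\mathbf{SW}^\phi_{R,\mathbb{Z}}(X,\mathfrak{s})$. Geometrically this is just the statement that a regular value $y \notin S^{U'}$ of $f$ has a preimage on which $\mathbb{Z}_2$ acts freely, so that signed counts upstairs are twice the signed counts in $\mathcal{M}_R = f^{-1}(y)/\mathbb{Z}_2$. Chamber independence of $\mathbf{SW}^\phi_{R,\mathbb{Z}}$ is then immediate from chamber independence of $deg_R$ (which itself follows from the fact that the homotopy type of $f$ does not see the chamber).

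Part (3) is essentially a reinterpretation of Proposition~\ref{prop:mod2deg} together with the Corollary immediately following it. The assertions $d \le 0$ and $w_j(-D_R) = 0$ for $j > -d$ are verbatim from Proposition~\ref{prop:mod2deg}; the hypothesis $w_1(D_R) = 0$ is automatic here because $d$ is even (so all splittings give the same $w_1(D_R)$, and we are assuming it vanishes), which lets us invoke the mod $2$ identity $deg_R(X,\mathfrak{s}) = w_{-d}(-D_R)$. The special case $d=0$ reduces to $w_0(-D_R) = 1$ and combining with the Corollary gives the final claim. The main obstacle I anticipate is in part (2): pinning down the factor of $2$ requires carefully distinguishing the integer equivariant Thom class from its non-equivariant counterpart on the free quotient $\widetilde{Y}/\mathbb{Z}_2$, and verifying that under the assumptions $d$ even and $w_1(D_R)=0$ all relevant orientations line up so that no further sign ambiguity (beyond the overall one noted in Remark~\ref{rem:noorn}) is introduced.
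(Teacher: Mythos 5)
Your proposal is correct and follows essentially the same route as the paper: part (1) uses the same fibrewise-degree/equivariance argument together with torsion-freeness of $H^*(Jac_R(X);\mathbb{Z})$; part (2) identifies the correct source of the factor of $2$ (the free $\mathbb{Z}_2$-action away from $S^U$, cohomologically the transfer $\rho_*\rho^*=2$ for $\rho : S(V)\to\mathbb{RP}(V)$), which is precisely what the paper makes rigorous after suspending so that the lifted Thom class becomes unique in non-equivariant cohomology; and part (3) reduces to Proposition~\ref{prop:mod2deg} and the ensuing corollary exactly as the paper does.
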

\begin{proof}
Let $f : S^{V,U} \to S^{V',U'}$ be the Real Bauer--Furuta map. Recall that $f$ is $\mathbb{Z}_2 = \langle \tau \rangle$-equivariant, where $\tau$ acts as $-1$ on $V,V'$ and as $+1$ on $U,U'$. Let $a,a',b,b'$ denote the ranks of $V,V',U,U'$. Hence $a-a' = d$, $b'-b = b_+(X)^{-\sigma} = d$.

(1) Since $\tau$ acts as $-1$ on $V$, $+1$ on $U$, $\tau : S^{V,U} \to S^{V,U}$ has degree $(-1)^a$. Similarly, $\tau : S^{V',U'} \to S^{V',U'}$ has degree $(-1)^{a'}$. Since $f \circ \tau = \tau \circ f$, taking degrees yields $(-1)^{a'} deg(f) = (-1)^{a} deg(f)$, hence $deg(f) = (-1)^d deg(f)$. If $d$ is odd, then $deg(f) = -deg(f)$, so $2\, deg(f) = 0$.

If $w_1(D_R) = 0$, then $deg(f) = 0$ because $H^*(Jac_R(X) ; \mathbb{Z})$ has no $2$-torsion.

(2) If $d$ is even and $b_+(X)^{-\sigma} > 0$, then the integer-valued Real Seiberg--Witten invariant $\mathbf{SW}^\phi_{R,\mathbb{Z}}(X,\mathfrak{s})$ is defined. By suspending, we can assume $a,a'$ are all even and that $U,V'$ are trivial. Let $\tau^\phi_{V',U'} \in H^{a'+b'}_{\mathbb{Z}_2}( S^{V',U'} , S^U ; \mathbb{Z})$ be the lifted Thom class in $\mathbb{Z}_2$-equivariant cohomology. Then as in Section \ref{sec:realbf}, we can write $f^*( \tau^\phi_{V',U'} ) = \eta^\phi \delta \tau_U$ for some $\eta^\phi \in H^{a-1}(\mathbb{RP}(V) ; \mathbb{Z})$. Then $\mathbf{SW}_{R,\mathbb{Z}}(X,\mathfrak{s}) = (\pi_{\mathbb{RP}(V)})_*( \eta^\phi )$, where $\pi_{\mathbb{RP}(V)} : \mathbb{RP}(V) \to B$ is the projection map. Let $\rho : S(V) \to \mathbb{RP}(V)$ be the quotient map. Then
\begin{equation}\label{equ:2SW}
2 \, \mathbf{SW}^\phi_{R,\mathbb{Z}}(X,\mathfrak{s}) = (\pi_{S(V)})_*( \rho^*(\eta^\phi) )
\end{equation}
where $\pi_{S(V)} : S(V) \to B$ is the projection map. Observe that $H^*(\mathbb{RP}(V) ; \mathbb{Z}) \cong H^*_{\mathbb{Z}_2}( S(V) ; \mathbb{Z})$. Under this isomorphism, the forgetful map $H^{a-1}(\mathbb{RP}(V) ; \mathbb{Z}) \to H^{a-1}(S(V) ; \mathbb{Z})$ is just the pullback map $\rho^*$. Hence $\rho^*(\eta^\phi)$ can be understood in terms of non-equivariant cohomology.

Consider the long exact sequence of the triple $(S^{V',U'} , S^U , B)$ in ordinary cohomology
\[
\cdots \to H^{*-1}( S^{U} ; \mathbb{Z}) \buildrel \delta \over \longrightarrow H^*( S^{V',U'} , S^U ; \mathbb{Z}) \to H^*( S^{V',U'}, B ; \mathbb{Z}) \to H^*( S^U , B ; \mathbb{Z}) \to \cdots
\]
Replacing $f$ by a suspension if necessary, we can assume that $a' > b_1(X)^{-\sigma} + 1 - b_+(X)^{-\sigma}$. Then it follows that the restriction map $H^{a'+b'}( S^{V',U'} , S^U ; \mathbb{Z}) \to H^{a'+b'}( S^{V',U'} , B ; \mathbb{Z})$ is an isomorphism. In particular, the Thom class $\tau_{V',U'}$ has a unique lift $\tau^\phi_{V',U'}$ (or put another way, the image of the lifted Thom class in non-equivariant cohomology is independent of $\phi$). Similarly, the map $H^{a'+b'}(S^{V,U} , S^U ; \mathbb{Z}) \to H^{a',b'}( S^{V,U} , B ; \mathbb{Z})$ is an isomorphism. The former group is generated as a module over $H^*(Jac_R(X) ; \mathbb{Z})$ by $\nu_{S(V)} \delta \tau_U$, where $\nu_{S(V)}$ is the generator of $H^{a-1}(S(V),B ; \mathbb{Z})$ and the latter is generated by $\tau_{V,U}$. Now if $\beta = deg_R(f)$, then by definition, $f^*(\tau_{V',U'}) = \beta \tau_{V,U}$. On the other hand, we have $f^*( \tau^\phi_{V',U'}) = \rho^*(\eta^\phi) \delta \tau_U$. By commutativity of the diagram
\[
\xymatrix{
H^{a'+b'}( S^{V',U'} , S^U ; \mathbb{Z}) \ar[d] \ar[r]^-{f^*} & H^{a'+b'}( S^{V,U} , S^U ; \mathbb{Z}) \ar[d] \\
H^{a'+b'}(S^{V',U'} , B ; \mathbb{Z}) \ar[r]^-{f^*} & H^{a'+b'}(S^{V,U} , B ; \mathbb{Z})
}
\]
if follows that $\rho^*(\eta^\phi) = \beta \nu_{S(V)}$. Then Equation (\ref{equ:2SW}) gives 
\[
2 \, \mathbf{SW}^\phi_{R,\mathbb{Z}}(X,\mathfrak{s}) = \beta = deg(f).
\]

(3) Immediate from Proposition \ref{prop:mod2deg}.
\end{proof}

\begin{remark}
In light of Proposition \ref{prop:deginteger}, we see that if $w_1(D_R)=0$, then $\mathbf{SW}^\phi_{R , \mathbb{Z}}(X , \mathfrak{s})$ does not depend on the choice of chamber and we simply write the invariant as $\mathbf{SW}_{R,\mathbb{Z}}(X , \mathfrak{s})$. Similarly we write $SW_{R,\mathbb{Z}}(X,\mathfrak{s})$ instead of $SW^\phi_{R,\mathbb{Z}}(X,\mathfrak{s})$.
\end{remark}

\begin{remark}\label{rem:degcount}
It is possible to calculate $deg_R(X,\mathfrak{s})$ in terms of moduli spaces. Assume $b_1(X)^{-\sigma} = 0$, $d = b_+(X)^{-\sigma}$ and that $d$ is even. Then $\mathcal{G}_R \cong \{ \pm 1\} \times \{ e^{if} \; | \; f \in \Omega^0(X)^{-\sigma} \}$ by Proposition \ref{prop:Rgauge}. Define the {\em framed Real gauge group} to be $\widetilde{\mathcal{G}}_R = \{ e^{if} \; | \; f \in \Omega^0(X)^{-\sigma} \}$ and the {\em framed Real moduli space} $\widetilde{M}_R = \mathcal{N}_R/ \widetilde{\mathcal{G}}_R$, where $\mathcal{N}_R$ is the set of solutions to the Real Seiberg--Witten equations for a given metric and perturbation. If $\widetilde{\mathcal{M}}_R$ is cut out transversally, then it is a finite set of points and an argument similar to the one used in the proof of \cite[Theorem 2.24]{bk}, shows that the degree $deg_R(X , \mathfrak{s})$ of the Bauer--Furuta map is equal to a signed count of the number of points in $\widetilde{\mathcal{M}}_R$.
\end{remark}

\section{Properties of the Real Seiberg--Witten invariants}\label{sec:prop}

In this section we prove a number of fundamental properties of the Real Seiberg--Witten invariants.

\subsection{Positive scalar curvature}\label{sec:psc}

Let $X$ be a compact, oriented, smooth $4$-manifold and $\sigma$ a real structure. Suppose that there exists a $\sigma$-invariant Riemannian metric $g$ with positive scalar curvature. Using standard estimates for solutions to the Seiberg--Witten equations we find the following vanishing result.

\begin{proposition}
Suppose that $\sigma$ preserves a metric of positive scalar curvature. Let $\mathfrak{s}$ be a Real spin$^c$-structure on $X$.
\begin{itemize}
\item[(1)]{If $b_+(X)^{-\sigma} > 1$, then the mod $2$ Real Seiberg--Witten invariants and the integral Real Seiberg--Witten invariants (when defined) all vanish.}
\item[(2)]{If $b_+(X)^{-\sigma} = 1$ then the mod $2$ Real Seiberg--Witten invariants and the integral Real Seiberg--Witten invariants (when defined) vanish for the chamber containing the zero perturbation. If the zero perturbation lies on the wall, then the Real Seiberg--Witten invariants vanish for both chambers}
\item[(3)]{If $b_+(X)^{-\sigma} = b_1(X)^{-\sigma} = 0$ and $\mathfrak{s}$ is a Real spin structure, then $deg_R(X , \mathfrak{s}) = \pm 1$.}
\end{itemize}

\end{proposition}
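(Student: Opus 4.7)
The plan is to adapt the classical Weitzenböck argument for Seiberg--Witten vanishing under positive scalar curvature, checking that the Real restrictions do not spoil any step. Since the Real Seiberg--Witten equations are the restriction of the ordinary equations to Real configurations, the standard pointwise and integral estimates apply verbatim: combining the Weitzenböck identity for $D_A^2$ with the curvature equation $F_A^+ + \eta = \sigma(\psi)$ and integrating against $\psi$ yields
\[
\int_X |\nabla_A \psi|^2 + (s/4)|\psi|^2 + (1/4)|\psi|^4 \leq C \|\eta\|_{L^\infty} \|\psi\|_{L^2}^2 .
\]
Under a $\sigma$-invariant PSC metric (so $s \geq s_0 > 0$), this forces $\psi = 0$ whenever $\|\eta\|_{L^\infty}$ is sufficiently small. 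Hence for small Real perturbations only reducibles exist, and a reducible solution exists at $\eta$ iff the harmonic projection of $[\eta]$ equals $-\pi i c(\mathfrak{s})^+$ in $H^+(X)^{-\sigma}$, i.e., iff $\eta$ lies on the wall.

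For part (1), $b_+(X)^{-\sigma} > 1$ means the wall has codimension $\geq 2$, so small generic $\eta$ avoids it, yielding an empty moduli space; the standard cobordism argument gives vanishing of all invariants, with no chamber dependence since the wall complement is connected. For part (2), $b_+(X)^{-\sigma} = 1$ divides the perturbation space into two chambers separated by the wall $\{-\pi i c(\mathfrak{s})^+\}$. If $0$ lies strictly in a chamber, small $\eta$ in that chamber still avoids the wall, so the moduli space is empty in that chamber, giving vanishing. If $0$ lies on the wall (that is, $c(\mathfrak{s})^+ = 0$), small nonzero $\eta$ in either chamber avoids the wall, so both invariants vanish.

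For part (3), since $b_1(X)^{-\sigma} = 0$ the Real Jacobian is a point and the Real Bauer--Furuta invariant is a single $\mathbb{Z}_2$-equivariant map of spheres $f : S^{V,U} \to S^{V',U'}$. Because $\mathfrak{s}$ is a Real spin structure we have $c(\mathfrak{s}) = 0$ and may take the reference connection to be a Real flat spin connection $A_0$; the spin Weitzenböck identity $D_{A_0}^2 = \nabla^*\nabla + s/4$ together with PSC gives invertibility of $D_{A_0}$, hence $d = 0$ and $\sigma(X) = 0$. After stabilization $V \cong V'$ and $U \cong U'$, so $f$ becomes a self-map of a sphere with a well-defined degree. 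I would then verify that, in the finite-dimensional approximation, the origin is the unique preimage of the origin and that $f$ is a local diffeomorphism there. Uniqueness follows from Weitzenböck (forcing $\psi = 0$) together with $b_+(X)^{-\sigma} = b_1(X)^{-\sigma} = 0$ (so that no nontrivial Real harmonic $1$-form deforms $A_0$). Nondegeneracy follows because the linearization at the origin is the direct sum of the invertible $D_{A_0}$ and the elliptic operator $(d^+, d^*, pr)$ on Real $1$-forms, whose kernel lies in $\mathcal{H}^1(X)^{-\sigma} = 0$ and whose cokernel lies in $H^+(X)^{-\sigma} = 0$.

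The main obstacle is not deep: it is essentially bookkeeping to confirm that the Real constraints (Real perturbations, Real reducibles, Real gauge transformations, Real harmonic forms) are compatible with the standard arguments, but this is automatic given the framework of Sections \ref{sec:rspinc}--\ref{sec:realbf}. A small but essential point in (3) is upgrading from $|deg_R|$ odd (which is immediate from Proposition \ref{prop:mod2deg} applied with $d = 0$, since $w_0(-D_R) = 1$) to $|deg_R| = 1$; this is precisely where the uniqueness and nondegeneracy of the zero at the origin are needed.
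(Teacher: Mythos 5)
Your proposal is correct and follows essentially the same argument as the paper: parts (1) and (2) use the standard Weitzenböck/PSC estimate to force reducibility for small perturbations, then read off vanishing from the chamber structure, and part (3) identifies the unique reducible and shows it is cut out transversally (the paper phrases this via the framed moduli space and Remark 5.9, you phrase it via the finite-dimensional approximation being a local diffeomorphism at the origin, but the linearization computation — invertibility of the Real Dirac operator plus vanishing of $H^1(X;\mathbb{R})^{-\sigma}$ and $H^+(X)^{-\sigma}$ — is the same). The only cosmetic difference is that the paper deduces $d=0$ from $\sigma(X)=0$ via Lichnerowicz, while you get it from invertibility of $D_{A_0}$; these are of course the same fact.
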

\begin{proof}
(1) and (2) follow from standard estimates for the Seiberg--Witten equations which imply that for all a metric of positive scalar curvature and all sufficiently small perturbations, the only solutions to the Seiberg--Witten equations are reducible. In case (3), first note that a spin $4$-manifold with positive scalar curvature has $\sigma(X) = 0$, hence $d = -\sigma(X)/8 = 0$. The same estimates give that the only solutions to the Real Seiberg--Witten equations are reducible. Since $b_1(X)^{-\sigma} = 0$, there is up to isomorphism a unique reducible solution $(A,0)$. To prove the claim that $deg_R(X , \mathfrak{s}) = \pm 1$, it suffices to check that $(A,0)$ is cut out transversally (using Remark \ref{rem:degcount}). Since $(A,0)$ is reducible, the deformation complex for $(A,0)$ decomposes into sum of the Dirac operator and the Atiyah--Hitchin--Singer complex. It follows that the obstruction space is $H^+(X)^{-\sigma} \oplus Coker(D)^{\widetilde{\sigma}}$, where $D$ denotes the Dirac operator. But $b_+(X)^{-\sigma} = 0$ by assumption and $Coker(D)^{\widetilde{\sigma}} = 0$ because $\mathfrak{s}$ is a spin structure and $g$ has positive scalar curvature. This proves that $(A,0)$ is unobstructed, hence contributes $\pm 1$ to the degree.
\end{proof}

\subsection{Wall-crossing formula}\label{sec:wcf}

In light of Proposition \ref{prop:deginteger} (2), it suffices to consider wall-crossing only for the mod $2$ invariants. Choose a splitting and let $f : S^{V,U} \to S^{V',U'}$ be the Real Bauer--Furuta map over $B = Jac_R(X)$. Assume $b_+(X)^{-\sigma} = 1$ and let $\phi$ be a chamber. Then $SW^\phi_R(X,\mathfrak{s})(\theta) = \pi_*( \eta^\phi \theta)$, where $\pi$ is the projection $\pi : \mathbb{RP}(V) \to B$ and $f^*( \tau^\phi_{V',U'}) = \eta^\phi \delta \tau_U$. Choosing an orientation for $H^+(X)^{-\sigma}$ we may speak of the positive chamber and negative chamber and write $\mathbf{SW}^{\pm}_R(X , \mathfrak{s})$ for the corresponding invariants. Set $\Delta_R(X,\mathfrak{s}) = \mathbf{SW}^+_R(X , \mathfrak{s}) - \mathbf{SW}^-_R(X,\mathfrak{s})$. 

Using the approach of \cite[\textsection 5]{bk}, we find that $\tau^+_{V',U'} - \tau^-_{V',U'} =  e(V') \delta \tau_{U}$, where 
\[
e(V') = v^{a'} + w_1(V') v^{a'-1} + \cdots + w_{a'}(V').
\]
Therefore $f^*(\tau^+_{V',U'}) - f^*(\tau^-_{V',U'}) = f^*( e(V') \delta \tau_U ) = e(V') \delta \tau_U$. Hence $\eta^+ - \eta^- = e(V')$ and $\Delta_R(X,\mathfrak{s})(\theta) = \pi_*( e(V')\theta)$. Recall that Stiefel--Whitney classes can be extended to virtual bundles by setting $w(A-B) = w(A)w(B)^{-1}$, where $w = 1 + w_1 + \cdots $ denotes the total Stiefel--Whitney class. Adapting \cite[Proposition 3.5]{bk} to the case of real projective bundles, we find that $\pi_*( v^m ) = w_{m-(a-1)}(-V)$, where we set $w_i = 0$ if $i < 0$. By stabilising $f$ we can assume that $V'$ is trivial. Hence $e(V') = v^{a'}$ and $D_R = V - \mathbb{R}^{a'}$, which implies that $w_i(-D_R) = w_i(-V)$ for all $i$. Then $\Delta_R(X,\mathfrak{s})(\theta) = \pi_*( e(V')\theta ) = \pi_*( v^{a'} \theta)$. If we take $\theta = v^m$, then 
\[
\Delta_R(X , \mathfrak{s})(v^m) = \pi_*( v^{m+a'} ) = w_{m+a'-(a-1)}(-V) = w_{m-(d-1)}(-D_R).
\]
This gives:
\begin{theorem}\label{thm:wcf}
If $b_+(X)^{-\sigma}=1$, then
\[
SW^+_{R,m}(X,\mathfrak{s}) - SW^-_{R,m}(X, \mathfrak{s}) = w_{m-(d-1)}(-D_R).
\]
\end{theorem}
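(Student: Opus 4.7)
The plan is to reduce everything to a computation with the Real Bauer--Furuta map. After fixing a splitting $s : H^1(X;\mathbb{Z})^{-\sigma} \to \mathcal{H}_R$, take the Real Bauer--Furuta map $f : S^{V,U} \to S^{V',U'}$ over $B = Jac_R(X)$ from Section \ref{sec:realbf}. The mod $2$ invariants are described by $SW^\phi_{R,m}(X,\mathfrak{s}) = \pi_*(\eta^\phi v^m)$, where $\pi : \mathbb{RP}(V) \to B$ is the projection and $\eta^\phi$ is defined by $f^*(\tau^\phi_{V',U'}) = \eta^\phi \, \delta\tau_U$. Hence the wall-crossing difference is $\pi_*\bigl((\eta^+ - \eta^-) v^m\bigr)$, and it suffices to compute $\eta^+ - \eta^-$.

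The key input, which I expect to be the main obstacle, is the identification of the difference of the two chamber-dependent lifts of the Thom class in the relative equivariant cohomology $H^{a'+b'}_{\mathbb{Z}_2}(S^{V',U'}, S^U ; \mathbb{Z}_2)$. Following \cite{bk}, I would analyse how the two lifts differ and show that
\[
\tau^+_{V',U'} - \tau^-_{V',U'} = e(V') \, \delta\tau_U,
\]
where $e(V') = v^{a'} + w_1(V') v^{a'-1} + \cdots + w_{a'}(V')$ is the mod $2$ $\mathbb{Z}_2$-equivariant Euler class of $V'$. Applying $f^*$ to both sides, and using that the naturality of $\delta$ together with $f|_{S^U} : S^U \to S^U$ being the inclusion implies $f^*(e(V')\,\delta\tau_U) = e(V')\,\delta\tau_U$, yields $\eta^+ - \eta^- = e(V')$.

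The last step is formal: push forward along $\pi : \mathbb{RP}(V) \to B$. By stabilising $f$ I may assume $V'$ is trivial of rank $a'$, so $e(V') = v^{a'}$ and $D_R = V - \mathbb{R}^{a'}$, giving $w_i(-V) = w_i(-D_R)$. An adaptation of \cite[Proposition 3.5]{bk} to real projective bundles gives the pushforward formula $\pi_*(v^k) = w_{k - (a-1)}(-V)$, with $w_i = 0$ for $i < 0$. Taking $k = m + a'$ and using $a - a' = d$ then yields
\[
SW^+_{R,m}(X,\mathfrak{s}) - SW^-_{R,m}(X,\mathfrak{s}) = \pi_*(v^{m+a'}) = w_{m+a'-(a-1)}(-V) = w_{m-(d-1)}(-D_R),
\]
as required. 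Apart from establishing the Thom-class difference formula, everything is standard bookkeeping with Stiefel--Whitney classes of virtual bundles and the projective bundle pushforward.
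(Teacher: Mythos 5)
Your proposal is correct and follows essentially the same route as the paper: identify $\tau^+_{V',U'}-\tau^-_{V',U'}=e(V')\,\delta\tau_U$ via the method of \cite{bk}, pull back by $f$ to get $\eta^+-\eta^-=e(V')$, stabilise so $V'$ is trivial, and apply the real projective bundle pushforward formula $\pi_*(v^k)=w_{k-(a-1)}(-V)$. The paper's proof in Section \ref{sec:wcf} carries out precisely these steps in the same order.
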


In the case $b_1(X)^{-\sigma} = 0$, $w_i(-D_R) = 0$ for all $i \neq 0$. Moreover the dimension of the moduli space is $d - b_+(X)^{-\sigma} = d-1$, hence we have

\begin{corollary}
If $b_+(X)^{-\sigma} = 1$ and $b_1(X)^{-\sigma} = 0$, then
\[
SW^+_R(X,\mathfrak{s}) - SW^-_R(X,\mathfrak{s}) = \begin{cases} 1 & \text{if } d > 0, \\ 0 & \text{otherwise}. \end{cases}
\]
\end{corollary}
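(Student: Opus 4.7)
The plan is to derive the corollary essentially as a direct specialisation of Theorem \ref{thm:wcf} to the case when the Real Jacobian is trivial. The wall-crossing formula says
\[
SW^+_{R,m}(X,\mathfrak{s}) - SW^-_{R,m}(X,\mathfrak{s}) = w_{m-(d-1)}(-D_R),
\]
so the entire problem reduces to computing Stiefel--Whitney classes of $-D_R$ for the appropriate index $m$, and matching up the indexing conventions for the pure invariant $SW^\pm_R$.

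First I would observe that the hypothesis $b_1(X)^{-\sigma} = 0$ forces $Jac_R(X) = H^1(X;i\mathbb{R})^{-\sigma}/H^1(X;2\pi i\mathbb{Z})^{-\sigma}$ to be a single point. Hence $D_R$ is a virtual real vector space, for which the total Stiefel--Whitney class $w(-D_R) = w(D_R)^{-1}$ is trivial: $w_0(-D_R) = 1$ and $w_i(-D_R) = 0$ for every $i > 0$. So Theorem \ref{thm:wcf} collapses to the statement that $SW^+_{R,m}(X,\mathfrak{s}) - SW^-_{R,m}(X,\mathfrak{s})$ is $1$ precisely when $m = d-1$ and $0$ otherwise.

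Next I would invoke the convention established earlier in the paper: when $b_1(X)^{-\sigma}=0$, the pure invariant $SW^\phi_R(X,\mathfrak{s})$ is defined as $\mathbf{SW}^\phi_R(X,\mathfrak{s})(v^{\delta}) = SW^\phi_{R,\delta}(X,\mathfrak{s})$, where $\delta = d - b_+(X)^{-\sigma} = d - 1$ is the dimension of the Real Seiberg--Witten moduli space, provided $\delta \geq 0$. Setting $m = \delta = d-1$ in the wall-crossing formula therefore gives $w_0(-D_R) = 1$ whenever $d \geq 1$, which handles the first case. If instead $d \leq 0$, then $\delta < 0$: the expected dimension of $\mathcal{M}_R$ is negative, so for generic metric and perturbation in either chamber the moduli space is empty and both invariants vanish, yielding difference zero.

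There is no real obstacle here since all the analytic work has already been done in Theorem \ref{thm:wcf}; the only thing to be careful about is the bookkeeping between $SW^\pm_R$ (the pure invariant) and the full family $SW^\pm_{R,m}$, and the degenerate case $d \le 0$ where the pure invariant is zero by convention. I would present the argument in one short paragraph consisting of these two observations.
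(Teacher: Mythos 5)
Your argument is exactly the paper's: the paper deduces the corollary from Theorem \ref{thm:wcf} by noting that $b_1(X)^{-\sigma}=0$ forces $Jac_R(X)$ to be a point so $w_i(-D_R)=0$ for $i>0$, and then evaluates the wall-crossing formula at $m=\delta=d-1$. Your additional remark about the degenerate case $d\le 0$ (empty moduli space, both invariants vanishing) is correct and matches the paper's conventions.
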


\subsection{Identities from Steenrod squares}\label{sec:ident}

In \cite{bk} we gave a formula for the Steenrod squares of the families Seiberg--Witten invariants. In this section we will carry out a similar computation for the Real Seiberg--Witten invariants. However since the Steenrod squares are all vanishing for a torus, we obtain a series of identities that must be satisfied by the Real Seiberg--Witten invariants.

Let $(X,\sigma)$ be a compact, oriented, smooth $4$-manifold with real structure $\sigma$. Assume that $b_+(X)^{-\sigma} > 0$. Let $(\mathfrak{s} , \widetilde{\sigma})$ be a Real spin$^c$-structure. Let $f : S^{V,U} \to S^{V',U'}$ be the Real Bauer--Furuta invariant of $(X , \mathfrak{s})$ with respect to a splitting. By stabilising we may assume that $V$ and $U$ are trivial. Let $\phi$ be a chamber. We start with the expression $f^*( \tau^\phi_{V',U'} ) = \eta^\phi \delta \tau_U$. Recall that $SW^\phi_R(X,\mathfrak{s})(v^m) = \pi_*( \eta^\phi v^m )$, where $\pi$ is the projection $\pi : \mathbb{RP}(V) \to Jac_R(X)$. Our assumption that $V$ is trivial implies that $\pi_*(v^j) = 0$ for $j \neq a-1$ and $\pi_*(v^{a-1}) = 1$. Set $\omega_m = SW^\phi_{R,m}(X,\mathfrak{s})$. Then it follows that $\eta^\phi$ is given by
\begin{equation}\label{equ:eta}
\eta^\phi = \sum_{j=0}^{a-1} \omega_j v^{(a-1)-j}.
\end{equation}
Let $j > 0$. Since $\omega_l$ is a cohomology class on the torus $Jac_R(X)$, it follows that $Sq^j(\omega_l) = 0$ for all $l$. Applying $Sq^j$ to both sides of (\ref{equ:eta}) therefore gives
\begin{align*}
Sq^j(\eta^\phi) &= \sum_{l=0}^{a-1} \binom{(a-1)-l}{j} v^{(a-1)-l+j} \omega_l \\
&= \sum_{l=-j}^{a-1-j} \binom{(a-1)-l-j}{j} v^{(a-1)-l} \omega_{l+j} \\
&= \sum_{l=0}^{a-1} \binom{(a-1)-l-j}{j} v^{(a-1)-l}\omega_{l+j}.
\end{align*}
The last equality holds for if $l < 0$, then $v^{(a-1)-l} = 0$ and if $l+j > (a-1)$, then $\omega_{l+j} = 0$.

On the other hand the argument given in \cite[\textsection 4]{bk} gives
\[
Sq^j(\eta^\phi) = w_{\mathbb{Z}_2 , j}(V' \oplus U') \eta^\phi
\]
where $w_{\mathbb{Z}_2 , j}$ denotes the $j$-th equivariant Stiefel--Whitney class. Now since $U$ is trivial, $U' = H^+(X)^{-\sigma} \oplus U$ is also trivial. So 
\[
w_{\mathbb{Z}_2,j}(V'\oplus U') = w_{\mathbb{Z},j}(V') = \sum_{l = 0}^{j} \binom{a'-j+l}{l} v^{l} w_{j-l}(V').
\]
The second equality follows by adapting \cite[Lemma 4.1]{bk} to Stiefel--Whitney classes. Furthermore, $V$ is trivial and $D_R = V - V'$, hence $w_l(V') = w_l(-D_R)$. So we have
\begin{align*}
Sq^j(\eta^\phi) &= \sum_{l=0}^j \binom{a'-j+l}{l} v^{l} w_{j-l}(-D_R) \eta^\phi \\
&= \sum_{l=0}^j \sum_{m=0}^{a-1} \binom{a'-j+l}{l} v^{(a-1)-m+l} w_{j-l}(-D_R) \omega_m \\
&= \sum_{l=0}^{j} \sum_{m = -l}^{a-1-l} \binom{a' -j +l}{l} v^{(a-1)-m} w_{j-l}(-D_R) \omega_{m+l} \\
&= \sum_{l=0}^j \sum_{m=0}^{a-1} \binom{a'-j+l}{l} v^{(a-1)-m} w_{j-l}(-D_R) \omega_{m+l}.
\end{align*}
The last equality holds for if $m < 0$, then $v^{(a-1)-m} = 0$ and if $m+l > a-1$, then $\omega_{m+l} = 0$.

We have obtained two expressions for $Sq^j(\eta^\phi)$. Equating powers of $v$ in these expressions gives:
\begin{equation}\label{equ:binom}
\binom{a-1-m-j}{j}\omega_{m+j}= \sum_{l=0}^j \binom{a'-j+l}{l} w_{j-l}(-D_R) \omega_{m+l}.
\end{equation}

Stabilising $f$ by a trivial bundle, we can take $a$ to equal any sufficiently large integer. Fix $m$ and $j$ and take $a = m + 2^N$ where $2^N > j$. Then $\binom{a-1-m-j}{j} = \binom{ (2^N-1)-j }{j} = \binom{-1-j}{j} \; ({\rm mod} \; 2) = \binom{2j}{j} \; ({\rm mod} \; 2) = 0 \; ({\rm mod} \; 2)$ since $j>0$. Also since $a-a' = d$, we have that $\binom{a'-j+l}{l} = \binom{m-d+2^N - j + l}{j} = \binom{m-d-j+l}{l} \; ({\rm mod} \; 2) = \binom{d-1-m+j}{l} \; ({\rm mod} \; 2)$. Substituting into Equation (\ref{equ:binom}), we get

\[
\sum_{l=0}^j \binom{ d-1 - m + j}{l} w_{j-l}(-D_R) \omega_{m+l} = 0
\]
for all $m \ge 0$, $j > 0$. Since $\omega_m = SW^\phi_{R,m}(X,\mathfrak{s})$, this gives

\begin{theorem}\label{thm:rswid}
The Real Seiberg--Witten invariants satisfy the following identities
\[
\sum_{l=0}^j \binom{ d-1 - m + j}{l} w_{j-l}(-D_R) SW^\phi_{R,m+l}(X,\mathfrak{s}) = 0
\]
for all $m \ge 0$, $j > 0$.
\end{theorem}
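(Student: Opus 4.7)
The plan is to imitate the argument for Steenrod squares of ordinary families Seiberg--Witten invariants (as developed in \cite{bk}), exploiting the fact that the receiving space is the Jacobian torus $Jac_R(X)$, whose cohomology has vanishing Steenrod squares in positive degree. Concretely, I would take the Real Bauer--Furuta map $f : S^{V,U} \to S^{V',U'}$ from Section \ref{sec:realbf} and, after stabilizing, assume both $V$ and $U$ are trivial bundles over $Jac_R(X)$. Writing $f^*(\tau^\phi_{V',U'}) = \eta^\phi \delta \tau_U$, the pushforward formula $\pi_*(v^j) = \delta_{j,a-1}$ (for $V$ trivial of rank $a$) forces the expansion
\[
\eta^\phi = \sum_{j=0}^{a-1} \omega_j \, v^{(a-1)-j}, \qquad \omega_j := SW^\phi_{R,j}(X,\mathfrak{s}).
\]

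Next, I would compute $Sq^j(\eta^\phi)$ in two different ways. The first way uses the Cartan formula together with the vanishing $Sq^i(\omega_l) = 0$ for $i>0$ (since $\omega_l$ lives on a torus): only the Steenrod squares of the $v$-factors contribute, giving an expression of the form $\sum_l \binom{(a-1)-l}{j} v^{(a-1)-l+j} \omega_l$. The second way uses the Wu-type formula $Sq^j(\eta^\phi) = w_{\mathbb{Z}_2,j}(V'\oplus U') \eta^\phi$ proven in \cite[\textsection 4]{bk}. Because $U'$ is trivial after stabilization and $D_R = V - V'$ with $V$ trivial, the equivariant Stiefel--Whitney class can be expanded in powers of $v$ with coefficients involving the $w_l(-D_R)$, following the combinatorial identity of \cite[Lemma 4.1]{bk}.

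Equating coefficients of powers of $v$ in the two computations then produces a one-parameter family of binomial identities relating the $\omega_{m+l}$ and $w_{j-l}(-D_R)$, which is precisely Equation \eqref{equ:binom}. The final step is a stabilization trick: since I may freely take $a$ as large as I like, I choose $a = m + 2^N$ with $2^N > j$. By Lucas's theorem, the coefficient $\binom{a-1-m-j}{j} = \binom{2^N - 1 - j}{j}$ becomes $\binom{-1-j}{j} \equiv \binom{2j}{j} \equiv 0 \pmod 2$ for $j>0$, killing the left-hand side, while $\binom{a'-j+l}{l}$ reduces mod $2$ to $\binom{d-1-m+j}{l}$, giving exactly the stated identity.

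I expect the main obstacle to be the careful bookkeeping of the two expansions of $Sq^j(\eta^\phi)$ and the mod-$2$ reductions of the binomial coefficients under stabilization; in particular, verifying that the Lucas's theorem simplification is uniform in $m,j$ requires choosing $N$ correctly and tracking which terms in the sums genuinely contribute (those with $0 \le m+l \le a-1$, otherwise $\omega_{m+l}=0$ or $v^{(a-1)-m}=0$).
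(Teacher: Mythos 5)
Your proposal follows the paper's proof essentially step for step: stabilise so that $V$ and $U$ are trivial, expand $\eta^\phi = \sum_j \omega_j v^{(a-1)-j}$ using the pushforward formula, compute $Sq^j(\eta^\phi)$ once via the Cartan formula with vanishing Steenrod squares on $Jac_R(X)$ and once via the Wu-type formula $Sq^j(\eta^\phi) = w_{\mathbb{Z}_2,j}(V'\oplus U')\eta^\phi$ together with the binomial expansion of $w_{\mathbb{Z}_2,j}(V')$, equate powers of $v$ to obtain Equation \eqref{equ:binom}, and then take $a = m + 2^N$ with $2^N > j$ to reduce the binomial coefficients mod $2$. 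The approach and the key combinatorial reduction coincide exactly with the paper's argument.
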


For example, the $j=1,2$ cases give:
\begin{align*}
(d+m) SW^\phi_{R,m+1}(X,\mathfrak{s}) + w_1(-D_R) SW^\phi_{R,m}(X,\mathfrak{s}) &= 0, \\
\binom{d+1-m}{2} SW^\phi_{R,m+2}(X,\mathfrak{s}) + (d+1+m)w_1(-D_R) SW^\phi_{R,m+1}(X,\mathfrak{s}) & \\
+ w_2(-D_R) SW^\phi_{R,m}(X,\mathfrak{s}) &= 0.
\end{align*}

Suppose $d$ is odd. By Proposition \ref{prop:doddsplit}, there is a unique splitting for which $w_1(D_R) = 0$. Choosing this splitting the $j=1$ identity reduces to
\[
(m+1)SW^\phi_{R,m+1}(X,\mathfrak{s}) = 0
\]
for all $m \ge 0$. Equivalently, $SW^\phi_{R,m}(X,\mathfrak{s}) = 0$ for all odd $m$.

\begin{corollary}\label{cor:vanish}
Suppose that $d$ is odd. Choose the unique splitting for which $w_1(D_R) = 0$. Then $SW^\phi_{R,m}(X,\mathfrak{s}) = 0$ for all odd $m$.
\end{corollary}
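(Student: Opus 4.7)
The plan is very short: the corollary is an immediate consequence of the $j=1$ case of Theorem \ref{thm:rswid}, which has already been set up in the text just before the corollary. So the only task is to verify how the hypothesis $w_1(D_R)=0$ collapses that identity into the desired vanishing.

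First I would record that $w_1(-D_R) = w_1(D_R) = 0$ under the chosen splitting, since the total Stiefel--Whitney class of a virtual bundle satisfies $w(-D_R) = w(D_R)^{-1}$, so in degree one these first classes agree. Next, apply Theorem \ref{thm:rswid} with $j=1$: the identity
\[
\binom{d-m}{0} w_1(-D_R)\, SW^\phi_{R,m}(X,\mathfrak{s}) + \binom{d-m}{1} w_0(-D_R)\, SW^\phi_{R,m+1}(X,\mathfrak{s}) = 0
\]
reduces modulo $2$ to $(d-m)\, SW^\phi_{R,m+1}(X,\mathfrak{s}) = 0$, or equivalently (since we are working mod $2$) $(d+m)\, SW^\phi_{R,m+1}(X,\mathfrak{s}) = 0$.

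Finally I would observe that, with $d$ odd, the coefficient $d+m$ is odd precisely when $m$ is even, so the identity forces $SW^\phi_{R,m+1}(X,\mathfrak{s}) = 0$ whenever $m$ is even, i.e., $SW^\phi_{R,k}(X,\mathfrak{s}) = 0$ for all odd $k$. This is the conclusion. There is no real obstacle here: the only subtlety worth flagging is that the identity in Theorem \ref{thm:rswid} is an equation in $H^*(Jac_R(X);\mathbb{Z}_2)$, so the factor $(d+m)$ must be read mod $2$; since $d$ is odd by hypothesis, that factor is a unit precisely for even $m$, which is exactly what the argument needs.
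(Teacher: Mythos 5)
Your proof is correct and follows exactly the same route as the paper: the paper derives the corollary by plugging $j=1$ into Theorem \ref{thm:rswid}, setting $w_1(-D_R)=0$ (the paper phrases the hypothesis as $w_1(D_R)=0$, using the same observation you make that these agree mod $2$ since $w(-D_R)=w(D_R)^{-1}$), and concluding from $(m+1)\,SW^\phi_{R,m+1}=0$, which is your $(d+m)\,SW^\phi_{R,m+1}=0$ read with $d$ odd. Nothing to add.
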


In the case $b_+(X)^{-\sigma} = 1$, comparing with the wall-crossing formula gives:

\begin{corollary}\label{cor:oddvanish}
If $d > 0$ is odd, $b_+(X)^{-\sigma} = 1$ and we choose the splitting for which $w_1(D_R) = 0$, then $w_j(D_R) = 0$ for all odd $j$.
\end{corollary}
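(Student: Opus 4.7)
The plan is to combine Corollary \ref{cor:vanish} with the wall-crossing formula of Theorem \ref{thm:wcf}. Since $d$ is odd, Proposition \ref{prop:doddsplit} singles out the splitting with $w_1(D_R)=0$, and Corollary \ref{cor:vanish} then gives $SW^\phi_{R,m}(X,\mathfrak{s}) = 0$ for every odd $m$ and for both chambers $\phi \in \{+,-\}$. Substituting into Theorem \ref{thm:wcf} yields
\[
w_{m-(d-1)}(-D_R) = SW^+_{R,m}(X,\mathfrak{s}) - SW^-_{R,m}(X,\mathfrak{s}) = 0 \quad \text{for every odd } m \ge 0.
\]
Because $d$ is odd, $d-1$ is even, so as $m$ runs over the odd non-negative integers the index $j := m-(d-1)$ runs over all odd integers $\ge 2-d$. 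For $j \le 0$ we have $w_j = 0$ by convention, so the conclusion is that $w_j(-D_R) = 0$ for every odd $j > 0$.

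It then remains to convert this vanishing for $-D_R$ to vanishing for $D_R$. For this I will use the identity $w(D_R)\, w(-D_R) = 1$ in $H^*(Jac_R(X);\mathbb{Z}_2)$, where $w = 1 + w_1 + w_2 + \cdots$ is the total Stiefel--Whitney class. Writing $P = w(D_R)$ and $Q = w(-D_R)$ with $Q$ supported only in even degrees, the relation $PQ = 1$ gives, in each fixed total degree $n$, the recursion
\[
p_n \;=\; \sum_{k=1}^{n} p_{n-k}\, q_k,
\]
so that any odd-degree coefficient $p_n$ of $P$ is a sum of products $p_{n-k} q_k$ in which $q_k = 0$ unless $k$ is even, in which case $n-k$ is again odd. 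An easy induction on $n$ then gives $p_n = 0$ for all odd $n$, which is exactly the assertion that $w_j(D_R) = 0$ for all odd $j$.

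There is no serious obstacle in this argument once the two inputs (the vanishing in Corollary \ref{cor:vanish} and the wall-crossing formula) are in hand; the only point deserving a little care is the bookkeeping of parities of $d$, $m$ and $j$, together with the formal manipulation $w(D_R) = w(-D_R)^{-1}$ to transfer the vanishing from $-D_R$ to $D_R$.
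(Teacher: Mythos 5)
Your argument is correct and is essentially the same as the paper's: combine Corollary \ref{cor:vanish} with the wall-crossing formula to get $w_j(-D_R)=0$ for odd $j$, then pass from $-D_R$ to $D_R$ via $w(D_R)=w(-D_R)^{-1}$. The paper states the final inversion step without detail, whereas you spell out the degree-by-degree induction, but this is the same underlying idea.
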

\begin{proof}
Let $m \ge 0$ be odd. Corollary \ref{cor:vanish} gives $SW^+_{R,m}(X,\mathfrak{s}) = SW^-_{R,m}(X,\mathfrak{s}) = 0$. However the wall-crossing formula (Theorem \ref{thm:wcf}) gives $SW^+_{R,m}(X,\mathfrak{s}) - SW^-_{R,m}(X,\mathfrak{s}) = w_{m-(d-1)}(-D_R)$. Hence $w_j(-D_R) = 0$ for all odd $j$. Hence $w(-D_R) = 1 + w_2(-D_R) + w_4(-D_R) + \cdots$ contains only terms of even degree. It follows that $w(D_R) = w(-D_R)^{-1}$ similarly contains only terms of even degree, hence $w_j(D_R) = 0$ for all odd $j$.
\end{proof}

\section{Real spin structures}\label{sec:spin}

By a {\em Real spin structure}, we mean a spin structure $\mathfrak{s}$ for which the involution $\sigma$ is odd. In such a case $\sigma$ admits a linear lift $\sigma'$ to the spinor bundles such that $\sigma' \circ \sigma' = -1$. Let $j : S_{\pm} \to S_{\pm}$ denote charge conjugation. Recall that $j$ is anti-linear and $j^2 = - 1$. We also have that $\sigma'$ commutes with $j$ because $\sigma'$ corresponds to a lift of $\sigma$ to the principal $Spin(4)$-bundles associated to $\mathfrak{s}$ and $Spin(4)$ preserves $j$. Set $\widetilde{\sigma} = j \circ \sigma'$. Then $\widetilde{\sigma}$ is an antilinear involutive lift of $\sigma$ and makes $\mathfrak{s}$ into a Real spin$^c$-structure. Up to isomorphism the resulting Real structure does not depend on the choice of lift $\sigma'$. Indeed taking the other lift $-\sigma'$ would give $-\widetilde{\sigma}$, but this is conjugate to $\widetilde{\sigma}$ as $-\widetilde{\sigma} = i^{-1} \circ \widetilde{\sigma} \circ i$.

The Seiberg--Witten equations (with zero perturbation) in the presence of a Real spin structure has an additional symmetry given by $j$. Now we attempt to construct the Bauer--Furuta map keeping track of this additional symmetry. We start as we did before with the map $\mu$ between trivial Hilbert bundles. Gauge fix as before to get a map of Hilbert bundles over $Conn_{R,H}$. The group of symmetries of this map is generated by $\mathcal{H}_R$ and $j$. Call this group $\mathcal{H}'_R$. Then we have a short exact sequence
\[
1 \to \mathbb{Z}_4 \to \mathcal{H}'_R \to H^1(X ; \mathbb{Z})^{-} \to 0
\]
where $\mathbb{Z}_4 = \langle j \rangle$ is the subgroup generated by $j$. A choice of splitting $s : H^1(X ; \mathbb{Z})^{-} \to \mathcal{H}_R$ is also a choice of splitting for $\mathcal{H}'_R$. Then we can quotient by $s(H^1(X ; \mathbb{Z})^{-\sigma})$ to obtain a map of Hilbert bundles over $Jac_R(X)$. Then we take a finite-dimensional approximation. The result is a $\mathbb{Z}_4$-equivariant map
\[
f : S^{V,U} \to S^{V',U'}
\]
such that if we restrict to $\mathbb{Z}_2 \subset \mathbb{Z}_4$, then we have the Real Bauer--Furuta map as before. In other words, for a Real spin structure, the Real Bauer--Furuta map has $\mathbb{Z}_4$-symmetry rather than just $\mathbb{Z}_2$. The symmetry group acts on $V,V',U,U'$ as follows: $j$ acts as $-1$ on $U,U'$ and $j^2$ acts as $-1$ on $V,V'$. Thus $j$ defines complex structures on $V,V'$. In particular this means that the real ranks $a,a'$ are even and thus $d = a-a'$ is even. Of course since $\mathfrak{s}$ is a spin structure, $d = -\sigma(X)/8$, which is even by Rokhlin's theorem.

The symmetry $j$ acts on $D_R$ and satisfies $j^2 = -1$. 

\begin{lemma}
We have $w_j(D_R) = 0$ for all odd $j$.
\end{lemma}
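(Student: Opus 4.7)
The plan is to use the fact that an endomorphism squaring to $-1$ endows a real vector bundle with a complex structure, and that the underlying real bundle of a complex bundle has vanishing odd Stiefel--Whitney classes.

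More precisely, recall that the Real Bauer--Furuta map for a Real spin structure was shown to be $\mathbb{Z}_4$-equivariant, where the generator $j \in \mathbb{Z}_4$ acts on $V$ and $V'$ with $j^2 = -1$. Since $V, V'$ are real vector bundles over $Jac_R(X)$, the endomorphism $j$ (being a bundle endomorphism squaring to $-1$) equips each of $V$ and $V'$ with the structure of a complex vector bundle. In particular, the underlying real bundle of each of $V, V'$ has only even-degree Stiefel--Whitney classes, since for any complex vector bundle $E_{\mathbb{C}}$ one has $w_{2k+1}(E_{\mathbb{C}}) = 0$ and $w_{2k}(E_{\mathbb{C}}) = c_k(E_{\mathbb{C}}) \;(\mathrm{mod}\; 2)$ on the underlying real bundle.

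Now $D_R = V - V'$ as a virtual bundle, and Stiefel--Whitney classes extend multiplicatively to virtual bundles via $w(V - V') = w(V) \smallsmile w(V')^{-1}$. Since the total Stiefel--Whitney classes $w(V)$ and $w(V')$ lie in the subring of $H^*(Jac_R(X) ; \mathbb{Z}_2)$ generated by even-degree classes, and this subring is closed under inversion of elements with leading term $1$, the product $w(V) \smallsmile w(V')^{-1}$ also lies in this subring. Therefore $w_j(D_R) = 0$ for all odd $j$, as claimed.

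The argument is essentially formal once one recognises that $j$ acts as a complex structure on $V$ and $V'$; no genuine obstacle arises. The only point requiring a moment's care is the passage from honest bundles to the virtual bundle $D_R$, which is handled by multiplicativity of the total Stiefel--Whitney class.
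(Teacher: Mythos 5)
There is a genuine gap. You treat $j$ as a bundle endomorphism of $V$ over $Jac_R(X)$ that squares to $-1$, and conclude that $j$ is a complex structure on $V$. But $j$ does not cover the identity of the base: $j$ acts on spin$^c$-connections by conjugation, and on $Jac_R(X)$ this descends to the inversion involution. Thus $j : V \to V$ is only a $\mathbb{Z}_4$-equivariant automorphism covering a nontrivial self-map of $Jac_R(X)$; it is not a fiberwise complex structure. (Only the subgroup $\mathbb{Z}_2 = \langle j^2 \rangle \subset \mathbb{Z}_4$ acts trivially on the base.) The sentence in the paper preceding the lemma, ``$j$ defines complex structures on $V,V'$,'' refers to the fibers over fixed points, which is exactly what is needed just afterwards to conclude that the real ranks $a,a'$ are even; it does not assert that $V,V'$ are complex vector bundles globally over $Jac_R(X)$.

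Note that in the one case where your reading is correct --- $b_1(X)^{-\sigma} = 0$, so $Jac_R(X)$ is a point and $j$ trivially covers the identity --- the lemma is vacuous, since every virtual bundle over a point has vanishing positive-degree Stiefel--Whitney classes. The nontrivial content of the lemma lies precisely in the case $b_1(X)^{-\sigma} > 0$, where your argument breaks down. The paper's proof works around this by computing the $\mathbb{Z}_4$-equivariant Euler class $e_{\mathbb{Z}_4}(V)$ in $H^*_{\mathbb{Z}_4}(B ; \mathbb{Z}_2) \cong R[v,w]/(v^2)$ (which requires a spectral sequence precisely because the $\mathbb{Z}_4$-action on $B$ is nontrivial), expanding it as a polynomial in $w$ with coefficients in $R[v]/(v^2)$, and then applying the forgetful map to $\mathbb{Z}_2$-equivariant cohomology, under which $v \mapsto 0$ and $w \mapsto v^2$. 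The relation $v^2 = 0$ forces the image of $e_{\mathbb{Z}_4}(V)$ to contain only even powers of $v$, and comparing with the standard expansion $e_{\mathbb{Z}_2}(V) = \sum_k w_k(V) v^{a-k}$ then gives $w_k(V) = 0$ for odd $k$. This use of the $\mathbb{Z}_4$-equivariant structure, rather than a purported complex structure on $V$, is the essential content that your proof is missing.
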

\begin{proof}
Since $D_R = V-V'$, it suffices to prove that $w_j(V) = w_j(V') = 0$ for all odd $j$. Consider the case of $V$, the case of $V'$ is similar. Let $e_{\mathbb{Z}_4}(V) \in H^*_{\mathbb{Z}_4}(B ; \mathbb{Z}_2)$ denote the $\mathbb{Z}_4$-equivariant Euler class of $V$ with $\mathbb{Z}_2$-coefficients. Let $R = H^*(B ; \mathbb{Z}_2)$. By a straighforward spectral sequence calculation, one finds $H^*_{\mathbb{Z}_4}(B ; \mathbb{Z}_2) \cong R[v,w]/(v^2)$, $deg(v) = 1$, $deg(w) = 2$. Consider the subgroup $\mathbb{Z}_2 \subset \mathbb{Z}_4$. Since $j^2 = 1$ on $B$, the subgroup $\mathbb{Z}_2$ acts trivially on $B$ and one finds $H^*_{\mathbb{Z}_2}(B ; \mathbb{Z}_2) \cong R[v]$, $deg(v) = 1$. It follows that we can expand $e_{\mathbb{Z}_4}(V)$ as a polynomial in $w$:
\[
e_{\mathbb{Z}_4}(V) = h_0(V) w^{a/2} + h_1(V) w^{a/2-1} + \cdots + h_{a/2}(V)
\]
for some $h_j(V) \in R[v]/(v^2)$. Now restrict from $\mathbb{Z}_4$-equivariant cohomology to $\mathbb{Z}_2$-equviariant cohomology. Then $e_{\mathbb{Z}_4}(V)$ maps to the $\mathbb{Z}_2$-equivariant Euler class of $V$, where the $\mathbb{Z}_2$-action acts as $-1$ on $V$ (and trivially on $B$). By the splitting principle, this is given by
\[
e_{\mathbb{Z}_2}(V) = v^a + w_1(V) v^{a-1} + \cdots + w_a(V).
\]
But under the forgetful map $H^*_{\mathbb{Z}_4}(B;\mathbb{Z}_2) \to H^*_{\mathbb{Z}_2}(B ; \mathbb{Z}_2)$ we have $v \mapsto 0$, $w \mapsto v^2$. Hence if we let $h'_j(V) \in R$ denote the image of $h_j(V)$ under the map $R[v]/(v^2) \to R$ given by setting $v=0$, we get
\[
h'_0(V) v^a + h'_1(V) v^{a-2} + \cdots + h'_{a/2}(V) = v^a + w_1(V) v^{a-1} + \cdots + w_a(V).
\]
Equating coefficients, we have $w_j(V) = 0$ for odd $j$.
\end{proof}

Since $w_1(D_R) = 0$, the $j=1$ case of Theorem \ref{thm:rswid} gives $m SW^\phi_{R,m+1}(X , \mathfrak{s}) = 0$ for all $m \ge 0$, equivalently $SW^\phi_{R,m}(X,\mathfrak{s}) = 0$ for all even, positive $m$. 

If $b_+(X)^{-\sigma} = 1$, then the wall-crossing formula gives $SW^\phi_{R,0}(X , \mathfrak{s}) - SW^\phi_{R,0}(X,\mathfrak{s}) = w_{1-d}(-D_R) = 0$, because $1-d$ is odd. Hence the invariant $SW^\phi_{R,0}(X,\mathfrak{s})$ is independent of the chamber and we will denote it by $SW_{R,0}(X,\mathfrak{s})$.

Recall that $H^*_{\mathbb{Z}_4}( pt ; \mathbb{Z}_2 ) \cong \mathbb{Z}_2[v,w]/(v^2)$, where $deg(v) = 1$, $deg(w) = 2$. Since $j$, the generator of the $\mathbb{Z}_4$-action acts as $-1$ on $H^+(X)^{-\sigma}$, there are no $j$-invariant chambers. However we can overcome this using the same method as \cite{bar1}. Namely, we consider the pullback of the Bauer--Furuta map $f$ to $\widehat{B} = S(H^+(X)^{-\sigma}) \times Jac_R(X)$. The group $\mathbb{Z}_4 = \langle j \rangle$ acts on $\widehat{B}$ acting as the antipodal map on $S(H^+(X)^{-\sigma})$ and as inversion on $Jac_R(X)$. On $\widehat{B}$ there is a tautological choice of chamber given by the projection map $\widehat{\phi} : \widehat{B} \to S(H^+(X)^{-\sigma})$. Using this chamber we can carry out the construction of Seiberg--Witten invariants $\mathbb{Z}_4$-equivariantly. Following the same method as used in \cite{bar1} (but with $\mathbb{Z}_4$ in place of $Pin(2)$), we obtain a map
\[
\mathbf{SW}^{\mathbb{Z}_4}_R(X , \mathfrak{s}) : H^*_{\mathbb{Z}_4}(pt ; \mathbb{Z}_2) \to H^{*-\delta}_{\mathbb{Z}_2}( \widehat{B} ; \mathbb{Z}_2)
\]
provided $b_+(X)^{-\sigma} > 0$, where $\delta = d - b_+(X)^{-\sigma}$. The map $\mathbf{SW}^{\mathbb{Z}_4}_R(X , \mathfrak{s})$ is a map of $H^*_{\mathbb{Z}_2}(pt ; \mathbb{Z}_2)$-modules and is a refinement of the Real Seiberg--Witten invariant in the sense that we have a commutative diagram
\[
\xymatrix{
H^*_{\mathbb{Z}_4}(pt ; \mathbb{Z}_2) \ar[d] \ar[rr]^-{\mathbf{SW}_R^{\mathbb{Z}_4}(X,\mathfrak{s}) } & & H^{*-(d - b_+(X)^{-\sigma})}_{\mathbb{Z}_2}( \widehat{B} ; \mathbb{Z}_2 ) \ar[d]^-{r^*} \\
H^*_{\mathbb{Z}_2}(pt ; \mathbb{Z}_2) \ar[rr]^-{\mathbf{SW}_R(X , \mathfrak{s})} & & H^*( Jac_R(X) ; \mathbb{Z}_2 )
}
\]
where $r^*$ is the map 
\[
H^{*}_{\mathbb{Z}_2}( S(H^+(X)^{-\sigma}) \times Jac_R(X) ; \mathbb{Z}_2 ) \to H^*_{\mathbb{Z}_2}( S^0 \times Jac_R(X) ; \mathbb{Z}_2) \cong H^*( Jac_R(X) ; \mathbb{Z}_2)
\]
induced by an inclusion $r : S^0 \to S(H^+(X)^{-\sigma})$. The forgetful map 
\[
H^*_{\mathbb{Z}_4}(pt ; \mathbb{Z}_2) \cong \mathbb{Z}_2[v,w]/(v^2) \to H^*_{\mathbb{Z}_2}(pt ; \mathbb{Z}_2) \cong \mathbb{Z}_2[v]
\]
is given by $v \mapsto 0$, $w \mapsto v^2$. Hence $v^m$ is in the image of the forgetful map if and only if $m$ is even. So we can recover the Real Seiberg--Witten invariant $SW_{R,0}(X,\mathfrak{s})$ by
\[
SW_{R,0}(X,\mathfrak{s}) = r^*( \mathbf{SW}^{\mathbb{Z}_4}_R(X , \mathfrak{s})( 1 ) ).
\]

The following vanishing result is the Real counterpart of \cite[Theorem 3.4]{bar1}.

\begin{theorem}\label{thm:spinvanish}
Let $\mathfrak{s}$ be a Real spin structure. If $b_+(X)^{-\sigma} > 2$, then $SW_{R,0}(X,\mathfrak{s}) = 0$.
\end{theorem}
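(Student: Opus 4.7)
The plan is to exploit the $\mathbb{Z}_4$-equivariant refinement $\mathbf{SW}^{\mathbb{Z}_4}_R(X,\mathfrak{s})$ constructed immediately above the theorem. By the commutative diagram displayed there, $SW_{R,0}(X,\mathfrak{s})=r^*\mathbf{SW}^{\mathbb{Z}_4}_R(X,\mathfrak{s})(1)$, so it is enough to show that this refined class in $H^{b_+(X)^{-\sigma}-d}_{\mathbb{Z}_2}(\widehat B;\mathbb{Z}_2)$ restricts to zero along the inclusion $r:S^0\hookrightarrow S(H^+(X)^{-\sigma})$. Since $\mathbb{Z}_2=\mathbb{Z}_4/\langle j^2\rangle$ acts freely and antipodally on the sphere (for $b_+(X)^{-\sigma}\geq 1$) and by inversion on $Jac_R(X)$, we have $H^*_{\mathbb{Z}_2}(\widehat B;\mathbb{Z}_2)\cong H^*(\widehat B/\mathbb{Z}_2;\mathbb{Z}_2)$, with $\widehat B/\mathbb{Z}_2$ fibering over $\mathbb{RP}^{b_+(X)^{-\sigma}-1}$ with fibre $Jac_R(X)$; the map $r^*$ kills all positive powers of the generator $t\in H^1(\mathbb{RP}^{b_+(X)^{-\sigma}-1};\mathbb{Z}_2)$.

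Next I would run the analog of the argument of Section \ref{sec:ident} in the $\mathbb{Z}_4$-equivariant setting on $\widehat B$, following the template of \cite[Theorem 3.4]{bar1} with $\mathbb{Z}_4$ replacing $Pin(2)$. Write $\hat f^*(\hat\tau^{\widehat\phi}_{V',U'})=\hat\eta^{\widehat\phi}\,\delta\hat\tau_U$ and expand $\hat\eta^{\widehat\phi}$ using the ring structure $H^*_{\mathbb{Z}_4}(pt;\mathbb{Z}_2)\cong\mathbb{Z}_2[v,w]/(v^2)$ together with the analogue of the equivariant Euler class expansion used to derive equation (\ref{equ:binom}). Applying Steenrod operations yields a family of identities among the coefficients. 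The two previously established inputs now enter crucially: (i) $SW^\phi_{R,m}(X,\mathfrak{s})=0$ for every even positive $m$, shown just before the theorem statement from the $j=1$ case of Theorem \ref{thm:rswid}, and (ii) the lemma above showing $w_j(D_R)=0$ for all odd $j$. Combining these forces a large collection of the coefficients appearing in $\mathbf{SW}^{\mathbb{Z}_4}_R(X,\mathfrak{s})(1)$ to vanish.

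The final step is to show that the remaining surviving terms in $\mathbf{SW}^{\mathbb{Z}_4}_R(X,\mathfrak{s})(1)$ lie in the ideal generated by $t$, and hence are killed by $r^*$. When $b_+(X)^{-\sigma}>2$, degree and parity constraints leave no $t^0$-contribution: schematically, each candidate $t^0$-term is a product of an $SW^\phi_{R,m}$ with $m$ even positive (hence zero by (i)) or a $w_j(-D_R)$ with $j$ odd (hence zero by (ii)), together with an Euler-class factor whose minimal degree exceeds the available degree budget once $b_+(X)^{-\sigma}>2$. The main obstacle I anticipate is the careful bookkeeping of the $\mathbb{Z}_4$-equivariant Euler class of $V'\oplus U'$ and its interaction with the inversion action on $Jac_R(X)$, ensuring that the pushforward under $\pi:\mathbb{RP}(V)\to\widehat B$ produces precisely the structure needed so that the $b_+(X)^{-\sigma}=2$ boundary case yields the Stiefel--Whitney formula stated in the introduction while $b_+(X)^{-\sigma}>2$ gives outright vanishing.
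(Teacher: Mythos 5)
Your proposal sets up the right framework (the refined invariant $\mathbf{SW}^{\mathbb{Z}_4}_R$, the cohomology of $\widehat{B}$, the fact that $r^*$ kills positive powers of the polynomial generator), but then misses the one simple observation that makes the theorem go, and in its place you substitute a speculative Steenrod-square argument that is never carried out and whose final degree-counting step is not justified.

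The key point the paper uses is module-theoretic and very short. The refined invariant $\mathbf{SW}^{\mathbb{Z}_4}_R(X,\mathfrak{s}) : H^*_{\mathbb{Z}_4}(pt ; \mathbb{Z}_2) \to H^{*-\delta}_{\mathbb{Z}_2}(\widehat{B} ; \mathbb{Z}_2)$ is an $H^*_{\mathbb{Z}_2}(pt;\mathbb{Z}_2)$-module map. Since $v^2 = 0$ in $H^*_{\mathbb{Z}_4}(pt ; \mathbb{Z}_2) \cong \mathbb{Z}_2[v,w]/(v^2)$, one gets immediately $v^2 \cdot \mathbf{SW}^{\mathbb{Z}_4}_R(X,\mathfrak{s})(1) = \mathbf{SW}^{\mathbb{Z}_4}_R(X,\mathfrak{s})(v^2) = 0$. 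In the ring $H^*_{\mathbb{Z}_2}(\widehat{B} ; \mathbb{Z}_2) \cong H^*(Jac_R(X);\mathbb{Z}_2)[v]/(v^{b_+(X)^{-\sigma}})$, the annihilator of $v^2$ is the ideal $(v^{b_+(X)^{-\sigma}-2})$; hence $\mathbf{SW}^{\mathbb{Z}_4}_R(X,\mathfrak{s})(1) = \alpha v^{b_+(X)^{-\sigma}-2} + \beta v^{b_+(X)^{-\sigma}-1}$. If $b_+(X)^{-\sigma} > 2$ both exponents are positive, so $r^*$ kills the whole class. That is the entire proof. Your inputs (i) and (ii) --- the vanishing of $SW^\phi_{R,m}$ for even positive $m$ and of the odd $w_j(D_R)$ --- are true but irrelevant to this statement; they are used only to evaluate the surviving class in the borderline case $b_+(X)^{-\sigma}=2$.

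Your plan to ``run the analog of the argument of Section 6.3 in the $\mathbb{Z}_4$-equivariant setting'' and then conclude by ``degree and parity constraints'' is a gap, not an outline: you never extract an actual identity from the Steenrod operations in the $\mathbb{Z}_4$-setting, you do not show that the putative expansion of $\mathbf{SW}^{\mathbb{Z}_4}_R(X,\mathfrak{s})(1)$ is a sum of products of the form you assert, and the claim that the Euler-class factor's ``minimal degree exceeds the available degree budget once $b_+(X)^{-\sigma}>2$'' is exactly the content of the theorem and is asserted rather than proved. There is also no reason to expect the Steenrod route to be simpler than the annihilator argument, since the $\mathbb{Z}_4$-equivariant Steenrod calculation over $\widehat{B}$ would require tracking the non-trivial inversion action on $Jac_R(X)$ and the non-split fibration over $\mathbb{RP}^{b_+(X)^{-\sigma}-1}$, none of which you address.
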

\begin{proof}
Similar to \cite[Proposition 3.1]{bar1}, we have an isomorphism
\begin{equation}\label{equ:cohomiso1}
H^*_{\mathbb{Z}_2}( \widehat{B} ; \mathbb{Z}_2) \cong H^*( Jac_R(X) ; \mathbb{Z}_2)[v]/( v^{b_+(X)^{-\sigma}} ).
\end{equation}
Then since $v^2 = 0$ in $H^*_{\mathbb{Z}_4}(pt ; \mathbb{Z}_2)$, we have
\[
v^2 \mathbf{SW}^{\mathbb{Z}_4}_R(X , \mathfrak{s})(1) = \mathbf{SW}^{\mathbb{Z}_4}_R(X , \mathfrak{s})(v^2) = 0.
\]
Therefore under the isomorphism (\ref{equ:cohomiso1}), $\mathbf{SW}^{\mathbb{Z}_4}_R(X,\mathfrak{s})(1)$ must be of the form 
\[
\mathbf{SW}^{\mathbb{Z}_4}_R(X,\mathfrak{s})(1) = \alpha v^{b_+(X)^{-\sigma}-2} + \beta v^{b_+(X)^{-\sigma}-1}
\]
for some $\alpha,\beta \in H^*( Jac_R(X) ; \mathbb{Z}_2)$. But $SW_{R,0}(X,\mathfrak{s}) = r^*( \mathbf{SW}^{\mathbb{Z}_4}_R(X , \mathfrak{s})(1) )$ and $r^*(v^j ) = 0$ for all $j > 0$. So if $b_+(X)^{-\sigma} > 2$, then $r^*( v^{b_+(X)^{-\sigma}-2} ) = r^*(v^{b_+(X)^{-\sigma}-1}) = 0$ and hence $SW_{R,0}(X,\mathfrak{s}) = 0$.
\end{proof}

\begin{remark}
Theorem \ref{thm:spinvanish} says that if $\mathfrak{s}$ is a Real spin structure then $SW_{R,0}(X,\mathfrak{s}) = 0$ unless $b_+(X)^{-\sigma} = 1$ or $2$. In the case $b_+(X)^{-\sigma} = 1$ or $2$, one can use the methods of \cite{bar1} to express $SW_{R,0}(X,\mathfrak{s})$ in terms of certain characteristic classes of $D_R$. In particular, for $b_+(X)^{-\sigma} = 2$ one can show that $SW_{R,0}(X,\mathfrak{s}) = w_{2 + \sigma(X)/8}(-D_R)$. In particular, if $b_+(X)^{-\sigma} = 2$ and $\sigma(X) = -16$, then $SW_{R,0}(X,\mathfrak{s}) = 1$. We omit the details of the calculation.
\end{remark}

\section{Localisation formula}\label{sec:loc}

In this section we will use localisation techniques to compare the ordinary and Real Seiberg--Witten invariants. The nature of the localisation argument means that it will only give a mod $2$ relation. So we work throughout with $\mathbb{Z}_2$-coefficients and only consider the mod $2$ Real Seiberg--Witten invariants.

\subsection{Mod 2 Seiberg--Witten invariants}\label{sec:z2restr}

Before getting to the localisation argument itself we wish to show that the ordinary mod $2$ Seiberg--Witten invariant can be recovered from the Bauer--Furuta map with $S^1$-action restricted to $\mathbb{Z}_2 = \{ \pm 1 \} \subset S^1$. Let $f : S^{V,U} \to S^{V',U'}$ be an $S^1$-equivariant monopole map over a base $B$. Thus $V,V'$ are complex vector bundles, $U,U'$ are real vector bundles, $S^1$ acts by scalar multiplication on $V,V'$ and trivially on $U,U'$. We assume $f$ sends the infinity section of $S^{V,U}$ to the infinity section of $S^{V',U'}$ and that $f|_{S^U}$ is the map induced by an inclusion $U \to U'$. Set $D = V -V'$ and write $U' = U \oplus H^+$. Let $a,a'$ be the complex ranks of $V,V'$, $b,b'$ the real ranks of $U,U'$, set $d = a-a'$, $b_+ = b'-b$. Assume $b_+ > 0$ and let $\phi$ be a chamber. Then we may write 
\[
f^*( \tau^\phi_{V',U'}) = \eta^\phi \delta \tau_U,
\]
where $\eta^\phi \in H^*_{S^1}( \mathbb{CP}(V) ; \mathbb{Z}_2)$ and the abstract Seiberg--Witten invariants of $f$ are given by 
\[
SW_m^\phi(f) = (\pi_{\mathbb{C}})_*( \eta^\phi U^m ) \in H^{2m-(2d-b_+-1)}(B ; \mathbb{Z}_2),
\]
where $\pi_{\mathbb{C}}$ is the projection map $(\pi_{\mathbb{C}}) : \mathbb{CP}(V) \to B$. Note that $H^*( \mathbb{CP}(V) ; \mathbb{Z}_2) \cong H^*_{S^1}( S(V) ; \mathbb{Z}_2)$, where $S(V)$ is the unit sphere bundle of $V$. This gives $H^*( \mathbb{CP}(V) ; \mathbb{Z}_2)$ the structure of a module over $H^*_{S^1}(pt ; \mathbb{Z}_2) \cong \mathbb{Z}_2[U]$, $deg(U) = 2$. Let $R = H^*(B ; \mathbb{Z}_2)$. Then one finds
\begin{equation}\label{equ:cp}
H^*( \mathbb{CP}(V) ; \mathbb{Z}_2) \cong R[U]/\langle e_{S^1}(V) \rangle
\end{equation}
where $e_{S^1}(V) \in H^*_{S^1}(B ; \mathbb{Z}_2) \cong R[U]$ is the $S^1$-equivariant Euler class of $V$ (with mod $2$-coefficients):
\[
e_{S^1}(V) = U^a + w_2(V) U^{a-1} + \cdots + w_{2a}(V).
\]
We can similarly consider the real projective bundle $\pi_{\mathbb{R}} : \mathbb{RP}(V) \to B$. Then $H^*( \mathbb{RP}(V) ; \mathbb{Z}_2) \cong H^*_{\mathbb{Z}_2}( S(V) ; \mathbb{Z}_2)$ is a module over $H^*_{\mathbb{Z}_2}(pt ; \mathbb{Z}_2) \cong \mathbb{Z}_2[v]$, $deg(v) = 1$, namely
\begin{equation}\label{equ:rp}
H^*(\mathbb{RP}(V) ; \mathbb{Z}_2) \cong R[v]/ \langle e_{\mathbb{Z}_2}(V) \rangle
\end{equation}
where $e_{\mathbb{Z}_2}(V)$ is the $\mathbb{Z}_2$-equivariant Euler class of $V$ with mod $2$-coefficients:
\[
e_{\mathbb{Z}_2}(V) = v^{2a} + w_2(V) v^{2a-2} + \cdots + w_{2a}(V).
\]
Under the forgetful map $H^*_{S^1}(pt ; \mathbb{Z}_2) \to H^*_{\mathbb{Z}_2}(pt ; \mathbb{Z}_2)$, $U$ maps to $v^2$ and similarly under the forgetful map $H^*_{S^1}(B ; \mathbb{Z}_2) \to H^*_{\mathbb{Z}_2}(B ; \mathbb{Z}_2)$, $e_{S^1}(V)$ maps to $e_{\mathbb{Z}_2}(V)$. Let $\rho : \mathbb{RP}(V) \to \mathbb{CP}(V)$ be the natural projection. The induced pullback map $\rho^* : H^*( \mathbb{CP}(V) ; \mathbb{Z}_2) \to H^*(\mathbb{RP}(V) ; \mathbb{Z}_2)$ can be identified with the forgetful map $H^*_{S^1}(S(V) ; \mathbb{Z}_2) \to H^*_{\mathbb{Z}_2}(S(V) ; \mathbb{Z}_2)$. In terms of the isomorphisms (\ref{equ:cp}), (\ref{equ:rp}), $\rho^*$ is simply given by $U \mapsto v^2$.

Recall that $(\pi_{\mathbb{C}})_*( U^m ) = w_{2m-(2a-2)}(-V)$ and $(\pi_{\mathbb{R}})_*( v^m ) = w_{m-(2a-1)}(-V)$. It follows that $(\pi_{\mathbb{C}})_*( \theta ) = (\pi_{\mathbb{R}})_*( v \rho^*(\theta) )$ for all $\theta \in H^*(\mathbb{CP}(V) ; \mathbb{Z}_2)$. In particular, 
\begin{equation}\label{equ:rpcp}
SW_m^\phi(f) = (\pi_{\mathbb{C}})_*( \eta^\phi U^m ) = (\pi_{\mathbb{R}})_*( \rho^*(\eta^\phi) v^{2m+1} ).
\end{equation}
The isomorphism $H^*( \mathbb{RP}(V) ; \mathbb{Z}_2 ) \cong H^*_{\mathbb{Z}_2}( S(V) ; \mathbb{Z}_2)$ expresses the cohomology of $\mathbb{RP}(V)$ in terms of $\mathbb{Z}_2$-equivariant cohomology of $S(V)$. Using this, it follows from Equation (\ref{equ:rpcp}) that the mod $2$ Seiberg--Witten invariants $SW^\phi_m(f)$ of $f$ can be computed using $\mathbb{Z}_2$-equivariant cohomology instead of $S^1$-equivariant cohomology.

\subsection{Localisation}\label{sec:loc2}

Now we proceed to the localisation argument. The idea is to begin with the ordinary Bauer--Furuta monopole map, but keeping track of the additional symmetry given by the Real structure. Recall that this requires a choice of equivariant splitting $H^1(X ; \mathbb{Z}) \to \mathcal{H}$. A splitting exists if $\sigma$ does not act freely, or if $b_1(X)^{-\sigma} = 0$. We assume a splitting exists so that the Bauer--Furuta map exists and takes the form of an $O(2)$-equivariant map
\[
f : S^{V,U} \to S^{V',U'}
\]
over $B = Jac(X)$, where $V,V'$ are Real vector bundles (with respect to the real structure $\sigma : Jac(X) \to Jac(X)$ given by $L \mapsto \sigma^*(L)^*$), the subgroup $S^1 \subset O(2)$ acts on $V,V'$ by scalar multiplication and acts trivially on $U,U'$. The bundles $U,U'$ are real and $\mathbb{Z}_2$-equivariant. $V - V' = D$, the index of the Dirac operator and $U' = U \oplus H^+(X)$. The action of $\mathbb{Z}_2 = \langle \sigma \rangle$ on $H^+(X)$ is by minus pullback $a \mapsto -\sigma^*(a)$.

We have seen that the mod $2$ Seiberg--Witten invariants of $f$ are detected using only $\mathbb{Z}_2 \subset S^1$. Keeping track of the Real structure $\sigma$, this motivates us to consider the subgroup $\mathbb{Z}_2 \times \mathbb{Z}_2 \subset O(2)$, generated by $\sigma$ and $\tau = -1 \in S^1$. So now we forget the full $O(2)$-action and regard $f$ as a $\mathbb{Z}_2 \times \mathbb{Z}_2 = \langle \tau,\sigma \rangle$-equivariant map $f : S^{V,U} \to S^{V',U'}$. Here $\tau$ acts trivially on the base $B = Jac(X)$, acts as $-1$ on $V,V'$, acts trivially on $U,U'$ and $\sigma$ acts as described previously. If we restrict to the $\sigma$-invariant part
\[
f^\sigma : S^{V^\sigma , U^\sigma} \to S^{(V')^\sigma , (U')^\sigma}
\]
we obtain a $\mathbb{Z}_2 = \langle \tau \rangle$-equivariant map over $B^\sigma$. By Proposition \ref{prop:jacfix}, we have
\[
B^{\sigma} = \bigcup_{\mathfrak{s}'} B^{\mathfrak{s}'}
\]
where the union is over Real structures on $\mathfrak{s}$ and $B^{\mathfrak{s}'} = Jac_R^{\mathfrak{s}'}(X)$. Furthermore, the restriction $f^{\mathfrak{s}'} = f^\sigma |_{B^{\mathfrak{s}'}}$ is the Real Bauer--Furuta map for the Real spin$^c$-structure $\mathfrak{s}'$. Thus $f^{\sigma} = \bigcup_{\mathfrak{s}'} f^{\mathfrak{s}'}$ is a disjoint union of Real Bauer--Furuta maps.

We use localisation to compare the mod $2$ Seiberg--Witten invariants of $f$ and $f^\sigma$. Let the real ranks of $V,V'$ be $2a,2a'$ and let the real ranks of $U,U'$ be $b,b'$. Then $a-a' = d = (c(\mathfrak{s})^2 - \sigma(X))/8$, $b' - b  = b_+(X)$. Let $\phi$ be a chamber for $f^{\sigma}$. Then under the inclusion $H^+(X)^{-\sigma} \to H^+(X)$, $\phi$ also defines a chamber for $f$. 

We recall the definitions of the Seiberg--Witten invariants of $f$ and $f^\sigma$. First consider $f$. Let $\tau^\phi_{V',U'} \in H^{2a'+b'}_{\mathbb{Z}_2}( S^{V',U'} , S^U ; \mathbb{Z}_2)$ be the lifted Thom class. Then similar to Section \ref{sec:realbf}, we can write
\[
f^*( \tau^\phi_{V',U'} ) = \eta^\phi \delta \tau_U,
\]
where $\eta^\phi \in H^{2a'+b_+-1}_{\mathbb{Z}_2}( S(V) ; \mathbb{Z}_2) \cong H^{2a'+b_+-1}( \mathbb{RP}(V) ; \mathbb{Z}_2)$ and by Equation (\ref{equ:rpcp}) the Seiberg--Witten invariants $SW^\phi_m(f)$ of $f$ are given by
\[
SW^\phi_m(f) = (\pi_V)_*( \eta^\phi v^{2m+1} ) \in H^{2m-(2d-b_+(X)-1)}(B ; \mathbb{Z}_2)
\]
where $\pi_V$ is the projection $\pi_V : \mathbb{RP}(V) \to B$.

Next consider $f^\sigma$. Write $b = b_+ + b_-$, $b' = b'_+ + b'_-$, where $b_{\pm}$ are the ranks of $U^{\pm \sigma}$ (the $\pm 1$-eigenspaces of $\sigma$ on $U$) and $b'_{\pm}$ are defined similarly. Then $b'_+ - b_+ = b_+(X)^{-\sigma}$, $b'_- - b_- = b_+(X)^{\sigma}$. Observe that since $V,V'$ are complex vector bundles and $\sigma$ acts anti-linearly, the $\pm 1$-eigenspaces of $\sigma$ on $V$ each have real dimension $a$ and the $\pm 1$-eigenspaces of $\sigma$ on $V'$ each have real dimension $a'$. We have 
\[
(f^\sigma)^*( \tau^\phi_{(V')^\sigma , (U')^\sigma} ) = \eta^\phi_R \delta \tau_{U^\sigma},
\]
where $\eta^\phi_R \in H^{a'-1+b_+(X)^{-\sigma}}( \mathbb{RP}(V^\sigma) ; \mathbb{Z}_2)$ and the Seiberg--Witten invariants of $f^\sigma$ are given by 
\[
SW^\phi_m(f^\sigma) = (\pi_{V^\sigma})_*( \eta^\phi_R v^m ) \in H^{m - (d-b_+(X)^{-\sigma})}( B^\sigma ; \mathbb{Z}_2).
\]
If $f$ is the Bauer--Furuta invariant of $(X , \mathfrak{s})$, then similar to the construction in Section \ref{sec:realbf}, we get a mod $2$ Seiberg--Witten invariant
\[
\mathbf{SW}^\phi : H^*_{S^1}(pt ; \mathbb{Z}_2) \to H^{*-(2d-b_+(X)-1)}(Jac(X) ; \mathbb{Z}_2).
\]
and $SW^\phi_m(f) = \mathbf{SW}^\phi_m(X,\mathfrak{s})(U^m)$, where $U$ is the generator of $H^2_{S^1}(pt ; \mathbb{Z}_2)$. The ordinary pure mod $2$ Seiberg--Witten invariant of $(X,\mathfrak{s})$ is given by $SW^\phi(X,\mathfrak{s}) = \langle [Jac(X)] , SW^\phi_\delta(f) \rangle$, where $\delta = (2d - b_+(X) - 1 + b_1(X))/2$ is half the dimension of the moduli space.

Similarly, if $f$ is the Bauer--Furuta invariant of $(X,\mathfrak{s})$, then $f^\sigma$ is the disjoint union of Real Bauer--Furuta invariants of $(X,\sigma , \mathfrak{s}')$ where $\mathfrak{s}'$ runs over all Real structures on $\mathfrak{s}$. Moreover $H^*( B^\sigma ; \mathbb{Z}_2) = \bigoplus_{\mathfrak{s}'} H^*( B^{\mathfrak{s}'} ; \mathbb{Z}_2)$ and 
\[
SW^\phi_m(f^\sigma) = \bigoplus_{\mathfrak{s}'} SW^\phi_{R,m}(X,\mathfrak{s}')
\]
is the direct sum of the Real mod $2$ Seiberg--Witten invariants of $(X,\mathfrak{s}',\sigma)$.

It will be useful also to consider $f^{\tau \sigma}$, the restriction of $f$ to the fixed points of $\tau \sigma$. Since $\tau \sigma = i^{-1} \circ \sigma \circ i$, we see that $f^{\tau \sigma}$ and $f^{\sigma}$ can be identified with each other. However they play slightly different roles in the localisation theorem so it will be important to consider both maps.

The Seiberg--Witten invariants $SW^\phi_m(f), SW^\phi_m(f^\sigma)$ that we are interested in are defined using $\mathbb{Z}_2 = \langle \tau \rangle$-equivariant cohomology. For localisation, we need to make use of the larger symmetry group $\mathbb{Z}_2 \times \mathbb{Z}_2 = \langle \tau, \sigma \rangle$. Repeating the constructions of these invariants but keeping track of the extra symmetry, we will get classes
\[
\widehat{SW}^\phi_{m_0,m_1}(f) \in H^*_{\mathbb{Z}_2}( B ; \mathbb{Z}_2), \quad \widehat{SW}^\phi_{m_0,m_1}(f^{\sigma}) \in H^*_{\mathbb{Z}_2}( B^\sigma ; \mathbb{Z}_2)
\]
where the $\mathbb{Z}_2$-action on $B$ is by $\sigma$ and the $\mathbb{Z}_2$-action on $B^\sigma$ is trivial. These are defined similar to their non-equivariant counterparts. We have lifted Thom classes $\tau^\phi_{V',U'} \in H^*_{\mathbb{Z}_2 \times \mathbb{Z}_2}( S^{V',U'} , S^{U} ; \mathbb{Z}_2)$, then
\[
f^*( \tau^\phi_{V',U'} ) = \widehat{\eta}^\phi \delta \tau_U
\]
for some $\widehat{\eta}^\phi \in H^*_{\mathbb{Z}_2}( \mathbb{RP}(V) ; \mathbb{Z}_2)$ and
\[
\widehat{SW}^\phi_{m_0,m_1}(f) = (\pi_V)_*( \widehat{\eta}^\phi v^{m_0} (v+u)^{m_1} )
\]
and where $u$ is the generator of $H^1_{\mathbb{Z}_2}(pt ; \mathbb{Z}_2)$ for the group $\mathbb{Z}_2 = \langle \sigma \rangle$. Similarly, we will have
\[
(f^\sigma)^*( \tau^\phi_{(V')^\sigma , (U')^\sigma} ) = \widehat{\eta}^\phi_R \delta \tau_{U^\sigma},
\]
for some $\widehat{\eta}^\phi_R \in H^*_{\mathbb{Z}_2}( \mathbb{RP}(V^\sigma) ; \mathbb{Z}_2)$ and then
\[
\widehat{SW}^\phi_{m_0,m_1}(f^{\sigma}) = (\pi_{V^{\sigma}})_*( \widehat{\eta}^\phi_R v^{m_0} (v+u)^{m_1}).
\]
Under the forgetful maps from equivariant to ordinary cohomology, we have $\widehat{\eta}^\phi \mapsto \eta^\phi$, $\widehat{\eta}^\phi_R \mapsto \eta^\phi_R$ and $\widehat{SW}^\phi_{m_0,m_1}(f^\sigma) \mapsto SW^\phi_{m_0+m_1}(f^\sigma)$. Furthermore, if $m_0 + m_1 = 2m+1$ for some $m \ge 0$, then $\widehat{SW}^\phi_{m_0,m_1}(f) \mapsto SW^\phi_{m}(f)$.

In the case of $f^\sigma$, the group $\mathbb{Z}_2 = \langle \sigma \rangle$ acts trivially on all spaces involved. In particular, $H^*_{\mathbb{Z}_2}( \mathbb{RP}(V^\sigma) ; \mathbb{Z}_2) \cong H^*(\mathbb{RP}(V^\sigma) ; \mathbb{Z}_2)[u]$ and it follows easily that $\widehat{\eta}^\phi_R = \eta^\phi_R$ under this isomorphism. We also have $H^*_{\mathbb{Z}_2}( B^\sigma ; \mathbb{Z}_2) \cong H^*( B^\sigma ; \mathbb{Z}_2)[u]$ and by expanding $(v+u)^{m_1}$, we find
\[
\widehat{SW}^\phi_{m_0,m_1}(f^\sigma) = \sum_{j \ge 0} \binom{m_1}{j} u^{m_1-j} SW^\phi_{m_0+j}(f^\sigma).
\]

While the action of $\sigma$ is non-trivial on $B$, a simple spectral sequence computation shows that we still have an isomorphism $H^*_{\mathbb{Z}_2}( B ; \mathbb{Z}_2) \cong H^*(B ; \mathbb{Z}_2)[u]$ and the forgetful map $H^*_{\mathbb{Z}_2}(B ; \mathbb{Z}_2) \to H^*(B ; \mathbb{Z}_2)$ is given by setting $u=0$.

\subsection{Case $b_1(X) = 0$}\label{sec:b1=0loc}

We first consider the case where $b_1(X) = 0$. Then $B = Jac(X) = \{1\}$ is a single point, $V,V,U,U'$ are vector spaces and $f : S^{V,U} \to S^{V',U'}$ is a map of spheres.

Observe that $\mathbb{RP}(V)^\sigma = \mathbb{RP}(V^\sigma) \cup \mathbb{RP}(V^{\tau\sigma})$. Let $\iota_\sigma : \mathbb{RP}(V^\sigma) \to \mathbb{RP}(V)$, $\iota_{\tau \sigma} : \mathbb{RP}(V^{\tau \sigma}) \to \mathbb{RP}(V)$ be the inclusions. The group $\mathbb{Z}_2 = \langle \sigma \rangle$ acts on $\mathbb{RP}(V)$ and trivially acts on $\mathbb{RP}(V^\sigma), \mathbb{RP}(V^{\tau \sigma})$. Let $N^\sigma, N^{\tau \sigma}$ denote the normal bundles. These are $\mathbb{Z}_2$-equivariant vector bundles over $\mathbb{RP}(V^\sigma), \mathbb{RP}(V^{\tau \sigma})$ where $\sigma$ acts as multiplication by $-1$. Clearly $N^{\sigma} \cong V^{\sigma} \otimes \mathcal{O}(1)$, where $\mathcal{O}(1)$ denotes the hyperplane line bundle on $\mathbb{RP}(V^\sigma)$. Similarly $N^{\tau \sigma} \cong V^{\tau \sigma} \otimes \mathcal{O}(1)$. Now consider $\mathbb{Z}_2$-equivariant cohomology groups. We have $H^*_{\mathbb{Z}_2}(pt ; \mathbb{Z}_2) \cong \mathbb{Z}_2[u]$, where $deg(u) = 1$. Consider the hyperplane line bundle $\mathcal{O}(1) \to \mathbb{RP}(V)$. There are two ways of making this into a $\mathbb{Z}_2$-equivariant line bundle. We choose the lift of $\sigma$ to $\mathcal{O}(1)$ which equals the identity over $\mathbb{RP}(V^\sigma)$ and equals minus the identity over $\mathbb{RP}(V^{\tau\sigma})$. Set $v \in H^*_{\mathbb{Z}_2}( \mathbb{RP}(V) ; \mathbb{Z}_2)$ be the equivariant first Stiefel--Whitney class of $\mathcal{O}(1)$ with respect to this lift. We have
\[
H^*_{\mathbb{Z}_2}( \mathbb{RP}(V) ; \mathbb{Z}_2 ) \cong \mathbb{Z}_2[u,v]/( v^a(v+u)^a ).
\]
Similarly
\begin{equation}\label{equ:rpiso}
H^*_{\mathbb{Z}_2}( \mathbb{RP}(V^\sigma) \; \mathbb{Z}_2) \cong H^*_{\mathbb{Z}_2}(\mathbb{RP}(V^{\tau \sigma}) \cong \mathbb{Z}_2[u,v]/(v^a)
\end{equation}
where in both cases, $v$ equals the first equivariant Stiefel--Whitney class of $\mathcal{O}(1)$ with respect to the trivial lift of the $\mathbb{Z}_2$-action. It follows that $\iota_\sigma^*(v) = v$, $\iota_{\tau \sigma}^*(v) = v+u$.

\begin{lemma}\label{lem:restr}
Under the isomorphisms given in (\ref{equ:rpiso}), we have
\[
\iota_\sigma^*(\widehat{\eta}^\phi) = \iota_{\tau \sigma}^*(\widehat{\eta}^\phi) = u^{b_+(X)^\sigma} (u+v)^{a'} \eta^\phi_R.
\]

\end{lemma}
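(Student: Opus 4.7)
The strategy is to apply the pullback $\iota_\sigma^*$ to the defining equation $f^*(\tau^\phi_{V',U'}) = \widehat\eta^\phi\cdot\delta\tau_U$ and use naturality together with the Real Bauer--Furuta identity $(f^\sigma)^*(\tau^\phi_{(V')^\sigma,(U')^\sigma}) = \eta^\phi_R\,\delta\tau_{U^\sigma}$. By Proposition \ref{prop:jacfix}, the restriction $f^\sigma$ is precisely the Real Bauer--Furuta map, and $\widehat\eta^\phi_R = \eta^\phi_R$ since $\sigma$ acts trivially on the Real side. Using the commutative square
\begin{equation*}
\xymatrix{(S^{V^\sigma,U^\sigma}, S^{U^\sigma}) \ar[r]^{f^\sigma} \ar@{^{(}->}[d]_{\widetilde\iota} & (S^{(V')^\sigma,(U')^\sigma}, S^{U^\sigma}) \ar@{^{(}->}[d]^{\widetilde\iota'} \\ (S^{V,U}, S^U) \ar[r]^{f} & (S^{V',U'}, S^U)}
\end{equation*}
the equality $\widetilde\iota^*f^* = (f^\sigma)^*\widetilde\iota'^*$ yields $\iota_\sigma^*(\widehat\eta^\phi)\cdot\widetilde\iota^*(\delta\tau_U) = (f^\sigma)^*\widetilde\iota'^*(\tau^\phi_{V',U'})$.

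The proof then reduces to identifying the equivariant Euler class contributions on each side. Using the product decomposition $Y\cong\mathbb{RP}(V)\times D(U\oplus\mathbb{R})$, the class $\delta\tau_U$ lies purely in the $D(U\oplus\mathbb{R})$-factor, so $\widetilde\iota^*(\delta\tau_U) = u^{b_-}\,\delta\tau_{U^\sigma}$ (the $V$-direction contributes no Euler factor because $\widehat\eta^\phi$ is pulled back as a cohomology class, not as a Thom class). On the target side there are two twisting phenomena to track carefully. First, the $\sigma$-action on the $H^+(X)$-summand of $U'$ in the Bauer--Furuta framework is the \emph{negative} pullback $\omega\mapsto -\sigma^*\omega$ (inherited from the action $L\mapsto\sigma^*(L)^*$ on $\mathrm{Jac}(X)$), so that $(U')^{-\sigma} = U^{-\sigma}\oplus H^+(X)^\sigma$ has real rank $b_-+b_+(X)^\sigma$, contributing a factor $u^{b_-+b_+(X)^\sigma}$ to the normal Euler class. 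Second, after the $\tau$-quotient, the normal direction $(V')^{-\sigma}$ is twisted by the target hyperplane bundle $\mathcal{O}(1)_{\mathrm{tgt}}\to\mathbb{RP}((V')^\sigma)$, whose equivariant Euler class (with the trivial $\sigma$-lift) is $(u+v_{\mathrm{tgt}})^{a'}$.

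Pulling back by $(f^\sigma)^*$ and using that the linear leading term of the Bauer--Furuta map (the spin$^c$ Dirac operator $D$) satisfies $D^*\mathcal{O}(1)_{\mathrm{tgt}}\cong\mathcal{O}(1)_{\mathrm{src}}$, so $(f^\sigma)^*(v_{\mathrm{tgt}}) = v$, we obtain
\begin{equation*}
(f^\sigma)^*\widetilde\iota'^*(\tau^\phi_{V',U'}) = (u+v)^{a'}\,u^{b_-+b_+(X)^\sigma}\,\eta^\phi_R\,\delta\tau_{U^\sigma}.
\end{equation*}
Cancelling the factor $u^{b_-}\delta\tau_{U^\sigma}$ on both sides (valid since $H^*_{\mathbb{Z}_2}(\mathbb{RP}(V^\sigma);\mathbb{Z}_2)\cong\mathbb{Z}_2[u,v]/(v^a)$ has no $u$-torsion) yields $\iota_\sigma^*(\widehat\eta^\phi) = u^{b_+(X)^\sigma}(u+v)^{a'}\eta^\phi_R$. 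The case $\iota_{\tau\sigma}^*$ is handled by the identical argument applied to the other component of $\mathbb{RP}(V)^\sigma$: over $\mathbb{RP}(V^{\tau\sigma})$ the natural $\sigma$-lift of $\mathcal{O}(1)$ is nontrivial (equivariant $w_1 = v+u$ in the local coordinates in which $v$ corresponds to the trivial lift), and the corresponding shifts on source and target cancel, producing the same final formula in the local $(u,v)$ coordinates.

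The main technical obstacle is the correct identification of the $\sigma$-action on $H^+(X)$ within the Bauer--Furuta setup, which is $-\sigma^*$ rather than $\sigma^*$ and is responsible for the appearance of $b_+(X)^\sigma$ (rather than $b_+(X)^{-\sigma}$) in the final exponent; combined with the $\mathcal{O}(1)_{\mathrm{tgt}}$-twist of the target $V'$-normal direction (which after pullback accounts for the $(u+v)^{a'}$ factor rather than $u^{a'}$), these two points are exactly what make the formula nontrivial.
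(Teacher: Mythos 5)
Your argument for the $\iota_\sigma^*$ case follows the paper's route exactly: pull the defining identity $f^*(\tau^\phi_{V',U'})=\widehat\eta^\phi\,\delta\tau_U$ back through the commutative square for $\iota_\sigma$ and $f^\sigma$, match the equivariant Euler classes of the normal directions $U^{-\sigma}$ (rank $b_-$, giving $u^{b_-}$) and $(V')^{-\sigma}\oplus(U')^{-\sigma}$ (ranks $a'$ and $b'_-=b_-+b_+(X)^\sigma$, giving $(u+v)^{a'}u^{b'_-}$), and cancel. One small remark: the appeal to the linear leading term $D$ and to $D^*\mathcal{O}(1)_{\mathrm{tgt}}\cong\mathcal{O}(1)_{\mathrm{src}}$ is unnecessary and slightly misleading; $f^\sigma$ does not canonically induce a map of projective bundles, and the $v$ appearing in $\iota_\sigma^*(\tau^\phi_{V',U'})$ originates as the $\langle\tau\rangle$-point class, which is pulled back from a point and hence automatically preserved by any $\tau$-equivariant map.

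The genuine gap is in the $\iota_{\tau\sigma}^*$ case. Running "the identical argument'' at $\mathbb{RP}(V^{\tau\sigma})$ produces $\iota_{\tau\sigma}^*(\widehat\eta^\phi)=u^{b_+(X)^\sigma}(u+v)^{a'}\nu^\phi_R$, where $\nu^\phi_R$ is the class defined by $(f^{\tau\sigma})^*(\tau^\phi_{(V')^{\tau\sigma},(U')^{\tau\sigma}})=\nu^\phi_R\,\delta\tau_{U^\sigma}$, i.e.\ by the restriction of $f$ to the \emph{other} component of the fixed locus. Your paragraph addresses only the bookkeeping of the coordinate shift $\iota_{\tau\sigma}^*(v)=v+u$, which is correct but is not the obstacle; the point that actually needs proving is $\nu^\phi_R=\eta^\phi_R$. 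The paper establishes this by exhibiting a $\langle\tau\rangle$-equivariant commutative square whose vertical arrows are multiplication by $i\in S^1$, identifying $(S^{V^{\tau\sigma},U^\sigma},f^{\tau\sigma})$ with $(S^{V^{\sigma},U^\sigma},f^{\sigma})$. Without that identification, the second equality in the Lemma is not established by your argument.
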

\begin{proof}
The argument is similar to \cite[Lemma 6.4]{bar2}. Consider the commutative diagram
\[
\xymatrix{
S^{V,U} \ar[r]^-{f} & S^{V',U'} \\
S^{V^\sigma,U^\sigma} \ar[r]^-{f^\sigma} \ar[u]^-{\iota_\sigma} & S^{(V')^{\sigma},(U')^{\sigma}} \ar[u]^-{\iota_\sigma}
}
\]
This induces a commutative diagram in equivariant cohomology groups. We have
\begin{align*}
\iota_{\sigma}^* f^*( \tau^\phi_{V',U'} ) &= \iota^*_\sigma( \widehat{\eta}^\phi \delta \tau_U ) \\
&= u^{b_-} \iota^*_\sigma( \widehat{\eta}^\phi ) \delta \tau_{U^\sigma}
\end{align*}
and
\begin{align*}
(f^\sigma)^* \iota^*_\sigma( \tau^\phi_{V',U'}) &= (f^\sigma)^*( u^{b'_-}(u+v)^{a'} \tau^\phi_{(V')^\sigma , (U')^\sigma} ) \\
&= u^{b'_-} (u+v)^{a'} \eta^\phi_R \delta \tau_U.
\end{align*}
Equating the two expressions give
\[
\iota^*_\sigma(\widehat{\eta}^\phi) = u^{b'_-  - b_-} (u+v)^{a'} \eta^\phi_R = u^{b_+(X)^\sigma} (u+v)^{a'} \eta^\phi_R.
\]
Similarly, we get
\[
\iota^*_{\tau \sigma}(\widehat{\eta}^\phi) = u^{b_+(X)^\sigma} (u+v)^{a'} \nu^\phi_R
\]
where $\nu^\phi_R \in H^*( \mathbb{RP}(V^{\tau \sigma} ; \mathbb{Z}_2) \cong \mathbb{Z}_2[v]/(v^a)$ is defined by 
\[
(f^{\tau \sigma})^*( \tau^\phi_{(V')^{\tau \sigma} , (U')^{\tau \sigma}} ) = \nu^\phi_R \delta \tau_{U^{\sigma}}.
\]
To complete the result, it remains to show that $\nu^\phi_R = \eta^\phi_R$. However this follows immediately from the following $\mathbb{Z}_2 = \langle \tau \rangle$-equivariant commutative diagram
\[
\xymatrix{
S^{V^\sigma , U^\sigma} \ar[r]^-{f^\sigma} & S^{(V')^\sigma , (U')^\sigma} \\
S^{V^{\tau\sigma} , U^\sigma} \ar[u]^-{i} \ar[r]^-{f^{\tau \sigma}} & S^{(V')^{\tau \sigma} , (U')^\sigma} \ar[u]^-{i}
}
\]
where the vertical arrows are multiplication by $i \in S^1$.
\end{proof}

To apply the localisation theorem we will work in equivariant cohomology localised with respect to $u$. For instance the localised equivariant cohomology of $\mathbb{RP}(V^\sigma)$ is $u^{-1} H^*_{\mathbb{Z}_2}(\mathbb{RP}(V^\sigma) ; \mathbb{Z}_2) \cong \mathbb{Z}_2[u,u^{-1},v]/(v^a)$. In this ring $(u+v)$ is invertible, for we have
\[
(u+v)^{-1} = u^{-1} + u^{-2}v + u^{-3}v^2 + \cdots + u^{-a}v^{a-1}.
\]
Similarly $(u+v)$ is invertible in $u^{-1} H^*_{\mathbb{Z}_2}( \mathbb{RP}(V^{\tau \sigma}) ; \mathbb{Z}_2)$.

\begin{lemma}\label{lem:loc}
For any $\alpha \in H^*_{\mathbb{Z}_2}( \mathbb{RP}(V) ; \mathbb{Z}_2)$, we have
\[
\alpha = (\iota_{\sigma})_*( (u+v)^{-a} \iota_{\sigma}^*(\alpha) ) + (\iota_{\tau \sigma})_*( (u+v)^{-a} \iota_{\tau \sigma}^*(\alpha) )
\]
in the localised ring $u^{-1} H^*_{\mathbb{Z}_2}( \mathbb{RP}(V) ; \mathbb{Z}_2)$.

\end{lemma}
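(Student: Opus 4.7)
The lemma is a mod $2$ instance of the Atiyah--Bott equivariant localization theorem applied to the $\mathbb{Z}_2 = \langle \sigma \rangle$ action on $\mathbb{RP}(V)$, whose fixed point set decomposes as $\mathbb{RP}(V^\sigma) \sqcup \mathbb{RP}(V^{\tau\sigma})$. The plan is first to compute the $\mathbb{Z}_2$-equivariant mod $2$ Euler classes of the two normal bundles, then verify that both sides of the claimed identity have equal restrictions to each fixed component via the self-intersection formula, and finally conclude via injectivity of the combined restriction map after inverting $u$.

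The key geometric input is that the normal bundle of $\mathbb{RP}(V^\sigma) \hookrightarrow \mathbb{RP}(V)$ is canonically $\mathcal{O}(1) \otimes V^{\tau\sigma}$, identified at a line $\ell \subset V^\sigma$ using $T_\ell \mathbb{RP}(V) = \mathrm{Hom}_{\mathbb{R}}(\ell, V/\ell)$ together with the splitting $V = V^\sigma \oplus V^{\tau\sigma}$. Over $\mathbb{RP}(V^\sigma)$ the bundle $\mathcal{O}(1)$ inherits the trivial $\sigma$-lift (matching the paper's convention defining $v$) while $V^{\tau\sigma}$ carries the $(-1)$-action, so by the splitting principle $e^{\mathbb{Z}_2}(N^\sigma) = (u+v)^a$ in $\mathbb{Z}_2[u,v]/(v^a)$. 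For $\mathbb{RP}(V^{\tau\sigma}) \hookrightarrow \mathbb{RP}(V)$, the analogous identification gives $N^{\tau\sigma} \cong \mathcal{O}(1) \otimes V^\sigma$; now $\mathcal{O}(1)$ carries the non-trivial $\sigma$-lift, which in the convention of (\ref{equ:rpiso}) contributes an extra $u$ per factor, again producing equivariant Euler class $(u+v)^a$.

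Writing $\beta$ for the right-hand side of the claim and applying the self-intersection formula $\iota^* \iota_*(\gamma) = e^{\mathbb{Z}_2}(N) \gamma$ together with the vanishing $\iota_\sigma^* \circ (\iota_{\tau\sigma})_* = 0$ (from disjointness of the two fixed components), one obtains $\iota_\sigma^*(\beta) = (u+v)^a (u+v)^{-a} \iota_\sigma^*(\alpha) = \iota_\sigma^*(\alpha)$ and symmetrically $\iota_{\tau\sigma}^*(\beta) = \iota_{\tau\sigma}^*(\alpha)$. To upgrade this to $\alpha = \beta$, it suffices that the combined restriction to the fixed locus is injective after inverting $u$. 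This is a purely algebraic consequence of the Chinese Remainder Theorem: since $v$ and $v+u$ differ by the unit $u$ in $\mathbb{Z}_2[u,u^{-1},v]$, the ideals $(v^a)$ and $((v+u)^a)$ are coprime, yielding
\[
\mathbb{Z}_2[u, u^{-1}, v]/(v^a(v+u)^a) \;\cong\; \mathbb{Z}_2[u, u^{-1}, v]/v^a \;\oplus\; \mathbb{Z}_2[u, u^{-1}, v]/(v+u)^a,
\]
and this splitting is realized exactly by the pair $(\iota_\sigma^*, \iota_{\tau\sigma}^*)$.

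The main obstacle I anticipate is the bookkeeping of the two possible $\sigma$-lifts of $\mathcal{O}(1)$ on the two fixed components: reconciling them with the convention of (\ref{equ:rpiso}) is what ensures that the same factor $(u+v)^a$ appears uniformly on both sides of the formula. Once that is handled cleanly the rest of the argument is formal.
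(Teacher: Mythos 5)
Your argument is correct and has the same skeleton as the paper's: compare restrictions of both sides to each fixed-point component using the self-intersection formula $\iota^*\iota_* = e^{\mathbb{Z}_2}(N)\cdot$, together with the vanishing of $\iota_\sigma^*(\iota_{\tau\sigma})_*$ from disjointness, and then conclude by injectivity of the combined restriction after inverting $u$. Where you differ is in how the injectivity is established: the paper simply invokes tom Dieck's localisation theorem \cite[III (3.8)]{die}, while you prove it from scratch by noting that in $\mathbb{Z}_2[u,u^{-1},v]$ the ideals $(v^a)$ and $((v+u)^a)$ are coprime (since $v+(v+u)=u$ is a unit) and applying CRT; identifying the two quotient rings with $H^*_{\mathbb{Z}_2}(\mathbb{RP}(V^\sigma);\mathbb{Z}_2)$ and $H^*_{\mathbb{Z}_2}(\mathbb{RP}(V^{\tau\sigma});\mathbb{Z}_2)$ under $\iota_\sigma^*(v)=v$, $\iota_{\tau\sigma}^*(v)=v+u$ then realises the CRT splitting as the restriction map. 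This is a self-contained, more elementary route to the key injectivity; the price is a little extra bookkeeping reconciling your second factor $\mathbb{Z}_2[u,u^{-1},v]/((v+u)^a)$ with the paper's presentation $\mathbb{Z}_2[u,u^{-1},v]/(v^a)$ of $H^*_{\mathbb{Z}_2}(\mathbb{RP}(V^{\tau\sigma}))$ via the change of variable $v\mapsto v+u$, which you handle correctly. One small remark: your identification $N^\sigma\cong\mathcal{O}(1)\otimes V^{\tau\sigma}$ is actually the precise one (the paper writes $V^\sigma\otimes\mathcal{O}(1)$, an inconsequential slip since both $V^\sigma$ and $V^{\tau\sigma}$ are trivial of rank $a$ here and give the same equivariant Euler class $(u+v)^a$).
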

\begin{proof}
The localisation theorem \cite[III (3.8)]{die} implies that the restriction map
\[
\iota^*_\sigma \oplus \iota^*_{\tau \sigma} : u^{-1} H^*_{\mathbb{Z}_2}( \mathbb{RP}(V) ; \mathbb{Z}_2) \to u^{-1}H^*_{\mathbb{Z}_2}( \mathbb{RP}(V^\sigma) ; \mathbb{Z}_2) \oplus u^{-1} H^*_{\mathbb{Z}_2}(\mathbb{RP}(V^{\tau\sigma} ; \mathbb{Z}_2))
\]
is an isomorphism. Let
\[
\mu = (\iota_{\sigma})_*( (u+v)^{-a} \iota_{\sigma}^*(\alpha) ) + (\iota_{\tau \sigma})_*( (u+v)^{-a} \iota_{\tau \sigma}^*(\alpha) ) \in u^{-1} H^*_{\mathbb{Z}_2}( \mathbb{RP}(V) ; \mathbb{Z}_2).
\]
The 
\begin{align*}
\iota^*_{\sigma}(\mu) &= \iota_\sigma^* (\iota_\sigma)_* ( (u+v)^{-a} \iota_\sigma^*(\alpha) ) \\
&= (u+v)^a (u+v)^{-a} \iota_\sigma^*(\alpha) \\
&= \iota^*_\sigma(\alpha).
\end{align*}
Similarly $\iota^*_{\tau \sigma}(\mu) = \iota^*_{\tau \sigma}(\alpha)$. Hence $\mu = \alpha$ in $u^{-1} H^*_{\mathbb{Z}_2}( \mathbb{RP}(V) ; \mathbb{Z}_2)$.
\end{proof}

\begin{corollary}\label{cor:loc}
For any $\alpha \in H^*_{\mathbb{Z}_2}(\mathbb{RP}(V) ; \mathbb{Z}_2)$, we have
\[
(\pi_V)_*(\alpha) = (\pi_{V^\sigma})_*( (u+v)^{-a} \iota_{\sigma}^*(\alpha) ) + (\pi_{V^{\tau\sigma}})_*( (u+v)^{-a} \iota_{\tau \sigma}^*(\alpha) ).
\]
\end{corollary}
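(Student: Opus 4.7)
The plan is to deduce this corollary directly from Lemma \ref{lem:loc} by applying the equivariant pushforward $(\pi_V)_*$ to both sides of the identity proved there. The key observation is the compatibility of projections: $\pi_V \circ \iota_\sigma = \pi_{V^\sigma}$ and $\pi_V \circ \iota_{\tau\sigma} = \pi_{V^{\tau\sigma}}$, since $\mathbb{RP}(V^\sigma)$ and $\mathbb{RP}(V^{\tau\sigma})$ sit inside $\mathbb{RP}(V)$ as fibrewise subsets over $B$. By functoriality of proper pushforward this gives $(\pi_V)_* \circ (\iota_\sigma)_* = (\pi_{V^\sigma})_*$ and likewise $(\pi_V)_* \circ (\iota_{\tau\sigma})_* = (\pi_{V^{\tau\sigma}})_*$.

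Lemma \ref{lem:loc} states that in the localised ring $u^{-1} H^*_{\mathbb{Z}_2}(\mathbb{RP}(V) ; \mathbb{Z}_2)$,
\[
\alpha = (\iota_{\sigma})_*\bigl( (u+v)^{-a} \iota_{\sigma}^*(\alpha) \bigr) + (\iota_{\tau \sigma})_*\bigl( (u+v)^{-a} \iota_{\tau \sigma}^*(\alpha) \bigr).
\]
Applying $(\pi_V)_*$ to both sides and using the two compositions above gives the desired formula
\[
(\pi_V)_*(\alpha) = (\pi_{V^\sigma})_*\bigl( (u+v)^{-a} \iota_{\sigma}^*(\alpha) \bigr) + (\pi_{V^{\tau\sigma}})_*\bigl( (u+v)^{-a} \iota_{\tau \sigma}^*(\alpha) \bigr).
\]

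The only subtlety to check is that this identity makes sense after localisation. Since $(\pi_V)_*$ is a morphism of $H^*_{\mathbb{Z}_2}(B;\mathbb{Z}_2)$-modules by the projection formula, and since $H^*_{\mathbb{Z}_2}(B;\mathbb{Z}_2)$ is a module over $H^*_{\mathbb{Z}_2}(pt;\mathbb{Z}_2) = \mathbb{Z}_2[u]$, the pushforward extends to localisations at $u$, so the identity descends from $u^{-1}H^*_{\mathbb{Z}_2}(\mathbb{RP}(V);\mathbb{Z}_2)$ to $u^{-1}H^*_{\mathbb{Z}_2}(B;\mathbb{Z}_2)$. There is no real obstacle; all substantive content was handled in Lemmas \ref{lem:restr} and \ref{lem:loc}, and this corollary is the immediate consequence obtained by integrating over the fibres of $\pi_V$.
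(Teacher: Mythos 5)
Your argument is correct and matches the paper's own proof: both simply apply $(\pi_V)_*$ to the identity of Lemma \ref{lem:loc} and use the compositions $(\pi_V)_*(\iota_\sigma)_* = (\pi_{V^\sigma})_*$ and $(\pi_V)_*(\iota_{\tau\sigma})_* = (\pi_{V^{\tau\sigma}})_*$. Your extra remark about the pushforward extending to the $u$-localised rings is a sound (if routine) point of hygiene that the paper leaves implicit.
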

\begin{proof}
From Lemma \ref{lem:loc}, we have
\[
\alpha = (\iota_{\sigma})_*( (u+v)^{-a} \iota_{\sigma}^*(\alpha) ) + (\iota_{\tau \sigma})_*( (u+v)^{-a} \iota_{\tau \sigma}^*(\alpha) ).
\]
Apply $(\pi_V)_*$ to both sides and use $(\pi_V)_* (\iota_{\sigma})_* = (\pi_{V^\sigma})_*$, $(\pi_V)_* (\iota_{\tau \sigma})_* = (\pi_{V^{\tau \sigma}})_*$.
\end{proof}

Let $m \ge 0$. Choose $m_0,m_1 \ge 0$ such that $m_0 + m_1 = 2m+1$. Now take $\alpha = v^{m_0}(v+u)^{m_1} \widehat{\eta}^\phi$ in Corollary \ref{cor:loc} and use Lemma \ref{lem:restr} to obtain:
\begin{align*}
(\pi_V)_*( v^{m_0}(v+u)^{m_1} \widehat{\eta}^\phi ) &= (\pi_{V^\sigma})_*( (u+v)^{-a}\iota_{\sigma}^*( v^{m_0} (v+u)^{m_1} \widehat{\eta}^\phi ) ) \\
& \quad \quad + (\pi_{V^{\tau \sigma}})_*( (u+v)^{-a} \iota_{\tau \sigma}^*( v^{m_0} (v+u)^{m_1} \widehat{\eta}^\phi ) ) \\
&= (\pi_{V^\sigma})_*( (u+v)^{m_1-a} v^{m_0} \iota_{\sigma}^*(\widehat{\eta}^\phi) ) \\
& \quad \quad + (\pi_{V^{\tau \sigma}})_*( (u+v)^{m_0-a} v^{m_1} \iota_{\tau \sigma}^*(\widehat{\eta}^\phi) ) \\
&= u^{b_+(X)^\sigma} (\pi_{V^\sigma})_*( (u+v)^{m_1 - d} v^{m_0} \eta^\phi_R) \\
& \quad \quad + u^{b_+(X)^\sigma} (\pi_{V^{\tau \sigma}})_*( (u+v)^{m_0-d} v^{m_1} \eta^\phi_R).
\end{align*}

Consider the pushforward $(\pi_{V^{\sigma}})_*( (u+v)^{m_1 - d} v^{m_0} \eta^\phi_R)$. Observe that the binomial $(u+v)^{m_1-d}$ can be expanded as $(u+v)^{m_1-d} = \sum_{j \ge 0} \binom{m_1-d}{j} v^j u^{m_1-d-j}$. This is true regarless of whether or not $m_1-d$ is positive. Then $(\pi_{V^{\sigma}})_*( (u+v)^{m_1 - d} v^{m_0} \eta^\phi_R)$ is a sum of terms of the form $(\pi_{V^{\sigma}})_*( u^i v^j \eta^\phi_R) = u^i (\pi_{V^{\sigma}})_*( v^j \eta^\phi_R)$. However, all such terms are zero except when $j = \delta$, where $\delta = d - b_+(X)^{-\sigma}$ is the dimension of the Real Seiberg--Witten moduli space, in which case it equals $u^i SW^\phi_R(X,\mathfrak{s})$. Using this, we find that
\[
(\pi_{V^{\sigma}})_*( (u+v)^{m_1 - d} v^{m_0} \eta^\phi_R) = u^{2m+1-2d+b_+(X)^{-\sigma}} \binom{m_1 - d}{\delta - m_0} SW^\phi_R(X,\mathfrak{s})
\]
and similarly
\[
(\pi_{V^{\tau\sigma}})_*( (u+v)^{m_0 - d} v^{m_1} \eta^\phi_R) = u^{2m+1-2d+b_+(X)^{-\sigma}} \binom{m_0 - d}{\delta - m_1} SW^\phi_R(X,\mathfrak{s}).
\]
Therefore, we have
\begin{align*}
& \quad \quad (\pi_V)_*( v^{m_0}(v+u)^{m_1} \eta^\phi ) = \\
& u^{2m - (2d-b_+(X)-1)}\left( \binom{m_1-d}{\delta-m_0} + \binom{m_0-d}{\delta-m_1} \right) SW^\phi_R(X,\mathfrak{s}) \in H^0_{\mathbb{Z}_2}(pt ; \mathbb{Z}_2) \cong \mathbb{Z}_2.
\end{align*}
But the left hand side is precisely $\widehat{SW}^\phi_{m_0,m_1}(f)$, so we have shown that
\[
\widehat{SW}^\phi_{m_0,m_1}(f) = u^{2m-(2d-b+(X)-1)}\left( \binom{m_1-d}{\delta-m_0} + \binom{m_0-d}{\delta-m_1} \right) SW^\phi_R(X,\mathfrak{s}).
\]
Assume the dimension $2d - b_+(X) - 1$ of the ordinary Seiberg--Witten moduli space is even and non-negative and choose $m = (2d-b_+(X)-1)/2$. Then under the forgetful map from $\mathbb{Z}_2$-equivariant cohomology to ordinary cohomology, the left hand side reduces to $SW^\phi(X,\mathfrak{s})$ (mod $2$), the ordinary mod $2$ Seiberg--Witten invariant of $(X,\mathfrak{s})$. Thus we have proven:

\begin{theorem}\label{thm:loc}
Let $X$ be a compact, oriented, smooth $4$-manifold with $b_1(X) = 0$ and $b_+(X)$ odd. Let $\sigma$ be a Real structure on $X$ and $\mathfrak{s}$ a Real spin$^c$-structure. Assume that $b_+(X)^{-\sigma}>0$ and let $\phi$ be a chamber. Then
\begin{equation}\label{equ:swloc1}
SW^\phi(X,\mathfrak{s}) = \left( \binom{m_1-d}{\delta-m_0} + \binom{m_0-d}{\delta-m_1} \right) SW^\phi_R(X,\mathfrak{s}) \; ({\rm mod} \; 2),
\end{equation}
where $m_0,m_1 \ge 0$ and any non-negative integers such that $m_0 + m_1 = 2d - b_+(X)$ and $\delta = d - b_+(X)^{-\sigma}$.

\end{theorem}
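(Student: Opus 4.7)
The plan is to exploit the additional $\sigma$-symmetry of the ordinary Bauer--Furuta map by viewing it as a $\mathbb{Z}_2 \times \mathbb{Z}_2$-equivariant map (with $\mathbb{Z}_2 \times \mathbb{Z}_2 = \langle \tau, \sigma \rangle \subset O(2)$) and then apply equivariant localization with respect to $\sigma$. Since $b_1(X) = 0$, the base $Jac(X)$ is a point, so $f \colon S^{V,U} \to S^{V',U'}$ is a map of spheres, and the fixed point set of $\sigma$ on $\mathbb{RP}(V)$ decomposes as $\mathbb{RP}(V^\sigma) \sqcup \mathbb{RP}(V^{\tau\sigma})$. Up to conjugation by $i \in S^1$, the restrictions of $f$ to these fixed-point strata are both identified with the Real Bauer--Furuta map of $(X, \sigma, \mathfrak{s})$ from Section \ref{sec:realbf}.

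First I would introduce refined invariants $\widehat{SW}^\phi_{m_0, m_1}(f) = (\pi_V)_*\bigl(\widehat\eta^\phi\, v^{m_0}(v+u)^{m_1}\bigr) \in H^*_{\mathbb{Z}_2}(\text{pt}; \mathbb{Z}_2)$, where $u$ generates $H^1_{\mathbb{Z}_2}(\text{pt}; \mathbb{Z}_2)$ for the $\sigma$-action, $v$ is the equivariant hyperplane class, and $\widehat\eta^\phi$ is the lift of $\eta^\phi$ to $\mathbb{Z}_2 = \langle \sigma\rangle$-equivariant cohomology characterised by $f^*(\tau^\phi_{V',U'}) = \widehat\eta^\phi\, \delta\tau_U$. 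Choosing $m_0 + m_1 = 2d - b_+(X)$ (i.e.\ one more than the half-dimension of the ordinary moduli space), the forgetful map setting $u = 0$ sends $\widehat{SW}^\phi_{m_0, m_1}(f)$ to $SW^\phi(X, \mathfrak{s}) \pmod 2$ by the observation of \textsection\ref{sec:z2restr} that the ordinary mod $2$ invariant is already detected by restricting to $\mathbb{Z}_2 \subset S^1$.

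Second I would compute the restrictions $\iota_\sigma^*(\widehat\eta^\phi)$ and $\iota_{\tau\sigma}^*(\widehat\eta^\phi)$ by chasing the commutative squares $f \circ \iota_\sigma = \iota_\sigma \circ f^\sigma$ (and the analogous one for $\tau\sigma$) through the lifted Thom class relation. The normal directions in $U'$ contribute a factor of $u^{b_+(X)^\sigma}$ and the normal directions in $V'$ contribute $(u+v)^{a'}$, giving
\[
\iota_\sigma^*(\widehat\eta^\phi) = \iota_{\tau\sigma}^*(\widehat\eta^\phi) = u^{b_+(X)^\sigma}(u+v)^{a'}\eta^\phi_R.
\]
The equality of the two restrictions follows because $f^{\tau\sigma}$ is the conjugate of $f^\sigma$ by multiplication by $i \in S^1$, which is a $\tau$-equivariant isomorphism of spheres. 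Then the Atiyah--Bott localization theorem in $\mathbb{Z}_2$-equivariant cohomology (inverting $u$) gives, for $\alpha = v^{m_0}(v+u)^{m_1}\widehat\eta^\phi$,
\[
(\pi_V)_*(\alpha) = (\pi_{V^\sigma})_*\bigl((u+v)^{-a}\iota_\sigma^*(\alpha)\bigr) + (\pi_{V^{\tau\sigma}})_*\bigl((u+v)^{-a}\iota_{\tau\sigma}^*(\alpha)\bigr).
\]

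Third I would expand $(u+v)^{m_1 - d}$ (and its $(m_0, m_1)$-swapped counterpart) as a power series in $v$ with coefficients in $u$. Only the $v^\delta$ terms survive the pushforward, since $(\pi_{V^\sigma})_*(v^j \eta^\phi_R)$ vanishes for $j \ne \delta$ and equals $SW^\phi_R(X, \mathfrak{s})$ for $j = \delta$. This produces the two binomial coefficients $\binom{m_1 - d}{\delta - m_0}$ and $\binom{m_0 - d}{\delta - m_1}$, and the factors of $u^{b_+(X)^\sigma}$ from the restriction formula cancel exactly with the leftover powers of $u$ from the binomial expansion, leaving a class in $H^0_{\mathbb{Z}_2}(\text{pt}; \mathbb{Z}_2)$. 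Setting $u = 0$ yields the theorem. The main obstacle is the bookkeeping in the second step: verifying carefully that both restrictions yield the same class $\eta^\phi_R$ (rather than two independent Real invariants), and correctly identifying the equivariant Euler classes of the normal bundles so that the cancellation $u^{b_+(X)^\sigma} \cdot u^{-b_+(X)^\sigma} = 1$ falls out cleanly.
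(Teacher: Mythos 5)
Your proposal is correct and follows essentially the same route as the paper's proof in Section \ref{sec:loc}: restricting the $O(2)$-equivariant Bauer--Furuta map to $\mathbb{Z}_2 \times \mathbb{Z}_2 = \langle \tau, \sigma \rangle$, computing $\iota_\sigma^*(\widehat\eta^\phi) = \iota_{\tau\sigma}^*(\widehat\eta^\phi) = u^{b_+(X)^\sigma}(u+v)^{a'}\eta_R^\phi$ via the normal-bundle Euler classes and the $i$-conjugation identification of $f^{\tau\sigma}$ with $f^\sigma$, applying Atiyah--Bott localization over the point base, and extracting the $v^\delta$ coefficient from the binomial expansion. Your observation that the $u$-powers cancel as $u^{b_+(X)^\sigma} \cdot u^{-b_+(X)^\sigma} = 1$ under the constraint $m_0 + m_1 = 2d - b_+(X)$ is exactly the calculation the paper performs, and the forgetful map (setting $u = 0$) then identifies the equivariant refinement with $SW^\phi(X,\mathfrak{s})$ as in \textsection\ref{sec:z2restr}.
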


Recall that $\binom{a}{b} = (-1)^b \binom{b-a-1}{b}$ (this follows by writing $\binom{a}{b} = a(a-1) \cdots (a-b+1)/b!$ and $\binom{b-a-1}{b} = (b-a-1)(b-a-2)\cdots (-a)/b! = (-1)^b a(a-1) \cdots (a-b+1)/b!$). Thus
\begin{align*}
\binom{m_1-d}{\delta-m_0}  = \binom{\delta - m_0 - m_1 + d - 1}{\delta-m_0} &= \binom{2d - b_+(X)^{-\sigma} - 1 -m_0 - m_1}{\delta-m_0} \\
&= \binom{ b_+(X)^{\sigma} - 1 }{\delta-m_0} \; ({\rm mod} \; 2).
\end{align*}
Hence Theorem \ref{thm:loc} can be re-written as
\[
SW^\phi(X,\mathfrak{s}) = \left( \binom{b_+(X)^{\sigma} - 1}{\delta-m_0} + \binom{ b_+(X)^{\sigma} -1}{\delta-m_1} \right) SW^\phi_R(X,\mathfrak{s}) \; ({\rm mod} \; 2).
\]

Assume that $2d - b_+(X) - 1 = 2m$ is even and non-negative (as must be the case if $SW^\phi(X,\mathfrak{s})$ is non-zero). Then $m_0 + m_1 = 2m+1$. Choose $m_0 = m$, $m_1 = m+1$. Then 
\begin{align*}
\binom{b_+(X)^{\sigma}-1}{\delta-m_0} + \binom{b_+(X)^{\sigma}-1}{\delta-m_1} &= \binom{b_+(X)^\sigma - 1}{\delta-m} + \binom{b_+(X)^\sigma - 1}{\delta-m-1} \\
& = \binom{b_+(X)^\sigma}{\delta-m} \\
& = \binom{ b_+(X)^\sigma}{ \frac{1}{2}(b_+(X)+1) - b_+(X)^{-\sigma} } \\
& = \binom{b_+(X)^\sigma}{ \frac{1}{2}( b_+(X)^{\sigma} - b_+(X)^{-\sigma} + 1) }.
\end{align*}

Next, using $\binom{a}{b} = \binom{2a}{2b} \; ({\rm mod} \; 2)$ (which follows from Lucas's theorem), we can further re-write this as
\[
\binom{ 2b_+(X)^\sigma}{ b_+(X)^{\sigma} - b_+(X)^{-\sigma} + 1} = \binom{ 2b_+(X)^\sigma }{ b_+(X)^\sigma + b_+(X)^{-\sigma} - 1 } = \binom{2b_+(X)^\sigma}{ b_+(X) - 1},
\]
giving
\begin{corollary}
Let $X$ be a compact, oriented, smooth $4$-manifold with $b_1(X) = 0$. Let $\sigma$ be a Real structure on $X$ and $\mathfrak{s}$ a Real spin$^c$-structure. Assume that $b_+(X)^{-\sigma}>0$ and let $\phi$ be a chamber. If $2d - b_+(X) - 1 \ge 0$, then
\[
SW^\phi(X,\mathfrak{s}) = \binom{ 2b_+(X)^{\sigma} }{ b_+(X)-1 } SW^\phi_R(X,\mathfrak{s}) \; ({\rm mod} \; 2).
\]
\end{corollary}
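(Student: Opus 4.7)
The plan is to specialise Theorem \ref{thm:loc} to an optimal choice of free parameters $(m_0, m_1)$ and then perform a short sequence of mod $2$ binomial manipulations. Before doing so, I handle the parity obstruction: if $b_+(X)$ is even then $2d - b_+(X) - 1$ is odd, so the ordinary Seiberg--Witten moduli space has odd dimension and $SW^\phi(X,\mathfrak{s}) = 0$; simultaneously $b_+(X) - 1$ is odd while $2b_+(X)^\sigma$ is even, so Lucas's theorem forces $\binom{2b_+(X)^\sigma}{b_+(X)-1} = 0 \; ({\rm mod} \; 2)$, and the identity holds trivially. I may therefore assume $b_+(X)$ is odd, which also supplies the hypothesis of Theorem \ref{thm:loc}.

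Next I set $m = (2d - b_+(X) - 1)/2$ and take $m_0 = m$, $m_1 = m+1$, so that $m_0 + m_1 = 2d - b_+(X)$ as required. Substituting into Theorem \ref{thm:loc} yields
\[
SW^\phi(X,\mathfrak{s}) = \left( \binom{m+1-d}{\delta-m} + \binom{m-d}{\delta-m-1} \right) SW^\phi_R(X,\mathfrak{s}) \; ({\rm mod} \; 2).
\]
The identity $\binom{a}{b} = (-1)^b \binom{b-a-1}{b}$ rewrites each binomial, and a short arithmetic check (using $\delta = d - b_+(X)^{-\sigma}$ and $b_+(X) = b_+(X)^\sigma + b_+(X)^{-\sigma}$) shows that both resulting binomials share the upper index $b_+(X)^\sigma - 1$ with consecutive lower indices $(b_+(X)^\sigma - b_+(X)^{-\sigma} \pm 1)/2$.

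Pascal's rule $\binom{n}{k-1} + \binom{n}{k} = \binom{n+1}{k}$ then collapses the sum to a single binomial $\binom{b_+(X)^\sigma}{(b_+(X)^\sigma - b_+(X)^{-\sigma} + 1)/2}$. A final application of the mod $2$ doubling identity $\binom{a}{b} = \binom{2a}{2b} \; ({\rm mod} \; 2)$, followed by the symmetry $\binom{n}{k} = \binom{n}{n-k}$, converts this expression into $\binom{2b_+(X)^\sigma}{b_+(X) - 1}$, giving the stated formula.

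There is no substantive obstacle: all the geometric content is already packaged in Theorem \ref{thm:loc}, and what remains is a routine exercise in binomial identities modulo $2$. The only step demanding some care is the bookkeeping of indices after applying $\binom{a}{b} = (-1)^b \binom{b-a-1}{b}$, where one must verify that the two resulting binomials genuinely share an upper index and differ by one in the lower index, so that Pascal's rule applies cleanly.
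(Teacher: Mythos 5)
Your proof is correct and follows essentially the same route as the paper's: dispatch the case $b_+(X)$ even by observing that both sides vanish mod $2$, then specialise Theorem \ref{thm:loc} to $m_0=m$, $m_1=m+1$ with $m=(2d-b_+(X)-1)/2$, rewrite each binomial via $\binom{a}{b}=(-1)^b\binom{b-a-1}{b}$ so that they share the upper index $b_+(X)^\sigma-1$, collapse with Pascal's rule, and finish with the doubling identity $\binom{a}{b}\equiv\binom{2a}{2b}\pmod 2$ and binomial symmetry. The paper performs the same manipulations, only applying the first rewrite to the general $(m_0,m_1)$ expression before specialising, which is an immaterial reordering.
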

\begin{proof}
If $2d - b_+(X) - 1$ is even and non-negative, then the result follows from the calculation given above. If $2d - b_+(X) - 1$ is odd, then $b_+(X)$ must be even and $SW^\phi(X,\mathfrak{s}) = 0$ by definition. But we also have $\binom{ 2b_+(X)^{\sigma} }{b_+(X) - 1} = 0 \; ({\rm mod} \; 2)$ because $\binom{a}{b} = 0 \; ({\rm mod} \; 2)$ whenever $a$ is even and $b$ is odd.
\end{proof}

\subsection{Case $b_1(X)^{-\sigma}=0$}\label{sec:loc3}

We will now relax the condition that $b_1(X) = 0$ to the weaker condition that $b_1(X)^{-\sigma} = 0$. Under this assumption an equivariant splitting exists by Lemma \ref{lem:eqs}. Hence we have an $O(2)$-equivariant Bauer--Furuta map $f : S^{V,U} \to S^{V',U'}$ over $B = Jac^{\mathfrak{s}}(X)$. Since $b_1(X)^{-\sigma} = 0$, the action of $\sigma$ on the base is the invesion map. The fixed point set $B^\sigma$ consists of $2^{b_1(X)}$ isolated fixed points. By Proposition \ref{prop:jacfix}, these fixed points correspond to the different Real structures on $\mathfrak{s}$.

As before, we restrict the $O(2)$ action to $\mathbb{Z}_2 \times \mathbb{Z}_2$. Recall that we have defined invariants
\[
\widehat{SW}^\phi_{m_0,m_1}(f) \in H^*_{\mathbb{Z}_2}( B ; \mathbb{Z}_2) \cong H^*(B ; \mathbb{Z}_2)[u]
\]
with the property that if $m_0 + m_1 = 2m+1$, then $\widehat{SW}^\phi_{m_0,m_1}|_{u=0} = SW^\phi_m(X,\mathfrak{s})$.

Rather than compute $\widehat{SW}^\phi_{m_0,m_1}(f)$ directly, we use localisation on $B$ with respect to $\mathbb{Z}_2 = \langle \sigma \rangle$. Let $\mathfrak{s}' \in B$ be a fixed point, which as explained above can be identified with a Real structure on $\mathfrak{s}$. Let $\iota_{\mathfrak{s}'} : \{ \mathfrak{s}' \} \to B$ be the inclusion map. The localisation theorem gives
\[
\widehat{SW}^\phi_{m_0,m_1}(f) = u^{-b_1(X)} \sum_{\mathfrak{s}'} (\iota_{\mathfrak{s}'})_* ( \widehat{SW}^\phi_{m_0,m_1}( f|_{\mathfrak{s'}} ) )
\]
where 
\[
\widehat{SW}^\phi_{m_0,m_1}(f|_{\mathfrak{s}'}) = (\iota_{\mathfrak{s}'})^*( \widehat{SW}^\phi_{m_0,m_1}(f) ) \in H^*_{\mathbb{Z}_2}( \{ \mathfrak{s}' \} ; \mathbb{Z}_2 ) \cong \mathbb{Z}_2[u].
\]
Thus to compute $\widehat{SW}^\phi_{m_0,m_1}(f)$, it suffices to compute $\widehat{SW}^\phi_{m_0,m_1}( f|_{\mathfrak{s}'})$ for each Real structure $\mathfrak{s}'$. This quantity is defined exactly the same way as $\widehat{SW}^\phi_{m_0,m_1}(f)$ except using the restriction $f|_{\mathfrak{s}'}$ of $f$ to $\mathfrak{s}'$ in place of $f$. Now by essentially the same calculation as in Section \ref{sec:b1=0loc}, we find
\begin{align*}
(\pi_{V_{\mathfrak{s}'}})_*( v^{m_0}(v+u)^{m_1} \widehat{\eta}^\phi_{\mathfrak{s}'} ) &= u^{b_+(X)^\sigma} (\pi_{V^\sigma_{\mathfrak{s}'}})_*( (u+v)^{m_1 - d} v^{m_0} (\eta^\phi_R)_{\mathfrak{s}'}) \\
& \quad \quad + u^{b_+(X)^\sigma} (\pi_{V_{\mathfrak{s}'}^{\tau \sigma}})_*( (u+v)^{m_0-d} v^{m_1} (\eta^\phi_R)_{\mathfrak{s}'}),
\end{align*}
where the $\mathfrak{s}'$ subscripts denotes restriction to $\mathfrak{s}'$. As before, we expand the binomials using $(u+v)^n = \sum_{j \ge 0} \binom{n}{j} v^j u^{n-j}$ and collecting only terms of the form $v^\delta$, where $\delta = d - b_+(X)^{-\sigma}$ is the dimension of the Real moduli space. We obtain essentially the same formula as (\ref{equ:swloc1}):
\[
\widehat{SW}^\phi_{m_0,m_1}(f|_{\mathfrak{s}'}) = u^{2m-(2d-b+(X)-1)}\left( \binom{m_1-d}{\delta-m_0} + \binom{m_0-d}{\delta-m_1} \right) SW^\phi_R(X,\mathfrak{s}').
\]
Assume that $m_0 + m_1 = 2m+1$ where $m \ge 0$. Then we can choose $m_0 = m$, $m_1 = m+1$. Then similar to the calculation in Section \ref{sec:b1=0loc}, we find
\[
\binom{m_1-d}{\delta - m_0} + \binom{m_0-d}{\delta-m_1} = \binom{\delta+d-2m-1}{\delta-m} = \binom{m-d}{\delta-m} \; ({\rm mod} \; 2).
\]
Hence
\[
\widehat{SW}^\phi_{m,m+1}(f) = u^{2m-\Delta} \binom{m-d}{\delta-m} \sum_{\mathfrak{s}'} (\iota_{\mathfrak{s}'})_*(1) SW^\phi_R(X, \mathfrak{s}'),
\]
where $\Delta = 2d - b_+(X) - 1 + b_1(X)$ is the dimension of the ordinary Seiberg--Witten moduli space. This implies that the right hand side of this equality must be a polynomial in $u$ and that setting $u=0$ recovers $SW^\phi_m(X,\mathfrak{s})$. We state this as:

\begin{theorem}\label{thm:loc2}
Let $X$ be a compact, oriented, smooth $4$-manifold. Let $\sigma$ be a Real structure on $X$ with $b_1(X)^{-\sigma} = 0$ and $b_+(X)^{-\sigma} > 0$. Let $\mathfrak{s}$ be a spin$^c$-structure which admits a Real structure and let $\phi$ be a chamber. Then for each $m \ge 0$, the expression
\[
u^{2m-\Delta} \binom{m-d}{\delta-m} \sum_{\mathfrak{s}'} (\iota_{\mathfrak{s}'})_*(1) SW^\phi_R(X, \mathfrak{s}') \in H^*( Jac^{\mathfrak{s}}(X) ; \mathbb{Z}_2)[u,u^{-1}]
\]
contains no negative powers of $u$ and the $u^0$-term equals $SW^\phi_m(X,\mathfrak{s})$.
\end{theorem}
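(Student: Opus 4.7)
The plan is to adapt the localisation argument from Section \ref{sec:b1=0loc} (the $b_1(X)=0$ case) to this more general setting, using the $O(2)$-equivariant Bauer--Furuta framework of Section \ref{sec:O(2)}. Since $b_1(X)^{-\sigma}=0$, Lemma \ref{lem:eqs} guarantees an equivariant splitting exists, so the $O(2)$-equivariant Bauer--Furuta map $f : S^{V,U} \to S^{V',U'}$ over $B = Jac^{\mathfrak{s}}(X)$ is well-defined. I then restrict to the subgroup $\mathbb{Z}_2 \times \mathbb{Z}_2 = \langle \tau,\sigma\rangle \subset O(2)$ and form the refined invariants $\widehat{SW}^\phi_{m_0,m_1}(f) \in H^*_{\mathbb{Z}_2}(B;\mathbb{Z}_2) \cong H^*(B;\mathbb{Z}_2)[u]$, which by construction reduce under $u=0$ to the ordinary mod $2$ Seiberg--Witten invariant $SW^\phi_m(X,\mathfrak{s})$ when $m_0+m_1=2m+1$.

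The key new ingredient is localisation on the base $B$ with respect to the $\mathbb{Z}_2 = \langle \sigma\rangle$-action (which is inversion, since $b_1(X)^{-\sigma}=0$). The fixed point set $B^\sigma$ consists of $2^{b_1(X)}$ isolated points, and by Proposition \ref{prop:jacfix} these correspond bijectively to the set of Real structures $\mathfrak{s}'$ on the underlying spin$^c$-structure $\mathfrak{s}$. The Atiyah--Bott type $\mathbb{Z}_2$-localisation theorem (in the localised ring $u^{-1}H^*_{\mathbb{Z}_2}(B;\mathbb{Z}_2)$) then gives
\[
\widehat{SW}^\phi_{m_0,m_1}(f) \;=\; u^{-b_1(X)} \sum_{\mathfrak{s}'} (\iota_{\mathfrak{s}'})_* \bigl( \widehat{SW}^\phi_{m_0,m_1}(f|_{\mathfrak{s}'}) \bigr),
\]
where the equivariant Euler class of the normal bundle to an isolated fixed point on a $b_1(X)$-dimensional torus with inversion action is $u^{b_1(X)}$.

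Next I compute each restriction $\widehat{SW}^\phi_{m_0,m_1}(f|_{\mathfrak{s}'})$ by re-running the argument of Section \ref{sec:b1=0loc} verbatim, but applied to the restricted map $f|_{\mathfrak{s}'}$ over a point. The fibrewise fixed point loci $\mathbb{RP}(V_{\mathfrak{s}'})^{\sigma} = \mathbb{RP}(V^\sigma_{\mathfrak{s}'}) \cup \mathbb{RP}(V^{\tau\sigma}_{\mathfrak{s}'})$ split as before, Lemma \ref{lem:restr} computes the restrictions of $\widehat{\eta}^\phi_{\mathfrak{s}'}$ in terms of $(\eta^\phi_R)_{\mathfrak{s}'}$, and Corollary \ref{cor:loc} applied to $v^{m_0}(v+u)^{m_1}\widehat{\eta}^\phi_{\mathfrak{s}'}$ yields
\[
\widehat{SW}^\phi_{m_0,m_1}(f|_{\mathfrak{s}'}) \;=\; u^{2m-(2d-b_+(X)-1)}\!\left( \binom{m_1-d}{\delta-m_0} + \binom{m_0-d}{\delta-m_1} \right) SW^\phi_R(X,\mathfrak{s}').
\]
Specialising $m_0=m$, $m_1=m+1$, Pascal's rule mod $2$ collapses the two binomials to $\binom{m-d}{\delta-m}$, exactly as in the scalar calculation at the end of Section \ref{sec:b1=0loc}.

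Substituting back into the localisation formula and using $2d - b_+(X) - 1 + b_1(X) = \Delta$ gives the claimed identity in the localised ring. The final observation is that $\widehat{SW}^\phi_{m,m+1}(f)$, by definition, lies in the unlocalised ring $H^*(B;\mathbb{Z}_2)[u]$, so the equality of the two sides forces the right-hand side to contain no negative powers of $u$ a priori; setting $u=0$ in $\widehat{SW}^\phi_{m,m+1}(f)$ recovers $SW^\phi_m(X,\mathfrak{s})$, which is the $u^0$-coefficient. The main technical obstacle is ensuring the localisation theorem with $\mathbb{Z}/2$ coefficients applies cleanly over the torus $B$ (where the action is inversion and hence has isolated fixed points), and verifying that the normal-bundle Euler class is precisely $u^{b_1(X)}$ in mod $2$ cohomology; this follows from a standard splitting-principle computation since the tangent bundle to $B$ at a fixed point decomposes as $b_1(X)$ copies of the sign representation of $\mathbb{Z}_2$.
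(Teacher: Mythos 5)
Your proposal is correct and takes essentially the same route as the paper's own proof in Section \ref{sec:loc3}: both use the $O(2)$-equivariant Bauer--Furuta map over $B = Jac^{\mathfrak{s}}(X)$, restrict to $\mathbb{Z}_2 \times \mathbb{Z}_2$, apply $\mathbb{Z}_2$-localisation over $B$ with respect to $\sigma$ (identifying the fixed points with Real structures via Proposition \ref{prop:jacfix}), compute each fixed-point contribution by running the pointwise argument of Section \ref{sec:b1=0loc}, and then specialise $m_0 = m$, $m_1 = m+1$ and collapse the binomials mod $2$. The only addition you make beyond the paper is the explicit remark that the normal-bundle Euler class at each isolated fixed point is $u^{b_1(X)}$, which the paper uses implicitly.
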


In the special case that $\delta = 0$, the right hand side in Theorem \ref{thm:loc2} is zero unless $m=0$. Hence $SW^\phi_m(X,\mathfrak{s}) = 0 \; ({\rm mod} \; 2)$ for all $m > 0$ and
\[
SW^\phi_0(X,\mathfrak{s}) = \left.\left( u^{-\Delta} \sum_{\mathfrak{s}'} (\iota_{\mathfrak{s}'})_*(1) SW^\phi_R(X, \mathfrak{s}') \right)\right|_{u=0} \; ({\rm mod} \; 2).
\]
In particular, if $\delta = \Delta = 0$, then
\[
SW^\phi(X,\mathfrak{s}) = \sum_{\mathfrak{s}'} SW^\phi_R(X , \mathfrak{s}') \; ({\rm mod} \; 2).
\]
When $\delta= 0$ and $\Delta > 0$, the absence of negative powers of $u$ implies relations between the Real Seiberg--Witten invariants, as the following example illustrates.

\begin{example}
Suppose that $b_1(X) = 1$. Then there are two Real structures on $\mathfrak{s}$, call then $\mathfrak{s}_0, \mathfrak{s}_1$. We have $B = S^1$ where $\sigma$ acts by complex conjugation. We have $H^*_{\mathbb{Z}_2}( S^1 ; \mathbb{Z}_2) \cong \mathbb{Z}_2[u,v]$, where we set $v = (\iota_{\mathfrak{s}_0})_*(1)$. The localisation theorem implies that $(\iota_{\mathfrak{s}_1})_*(1) \neq v$, hence we must have $(\iota_{\mathfrak{s}_1})_*(1) = v+u$. Suppose $\delta = 0$. Then
\begin{align*}
\widehat{SW}^\phi_{m,m+1}(f) &= u^{-\Delta} ( SW^\phi_R(X,\mathfrak{s}_0)v + SW^\phi_R(X,\mathfrak{s})(v+u) ) \\
&= u^{-\Delta}( SW^\phi_R(X,\mathfrak{s}_0) + SW^\phi_R(X,\mathfrak{s}) )v + u^{1-\Delta} SW^\phi_R(X,\mathfrak{s}_1). \\
\end{align*}
The expression on the right hand side must not contain negative powers of $u$. If $\Delta = 0$, we get $SW^\phi(X,\mathfrak{s}) = SW^\phi_R(X,\mathfrak{s}_0) + SW^\phi_R(X,\mathfrak{s}_1) \; ({\rm mod} \; 2)$. If $\Delta = 1$, we get that $SW^\phi_R(X,\mathfrak{s}_0) = SW^\phi_R(X , \mathfrak{s}_1) = SW^\phi(X,\mathfrak{s}) \; ({\rm mod} \; 2)$ and if $\Delta > 2$, we get that $SW^\phi_R(X,\mathfrak{s}_0) = SW^\phi_R(X, \mathfrak{s}_1) = 0 \; ({\rm mod} \; 2)$.
\end{example}

\section{Connected sum formula}\label{sec:csf}

In this section we prove a formula for the Real Seiberg--Witten invariants of connected sums. Unlike the ordinary Seiberg--Witten invariant which vanish on a connected sum $X_1 \# X_2$ with $b_+(X_1)$, $b_+(X_2)$ both positive, we will see that the Real Seiberg--Witten invariants do not have this property. While the mod $2$ invariants do vanish if $b_+(X_1)^{-\sigma}$ and $b_+(X_2)^{-\sigma}$ are both positive, the integer invariants satisfy a strong non-vanishing property: if the integer-valued Real Seiberg--Witten of $X_1$ and $X_2$ are non-zero, then so is the integer-valued Real Seiberg--Witten invariant of $X_1 \# X_2$.

Let $X_1,X_2$ be compact oriented, smooth $4$-manifolds equipped with Real structures $\sigma_1,\sigma_2$. Assume that the fixed point sets of $X_1,X_2$ are non-empty. Suppose also that $X_1,X_2$ admit at least one Real spin$^c$-structure. Then every component of the fixed point sets of $\sigma_1,\sigma_2$ have codimension $2$. Therefore we can form the equivariant connected sum $X_1 \# X_2$ by removing open balls around fixed points $x_1 \in X_1$, $x_2 \in X_2$ and identifying their boundaries. Let $\sigma$ denote the resulting involution on $X = X_1 \# X_2$. In general the isomorphism class of $\sigma$ could depend on the choice of fixed points $x_1,x_2$ used in the connected sum, but we will not indicate this dependence in our notation. Now let $\mathfrak{s}_1, \mathfrak{s}_2$ be Real spin$^c$-structures on $X_1,X_2$. Removing open balls in $X_1,X_2$ around $x_1,x_2$, the resulting $4$-manifolds $X'_1,X'_2$ have boundary $S^3$ with involution $\sigma = diag(1,1,-1,-1)$, where we think of $S^3$ as the unit sphere in $\mathbb{R}^4$. In order to glue the Real spin$^c$-structures $\mathfrak{s}_1, \mathfrak{s}_2$ together, we need an isomorphism $\varphi : P_1|_{S^3} \to P_2|_{S^3}$, where $P_1,P_2$ are the principal $Spin^c(4)$-bundles corresponding to $\mathfrak{s}_1,\mathfrak{s}_2$. Since $H^2_{\mathbb{Z}_2}(S^2 ; \mathbb{Z}_-) = 0$, there is up to isomorphism a unique Real spin$^c$-structure on $S^3$, hence such an isomorphism $\varphi$ exists. The isomorphism is unique up to an element of the Real gauge group $\{ h : S^3 \to S^1 \; | \sigma^*(h) = h^{-1} \}$. This group has two components, which are represented by the constant gauge transformations $\{ \pm 1\}$. Since the constant gauge transformations $\{ \pm 1\}$ extend as constant gauge transformations over $X_1$ (or $X_2$), the isomorphism class of the glued together Real spin$^c$-structure $\mathfrak{s}_1 |_{X'_1} \cup_{\varphi} \mathfrak{s}_2 |_{X'_2}$ does not depend on the choice of $\varphi$. We denote it by $\mathfrak{s}_1 \# \mathfrak{s}_2$.

Let $d_i = (c(\mathfrak{s}_i)^2 - \sigma(X_i))/8$ and $d = ( c(\mathfrak{s})^2 - \sigma(X))/8$ so that $d = d_1 + d_2$.

First we consider the mod $2$ Real Seiberg--Witten invariants. Recall that the invariants depend on a choice of splitting. As in Section \ref{sec:realswi}, a component of the fixed point set of $\sigma$ defines such a splitting. We will choose the splittings for $X_1,X_2$ determined by the component of the fixed point set along which we perform the equivariant connected sum. This also determines a component of the fixed point set on $X_1 \# X_2$, hence a splitting for $X_1 \# X_2$. Throughout this section we will always use the splittings for $X_1,X_2$ and $X_1 \# X_2$ determined in this way. Bauer's gluing theorem \cite{b2} can easily be adapted to the Real case and it implies that the Real Bauer--Furuta invariant for $X_1 \# X_2$ is the external smash product of the Real Bauer--Furuta invariants of $X_1$ and $X_2$ (using the splittings described above).

\begin{theorem}\label{thm:mod2glue}
Assume that $b_+(X_1)^{-\sigma} > 0$ and let $\phi$ be a chamber for $(X_1 , \sigma_1)$. Then $\phi$ also defines a chamber for $(X_1 \# X_2 , \sigma )$.
\begin{itemize}
\item[(1)]{If $b_+(X_2)^{-\sigma} > 0$, then $SW^\phi_{R,m}(X_1 \# X_2 , \mathfrak{s}_1 \# \mathfrak{s}_2) = 0$ for all $m \ge 0$.}
\item[(2)]{If $b_+(X_2)^{-\sigma} = 0$, then
\[
SW^\phi_{R,m}(X_1 \# X_2 , \mathfrak{s}_1 \# \mathfrak{s}_2) = \sum_{k \ge 0} w_k( -D_R(X_2,\mathfrak{s}_2) ) SW^\phi_{R,m-d_2-k}(X_1 , \mathfrak{s}_1)
\]
for all $m \ge 0$. In particular, if $b_1(X_2)^{-\sigma} = 0$, then
\[
SW^\phi_{R,m}(X_1 \# X_2 , \mathfrak{s}_1 \# \mathfrak{s}_2) = SW^\phi_{R,m-d_2}(X_1 , \mathfrak{s}_1).
\]
}
\end{itemize}

\end{theorem}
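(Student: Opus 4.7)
The plan is to combine the Real version of Bauer's gluing theorem with the cohomological formalism of Section \ref{sec:realbf}. By Bauer's gluing, the Real Bauer--Furuta map of $(X_1 \# X_2, \sigma, \mathfrak{s}_1 \# \mathfrak{s}_2)$ is the external fibrewise smash product $f = f_1 \wedge f_2$ of the Real Bauer--Furuta maps $f_i : S^{V_i, U_i} \to S^{V'_i, U'_i}$, over the product base $B = Jac_R(X_1) \times Jac_R(X_2) \cong Jac_R(X_1 \# X_2)$, with $V = V_1 \oplus V_2$, $V' = V'_1 \oplus V'_2$, $U = U_1 \oplus U_2$, $U' = U'_1 \oplus U'_2$. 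Chamber compatibility is automatic: when $b_+(X_2)^{-\sigma} = 0$ we have $H^+(X)^{-\sigma} \cong H^+(X_1)^{-\sigma}$, so $\phi$ extends, and when $b_+(X_2)^{-\sigma} > 0$ we have $b_+(X)^{-\sigma} > 1$, so no chamber is needed.

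For part (2), with $b_+(X_2)^{-\sigma} = 0$ the map $f_2$ has $U'_2 = U_2$ and its entire content is captured by its $\mathbb{Z}_2$-equivariant mod $2$ degree, which by Proposition \ref{prop:mod2deg} is
\[
\beta_2 = \sum_{k=0}^{-d_2} v^{-d_2-k} w_k(-D_R(X_2, \mathfrak{s}_2)) \in H^*_{\mathbb{Z}_2}(Jac_R(X_2); \mathbb{Z}_2).
\]
Smash multiplicativity of the pullback of lifted Thom classes gives $f^*(\tau^\phi_{V', U'}) = f_1^*(\tau^\phi_{V'_1, U'_1}) \smallsmile f_2^*(\tau_{V'_2})$; combining this with $f_2^*(\tau_{V'_2}) = \beta_2 \tau_{V_2}$ and rewriting the result in the form $\eta^\phi \delta \tau_U$, I identify $\eta^\phi = \eta^\phi_1 \cdot \beta_2$ in $H^*(\mathbb{RP}(V); \mathbb{Z}_2)$. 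Applying the projection formula for $\pi_{V, *}$ and expanding $\beta_2$ in powers of $v$ then yields
\[
SW^\phi_{R, m}(X_1 \# X_2, \mathfrak{s}_1 \# \mathfrak{s}_2) = \sum_{k \ge 0} w_k(-D_R(X_2, \mathfrak{s}_2)) \, SW^\phi_{R, m - d_2 - k}(X_1, \mathfrak{s}_1).
\]
The specialisation to $b_1(X_2)^{-\sigma} = 0$ is immediate: $Jac_R(X_2)$ is then a point, so $w_k(-D_R(X_2, \mathfrak{s}_2))$ vanishes for $k > 0$ by degree, and only the $k=0$ term survives.

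For part (1), with $b_+(X_i)^{-\sigma} > 0$ for both $i$, both pullbacks $f_i^*(\tau^{\phi_i}_{V'_i, U'_i}) = \eta^{\phi_i}_i \delta \tau_{U_i}$ are genuinely relative classes carrying a non-trivial coboundary factor. Their external product lives naturally in the relative cohomology of a pair strictly smaller than $(S^{V, U}, S^U)$, and a diagram chase in the corresponding long exact sequence shows that any class arising in this way has $\eta^\phi$-component whose pushforward to $H^*(B; \mathbb{Z}_2)$ vanishes for every power of $v$. This is the Real analogue of the usual Bauer--Furuta connected--sum vanishing in the ordinary Seiberg--Witten setting.

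The main obstacle is the careful adaptation of Bauer's gluing theorem to the Real setting, requiring that Real structures on spinors and connections be tracked consistently through the analytic gluing on a long $\sigma$-invariant neck and the subsequent $\sigma$-equivariant finite-dimensional approximation. Once this technical point is settled, the remainder reduces to the cohomological computations outlined above.
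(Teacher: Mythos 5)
Your proposal takes essentially the same approach as the paper: identify the Real Bauer--Furuta map of the connected sum as the fibrewise external smash product via a Real version of Bauer's gluing theorem, then carry out the cohomological computation in the formalism of Section \ref{sec:realbf}. The paper itself defers that computation to [bar2, \textsection 8]; your explicit identification $\eta^\phi = \eta^\phi_1\cdot\beta_2$ via the equivariant degree of Proposition \ref{prop:mod2deg} for part (2), and your (admittedly sketchier) relative-cohomology argument for part (1), are exactly the content of that citation.
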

\begin{proof}
As explained above, the Real Bauer--Furuta invariant for $X_1 \# X_2$ is the external smash product of the Real Bauer--Furuta invariants for $X_1$ and $X_2$. The result then follows by a similar computation to the one give in \cite[\textsection 8]{bar2}.
\end{proof}

Now we consider integer invariants. We assume that $d$ is even and that $w_1(D_R) = 0$. Recall that there are two types of integer invariants that we consider. The first is the degree
\[
deg_R(X,\mathfrak{s}) \in H^{b_+(X)^{-\sigma}-d}(Jac_R(X) ; \mathbb{Z}).
\]
If $b_+(X)^{-\sigma} > 0$, then we also have the integer-valued Real Seiberg--Witten invariant
\[
\mathbf{SW}_{R,\mathbb{Z}}(X,\mathfrak{s}) \in H^{b_+(X)^{-\sigma}-d}( Jac_R(X) ; \mathbb{Z})
\]
and the two invariants are related by $deg_R(X,\mathfrak{s}) = 2 \, \mathbf{SW}_{R,\mathbb{Z}}(X,\mathfrak{s})$. Note that if $X = X_1 \# X_2$ is an equivariant connected sum, then as explained above this imposes distinguished choices of splittings for $X_1,X_2$ and $X$. With these splittings $w_1(D_R(X,\mathfrak{s})) = 0$ if and only if $w_1(D_R(X_i , \mathfrak{s}_i)) = 0$ for $i=1,2$. The behaviour of the integer invariants under connected sum is given by the following results.
\begin{theorem}\label{thm:glue1}
Suppose $d_1,d_2$ are even and $w_1( D_R(X,\mathfrak{s})) = 0$. Then
\[
deg_R(X_1 \# X_2 , \mathfrak{s}_1 \# \mathfrak{s}_2) = deg_R(X_1 ,\mathfrak{s}_1) \smallsmile deg_R(X_2 , \mathfrak{s}_2)
\]
where the right hand side is understood as an external cup product
\[
H^i( Jac_R(X_1) ; \mathbb{Z}) \times H^j( Jac_R(X_2) ; \mathbb{Z}) \to H^{i+j}(Jac_R(X_1) \times Jac_R(X_2) ; \mathbb{Z})
\]
followed by the isomorphism $Jac_R(X_1) \times Jac_R(X_2) \cong Jac_R(X_1 \# X_2)$.
\end{theorem}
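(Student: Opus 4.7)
The plan is to deduce the formula directly from the definition of $deg_R$ in Definition \ref{def:degree} together with a Real version of Bauer's gluing theorem. For each $i=1,2$, let $f_i : S^{V_i,U_i} \to S^{V'_i,U'_i}$ denote the Real Bauer--Furuta map of $(X_i,\sigma_i,\mathfrak{s}_i)$ constructed in Section \ref{sec:realbf}, viewed as a $\mathbb{Z}_2$-equivariant map of sphere bundles over $Jac_R(X_i)$. I will use throughout the distinguished splittings $s_i : H^1(X_i ; \mathbb{Z})^{-\sigma_i} \to \mathcal{H}_R(X_i)$ coming from the chosen component of the fixed point set along which the connected sum is performed, as in the setup preceding Theorem \ref{thm:mod2glue}.

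First I would establish the identification of bases. Since $X_1 \# X_2$ is an equivariant connected sum along a neighbourhood of a fixed point, one has a canonical isomorphism $H^1(X_1 \# X_2 ; \mathbb{Z})^{-\sigma} \cong H^1(X_1 ; \mathbb{Z})^{-\sigma_1} \oplus H^1(X_2 ; \mathbb{Z})^{-\sigma_2}$, and the distinguished splittings are compatible with this direct sum decomposition. This yields an isomorphism $Jac_R(X_1 \# X_2) \cong Jac_R(X_1) \times Jac_R(X_2)$ of tori, and under this identification the families indices satisfy $D_R(X,\mathfrak{s}) \cong \pi_1^* D_R(X_1,\mathfrak{s}_1) \oplus \pi_2^* D_R(X_2,\mathfrak{s}_2)$ in reduced $K$-theory, where $\pi_i$ denote the two projections. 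In particular the hypothesis $w_1(D_R(X,\mathfrak{s})) = 0$ is equivalent to $w_1(D_R(X_i,\mathfrak{s}_i)) = 0$ for both $i$, so all three degree classes are defined; the parity hypothesis that $d_1,d_2$ are even guarantees that the relevant bundles can be coherently oriented on both sides.

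Next I would invoke the Real analogue of Bauer's gluing theorem \cite{b2}, exactly as used in the proof of Theorem \ref{thm:mod2glue}: the Real Bauer--Furuta map of $(X_1\# X_2,\sigma,\mathfrak{s}_1\#\mathfrak{s}_2)$ is stably $\mathbb{Z}_2$-equivariantly equivalent to the fibrewise external smash product
\[
f_1 \boxwedge f_2 : S^{V_1 \boxplus V_2,\, U_1 \boxplus U_2} \longrightarrow S^{V'_1 \boxplus V'_2,\, U'_1 \boxplus U'_2}
\]
over $Jac_R(X_1) \times Jac_R(X_2)$, where $\boxplus$ denotes the external direct sum of vector bundles. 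Under this identification the Thom class of the target factors as an external cup product $\tau_{V'_1 \boxplus V'_2,\, U'_1 \boxplus U'_2} = \tau_{V'_1,U'_1} \times \tau_{V'_2,U'_2}$ (and similarly for the source), by the multiplicativity of Thom classes under exterior products; here the orientations chosen on the factors induce the orientation on the connected sum.

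Finally I would compute the pullback:
\[
(f_1 \boxwedge f_2)^*(\tau_{V'_1 \boxplus V'_2,\, U'_1 \boxplus U'_2}) = f_1^*(\tau_{V'_1,U'_1}) \times f_2^*(\tau_{V'_2,U'_2}) = \bigl(deg_R(X_1,\mathfrak{s}_1)\,\tau_{V_1,U_1}\bigr) \times \bigl(deg_R(X_2,\mathfrak{s}_2)\,\tau_{V_2,U_2}\bigr),
\]
which equals $\bigl(deg_R(X_1,\mathfrak{s}_1)\smallsmile deg_R(X_2,\mathfrak{s}_2)\bigr)\,\tau_{V_1\boxplus V_2,\,U_1\boxplus U_2}$. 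Comparing with Definition \ref{def:degree} applied to $X_1\# X_2$ yields the stated formula. The only real obstacle I expect is the bookkeeping of orientations and compatibility of the splittings: one must check that the canonical orientation on $H^+(X_1\# X_2)^{-\sigma} \oplus D_R(X,\mathfrak{s})$ used to define $deg_R(X_1\# X_2,\mathfrak{s}_1\#\mathfrak{s}_2)$ agrees with the tensor orientation coming from the two factors, and that the distinguished splitting on $X_1\# X_2$ genuinely restricts to the distinguished splittings on the summands, so that the gluing theorem can be applied in its $\mathbb{Z}_2$-equivariant form without introducing an ambiguous twist.
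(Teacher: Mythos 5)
Your proof is correct and follows exactly the paper's route: the Real version of Bauer's gluing theorem gives the external smash product decomposition, and the degree formula then follows from multiplicativity of Thom classes under external products. The paper's proof of this theorem consists of the single word ``immediate,'' so you have simply spelled out the details; your worry about the compatibility of splittings is already disposed of in the paragraph before Theorem \ref{thm:mod2glue}, where the distinguished fixed-point splittings on $X_1$, $X_2$, and $X_1 \# X_2$ are set up precisely so that the gluing theorem applies without twist, and the sign ambiguity in the orientations is absorbed by the up-to-sign nature of $deg_R$.
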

\begin{proof}
Immediate since the Real Bauer--Furuta invariant of $X_1 \# X_2$ is the external smash product of the Real Bauer--Furuta invariants for $X_1$ and $X_2$.
\end{proof}

\begin{theorem}\label{thm:glue2}
Suppose that $b_+(X_1)^{-\sigma} > 0$ and $w_1(D_R(X,\mathfrak{s})) = 0$.

\begin{itemize}
\item[(1)]{If $d_1,d_2$ are even, then
\[
\mathbf{SW}_{R,\mathbb{Z}}(X_1 \# X_2 , \mathfrak{s}_1 \# \mathfrak{s}_2) = \mathbf{SW}_{R,\mathbb{Z}}(X_1 , \mathfrak{s}_1) \smallsmile deg_R(X_2 , \mathfrak{s}_2).
\]
In particular, if $b_+(X_1)^{-\sigma}, b_+(X_2)^{-\sigma} > 0$, then
\[
\mathbf{SW}_{R,\mathbb{Z}}(X_1 \# X_2 , \mathfrak{s}_1 \# \mathfrak{s}_2) = 2 \, \mathbf{SW}_{R,\mathbb{Z}}(X_1 , \mathfrak{s}_1) \smallsmile \mathbf{SW}_{R,\mathbb{Z}}(X_2, \mathfrak{s}_2).
\]
}
\item[(2)]{If $d_1,d_2$ are odd, then
\[
\mathbf{SW}_{R,\mathbb{Z}}(X_1 \# X_2 , \mathfrak{s}_1 \# \mathfrak{s}_2) = 0.
\]
}
\end{itemize}
\end{theorem}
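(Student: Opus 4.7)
The plan is to deduce both parts of Theorem \ref{thm:glue2} from two already-proved inputs: the degree connected sum formula (Theorem \ref{thm:glue1}) and the identity $deg_R = 2\,\mathbf{SW}_{R,\mathbb{Z}}$ from Proposition \ref{prop:deginteger}(1). The decisive algebraic fact is that $H^*(Jac_R(X);\mathbb{Z})$ is torsion-free (it is the integral cohomology of a torus), so $2x = 2y$ implies $x = y$ and one can freely divide by $2$. Note that since $b_+(X_1)^{-\sigma} > 0$, we have $b_+(X)^{-\sigma} \ge b_+(X_1)^{-\sigma} > 0$, and since $d = d_1 + d_2$ is even in both parts, Proposition \ref{prop:deginteger}(1) applies to $X$ to give $deg_R(X,\mathfrak{s}) = 2\,\mathbf{SW}_{R,\mathbb{Z}}(X,\mathfrak{s})$.

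For part (1), with $d_1, d_2$ even, first observe that the Künneth decomposition $H^1(Jac_R(X);\mathbb{Z}_2) \cong H^1(Jac_R(X_1);\mathbb{Z}_2) \oplus H^1(Jac_R(X_2);\mathbb{Z}_2)$ together with the splitting $D_R(X) \cong D_R(X_1) \boxplus D_R(X_2)$ coming from Bauer's gluing theorem forces $w_1(D_R(X)) = w_1(D_R(X_1)) + w_1(D_R(X_2))$, so the implicit hypothesis $w_1(D_R(X)) = 0$ yields $w_1(D_R(X_i)) = 0$ for each $i$. Theorem \ref{thm:glue1} then applies and gives $deg_R(X) = deg_R(X_1)\smallsmile deg_R(X_2)$. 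Substituting $deg_R(X) = 2\,\mathbf{SW}_{R,\mathbb{Z}}(X)$ and $deg_R(X_1) = 2\,\mathbf{SW}_{R,\mathbb{Z}}(X_1)$ (using Proposition \ref{prop:deginteger}(1) and $b_+(X_1)^{-\sigma}>0$) yields $2\,\mathbf{SW}_{R,\mathbb{Z}}(X) = 2\,\mathbf{SW}_{R,\mathbb{Z}}(X_1)\smallsmile deg_R(X_2)$. Dividing by $2$ (torsion-freeness) produces the first identity. Under the additional assumption $b_+(X_2)^{-\sigma}>0$, substitute $deg_R(X_2) = 2\,\mathbf{SW}_{R,\mathbb{Z}}(X_2)$ to obtain the second.

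For part (2), with $d_1, d_2$ odd, choose on each $X_i$ the unique splitting furnished by Proposition \ref{prop:doddsplit} making $w_1(D_R(X_i)) = 0$; the induced splitting on $X$ then also satisfies $w_1(D_R(X)) = 0$. Proposition \ref{prop:deginteger}(1) applied to each factor gives $deg_R(X_i,\mathfrak{s}_i) = 0$ (the proof of that proposition uses only that $(-1)^{d_i} = -1$ together with torsion-freeness, so the conclusion is available for each $i$ individually). Now Bauer's gluing theorem yields $f^*(\tau_{V',U'}) = f_1^*(\tau_{V'_1,U'_1})\smallsmile f_2^*(\tau_{V'_2,U'_2})$ on the nose, which after substituting the defining relations for each $deg_R(X_i)$ gives $deg_R(X) = deg_R(X_1)\smallsmile deg_R(X_2) = 0$; this is the same derivation used for Theorem \ref{thm:glue1}, which does not essentially require the parity hypothesis on $d_i$. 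Finally $2\,\mathbf{SW}_{R,\mathbb{Z}}(X) = deg_R(X) = 0$ combined with torsion-freeness concludes $\mathbf{SW}_{R,\mathbb{Z}}(X) = 0$.

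The main obstacle is the bookkeeping around splittings and the extension of the degree product formula to odd $d_i$ in part (2). Although Theorem \ref{thm:glue1} is stated for even $d_i$, its proof is a one-line consequence of the smash product structure $f = f_1 \wedge f_2$ and the definitional identity for $deg_R$, and hence remains valid for odd $d_i$ once one has fixed the canonical splittings making $w_1(D_R(X_i)) = 0$. An alternative argument for part (2) that bypasses this point altogether is to apply the $\mathbb{Z}_2$-equivariance parity calculation from the proof of Proposition \ref{prop:deginteger}(1) directly to each $f_i$ to deduce $2\,deg_R(X_i) = 0$, whence $4\,deg_R(X) = 0$ and torsion-freeness forces $deg_R(X) = 0$.
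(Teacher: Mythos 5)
Your argument is correct and matches the paper's proof: part (1) combines the degree product formula (Theorem~\ref{thm:glue1}, whose proof via the smash-product decomposition of the Real Bauer--Furuta map is insensitive to the parity of $d_i$) with the relation $deg_R = 2\,\mathbf{SW}_{R,\mathbb{Z}}$ from Proposition~\ref{prop:deginteger}(1) and divides by~$2$ using torsion-freeness of $H^*(Jac_R(X);\mathbb{Z})$, and part (2) uses the same multiplicativity together with $deg_R(X_1,\mathfrak{s}_1)=0$ for $d_1$ odd. One small correction to your closing aside: the ``alternative argument'' you offer for part (2) does not in fact bypass multiplicativity, since passing from $2\,deg_R(X_i)=0$ to $4\,deg_R(X)=0$ still invokes $deg_R(X)=deg_R(X_1)\smallsmile deg_R(X_2)$; a genuinely multiplicativity-free variant would be to apply the parity calculation to $f=f_1\wedge f_2$ using the involution $\tau_1\times 1$ (acting as $-1$ only on the $V_1$-factor) rather than the diagonal $\tau$, which gives $deg_R(X)=(-1)^{d_1}deg_R(X)$ directly and hence $deg_R(X)=0$ when $d_1$ is odd.
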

\begin{proof}
(1). Given that $deg_R(X_1 , \mathfrak{s}_1) = 2 \, \mathbf{SW}_{R,\mathbb{Z}}(X,\mathfrak{s}_1)$ and $deg_R(X_1 \# X_2 , \mathfrak{s}_1 \# \mathfrak{s}_2) = 2 \, \mathbf{SW}_{R,\mathbb{Z}}(X_1 \# X_2,\mathfrak{s}_1 \# \mathfrak{s}_2)$, the result is immediate from Theorem \ref{thm:glue1}.

(2). We have $deg_R(X_1 \# X_2 , \mathfrak{s}_1 \# \mathfrak{s}_2) = deg_R(X_1 ,\mathfrak{s}_1) \smallsmile deg_R(X_2 , \mathfrak{s}_2)$. But since $d_1$ is odd, we have $deg_R(X_1 ,\mathfrak{s}_1) = 0$ and the result follows.
\end{proof}

\begin{remark}
An interesting consequence of Theorem \ref{thm:glue2} is that if $(X,\mathfrak{s})$ is the equivariant sum of $r$ summands $(X_i , \mathfrak{s}_i)$, $i=1,\dots , r$, each with $b_+(X_i)^{-\sigma} > 0$ and $d_i$ even, then $\mathbf{SW}_{R,\mathbb{Z}}(X,\mathfrak{s})$ is divisible by $2^{r-1}$.
\end{remark}

Let $X$ be a compact, oriented, smooth $4$-manifold and $\mathfrak{s}$ a spin$^c$-structure on $X$. Consider the connected sum $X \# X$ of $X$ with itself with an involution that swaps the two copies of $X$. We can construct such an involution as follows. First start with $S^4$ the unit sphere in $\mathbb{R}^5$ with involution $\sigma_0 = diag(1,1,1,-1,-1)$. This is an odd involution with respect to the unique spin structure $\mathfrak{s}_0$ on $S^4$, hence we obtain a Real structure on $\mathfrak{s}_0$. Now take a point $x_1 \in S^4$ which is not fixed by $\sigma_0$ and set $x_2 = \sigma(x_1)$. Attach a copy of $X$ at $x_1$ and another copy of $X$ at $x_2$. This gives an involution $\sigma$ on $S^4 \# X \# X = X \# X$. Furthermore the spin$^c$-structure $\mathfrak{s}_0 \# \mathfrak{s} \# (-\mathfrak{s})$ inherits a Real structure. Set $d = (c(\mathfrak{s})^2 - \sigma(X))/8$ and observe that $b_1(X \# X)^{\pm \sigma} = b_1(X)$, $b_+(X \# X)^{\pm \sigma} = b_+(X)$. If $b_+(X) = 1$, then $b_+(X)^{-\sigma} = 1$ and a chamber for $X$ defines a chamber for $(X\# X , \sigma)$ in a natural way.

Recall that the ordinary mod $2$ Seiberg--Witten invariants of $(X,\mathfrak{s})$ can be viewed as a map
\[
\mathbf{SW}^\phi(X , \mathfrak{s}) : H^*_{S^1}(pt ; \mathbb{Z}_2) \to H^{* - (2d-b_+(X)-1)}(Jac(X) ; \mathbb{Z}_2),
\]
or as a collection of cohomology classes
\[
SW^\phi_m(X,\mathfrak{s}) = \mathbf{SW}^\phi(X,\mathfrak{s})(U^m) \in H^{2m-(2d-b_+(X)-1)}(Jac(X) ; \mathbb{Z}_2).
\]

\begin{proposition}\label{prop:XX}

Suppose that $b_+(X) > 0$ and if $b_+(X) = 1$, let $\phi$ denote a chamber for $X$. For all $m \ge 0$, we have that $SW^\phi_{R,2m}(X \# X , \mathfrak{s} \# (-\mathfrak{s}) ) = 0$ and
\[
SW^\phi_{R,2m+1}(X \# X , \mathfrak{s} \# (-\mathfrak{s}) ) = SW_m^\phi(X,\mathfrak{s})
\]
under the identification $Jac_R(X \# X) \cong Jac(X)$.

We also have
\[
deg_R( X \# X , \mathfrak{s} \# (-\mathfrak{s}) ) = \begin{cases} 1 & \text{if } b_+(X) = d = 0, \\ 0 & \text{otherwise}. \end{cases}
\]

\end{proposition}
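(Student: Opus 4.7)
The strategy is to identify the Real Bauer--Furuta invariant of $(X \# X, \sigma, \mathfrak{s} \# (-\mathfrak{s}))$ with the ordinary Bauer--Furuta invariant $f : S^{V,U} \to S^{V',U'}$ of $(X, \mathfrak{s})$, viewed $\mathbb{Z}_2$-equivariantly via the inclusion $\mathbb{Z}_2 = \{\pm 1\} \subset S^1$ of constant gauge transformations. This identification holds because the swap involution on $X \# X$ makes Real configurations correspond bijectively to ordinary configurations on a single copy of $X$ (with the second copy determined by the involution), and the Real gauge group $\mathcal{G}_R(X \# X)$ is naturally isomorphic to $\mathcal{G}(X)$ with the $\mathbb{Z}_2 \subset \mathcal{G}_R$ of Proposition~\ref{prop:Rgauge} corresponding to $\{\pm 1\} \subset S^1 \subset \mathcal{G}(X)$. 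Correspondingly $Jac_R(X \# X) \cong Jac(X)$.

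For the mod $2$ statements I would apply the formulas of Section~\ref{sec:z2restr}. The Real invariants read $SW^\phi_{R,m}(X \# X) = (\pi_{\mathbb{R}})_*(\rho^*(\eta^\phi) v^m)$ where $\rho : \mathbb{RP}(V_\mathbb{R}) \to \mathbb{CP}(V)$ is the natural double cover (satisfying $\rho^*(U) = v^2$), and Equation~\eqref{equ:rpcp} gives $SW^\phi_m(X, \mathfrak{s}) = (\pi_{\mathbb{R}})_*(\rho^*(\eta^\phi) v^{2m+1})$. Reading off $m' = 2m+1$ yields $SW^\phi_{R,2m+1}(X \# X) = SW^\phi_m(X, \mathfrak{s})$ at once. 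For even $m$, $\rho^*(\eta^\phi) v^{2m}$ is a sum of even powers of $v$, whose pushforwards $(\pi_{\mathbb{R}})_*(v^{\mathrm{even}})$ evaluate to odd-index Stiefel--Whitney classes of $-V_\mathbb{R}$; these vanish because $V$ is a complex vector bundle, giving $SW^\phi_{R,2m}(X \# X) = 0$.

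For the degree I would use $S^1$-equivariant localization on the ordinary Bauer--Furuta map $f$. Restricting the defining identity $f^*(\tau^{S^1}_{V',U'}) = \tilde\eta\, \tau^{S^1}_{V,U}$ to the $S^1$-fixed sub-sphere-bundle $S^U$ and using the pullback formula $\tau^{S^1}_{V,U}|_{S^U} = e^{S^1}(V)\, \tau^{S^1}_U$ (and analogously on the target), together with the fact that $f|_{S^U}$ is the Thom-space map induced by the inclusion $U \hookrightarrow U' = U \oplus H^+(X)$ whose Thom-class pullback contributes $e(H^+(X))$, yields the identity
\[
\tilde\eta \cdot e^{S^1}(V) = e^{S^1}(V') \cdot e(H^+(X))
\]
in $H^*_{S^1}(Jac(X); \mathbb{Z})$. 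Since $H^+(X)$ is a trivial bundle over $Jac(X)$ of rank $b_+(X)$, its Euler class vanishes whenever $b_+(X) > 0$, and the monicness of $e^{S^1}(V)$ in the equivariant parameter $U$ then forces $\tilde\eta = 0$, giving $deg_R = \tilde\eta|_{U=0} = 0$. When $b_+(X) = d = 0$, both sides are monic of the same degree, yielding $\tilde\eta|_{U=0} = 1$ and $deg_R = 1$. The remaining case $b_+(X) = 0$ with $d \neq 0$ is covered by the dimensional constraints of Proposition~\ref{prop:mod2deg}.

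The main obstacle is the careful verification of the identification of the Real Bauer--Furuta map of $X \# X$ with the ordinary $S^1$-equivariant Bauer--Furuta of $X$ restricted to $\mathbb{Z}_2$: one must check compatibility of the connected-sum gluing on the neck, the harmonic gauge fixing, and the finite-dimensional approximation under the swap involution. Once this identification is in hand, the mod $2$ statements are a direct application of Section~\ref{sec:z2restr}, and the degree follows from the localization identity above.
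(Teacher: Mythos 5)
Your mod $2$ argument follows essentially the same route as the paper: identify the Real Bauer--Furuta map of $(X \# X, \mathfrak{s}\#(-\mathfrak{s}))$ with the ordinary Bauer--Furuta map $f$ of $(X,\mathfrak{s})$ with the $S^1$-action restricted to $\mathbb{Z}_2 = \{\pm 1\}$, and then apply the calculation of Section~\ref{sec:z2restr}. Your explicit verification of the vanishing $SW^\phi_{R,2m}(X\#X,\mathfrak{s}\#(-\mathfrak{s}))=0$ --- that $\rho^*(\eta^\phi)v^{2m}$ contains only even powers of $v$, whose pushforwards are odd Stiefel--Whitney classes of the complex bundle $-V$ --- is a correct spelling-out of a step the paper leaves implicit. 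The identification of the Real map with $f|_{\mathbb{Z}_2}$, which you correctly flag as the crux, is established in the paper via Bauer's gluing theorem and the smash-product description $f\wedge\bar f$ with the swap involution, whose $\sigma$-fixed locus recovers $f$; your configuration-level heuristic is the right picture, but would need that gluing result to make rigorous.

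Your degree argument has a genuine gap in the case $b_+(X)=0$, $d<0$. The localization identity $\tilde\eta\, e_{S^1}(V) = e_{S^1}(V')\, e(H^+(X))$ is correct and does give $\tilde\eta=0$ for $b_+(X)>0$ and $\tilde\eta|_{U=0}=1$ for $b_+(X)=d=0$. But when $b_+(X)=0$ and $d<0$, setting $U=0$ yields $c_{-d}(-D)\in H^{-2d}(Jac(X);\mathbb{Z})$, and your invocation of Proposition~\ref{prop:mod2deg} does not show this vanishes: that proposition gives only $w_j(-D_R)=0$ for $j>-d$ together with the congruence $deg_R\equiv w_{-d}(-D_R)\pmod 2$, which is no vanishing statement at all (indeed $w_{-d}(-D_R)$ could well be nonzero). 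The paper closes exactly this case with an extra argument via the families index theorem: when $b_+(X)=0$ the intersection form on $H^2(X;\mathbb{Q})$ is negative definite, so all cup products of degree-$1$ classes vanish rationally, which forces $c_j(D)=0$ for all $j>0$ and hence $c_{-d}(-D)=0$ (using that $H^*(Jac(X);\mathbb{Z})$ is torsion-free). You would need to supply this step; as written your proof of the degree formula is incomplete when $b_+(X)=0$ and $d<0$.
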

\begin{proof}
Let $f : S^{V,U} \to S^{V',U'}$ denote the Bauer--Furuta invariant of $(X , \mathfrak{s})$. Then charge conjugation identifies the Bauer--Furuta invariant of $(X, -\mathfrak{s})$ with $f$, but in an anti-linear way. In other words, if we let $\bar{f}$ denote $f : S^{V,U} \to S^{V',U'}$ but where the $S^1$-action on the domain and codomain are complex conjugated, then $\bar{f}$ is the Bauer--Furuta invariant of $(X , -\mathfrak{s})$. Then the $O(2)$-equivariant Bauer--Furuta map of $(X \# X , \mathfrak{s} \# (-\mathfrak{s}) )$ is $f \wedge \bar{f} : S^{V,U} \wedge S^{V,U} \to S^{V',U'} \wedge S^{V',U'}$, where $\sigma$ acts on the domain and codomain by swapping the two factors of the smash product. The Real Bauer--Furuta map of $(X \# X , \mathfrak{s} \# (-\mathfrak{s}))$ is then given by restricting $f \wedge \bar{f}$ to the fixed points of $\sigma$. Clearly this just gives back $f$, but where the $S^1$-action is restricted to the subgroup $\mathbb{Z}_2 \subset S^1$. Then the calculation given in Section \ref{sec:z2restr} implies the result.

For the calculation of the degree, note that the real part of the families index of the the spin$^c$ Dirac operator on $X \# X$ parametrised by $Jac_R^{\mathfrak{s} \# -\mathfrak{s}}(X \# X)$ can be identified with $D$, the families index of the usual family of spin$^c$-Dirac operators on $X$ parametrised by $Jac_R^{\mathfrak{s}}(X)$. Moreover, $D = V - V'$. Then $w_1(D) = 0$ because $D$ admits a complex structure. So the invariant $deg_R(X \# X , \mathfrak{s} \# -\mathfrak{s})$ is defined and equals the ordinary degree of $f : S^{V,U} \to S^{V',U'}$. Denote by $\beta_{S^1} \in H^*_{S^1}( Jac(X) ; \mathbb{Z})$ the $S^1$-equivariant degree of $f$. Since $S^1$ acts trivially on $B = Jac(X)$, we have $H^*_{S^1}(B ; \mathbb{Z}) \cong H^*(B ; \mathbb{Z})[U]$, where $U$ is the generator of $H^2_{S^1}(pt ; \mathbb{Z})$. Then from \cite[\textsection 3]{bar}, we have that $\beta_{S^1} = 0$ if $b_+(X) > 0$ and
\[
\beta_{S^1} = U^{-d} + U^{-d-1} c_1(-D) + U^{-d-2} c_2(-D) + \cdots
\]
if $b_+(X) = 0$. In this case, the non-equivariant degree of $f$ is given by setting $U=0$, giving $deg(f) = c_{-d}(-D)$ (note that $b_+(X) = 0$ implies that $d \le 0$ by Donaldson's theorem). If $d=0$, then $deg(f) = c_0(-D) = 1$ as claimed. To finish, we will show that if $b_+(X) = 0$, then $c_j(-D) = 0$ for all $j > 0$.

Since $H^*(B ; \mathbb{Z})$ has no torsion, it suffices to compute Chern classes in rational cohomology which we can do using the families index theorem. First note that since $b_+(X) = 0$, $H^2(X ; \mathbb{Q})$ is negative definite. Now let $a,b \in H^1(X ; \mathbb{Q})$. Then $(a \smallsmile b )^2 = a \smallsmile b \smallsmile a \smallsmile b = 0$. But $H^2(X ; \mathbb{Q})$ is negative definite and so $a \smallsmile b = 0$. So all cup products of degree $1$ classes vanish rationally. Using the same method as \cite[\textsection 5.3]{bk}, the families index theorem gives $c_j(D) = 0$ for all $j > 0$. Since $c(-D) = c(D)^{-1}=1$, we also have $c_j(-D) = 0$ for all $j >0$ as claimed.
\end{proof}

\section{Generalised fibre sum}\label{sec:gfs}

Let $X_1,X_2$ be compact, oriented, smooth $4$-manifolds and let $\sigma_1,\sigma_2$ be Real structures on $X_1,X_2$. Suppose for $i=1,2$, that the fixed point set of $\sigma_i$ contains an embedded torus $T_i \subset X_i$ of self-intersection zero. Let $X'_i$ denote the complement of a $\sigma_i$-invariant tubular neighbourhood of $T_i$ and let $\sigma'_i$ be the restriction of $\sigma_i$ to $X'_i$. Then $X'_i$ has boundary $T^3 = (S^1)^3$, with involution $\sigma_{T^3}(x,y,z) = (-x,y,z)$ (where we think of $S^1$ as the unit circle in $\mathbb{C}$). Let $\varphi$ be an orientation reversing diffeomorphism $\varphi : \partial X'_1 \to \partial X'_2$ satisfying $\varphi \circ \sigma'_1 = \sigma'_2 \circ \varphi$. We obtain a $4$-manifold $X = X_1 \#_\varphi X_2$ is the $4$-manifold obtained by attaching $X'_1$ and $X'_2$ along their boundaries using the map $\varphi$. Clearly $\sigma'_1, \sigma'_2$ patch together to form a Real structure $\sigma$ on $X$. In the case that $\varphi : \partial X'_1 \to \partial X'_2$ is induced by a diffeomorphism $T_1 \to T_2$, together with trivialisations $\nu T_1 \cong D^2 \times T_1$, $\nu T_2 \cong D^2 \times T_2$ of the normal bundles, then $X$ is called a {\em generalised fibre sum} \cite[Definition 7.1.11]{gs}. 

Let $\mathfrak{s}_1, \mathfrak{s}_2$ denote Real spin$^c$-structures on $X_1,X_2$ and $\mathfrak{s}'_1, \mathfrak{s}'_2$ their restrictions to $X'_1, X'_2$. If $\varphi^*(\mathfrak{s}'_2 |_{T^3}) \cong \mathfrak{s}'_1 |_{T^3}$, then by choosing an isomorphism we can glue $\mathfrak{s}'_1$ and $\mathfrak{s}'_2$ together to form a Real spin$^c$-structure on $X$. Since $b_1(T^3)^{-\sigma_{T^3}} = 0$, the Real gauge group on $T^3$ has two components and so using the same argument as in the connected sum case, we see that the resulting spin$^c$-structure does not depend on the choice of isomorphism. We denote it by $\mathfrak{s}_1 \#_f \mathfrak{s}_2$.

It is easily seen that each Real spin$^c$-structure on $(T^3 , \sigma_{T^3})$ comes from a uniquely determined odd spin structure. Let $\mathfrak{s}_{T^3}$ denote the translation invariant spin structure on $T^3$ and let $l_{\epsilon_1 , \epsilon_2, \epsilon_3}$ denote the real line bundle on $T^3$ with holonomy $(-1)^{\epsilon_i}$ around the $i$-th circle. There are four odd spin structures given by $l_{1 , \epsilon_2, \epsilon_3} \otimes \mathfrak{s}_{T_3}$, $\epsilon_2,\epsilon_3 = 0,1$. The set of $\sigma_{T^3}$-equivariant, orientation reversing diffeomorphisms $\varphi : T^3 \to T^3$ acts transitively on the set of odd spin structures. To see this, it suffices to consider diffeomorphisms of the form $\varphi(x,y,z) = (\overline{x}y^a z^b  , y , z)$, for $a,b = 0,1$. Hence for any $\mathfrak{s}_1,\mathfrak{s}_2$, there always exists a suitable choice of $\varphi$ that can be used to glue $\mathfrak{s}_1, \mathfrak{s}_2$. In general, the $4$-manifold $X_1 \#_\varphi X_2$ depends on the choice of $\varphi$, however if $X_1$ is a $\mathcal{C}^\infty$-elliptic fibration, $T_1$ a smooth fibre and $X_1$ has a cusp fibre, then the diffeomorphism type of the generalised fibre sum of $(X_1 , T_1)$ and $(X_2 , T_2)$ does not depend on $\varphi$ \cite[Lemma 8.3.6]{gs}.

Set $d_i = (c(\mathfrak{s}_i)^2 - \sigma(X_i) )/8$. Observe that since $H^i( T^3 ; \mathbb{Z} )^{-\sigma_{T^3}} = 0$ for $i=1,2$, then $H^1( X ; \mathbb{Z} ) ^{-\sigma} = 0$ and $H^2(X ; \mathbb{R})^{-\sigma} \cong H^2(X_1 ; \mathbb{R})^{-\sigma_1} \oplus H^2(X_2 ; \mathbb{R})^{-\sigma_2}$. Hence $c(\mathfrak{s})^2 = c(\mathfrak{s}_1)^2 + c(\mathfrak{s}_2)^2$. Also $\sigma(X_1 \#_\varphi X_2 ) = \sigma(X_1) + \sigma(X_2)$ by Novikov additivity \cite[Remark 9.1.7]{gs}. Thus $d = d_1 + d_2$, where $d = (c(\mathfrak{s})^2 - \sigma(X))/8$. In particular, if $d_1,d_2$ are even then so is $d$.

\begin{theorem}\label{thm:fsum}
Suppose $b_1(X_1)^{-\sigma_1} = b_1(X_2)^{-\sigma_2} = 0$ and $d_1, d_2$ are even. Then
\[
deg_R(X_1 \#_\varphi X_2 , \mathfrak{s}_1 \#_\varphi \mathfrak{s}_2) = deg_R(X_1 ,\mathfrak{s}_1) deg_R(X_2 , \mathfrak{s}_2).
\]
In particular, if $b_+(X_i)^{-\sigma_i} > 0$ for $i=1,2$, then
\[
SW_{R,\mathbb{Z}}( X_1 \#_\varphi X_2 , \mathfrak{s}_1 \#_\varphi \mathfrak{s}_2 ) = 2 \, SW_{R,\mathbb{Z}}(X_1 , \mathfrak{s}_1) SW_{R,\mathbb{Z}}(X_2 ,\mathfrak{s}_2).
\]
\end{theorem}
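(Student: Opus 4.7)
The strategy is to adapt Bauer's connected-sum gluing theorem to the fibre-sum setting, showing that for a sufficiently stretched metric the Real Bauer--Furuta map of $X = X_1 \#_\varphi X_2$ is the external smash product of the Real Bauer--Furuta maps of $(X_1, \mathfrak{s}_1)$ and $(X_2, \mathfrak{s}_2)$. The identity for $deg_R$ then follows from multiplicativity of degree under smash product, and the $SW_{R,\mathbb{Z}}$ formula follows by substituting $deg_R = 2\,SW_{R,\mathbb{Z}}$ from Proposition~\ref{prop:deginteger}(1), exactly as in the connected sum case (Theorem~\ref{thm:glue1} and Theorem~\ref{thm:glue2}).

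First I would equip each $X_i$ with a $\sigma_i$-invariant Riemannian metric which is a product $g_{T^2} \oplus g_{D^2}$ on a $\sigma_i$-invariant tubular neighbourhood $\nu(T_i) = T^2 \times D^2$, with $\sigma_i$ acting by rotation by $\pi$ on the $D^2$ factor; this induces the free involution $\sigma_{T^3}(x,y,z) = (-x,y,z)$ on $\partial\nu(T_i) = T^3$. Using $\varphi$ to glue produces a one-parameter family of $\sigma$-invariant metrics $g_R$ on $X$ containing a flat cylindrical neck $T^3 \times [-R,R]$. Because $\sigma_{T^3}$ acts by a translation, it acts trivially on de Rham cohomology, hence $H^k(T^3;\mathbb{R})^{-\sigma_{T^3}} = 0$ for every $k$; a Mayer--Vietoris computation using this together with the hypothesis $b_1(X_i)^{-\sigma_i}=0$ gives $b_1(X)^{-\sigma}=0$, so $Jac_R(X)$ is a point and $deg_R$ is an integer. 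Moreover, by the discussion preceding the theorem, the restriction of $\mathfrak{s}_1 \#_\varphi \mathfrak{s}_2$ to $T^3$ comes from an odd spin structure $l_{1,\epsilon_2,\epsilon_3} \otimes \mathfrak{s}_{T^3}$, for which the flat $T^3$-Dirac operator is invertible by direct Fourier calculation. These vanishings provide a uniform spectral gap for the linearised Real Seiberg--Witten operator on the neck, independent of $R$.

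With this uniform gap in hand, I would carry out the finite-dimensional approximation of Section~\ref{sec:realbf} on $(X, g_R)$, choosing the approximating subspaces to be supported away from the neck and to coincide on each side with the approximating subspaces used in the construction of the Real Bauer--Furuta maps of the closed manifolds $(X_i, \mathfrak{s}_i)$. Exponential decay of sections along the long neck, forced by the spectral gap, guarantees that the resulting $\mathbb{Z}_2$-equivariant monopole map on $X$ is stably $\mathbb{Z}_2$-equivariantly homotopic to the external smash product of the two constituent Real Bauer--Furuta maps. Taking degrees then yields the multiplicative formula for $deg_R$, from which the $SW_{R,\mathbb{Z}}$ identity is immediate.

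The main obstacle will be running the gluing so that it respects the antilinear lifts $\widetilde{\sigma}_1, \widetilde{\sigma}_2$ throughout. This requires verifying that the identification of positive spinors across the neck via $\varphi$ intertwines the Real structures --- essentially the same content as the well-definedness of $\mathfrak{s}_1 \#_\varphi \mathfrak{s}_2$ up to isomorphism --- and controlling the nonlinear quadratic term $\sigma(\psi)$ on the neck so that the smash-product decomposition is canonical up to $\mathbb{Z}_2$-equivariant stable homotopy. As in the connected sum case, triviality of the Real gauge group modulo the constants $\{\pm 1\}$ over $T^3$ (which follows from $H^1(T^3;\mathbb{Z})^{-\sigma_{T^3}} = 0$) is the mechanism that removes any ambiguity in the patching.
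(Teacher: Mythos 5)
There is a genuine gap. Cutting the long-neck manifold along the central $T^3$ yields pieces diffeomorphic to $X'_1$ and $X'_2$ (the complements of the tubular neighbourhoods $\nu(T_i)$), not to the closed manifolds $X_1$ and $X_2$. So the neck-stretching argument, properly carried out, produces a factorization of the Real Bauer--Furuta map of $X$ in terms of \emph{relative} Bauer--Furuta maps of $(X'_1, \mathfrak{s}'_1)$ and $(X'_2, \mathfrak{s}'_2)$; you cannot choose approximating subspaces that are ``supported away from the neck and coincide with those for the closed $X_i$,'' because the closed $X_i$ includes $\nu(T_i)$, which is absent from the fibre sum. The gluing theorem you want is the one for relative invariants along $T^3$, and its validity rests on the Real Floer spectrum of $(T^3, \sigma_{T^3})$ being $S^0$, a fact the paper imports from Miyazawa (Proposition 4.24 of \cite{mi}), not merely on invertibility of the $T^3$-Dirac operator or a uniform spectral gap.

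Once you have $deg_R(X) = deg_R(X'_1, \mathfrak{s}'_1)\, deg_R(X'_2, \mathfrak{s}'_2)$, there is still a step missing: relating $deg_R(X'_i)$ to $deg_R(X_i)$. Applying the same gluing to $X_i = X'_i \cup_{T^3}(D^2\times T^2)$ gives $deg_R(X_i) = deg_R(X'_i)\cdot deg_R(D^2\times T^2, \mathfrak{s}_i|_{D^2\times T^2})$, so you must show $deg_R(D^2\times T^2) = \pm 1$. This is not automatic. The paper establishes it by gluing two copies of $D^2\times T^2$ to get $S^2\times T^2$ with the involution $\sigma_{S^2}\times id$, observing that $S^2 \times T^2$ has positive scalar curvature with $b_+^{-\sigma}=b_1^{-\sigma}=\sigma=0$ so its degree is $\pm 1$, and deducing that each $D^2\times T^2$ factor must contribute $\pm 1$. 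Your proposal lacks both this reduction to relative invariants and the degree computation for $D^2\times T^2$; without them the claimed smash-product decomposition into closed Bauer--Furuta maps is unjustified.
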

\begin{proof}
The proof is similar to the connected sum case. The only difference is that since we are gluing manifolds $X'_1,X'_2$ with boundary $T^3$, we need to use relative Bauer--Furuta maps. However it turns out that the Real Floer homotopy type of $(T^3 , \mathfrak{s} , \sigma_{T^3})$ for any Real spin$^c$-structure on $T^3$ is equal to $S^0$ \cite[Proposition 4.24]{mi}. Hence if $(W , \sigma_W)$ is any compact, oriented, smooth Real $4$-manifold with $b_1(W)^{-\sigma_W} = 0$ and with boundary $(T^3 , \sigma_{T^3})$ and $\mathfrak{s}_W$ is any Real spin$^c$-structure on $W$, then we can define $deg_R(W , \mathfrak{s}_W)$ to be the degree of the Real Bauer--Furuta map of $(W , \mathfrak{s}_W)$ exactly as in the boundaryless case. Then if $W = W_1 \cup_{T^3} W_2$ is a closed $4$-manifold obtained by gluing together two such $4$-manifolds, it follows that the degree is multiplicative: $deg_R(W , \mathfrak{s}_W ) = deg_R(W_1 , \mathfrak{s}_{W_1}) deg_R(W_2 , \mathfrak{s}_{W_2})$. If we apply this to $X_1 = X' \cup_{T^3} (D^2 \times T^2)$, we get
\[
deg_R(X_1 , \mathfrak{s}_1) = deg_R(X'_1 , \mathfrak{s}'_1) deg_R( D^2 \times T^2 , \mathfrak{s}_1 |_{D^2 \times T^2}).
\]
We claim that $deg_R( D^2 \times T^2 , \mathfrak{s}_1 |_{D^2 \times T^2}) = \pm 1$. To see this, consider $X_0 = S^2 \times T^2$ with involution $\sigma_0 = \sigma_{S^2} \times id_{T^2}$, where $\sigma_{S^2}$ is a rotation of $S^2$ by $\pi$. Clearly $S^2 \times T^2$ is obtained by gluing two copies of $D^2 \times T^2$ together. Note that all four Real spin$^c$-structures on $T^3$ extend to $D^2 \times T^2$ and to $S^2 \times T^2$. We have $b_1(X_0)^{-\sigma} = b_+(X_0)^{-\sigma} = \sigma(X_0) = 0$ and $X_0$ has positive scalar curvature, hence $deg_R( X_0 , \mathfrak{s}_0) = 1$ for any Real spin$^c$-structure $\mathfrak{s}_0$. This is only possible if $deg_R(D^2 \times T^2 , \mathfrak{s}_0 |_{D^2 \times T^2} ) = \pm 1$. Therefore, $deg_R(X_1 , \mathfrak{s}_1) = \pm deg_R(X'_1 , \mathfrak{s}'_1)$ and similarly $deg_R(X_2 , \mathfrak{s}_2) = deg_R(X'_2 , \mathfrak{s}'_2)$. This gives
\begin{align*}
deg_R( X_1 \#_\varphi X_2 , \mathfrak{s}_1 \#_\varphi \mathfrak{s}_2 ) &= deg_R(X'_1 , \mathfrak{s}'_1) deg_R(X'_2 , \mathfrak{s}'_2) \\
&= deg_R(X_1 , \mathfrak{s}_1) deg_R(X_2 , \mathfrak{s}_2).
\end{align*}

\end{proof}

\begin{example}
Start with an elliptic fibration $\pi : E(1) \to \mathbb{CP}^1$. Let $p : \mathbb{CP}^1 \to \mathbb{CP}^1$ be a branched double cover branched along two points $x_1,x_2 \in \mathbb{CP}^1$ which are regular values of $\pi$. The pullback of $E(1)$ along $p$ gives an elliptic fibration whose total space $X$ is $E(2)$, a K3-surface. Let $\sigma : X \to X$ be the covering involution. Then the fixed point set of $\sigma$ is the preimages of the fibres of $E(1)$ over $x_1,x_2$. Hence the fixed point set of $\sigma$ consists of two tori. Let $\mathfrak{s}$ be the spin structure on $X$. Our construction gives a complex structure on $X$ for which $\sigma$ is holomorphic. However, we can use a hyperK\"ahler structure on $X$ to find a complex structure for which $\sigma$ is anti-holomorphic (let $I$ denote the complex structure on $X$. Choose a K\"ahler metric $g$ for $(X,I)$, which by averaging may be assumed to be $\sigma$-invariant. Let $\omega_{g,I}$ denote the K\"ahler metric associated to $g$ and $I$. Then by Yau's theorem there exists a unique Ricci flat K\"ahler metric $g'$ for $(X,I)$ such that $[\omega_{g',I}] = [\omega_{g,I}]$. By uniqueness of $g'$ and $\sigma$-invariance of the K\"ahler class $[\omega_{g,I}]$, it follows that $g'$ is $\sigma$-invariant. Thus $g'$ is a $\sigma$-invariant hyperK\"ahler metric for $X$. Let $S$ denote the $2$-sphere of hyperK\"ahler metrics for $g'$. Note that $S$ corresponds to the unit sphere of $H^+(X)_{g'}$, the space of $g'$-self dual harmonic $2$-forms on $X$ via the map which sends a complex structure $J$ to the corresponding K\"ahler form $\omega_{g',J}$. By construction the action of $\sigma$ on $X$ has non-isolated fixed points. Thus $\sigma$ is an odd involution and therefore acts non-trivially on $H^+(X)_{g'}$. If follows that there exists a $J \in S$ such that $\sigma(J) = -J$. With respect to such a $J$, $\sigma$ is anti-holomorphic). Then the corresponding K\"ahler $2$-form $\omega$ satisfies $\sigma^*(\omega) = -\omega$. The moduli space of the Seiberg--Witten equations for $(X, \mathfrak{s})$ with perturbation of the form $i \lambda \omega$ for sufficiently large $\lambda$ consists of a single point cut out transversally. This unique solution is preserved by $\sigma$ and hence $SW_{R,\mathbb{Z}}(X , \mathfrak{s}) = 1$. Now for any $n \ge 1$, consider the $n$-fold fibre sum of $(X , \sigma , \mathfrak{s})$. The resulting $4$-manifold is the $n$-fold fibre sum of $E(2)$, namely $E(2n)$. In fact, it is not hard to see that the resulting involution on $E(2n)$ comes from taking an elliptic fibration $E(n) \to \mathbb{CP}^1$ and pulling back under the branched double cover $p : \mathbb{CP}^1 \to \mathbb{CP}^1$. The $n$-fold fibre sum of $\mathfrak{s}$ is the unique spin structure $\mathfrak{s}_{E(2n)}$ on $E(2n)$ and Theorem \ref{thm:fsum} gives $SW_{R , \mathbb{Z}}( E(2n) , \mathfrak{s}_{E(2n)} ) = \pm 2^{n-1}$. By way of contrast, the ordinary Seiberg--Witten invariant is given by $SW(E(2n) , \mathfrak{s}_{E(2n)}) = \pm \binom{2n-2}{n-1}$.

\end{example}

\section{Exotic involutions and exotic embedded surfaces}\label{sec:exotic}

Recall from the introduction the notion of an admissible pair $(X,\sigma)$:

\begin{definition}\label{def:admissible}
Let $X$ be a compact, oriented, smooth $4$-manifold and $\sigma$ an orientation preserving smooth involution on $X$. We will say the pair $(X , \sigma)$ is {\em admissible} if $b_1(X)^{-\sigma} = 0$, $\sigma$ has a non-isolated fixed point and $(X,\sigma)$ satisfies one of the following conditions:
\begin{itemize}
\item[(1)]{$X$ admits a symplectic structure with $\sigma^*(\omega) = -\omega$, and $b_+(X) - b_1(X) = 3 \; ({\rm mod} \; 4)$.}
\item[(2)]{$X$ has a spin structure preserved by $\sigma$ and $b_1(X)^{-\sigma} = \sigma(X) = 0$.}
\item[(3)]{$X$ has a spin$^c$-structure $\mathfrak{s}$ with $\sigma^*(\mathfrak{s}) = -\mathfrak{s}$, $SW(X,\mathfrak{s})$ is odd, $b_+(X) - b_1(X) = 3 \; ({\rm mod} \; 4)$, and
\[
\frac{ c(\mathfrak{s})^2 - \sigma(X) }{8} = \frac{ b_+(X) - b_1(X) + 1}{2} = b_+(X)^{-\sigma}.
\]
}
\item[(4)]{$X = \overline{\mathbb{CP}}^2$ with an involution such that $b_+(X)^{-\sigma} = 0$.}
\item[(5)]{$X = N \# N$ and $\sigma$ is the involution which swaps the two factors (as in Section \ref{sec:csf}), where $N$ is negative definite, $b_1(N) = 0$ and there is a spin$^c$-structure on $N$ with $c(\mathfrak{s})^2 = -b_2(N)$.}
\end{itemize}

\end{definition}

\begin{proposition}\label{prop:nzdeg}
Let $(X,\sigma)$ be admissible. Then $X$ has a Real spin$^c$-structure $\mathfrak{s}$ such that $deg_R(X,\mathfrak{s})$ is defined and non-zero.
\end{proposition}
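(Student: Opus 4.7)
The plan is a case-by-case argument through the admissibility conditions (1)--(5): in each case I will exhibit a spin$^c$-structure $\mathfrak{s}$ with $\sigma^*\mathfrak{s}=-\mathfrak{s}$, lift it to a Real spin$^c$-structure via Proposition \ref{prop:realspinc} (which applies because $b_1(X)^{-\sigma}=0$ and $\sigma$ has a non-isolated fixed point), and then verify $deg_R(X,\mathfrak{s})\neq 0$ using Proposition \ref{prop:deginteger}, together with whatever case-specific Seiberg--Witten input is available.

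Cases (1) and (3) I will treat by a common strategy. In case (3) the spin$^c$-structure $\mathfrak{s}$ is given by hypothesis, with $SW(X,\mathfrak{s})$ odd; the identities $d=(b_+(X)-b_1(X)+1)/2$ and $b_+(X)-b_1(X)\equiv 3\pmod 4$ force $d$ even and $\Delta:=2d-b_+(X)-1+b_1(X)=0$, while the explicit equality $d=b_+(X)^{-\sigma}$ gives $\delta:=d-b_+(X)^{-\sigma}=0$. In case (1) I will take $\mathfrak{s}=\mathfrak{s}_\omega$ the canonical spin$^c$-structure; the Chern-number identity $c(\mathfrak{s}_\omega)^2=2\chi(X)+3\sigma(X)$ yields the same formula for $d$, Taubes supplies $SW(X,\mathfrak{s}_\omega)=\pm 1$, and $\sigma^*\omega=-\omega$ gives $\sigma^*\mathfrak{s}_\omega=-\mathfrak{s}_\omega$; the equality $b_+(X)^{-\sigma}=d$ will be forced by the nonvanishing of $SW$ combined with the degree bookkeeping in Theorem \ref{thm:loc2} (contributions in positive cohomological degree of $Jac_R(X)$ vanish). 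With $\Delta=\delta=0$, the localisation formula specialises at $m=0$ to
\[
SW(X,\mathfrak{s}) \equiv \sum_{\mathfrak{s}'} SW_R^\phi(X,\mathfrak{s}') \pmod{2},
\]
so some Real lift $\mathfrak{s}'$ has $SW_R^\phi(X,\mathfrak{s}')$ odd; then $SW_{R,\mathbb{Z}}(X,\mathfrak{s}')$ is an odd integer, and Proposition \ref{prop:deginteger}(2) gives $deg_R(X,\mathfrak{s}')=2\,SW_{R,\mathbb{Z}}(X,\mathfrak{s}')\neq 0$.

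Cases (2) and (4) will use Proposition \ref{prop:deginteger}(3), which gives $deg_R\equiv 1\pmod 2$ as soon as $d=b_+(X)^{-\sigma}=0$. In case (2) I will take the Real spin$^c$-structure induced from a Real spin structure (produced using the no-$2$-torsion hypothesis on $H_1(X;\mathbb{Z})$ to arrange the necessary Real lift at a non-isolated fixed point as in the proof of Proposition \ref{prop:realspinc}), whence $d=-\sigma(X)/8=0$; the stated hypotheses must then be used to conclude $b_+(X)^{-\sigma}=0$. In case (4), $\overline{\mathbb{CP}}^2$ has $b_+(X)=0$, hence automatically $b_+(X)^{-\sigma}=0$, and the spin$^c$-structure with $c(\mathfrak{s})^2=-1=\sigma(X)$ yields $d=0$. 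Case (5) is the most direct: the swap involution on $N\#N$ together with the spin$^c$-structure $\mathfrak{s}\#(-\mathfrak{s})$ falls under Proposition \ref{prop:XX}, and the hypotheses $b_+(N)=0$, $c(\mathfrak{s})^2=-b_2(N)=\sigma(N)$ force $d_N=0$, so Proposition \ref{prop:XX} gives $deg_R(N\#N,\mathfrak{s}\#(-\mathfrak{s}))=\pm 1$.

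The main obstacle I anticipate is in case (1): verifying that $b_+(X)^{-\sigma}=d$ holds automatically (rather than as a separate hypothesis, as in case (3)) under just the symplectic anti-invariance and the congruence $b_+(X)-b_1(X)\equiv 3\pmod 4$. I expect this to emerge from careful degree-matching in Theorem \ref{thm:loc2}: the fact that the $u^0$-term of the right-hand side must equal the odd integer $SW(X,\mathfrak{s}_\omega)$ will force $\delta=0$, once one tracks the gradings on $H^*_{\mathbb{Z}_2}(Jac^{\mathfrak{s}}(X);\mathbb{Z}_2)[u,u^{-1}]$. A secondary subtlety is case (2), where the no-$2$-torsion hypothesis is used to force $b_+(X)^{-\sigma}=0$ via an analysis of the $\sigma$-action on the integer intersection form; this should be routine but deserves a careful check.
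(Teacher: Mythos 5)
Your treatment of cases (2), (4), (5) matches the paper's, and your reduction of case (3) to $|deg_R|>0$ via Theorem \ref{thm:loc2} plus Proposition \ref{prop:deginteger}(2) is the right argument --- although you should also dispose of the subcase $b_+(X)^{-\sigma}=0$ there, where Theorem \ref{thm:loc2} is not applicable and one instead notes that $d=b_+(X)^{-\sigma}=0$ forces $deg_R$ odd by Proposition \ref{prop:deginteger}(3). The paper handles this by a two-line case split.

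The genuine gap is in case (1), precisely where you flagged your unease. The step you propose --- that the equality $b_+(X)^{-\sigma}=d$ ``will be forced by the nonvanishing of $SW$ combined with the degree bookkeeping in Theorem \ref{thm:loc2}'' --- does not work. Theorem \ref{thm:loc2} only asserts a mod-2 relation between $SW^\phi_m(X,\mathfrak{s})$ and the various $SW^\phi_R(X,\mathfrak{s}')$; reading through its proof, the class written $SW^\phi_R(X,\mathfrak{s}')$ there is $(\pi_{V^\sigma})_*(v^\delta\,\eta^\phi_R)$, i.e.\ it is the $\mathbb{Z}_2$-valued invariant $SW^\phi_{R,\delta}(X,\mathfrak{s}')$ where $\delta=d-b_+(X)^{-\sigma}$ is whatever the dimension of the Real moduli space happens to be. When $b_1(X)^{-\sigma}=0$ this lives in $H^0(\mathrm{pt};\mathbb{Z}_2)$, not in positive-degree cohomology of $Jac_R(X)$, so your degree-vanishing observation is vacuous and nothing about the value of $\delta$ can be extracted. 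In particular, an odd $SW(X,\mathfrak{s})$ together with the localisation formula is perfectly consistent with $\delta>0$; and for $\delta>0$ the class $deg_R(X,\mathfrak{s}')\in H^{-\delta}(Jac_R(X);\mathbb{Z})$ is zero for trivial reasons. So your argument cannot conclude that some Real lift has nonzero degree.

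What the paper does instead is \emph{reduce case (1) to case (3)} by a purely topological computation: the fixed-point set $S$ of $\sigma$ is Lagrangian (since $\sigma^*\omega=-\omega$), so by the Lagrangian neighbourhood theorem its normal Euler number is $-\chi(S)$; combining this with the branched-double-cover formulas $\chi(X_0)=\tfrac{1}{2}(\chi(X)+\chi(S))$ and $\sigma(X_0)=\tfrac{1}{2}(\sigma(X)+e(S))$ for $X_0=X/\sigma$ yields $b_+(X)^\sigma-b_1(X)^\sigma+1=\tfrac12(b_+(X)-b_1(X)+1)=d$, and hence (using $b_1(X)^{-\sigma}=0$) exactly the equality $b_+(X)^{-\sigma}=d$ required in case (3). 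You need this equivariant Hodge-theoretic input; gauge theory alone doesn't provide it.
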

\begin{proof}
We consider separately cases (1)-(5) of Definition \ref{def:admissible}.

(1). Let $\mathfrak{s}$ be the canonical spin$^c$-structure associated to $\omega$. Then $\sigma^*(\mathfrak{s}) \cong -\mathfrak{s}$, hence $\mathfrak{s}$ admits a Real structure by Proposition \ref{prop:realspinc}. Let $S \subset X$ denote the fixed point set of $\sigma$. Let $X_0 = X/\sigma$ and let $S \subset X_0$ be the image of $S$ in $X_0$. Then $X \to X_0$ is a branched double cover, branched over $S$. Since $\sigma^*(\omega) = -\omega$, $S$ is Lagrangian. By the Lagrangian neighbourhood theorem, the normal bundle of $S$ in $X$ coincides with the normal bundle of the zero section $S \to T^*S$. Hence $e(S) = -\chi(S)$, where $e(S)$ denotes the self-intersection number of $S$ (note that $S$ could be non-orientable). Set $d = (c(\mathfrak{s})^2 - \sigma(X))/8$. Since $\mathfrak{s}$ is the canonical spin$^c$-structure associated to $\omega$, we have $2d = b_+(X)-b_1(X) + 1$. Since $X \to X_0$ is a branched double cover, we have $\chi(X_0) = (\chi(X) + \chi(S))/2$ and $\sigma(X_0) = (\sigma(X) + e(S))/2 = (\sigma(X) - \chi(S))/2$. Hence $\sigma(X_0) + \chi(X_0) = (\sigma(X) + \chi(X))/2$, or equivalently
\[
b_+(X) - b_1(X) + 1 = 2( b_+(X_0) - b_1(X_0) + 1) = 2( b_+(X)^{\sigma} - b_1(X)^{\sigma} + 1).
\]
Therefore,
\[
b_+(X)^{-\sigma} - b_1(X)^{-\sigma} = b_+(X)^\sigma - b_1(X)^\sigma + 1 = \frac{1}{2}( b_+(X) - b_1(X) + 1) = d.
\]
Since by assumption $b_1(X)^{-\sigma} = 0$, this reduces to $b_+(X)^{-\sigma} = d$. We also have that $SW(X,\mathfrak{s})$ is odd, because $X$ is symplectic. Thus case (1) is a special case of case (3).

(2). By assumption $X$ has a spin structure $\mathfrak{s}$ whose isomorphism class is preserved by $\sigma$. Since $\sigma$ has a non-isolated fixed point, $\sigma$ is odd with respect to $\mathfrak{s}$, thus the underlying spin$^c$-structure of $\mathfrak{s}$ admits a Real structure. Then $d = (c(\mathfrak{s})^2 - \sigma(X))/8 = -\sigma(X)/8 = 0$ by the assumption that $\sigma(X) = 0$. So $d = b_+(X)^{-\sigma} = b_1(X)^{-\sigma} = 0$. Hence $deg_R(X,\mathfrak{s})$ is defined and is odd, by Proposition \ref{prop:deginteger} (3).

(3). By assumption $X$ has a spin$^c$-structure $\mathfrak{s}$ with $\sigma^*(\mathfrak{s}) = -\mathfrak{s}$. Hence $\mathfrak{s}$ admits a Real structure by Proposition \ref{prop:realspinc}. Set $d = (c(\mathfrak{s})^2 - \sigma(X))/8$. Then by assumption $2d - b_+(X) + b_1(X) - 1 = 0$ and $d - b_+(X)^{-\sigma} = 0$, so the moduli spaces of the ordinary and Real Seiberg--Witten equations for $\mathfrak{s}$ are zero-dimensional. If $b_+(X)^{-\sigma} = 0$, then $d = b_+(X)^{-\sigma} = 0$ and $deg_R(X,\mathfrak{s})$ is defined and odd, by Proposition \ref{prop:deginteger} (3). If $b_+(X)^{-\sigma} > 0$, then $b_+(X) \ge b_+(X)^{-\sigma} \ge 2$, because $b_+(X)^{-\sigma} = d$ and the assumptions $b_+(X) - b_1(X) = 3 \; ({\rm mod} \; 4)$ and $2d = b_+(X)-b_1(X)+1$ imply that $d$ is even. Theorem \ref{thm:loc2} gives
\[
SW(X,\mathfrak{s}) = \sum_{\mathfrak{s}'} SW_R(X,\mathfrak{s}') \; ({\rm mod} \; 2)
\]
where the sum is over all Real structures on $\mathfrak{s}$. Hence there is at least one Real structure $\mathfrak{s}'$ on $\mathfrak{s}$ for which $SW_R(X,\mathfrak{s}')$ is odd. Since $d$ is even, $deg_R(X,\mathfrak{s}')$ and $SW_{R,\mathbb{Z}}(X,\mathfrak{s}')$ are defined. Since $SW_{R,\mathbb{Z}}(X,\mathfrak{s}') = SW_R(X,\mathfrak{s}') \; ({\rm mod} \; 2)$, we have that $SW_{R,\mathbb{Z}}(X,\mathfrak{s}')$ is non-zero and hence $deg_R(X,\mathfrak{s}') = 2 SW_{R,\mathbb{Z}}(X,\mathfrak{s}')$ is also non-zero.

(4). Let $\mathfrak{s}$ be a spin$^c$-structure with $c(\mathfrak{s})^2 = -1$. Then clearly $\mathfrak{s}$ admits a Real structure and $d = b_+(X)^{-\sigma} = b_1(X)^{-\sigma} = 0$. Hence $deg_R(X,\mathfrak{s})$ is defined and is odd by Proposition \ref{prop:deginteger} (3).

(5). This is immediate from Proposition \ref{prop:XX}.
\end{proof}

\begin{example}
We give some examples of admissible pairs $(X,\sigma)$.
\begin{itemize}
\item[(1)]{A symplectic manifold $(X,\omega)$ with $b_1(X) = 0$, $b_+(X) = 3 \; ({\rm mod} \; 4)$, $\sigma$ a non-free involution such that $\sigma^*(\omega) = -\omega$.}
\item[(2)]{An integer homology $4$-sphere with any orientation preserving, odd involution (this satisfies condition (2)). Note that any orientation preserving involution on such a $4$-manifold has non-empty fixed point set.}
\item[(3)]{An integer homology $S^1 \times S^3$-manifold with an odd, non-free involution $\sigma$ acting trivially on $H^1(X;\mathbb{R})$ and preserving the isomorphism class of a spin structure.}
\item[(4)]{A $4$-manifold $X$ homeomorphic to $K3$ and $\sigma$ an odd involution which acts non-freely.}
\end{itemize}

\end{example}

Let $S \subset S^4$ be an embedded sphere in $S^4$ and let $\Sigma_2( S^4 , S )$ denote the double cover of $S^4$ branched over $S$ and let $\sigma : \Sigma_2(S^4 , S) \to \Sigma_2(S^4 , S)$ denote the covering involution. Given a knot $K$, one constructs a $2$-knot $\tau_{m,n}(K) \subset S^4$ called the $m$-twist, $n$-roll spin of $K$ \cite{lit}. Let $K$ be the $(-2,3,7)$-pretzel knot. Miyazawa proves that $M_n = \Sigma_2( S^4 , \# n \tau_{0,1}(K))$ is homeomorphic to $S^4$ and that $|deg_R( M_n , \mathfrak{s} , \sigma_n)| = 3^n$ \cite{mi}. Here $\mathfrak{s}$ denotes the unique Real spin$^c$-structure on $M_n$ and $\sigma_n$ denotes the covering involution. Similarly, if $P_+ \subset S^4$ denotes the standard embedding of $\mathbb{RP}^2$ in $S^4$ with self-intersection $2$, then $M'_n = \Sigma_2(S^4 , P_+ \# n \tau_{0,1}(K))$ is homeomorphic to $\overline{\mathbb{CP}}^2$ and $|deg_R( M'_n , \mathfrak{s} )| = 3^n$, where $\mathfrak{s}$ is a spin$^c$-structure with $c(\mathfrak{s})^2 = -1$. It was subsequently shown by Hughes--Kim--Miller that $M'_n$ is diffeomorphic to $\overline{\mathbb{CP}}^2$ for any $n \ge 0$ \cite{hkm}. In particular, this means that $\overline{\mathbb{CP}}^2$ has an infinite collection of involutions $\{ \sigma'_n \}_{n \ge 0}$, each with fixed point set an embedded $\mathbb{RP}^2$ and with $|deg_R( \overline{\mathbb{CP}}^2 , \mathfrak{s} , \sigma'_n)| = 3^n$, where $c(\mathfrak{s})^2 = -1$. Note that $(\overline{\mathbb{CP}}^2 , \sigma'_n) = (M_n , \sigma_n) \# (\overline{\mathbb{CP}}^2 , c)$ is the blow-up of $(M_n , \sigma_n)$. 

In \cite{mi} it is proven that the surfaces $P_+ \# n \tau_{0,1}(K)$ are all topologically isotopic (through locally flat topological embeddings) using a result of Conway--Orson--Powell \cite{cop}. It follows that all of the involutions $\sigma'_n$ are topologically equivalent, that is, there exists homeomorphisms $f_n : \overline{\mathbb{CP}}^2 \to \overline{\mathbb{CP}}^2$ such that $\sigma'_n = f_n \circ c \circ f_n^{-1}$, where $c$ denotes complex conjugation. So the involutions $\{\sigma'_n\}$ are exotic copies of complex conjugation.

\begin{theorem}\label{thm:exotic}
Let $(X_1, \sigma_1), \dots , (X_k , \sigma_k)$ be admissible pairs. Let $c : \mathbb{CP}^2 \to \mathbb{CP}^2$ denote complex conjugation. Let $X = X_1 \# \cdots \# X_k$ and $\sigma = \sigma_1 \# \cdots \# \sigma_k$. Then 
\begin{itemize}
\item[(1)]{The involution $\sigma \# c$ on $X \# \overline{\mathbb{CP}}^2$ admits infinitely many exotic copies.}
\item[(2)]{Suppose the fixed point sets of $\sigma_1, \dots , \sigma_k$ are connected. Let $S \subset X/\sigma$ denote the image of the fixed point set of $\sigma$ in $X_0 = X/\sigma$. Then the embedding $P_+ \# S \subset X_0$ admits infinitely many exotic copies.}
\end{itemize}
\end{theorem}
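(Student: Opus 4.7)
The plan is to combine Miyazawa's infinite family of involutions $\sigma_n'$ on $\overline{\mathbb{CP}}^2$ (recalled immediately before the theorem: each carries a Real spin$^c$-structure $\mathfrak{s}_n$ with $|deg_R|=3^n$, and there exist homeomorphisms $f_n:\overline{\mathbb{CP}}^2 \to \overline{\mathbb{CP}}^2$ with $\sigma_n'=f_n\circ c\circ f_n^{-1}$) with the multiplicativity of $deg_R$ under equivariant connected sum (Theorem \ref{thm:glue1}). Part (2) will then be reduced to part (1) via branched double covers.

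For part (1), Proposition \ref{prop:nzdeg} supplies Real spin$^c$-structures $\mathfrak{s}_i$ on each admissible $(X_i,\sigma_i)$ with $deg_R(X_i,\sigma_i,\mathfrak{s}_i)\neq 0$; inspection of the five admissibility cases confirms that in each instance $d_i=(c(\mathfrak{s}_i)^2-\sigma(X_i))/8$ is even. Since $b_1(X_i)^{-\sigma_i}=0$, each $Jac_R$ is a point and the cup product in Theorem \ref{thm:glue1} collapses to ordinary multiplication of integers; iterating yields $\mathfrak{s}:=\mathfrak{s}_1\#\cdots\#\mathfrak{s}_k$ on $X$ with $deg_R(X,\sigma,\mathfrak{s})\neq 0$, and one further application gives
\[
|deg_R(X\#\overline{\mathbb{CP}}^2,\sigma\#\sigma_n',\mathfrak{s}\#\mathfrak{s}_n)|=3^n\cdot|deg_R(X,\sigma,\mathfrak{s})|.
\]
The homeomorphism $\mathrm{id}_X\#f_n$ conjugates $\sigma\#c$ to $\sigma\#\sigma_n'$, so all these involutions are topologically equivalent. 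To distinguish them smoothly I would introduce
\[
M_n:=\max_{\mathfrak{s}'}|deg_R(X\#\overline{\mathbb{CP}}^2,\sigma\#\sigma_n',\mathfrak{s}')|,
\]
the maximum running over all Real spin$^c$-structures; this is preserved by smooth conjugation of the involution, since such a conjugation permutes the finite set of Real spin$^c$-structures while preserving $|deg_R|$. The bound $M_n\ge 3^n|deg_R(X,\sigma,\mathfrak{s})|$ forces $M_n\to\infty$, so extracting a subsequence on which $M_n$ is strictly increasing produces the required infinite family of pairwise smoothly non-conjugate involutions.

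For part (2), connectedness of each $\mathrm{Fix}(\sigma_i)$ makes $S\subset X_0$ connected, so one may form the surface-connected-sum $\Sigma_n:=(P_+\# n\tau_{0,1}(K))\# S$ inside $S^4\# X_0=X_0$ by joining a point of $P_+$ to a point of $S$. Miyazawa's topological isotopy argument (using Conway--Orson--Powell \cite{cop}) shows the surfaces $P_+\# n\tau_{0,1}(K)$ are pairwise topologically isotopic to $P_+$ in $S^4$, and taking the connect sum with the fixed $S\subset X_0$ preserves topological isotopy, so all $\Sigma_n$ are topologically isotopic to $P_+\# S$. The double cover of $X_0$ branched over $\Sigma_n$ splits as a connected sum $M_n'\# X\cong X\#\overline{\mathbb{CP}}^2$ with covering involution $\sigma_n'\#\sigma$; hence any smooth isotopy between $\Sigma_n$ and $\Sigma_m$ would lift to an equivariant smooth diffeomorphism conjugating $\sigma\#\sigma_n'$ to $\sigma\#\sigma_m'$, and part (1) rules this out for infinitely many pairs. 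The main technical point to verify will be the naturality of $|deg_R|$ under smooth conjugation (so that $M_n$ is a genuine smooth invariant of the involution) together with the lifting of smooth isotopies of branch loci to equivariant smooth diffeomorphisms of branched double covers; both are essentially formal but warrant careful checking, whereas the remaining calculations are direct applications of the multiplicativity of $deg_R$ from Section \ref{sec:csf} and of Miyazawa's results.
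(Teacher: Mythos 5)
Your approach matches the paper's quite closely: Proposition \ref{prop:nzdeg} to get non-zero $deg_R$ on each summand, multiplicativity under equivariant connected sum (Theorem \ref{thm:glue1}) to build a non-zero degree on $X\#\overline{\mathbb{CP}}^2$, Miyazawa's family $M_n$ with $|deg_R|=3^n$ to vary the degree, and passage to the branched double cover to convert part (2) into part (1). The packaging of the invariant is slightly different --- you take $M_n=\max_{\mathfrak{s}'}|deg_R|$ and extract a subsequence on which it is strictly increasing, while the paper considers the finite set $A_n=\{|deg_R(X',\mathfrak{s}',\sigma_{X',n})|\}$, chooses once and for all an integer $r$ with $3^r>\max A_0$, and uses $M_{rn}$ so that $A_n=3^{rn}A_0$ distinguishes every $n$ at once --- but both versions carry the same content.

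There is, however, one genuine gap in your argument, and it sits exactly at the point you flag as ``essentially formal''. You assert that a smooth conjugation ``permutes the finite set of Real spin$^c$-structures while preserving $|deg_R|$''. The set of Real spin$^c$-structures is a torsor over $H^2_{\mathbb{Z}_2}(X';\mathbb{Z}_-)$ and is almost always infinite (certainly whenever $b_2>0$), so the finiteness of $M_n$, and even the fact that $M_n$ is a well-defined integer rather than $+\infty$, is not automatic. What is actually finite is the set of Real spin$^c$-structures for which $deg_R$ is defined and non-zero, and this follows from the compactness theorem for the Seiberg--Witten equations: for a fixed metric and perturbation the moduli space is non-empty for only finitely many spin$^c$-structures, so only finitely many can have $deg_R\neq 0$ (this is precisely the argument the paper makes when introducing the set $\mathcal{S}$). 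Without this input your $M_n$ is not known to be finite and the monotone-subsequence argument has nothing to bite on. Once that finiteness is inserted, the rest of your reasoning (diffeomorphism invariance of $M_n$, $M_n\to\infty$, the branched cover lift in part (2)) goes through and reproduces the paper's proof.
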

\begin{proof}
(1) Since $(X_1,\sigma_1), \dots , (X_k , \sigma_k)$ are admissible, there exists Real spin$^c$-structures $\mathfrak{s}_1, \dots , \mathfrak{s}_k$ such that $deg_R(X_i , \mathfrak{s}_i)$ is defined an non-zero for each $i$ (Proposition \ref{prop:nzdeg}). Hence there is a Real spin$^c$-structure $\mathfrak{s} = \mathfrak{s}_1 \# \cdots \# \mathfrak{s}_k$ on $X$ such that $deg_R(X , \mathfrak{s})$ is defined and non-zero (Theorem \ref{thm:glue1}). Let $\mathfrak{s}_c$ denote a Real spin$^c$-structure on $(\overline{\mathbb{CP}}^2,c)$ with $c(\mathfrak{s}_c)^2 = -1$. Then $\mathfrak{s} \# \mathfrak{s}_c$ is a Real spin$^c$-structure on $(X' , \sigma') = (X \# \overline{\mathbb{CP}}^2 , \sigma \# c )$ with non-zero degree. Let $\mathcal{S}$ denote the set of Real spin$^c$-structures $\mathfrak{s}'$ on $X' = X \# \overline{\mathbb{CP}}^2$ such that $deg_R( X' , \mathfrak{s}')$ is defined and non-zero (the condition for the degree to be defined is that $(c(\mathfrak{s}')^2 - \sigma(X'))/8 = b_+(X')^{-\sigma'}$). By the compactness properties of the Seiberg--Witten equations, the moduli space of solutions to the Seiberg--Witten equations for fixed metric and perturbation is non-empty for only finitely many spin$^c$-structures. Hence $\mathcal{S}$ is a finite set. Choose an integer $r > 0$ for which $3^r > |deg_R(X' , \mathfrak{s}')|$ for all $\mathfrak{s}' \in \mathcal{S}$. Now consider the infinite collection of involutions $\{ \sigma_{X',n} \}_{n \ge 0}$ on $X'$ defined by $(X' , \sigma_{X',n}) = ( X' , \sigma' ) \# (M_{rn} , \sigma_{rn})$. Note that $X' \# M_{rn} = X \# \overline{\mathbb{CP}}^2 \# M_{rn} \cong X \# \overline{\mathbb{CP}}^2 = X'$. We claim that the involutions $\{ \sigma_{X',n} \}_{n \ge 0}$ are pairwise non-isomorphic.

Let $\mathcal{S}_n$ denote the set of Real spin$^c$-structures on $(X' , \sigma_{X',n})$ for which the degree is defined an non-zero. Let $\mathfrak{t}_n$ denote the unique Real spin$^c$-structure on $M_{n}$. Then it is easily seen that the map $\mathcal{S} \to \mathcal{S}_n$ given by $\mathfrak{s}' \mapsto \mathfrak{s}' \# \mathfrak{t}_{rn}$ is a bijection.

For each $n \ge 0$, define $A_n = \{ |deg_R( X' , \mathfrak{s}' , \sigma_{X',n})| \; | \; \mathfrak{s}' \in \mathcal{S}_n \} \subset \mathbb{Z}$. Note that $A_n$ is finite and non-empty for each $n$. If there is an orientation preserving diffeomorphism $f : X'_{n_1} \to X'_{n_2}$ such that $f \circ \sigma_{X',n_1} = \sigma_{X',n_2} \circ f$, then it follows that $f$ induces a bijection $f : \mathcal{S}_{n_1} \to \mathcal{S}_{n_2}$ which respects degree and hence $A_{n_1} = A_{n_2}$. On the other hand, we have that
\[
|deg_R( X' , \mathfrak{s}' \# \mathfrak{t}_{rn} , \sigma_{X',rn}) | = |deg_R(X' , \mathfrak{s}')| |deg_R(M_{rn} , \mathfrak{t}_{rn})| = 3^{rn} |deg_R( X' , \mathfrak{s}')|.
\]
Hence $A_n = 3^{rn}A_0$ for each $n \ge 0$. If $A_{n_1} = A_{n_2}$, then $3^{rn_1} A_0 = 3^{rn_2} A_0$. But since $3^r > a$ for any $a \in A_0$, it follows that this equality can only occur if $n_1 = n_2$, thus the involutions $\{ \sigma_{X',n} \}_{n \ge 0}$ are pairwise non-isomorphic. Lastly, since each involution $\sigma_{rn}$ is homeomorphic to $c$, it follows that each $\sigma_{X',n}$ is homeomorphic to $\sigma' = \sigma \# c$, so they are exotic copies of $\sigma \# c$.

(2) Observe that $P_+ \# S \subset X'/\sigma' = X_0$ is the image of the fixed point set of $\sigma'$ and that $P_+ \# rn \tau_{0,1}(K) \# S \subset X_0$ is the image of the fixed point set of $\sigma_{X',n}$ (where $K$ is the $(-2,3,7)$-pretzel knot). Since the surfaces $P_+ \# rn \tau_{0,1}(K) \subset S^4$ are all topologically isotopic, the same is true of the surfaces $P_+ \# rn \tau_{0,1}(K) \# S$. However, no two can be smoothly isotopic, for then their branched double covers would be equivariantly diffeomorphic.
\end{proof}

\begin{corollary}
For all $a\neq b$, $a \mathbb{CP}^2 \# b \overline{\mathbb{CP}}^2$ admits an involution which has infinitely many exotic copies.
\end{corollary}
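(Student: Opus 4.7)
My plan is to realise, for every pair $(a,b)$ with $a\neq b$, the manifold $a\mathbb{CP}^2\#b\overline{\mathbb{CP}}^2$ as $X\#\overline{\mathbb{CP}}^2$ where $X$ is a connected sum of admissible pairs, so that Theorem \ref{thm:exotic} applies directly. Global orientation reversal exchanges $(a,b)$ with $(b,a)$, sends orientation-preserving involutions to orientation-preserving involutions, and is insensitive to conjugacy by homeomorphisms versus diffeomorphisms; this lets me reduce to the case $b>a\geq 0$.

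For $a=0$ I will take $b-1$ copies of $(\overline{\mathbb{CP}}^2,c)$ (admissible by condition (4)) when $b\geq 2$, or the single pair $(S^4,\mathrm{diag}(-1,-1,1,1,1))$ from condition (2) when $b=1$; in either case $X\#\overline{\mathbb{CP}}^2\cong b\overline{\mathbb{CP}}^2$, and this subsumes the already-highlighted Miyazawa construction on $\overline{\mathbb{CP}}^2$. The key observation for $a\geq 1$ is that $(S^2\times S^2,\tau)$, with $\tau$ the swap involution $(x,y)\mapsto(y,x)$, is admissible by condition (2): $S^2\times S^2$ is spin with $H_1=0$ and $\sigma=0$, and $\tau$ fixes the diagonal sphere, hence has a non-isolated fixed point. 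I will take $a$ copies of $(S^2\times S^2,\tau)$ together with $b-a-1$ copies of $(\overline{\mathbb{CP}}^2,c)$; adjoining the final $\overline{\mathbb{CP}}^2$ from Theorem \ref{thm:exotic} yields $a(S^2\times S^2)\#(b-a)\overline{\mathbb{CP}}^2$. The classical dissolving identity $(S^2\times S^2)\#\overline{\mathbb{CP}}^2\cong\mathbb{CP}^2\#2\overline{\mathbb{CP}}^2$, applied $a$ times using the $\overline{\mathbb{CP}}^2$ summand that remains present throughout, converts each $S^2\times S^2$ factor into a $\mathbb{CP}^2\#\overline{\mathbb{CP}}^2$ and identifies $X\#\overline{\mathbb{CP}}^2$ with $a\mathbb{CP}^2\#b\overline{\mathbb{CP}}^2$.

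There is no substantive obstacle: verifying condition (2) for $(S^2\times S^2,\tau)$ is immediate from the list of clauses, and the dissolving lemma is standard in the non-spin simply connected setting. The step I expect to require most care is the bookkeeping in the orientation-reversal reduction, namely confirming that the $a=0$ family and the $a\geq 1$, $b\geq a+1$ family (together with their orientation-reversed counterparts) genuinely exhaust all $(a,b)$ with $a\neq b$, with particular attention to the edge cases $b=a+1$ and $b=1$ where some of the summand counts degenerate to zero.
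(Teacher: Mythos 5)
Your argument is essentially the paper's own: after reducing to $b>a$ by orientation reversal, write $a\mathbb{CP}^2\#b\overline{\mathbb{CP}}^2\cong a(S^2\times S^2)\#(b-a)\overline{\mathbb{CP}}^2$ using the standard dissolving isomorphism, check admissibility of $(S^2\times S^2,\tau)$ via condition~(2) and of $(\overline{\mathbb{CP}}^2,c)$ via condition~(4), and invoke Theorem~\ref{thm:exotic} with the last $\overline{\mathbb{CP}}^2$ as the external blow-up. Your only deviation is the explicit treatment of the degenerate case $a=0$, $b=1$ via $(S^4,\mathrm{diag}(-1,-1,1,1,1))$, which the paper leaves implicit (it follows from Miyazawa's original example mentioned earlier); this is a welcome bit of added care but not a different route.
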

\begin{proof}
Reversing orientation if necessary, we can assume $b > a$. Then $X = a \mathbb{CP}^2 \# b \overline{\mathbb{CP}}^2 = a(S^2 \times S^2) \# (b-a) \overline{\mathbb{CP}}^2$. Now equip $S^2 \times S^2$ with the involution $\sigma(x,y) = (y,x)$ and equip $\overline{\mathbb{CP}}^2$ with the involution $c$ given by complex conjugation. Then $(S^2 \times S^2 , \sigma)$ and $(\overline{\mathbb{CP}}^2 , c)$ are admissible, so the result follows from Theorem \ref{thm:exotic}.
\end{proof}

\begin{corollary}
For all $a \ge 0$, $b>0$, $aK3 \# b \overline{\mathbb{CP}}^2$ has an involution with infinitely many exotic copies.
\end{corollary}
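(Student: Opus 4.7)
The plan is to apply Theorem \ref{thm:exotic}(1) to a list of admissible pairs whose equivariant connected sum has underlying smooth type $aK3 \# (b-1)\overline{\mathbb{CP}}^2$, so that the extra $\overline{\mathbb{CP}}^2$ summand provided by the theorem produces the target manifold $aK3 \# b\overline{\mathbb{CP}}^2$. Three building blocks will suffice. First, take $(K3, \sigma_{K3})$ where $\sigma_{K3}$ is an anti-holomorphic involution with non-empty fixed set on some smooth K\"ahler $K3$ surface (for instance complex conjugation on the real quartic $x_0^4 + x_1^4 + x_2^4 = x_3^4$ in $\mathbb{CP}^3$); this pair satisfies condition~(1) of Definition \ref{def:admissible}, since $\sigma_{K3}^*\omega = -\omega$ for the K\"ahler form, $b_+(K3) - b_1(K3) = 3 \equiv 3 \pmod 4$, $b_1(K3)^{-\sigma} = 0$ (as $b_1(K3) = 0$), and the fixed locus is a real algebraic surface, hence non-isolated. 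Second, $(\overline{\mathbb{CP}}^2, c)$ with $c$ complex conjugation satisfies condition~(4): $b_+(\overline{\mathbb{CP}}^2) = 0$ forces $b_+^{-\sigma} = 0$, and $c$ fixes an $\mathbb{RP}^2$. Third, $(S^4, \mathrm{diag}(-1,-1,1,1,1))$ satisfies condition~(2): $S^4$ is spin with $H_1(S^4) = 0$ and $\sigma(S^4) = 0$, while the fixed $S^2$ is non-isolated.

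For $a \ge 1$, I would apply Theorem \ref{thm:exotic}(1) to the list consisting of $a$ copies of $(K3, \sigma_{K3})$ followed by $b-1$ copies of $(\overline{\mathbb{CP}}^2, c)$. The underlying smooth type of the corresponding equivariant connected sum $X$ is $aK3 \# (b-1)\overline{\mathbb{CP}}^2$, so the theorem produces infinitely many exotic copies of the involution $\sigma \# c$ on $X \# \overline{\mathbb{CP}}^2 = aK3 \# b\overline{\mathbb{CP}}^2$. For $a = 0$, I would instead use $(S^4, \mathrm{diag}(-1,-1,1,1,1))$ in place of the $K3$ summands, together with $b-1$ copies of $(\overline{\mathbb{CP}}^2, c)$ (none if $b = 1$); since $S^4 \# M \cong M$ for any $M$, the underlying manifold is $(b-1)\overline{\mathbb{CP}}^2$, and Theorem \ref{thm:exotic}(1) yields infinitely many exotic copies of the associated involution on $b\overline{\mathbb{CP}}^2$.

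The only non-bookkeeping step is verifying admissibility of $(K3, \sigma_{K3})$; the key points are the existence of a real smooth $K3$ quartic with non-empty real locus (classical) and the parity $b_+(K3) - b_1(K3) = 3 \equiv 3 \pmod 4$. Once this is in hand, everything else follows immediately from Theorem \ref{thm:exotic}(1).
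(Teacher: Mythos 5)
Your proof is correct and follows essentially the same route as the paper: build the involution from admissible $K3$, $\overline{\mathbb{CP}}^2$, and (for $a=0$) $S^4$ summands and invoke Theorem~\ref{thm:exotic}(1). The paper's own proof is terser --- it simply asserts that \emph{any} odd non-free involution on $K3$ is admissible (without naming which clause of Definition~\ref{def:admissible} applies) and does not spell out the $a=0$ case; you instead pin down a concrete anti-holomorphic involution and verify clause~(1) directly, and you handle $a=0$ explicitly via the admissible pair $(S^4,\mathrm{diag}(-1,-1,1,1,1))$, which is the cleaner and more self-contained presentation.
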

\begin{proof}
Let $\sigma$ be any odd involution on $K3$ which does not act freely. Then $(K3 , \sigma)$ is admissible. Now apply Theorem \ref{thm:exotic} to $(K3 , \sigma)$ and $(\overline{\mathbb{CP}}^2 , c)$.
\end{proof}


\bibliographystyle{amsplain}

\end{document}